\newtheorem{theorem}{Theorem}[section]
\newtheorem{lemma}[theorem]{Lemma}
\theoremstyle{definition}
\theoremstyle{remark}
\newtheorem{remark}[theorem]{Remark}
\def\O{\Omega}
\def\RR{\mathbb{R}}
\def\CC{\mathbb{C}}
\def\cH{\mathcal{H}}
\def\curl{\mbox{curl}}
\newcommand{\ii}{\mathfrak i}
\newcommand{\mb}[1]{\ensuremath{\mathbf{#1}}}
\newcommand{\change}[1]{\textcolor{black}{#1}}
\def\bfA{\mb{A}}
\def\bfF{\mb{F}}
\author[J. Ovall]{Jeffrey S. Ovall}
\address{Jeffrey S. Ovall,
  Fariborz Maseeh Department of Mathematics and Statistics,
  Portland State University,
  Portland, OR 97201}
\email{jovall@pdx.edu}
\author[L. Zhu]{Li Zhu}
\address{Li (Julie) Zhu,
	Fariborz Maseeh Department of Mathematics and Statistics,
	Portland State University,
	Portland, OR 97201}
\email{lizhu@pdx.edu}
\thanks{The work of J. Ovall was partially supported by the NSF grant
  2208056 ``Computational Tools for Exploring Eigenvector
  Localization''.  The work of L. Zhu was supported by the NSF grant
  2136228 ``RTG: Program in Computation- and Data-Enabled Science''.
  The authors thank Dr. Jay Gopalakrishnan for useful conversations
  and coaching on the use of NGSolve and pyeigfeast in this context.}
\title[Magnetic Schr\"odinger Canonical Gauge]{A Canonical Gauge for
  Computing of Eigenpairs of the Magnetic Schr\"odinger Operator}
\date{\today}
\begin{document}
\begin{abstract}
  We consider the eigenvalue problem for the magnetic Schr\"odinger
  operator and take advantage of a property called gauge invariance to
  transform the given problem into an equivalent problem that is more
  amenable to numerical approximation.  More specifically, we propose
  a canonical magnetic gauge that can be computed by solving a Poisson problem,
  that yields a new operator having the same spectrum but eigenvectors
  that are less oscillatory.  Extensive numerical tests demonstrate
  that accurate computation of eigenpairs can be done more efficiently
  and stably with the canonical magnetic gauge.
\end{abstract}

\maketitle

\section{Introduction}\label{Introduction}
The magnetic Schr\"odinger operator,
\begin{align}\label{HAV}
    H(\mb{A}, V) = (-\ii \nabla - \mb{A}) \cdot (-\ii \nabla - \mb{A}) + V ~,
\end{align}
is used in models of the motion of a charged particle in an
electromagnetic field.  We will refer to $H(\mb{A},V)$ as the
\textit{Hamiltonian} associated with $\mb{A}$ and $V$.  The curl of
the \textit{magnetic vector potential} $\mb{A}$ is, up to scaling, the
underlying magnetic field, and the gradient of the scalar potential
$V$ is similarly associated with the electric field (cf.~\cite[Section
4.5]{Griffiths2018}).
In this context, the potentials are typically referred to as
\textit{gauges}, and it is clear that adding a gradient to $\mb{A}$,
or a constant scalar to $V$, cannot change the underlying dynamics, as
it does not change the magnetic and electric fields.  Thus, two gauges
for the magnetic field are deemed \textit{equivalent} if they differ
by a gradient.  From the earliest days of electrodynamic theory,
researchers in the field have sought to fix a ``canonical'' gauge by
applying additional constraints on $\mb{A}$ that are motivated by a
variety of physical or mathematical considerations~\cite{Jackson2001}.
Popular gauges include those of Lorenz and Coulomb, though scientists
continue to debate the merits of different gauges in different
contexts as well as the methodology for evaluating gauge choices
(cf.~\cite{Maudlin2018,Gomes2022}).

In the present work, we consider the eigenvalue problem associated
with $H(\mb{A},V)$, and take advantage of this \textit{gauge
  invariance} to select a canonical magnetic potential that will be
seen to improve computational efficiency and accuracy for associated eigenvalue problems.  More
specifically, we consider the eigenvalue problem
\begin{align}\label{EP}
H(\mb{A},V)\psi =\lambda\psi\mbox{ in }\Omega~,
\end{align}
where $\Omega\subset\RR^d$ is a bounded open set with sufficiently
regular boundary (e.g. Lipschitz regularity).
For this problem to be well-posed, we must impose suitable boundary
conditions.  Dirichlet conditions, $\psi = 0$ on $\partial\Omega$, or
Neumann conditions,  $(-\ii \nabla -\mb{A})\psi\cdot\mb{n}=0$ on
$\partial\Omega$, both  ensure that $H=H(\mb{A},V)$ is selfadjoint,
\begin{align}\label{BilinearForm}
\int_\Omega (H
  v)\overline{w}\,dx=\underbrace{\int_\Omega(-\ii\nabla-\mb{A})v\cdot\overline{(-\ii\nabla-\mb{A})w}+Vv\overline{w}\,dx}_{B(v,w)}\mbox{
  for }v,w\in\cH~,
\end{align}
in the weak sense.  Here, $\cH=H_0^1(\Omega)=H_0^1(\Omega;\CC)$ in the
case of Dirichlet conditions, and $\cH=H^1(\Omega)=H^1(\Omega;\CC)$ in
the case of Neumann conditions.  Mixed Dirichlet and Neumann
conditions also give rise to the same bilinear form $B$ and
variational problem~\eqref{BilinearForm}, with appropriate adjustments
to the function space $\cH$ that incorporate the (essential) Dirichlet
conditions.  It follows that the eigenvalues of $H(\mb{A},V)$ for any
of these boundary conditions are real, though the eigenvectors are
typically complex.  Contributions that consider how the vector and
scalar fields influence the asymptotic distribution of eigenvalues, the
asymptotic behavior of low-lying eigenvalues with respect to field
strength, and the spatial localization of eigenvectors for~\eqref{EP}
include~\cite{Helffer1987,Shen1996,Poggi2024,Hoskins2024,Ovall2024}.

We here assume that $\mb{A}$ is continuously differentiable in each of
its components, up to the boundary, i.e.
$\mb{A}\in [C^1(\overline\Omega;\RR)]^d$.  Although we might relax the
regularity requirements on $\mb{A}$, typical examples that are deemed
``physically meaningful'' have at least this regularity. We assume
that $V\in L^\infty(\Omega;\RR)$ is non-negative almost everywhere.
Because we are primarily concerned with determining a vector potential
that is canonical in a sense motivated by computational
considerations, we will primarily focus on the case $V=0$ in our
numerical experiments.  

When $V=0$, the resulting operator,
\begin{align}\label{MagneticLaplacian}
H(\mb{A})=H(\mb{A},0)=(-\ii \nabla - \mb{A}) \cdot (-\ii \nabla - \mb{A})~,
\end{align}
is called the \textit{magnetic Laplacian}, and it has received
significant attention in its own
right~\cite{BonnaillieNoel2006,Bonnaillie-Noel2007,BonnaillieNoel2016,Helffer1996,Fournais2006a,Pino2000,Fournais2019,Fournais2023,Baur2024}.
Much of this work has employed the tools of semiclassical analysis
(cf.~\cite{Zworski2012}), and considers the behavior of eigenvalues
and vectors of the operator
$H_h=(-\ii h\nabla - \mb{A}) \cdot (-\ii h\nabla - \mb{A})$ as the
\textit{semiclassical parameter} $h\to 0$.  We note that this $h$ is
not related to a discretization (mesh/grid) parameter.  Very little
has been published involving numerical methods for $H(\mb{A},V)$,
$H(\mb{A})$ or $H_h$.  We
highlight~\cite{Bonnaillie-Noel2007} as an exception in this regard,
which considers a variant of the operator $H_h$ on planar domains with
corners, together with the constant-curl vector field
$\mb{A}=(-y/2,x/2)$, which seems to be a popular choice in the
literature.

 \change{In brief, our approach is to
  replace the given gauge $\mb{A}$ with a ``canonical'' gauge $\mb{F}$
  such that $H(\mb{A},V)$ and $H(\mb{F},V)$ have the same spectrum,
  but for which the eigenvectors of $H(\mb{F},V)$ can be more readily
  resolved using coarser discretizations than would be needed to
  resolve the associated eigenvectors of $H(\mb{A},V)$.  The vector
  field $\mb{F}$ is obtained from $\mb{A}$ by solving a Neumann
  Poisson problem.  On the level of discretization, we must first
  compute a discrete approximation of $\mb{F}$ before solving the
  associated eigenvalue, and this approximation is done at the
  same level of discretization as is used for solving the
  eigenvalue problem.}

The rest of this paper is organized as follows: \change{In
  Section~\ref{Theory}, we provide heuristic motivation and initial
  empirical validation for our proposed approach via a few theoretical
  results and preliminary examples. 
  Section~\ref{Experiments} contains a more detailed comparison
  between computing eigenpairs of $H(\mb{A},V)$ and $H(\mb{F},V)$ for
  a variety of operators and domains.  These comparisons strongly
  support the claims made in
  Section~\ref{Theory}. A few concluding
  remarks are given in Section~\ref{Conclusions}.}

\section{Gauge Invariance and a Practical Helmholtz
  Decomposition}\label{Theory}
Gauge invariance for the magnetic potential is reflected
mathematically in the following lemma, which uses conjugation of
$H(\mb{A},V)$ to affect a gauge transform.  Although the result has
been known in the mathematical physics literature for many years
(cf.~\cite{Fock1926,Weyl1929,Jackson2001}), we
include a brief proof for completeness. 
\begin{lemma}[Gauge Invariance] \label{lemma1} Suppose that
  $\bfA=\nabla a + \bfF$ in $\O$ \change{for some scalar field $a$ and vector
  field $\mb{F}$}. Then
  $e^{-\ii a}H(\bfA, V)e^{\ii a}=H(\bfF, V)$. Furthermore,
  $(\lambda, \psi)$ is an eigenpair of $H(\bfA, V)$ if and only if
  $(\lambda, e^{-\ii a}\psi)$ is an eigenpair of $H(\bfF, V)$.
\end{lemma}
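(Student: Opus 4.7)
The plan is to verify the operator identity $e^{-\ii a} H(\bfA, V) e^{\ii a} = H(\bfF, V)$ by a direct product-rule calculation on the first-order operator $-\ii\nabla - \bfA$, and then deduce the eigenpair correspondence by a one-line conjugation argument.

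First I would compute how the momentum-like operator transforms. For any smooth test function $\phi$, apply the product rule to obtain
\begin{equation*}
(-\ii\nabla - \bfA)(e^{\ii a}\phi) = e^{\ii a}(-\ii\nabla - \bfA)\phi + (-\ii)(\ii\nabla a)e^{\ii a}\phi = e^{\ii a}\bigl(-\ii\nabla - \bfA + \nabla a\bigr)\phi.
\end{equation*}
Using the hypothesis $\bfA = \nabla a + \bfF$, the bracketed operator on the right is exactly $-\ii\nabla - \bfF$. Rearranging gives the pointwise identity
\begin{equation*}
e^{-\ii a}(-\ii\nabla - \bfA)e^{\ii a} = -\ii\nabla - \bfF,
\end{equation*}
which is the heart of the lemma.

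Next I would iterate this identity to handle the full second-order operator. Writing $H(\bfA,V) = (-\ii\nabla - \bfA)\cdot(-\ii\nabla - \bfA) + V$ and inserting $e^{\ii a}e^{-\ii a} = 1$ between the two factors yields
\begin{equation*}
e^{-\ii a}H(\bfA,V)e^{\ii a} = \bigl(e^{-\ii a}(-\ii\nabla - \bfA)e^{\ii a}\bigr)\cdot\bigl(e^{-\ii a}(-\ii\nabla - \bfA)e^{\ii a}\bigr) + e^{-\ii a}Ve^{\ii a}.
\end{equation*}
Since $V$ is a multiplication operator it commutes with $e^{\ii a}$, so the last term is just $V$, and the preceding calculation identifies the first term as $(-\ii\nabla - \bfF)\cdot(-\ii\nabla - \bfF)$. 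Combining, we get $H(\bfF,V)$.

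Finally, for the eigenpair statement, if $H(\bfA,V)\psi = \lambda\psi$, multiply both sides on the left by $e^{-\ii a}$ and insert $e^{\ii a}e^{-\ii a}$ before $\psi$ to obtain $H(\bfF,V)(e^{-\ii a}\psi) = \lambda(e^{-\ii a}\psi)$; the reverse implication is the same argument applied with $\bfA$ and $\bfF$ swapped and $a$ replaced by $-a$. The main subtlety, rather than any analytic obstacle, is simply the bookkeeping of signs and the placement of $\ii$ in the product rule; once the first-order identity is established, everything else is algebra. A brief remark that $e^{\pm\ii a}$ is a pointwise unitary multiplier that preserves the function space $\cH$ (so that the eigenvalue problem is genuinely transported between the two operators) finishes the argument.
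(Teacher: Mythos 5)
Your argument is essentially the paper's proof: derive the first-order conjugation identity $e^{-\ii a}(-\ii\nabla - \bfA)e^{\ii a} = -\ii\nabla - \bfF$, then apply it twice (once in the gradient position, once in the divergence position) and observe that $V$ commutes with multiplication by $e^{\ii a}$. The one small point worth noting is that the paper separately states the vector/divergence version of the identity, $(-\ii\nabla-\bfA)\cdot(e^{\ii a}\mb{G}) = e^{\ii a}(-\ii\nabla - \bfF)\cdot\mb{G}$, which your ``insert $e^{\ii a}e^{-\ii a}$'' step uses implicitly when the outer factor acts as a divergence on a vector field; spelling that out would make the bookkeeping airtight, but the underlying computation is identical.
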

\begin{proof}
  For any differentiable scalar field $u$ and vector field $\mb{G}$,
  it holds that
\begin{align*}
(-\ii \nabla - \mb{A})e^{\ii a}u &=e^{\ii a}(-\ii \nabla - \mb{F})u~,\\
(-\ii \nabla - \mb{A})\cdot(e^{\ii a}\mb{G})&=e^{\ii a}(-\ii \nabla - \mb{F})\cdot\mb{G}~.
\end{align*}
Taking $\mb{G}=(-\ii \nabla - \mb{F})u$ in the second identity, we see
that $e^{-\ii a}H(\mb{A}, V)e^{\ii a}u=H(\mb{F}, V)u$, as claimed.
The assertion about eigenpairs of $H(\mb{A}, V)$ and $H(\mb{F}, V)$
follows immediately from this.
\end{proof}

Stated less formally, the eigenvalues of Hamiltonians associated with
equivalent magnetic potentials are identical, and their corresponding
eigenvectors can only differ in phase.  The \textit{phase} of $\psi$
is $\theta:\Omega\to(-\pi,\pi]$ such that
$\psi=|\psi|e^{\ii\,\theta}$.  Since $\phi=e^{-\ii a}\psi$ satisfies
$|\phi|=|\psi|$, they clearly differ only in phase.  \change{We also
  see that, for smooth $a$ (which will be the case in our examples),
  corresponding eigenvectors $\psi$ and $\phi$ have precisely the same
  regularity.  As such, asymptotic convergence rates with respect to a
  discretization parameter, such as a mesh parameter $h$ in a finite
  element or finite difference discretization, will be the same
  regardless of which equivalent(!) magnetic potentials are used.  The
  point of our proposed approach is that, through proper choice of
  $\mb{F}$, as described in Theorem~\ref{CanonicalGauge}, the
  eigenvectors of $H(\mb{F},V)$ can be more readily resolved on
  coarser discretization than can their counterparts for
  $H(\mb{A},V)$.  Alternatively, one might say that the asymptotic
  convergence regime is reached earlier, e.g. for larger $h$, with
  $H(\mb{F},V)$ than it is for $H(\mb{A},V)$.}

We now return to the objective mentioned in the introduction, given in
more detail:
\begin{equation}
  \tag{T}\label{KeyTask}
  \parbox{\dimexpr\linewidth-4em}{%
    \strut
 \textit{Given a vector potential $\mb{A}$, determine an equivalent field
  $\mb{F}$ for which the accurate(!) numerical approximation of
  eigenpairs of $H(\mb{F},V)$ can be done more efficiently than for
  $H(\mb{A},V)$.}%
    \strut
  }
\end{equation}
More specifically, given a suitably small error tolerance for the
approximation of a collection of eigenvalues of $H(\mb{A},V)$ via (finite element)
discretization, achieving this tolerance by discretizing $H(\mb{F},V)$
should be cheaper than by discretizing $H(\mb{A},V)$.  \change{This is
  what we mean by use of ``more efficiently'' in this paper---that one reaches a reasonable
  error tolerance for eigenvalue approximations using $H(\mb{F},V)$ on
  (much) coarser
  discretizations than using $H(\mb{A},V)$.  We will explain our use of
  ``more stably'', first used in the abstract, in
  Section~\ref{Example1}, where we compare the computed
  eigenvalues on a few discretization levels for both $\mb{A}$ and $\mb{F}$.} 
We propose the
following choice of $\mb{F}$. 
\begin{theorem}[Canonical Gauge]\label{CanonicalGauge}
Let $\mb{A}$ be a given magnetic potential.  The solution $a\in H^1(\Omega;\RR)$ of the minimization problem
\begin{align}\label{GaugeMinimizer}
\|\mb{A}-\nabla{a}\|_{L^2(\Omega;\RR)}=\min_{v\in H^1(\Omega;\RR)}\|\mb{A}-\nabla{v}\|_{L^2(\Omega;\RR)}~,
\end{align}
which is unique up to an additive constant, is also a weak solution of the Neumann problem
\begin{align}\label{EquivNeumann}
\Delta a = \nabla\cdot\mb{A}\mbox{ in }\Omega\quad,\quad \nabla a\cdot\mb{n}=\mb{A}\cdot\mb{n}\mbox{ on }\partial\Omega~.
\end{align}
Setting  $\mb{F}=\mb{A}-\nabla a$, we have 
$\nabla\cdot \mb{F}=0$. 
\end{theorem}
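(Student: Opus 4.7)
My plan is to treat the variational problem~\eqref{GaugeMinimizer} as a standard least-squares projection of $\mb{A}$ onto the subspace $\nabla H^1(\Omega;\RR)\subset [L^2(\Omega;\RR)]^d$ inside the Hilbert space $[L^2(\Omega;\RR)]^d$. First I would verify that this subspace is closed: working modulo constants, one combines the Poincaré--Wirtinger inequality with the bound $\|\nabla v\|_{L^2}\leq \|v\|_{H^1}$ to show that, on the quotient $H^1(\Omega;\RR)/\RR$, the map $[v]\mapsto \nabla v$ is a bounded isomorphism onto its image, so the image is complete and hence closed in $[L^2(\Omega;\RR)]^d$. The Hilbert projection theorem then yields existence of a minimizer $\nabla a$, unique as an element of $\nabla H^1$, and hence uniqueness of $a$ up to an additive constant.

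Next, I would derive the first-order optimality condition in the usual way. Since $J(v)=\|\mb{A}-\nabla v\|_{L^2}^2$ is smooth and convex in $v$, differentiating $t\mapsto J(a+tv)$ at $t=0$ gives
\begin{align}\label{PlanWeak}
\int_\Omega (\mb{A}-\nabla a)\cdot\nabla v\, dx = 0 \quad\text{for all } v\in H^1(\Omega;\RR).
\end{align}
This is precisely the weak form of~\eqref{EquivNeumann}: formally integrating by parts (or, for $\mb{A}$ with only weak regularity, interpreting the identity in the sense of distributions against $v\in C_c^\infty(\Omega)$ and separately against $v$ whose traces probe $\partial\Omega$) identifies the interior equation $\Delta a=\nabla\cdot\mb{A}$ and the Neumann boundary condition $\nabla a\cdot\mb{n}=\mb{A}\cdot\mb{n}$. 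Under our standing assumption $\mb{A}\in[C^1(\overline{\Omega})]^d$, the consistency condition $\int_{\partial\Omega}\mb{A}\cdot\mb{n}\,dS = \int_\Omega\nabla\cdot\mb{A}\,dx$ holds automatically by the divergence theorem, so the Neumann problem is solvable, consistent with the existence obtained above.

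Finally, setting $\mb{F}=\mb{A}-\nabla a$, the optimality condition~\eqref{PlanWeak} reads $\int_\Omega \mb{F}\cdot\nabla v\,dx=0$ for every $v\in H^1(\Omega;\RR)$; restricting to $v\in C_c^\infty(\Omega)$ immediately gives $\nabla\cdot\mb{F}=0$ in the distributional sense (and testing against general $v\in H^1$ additionally encodes $\mb{F}\cdot\mb{n}=0$ on $\partial\Omega$, which we note in passing although it is not part of the statement). The only step that requires genuine care is the closedness of $\nabla H^1(\Omega;\RR)$ in $[L^2(\Omega;\RR)]^d$; everything else is a direct, almost mechanical consequence of the projection onto this closed subspace together with integration by parts.
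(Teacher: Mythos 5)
Your proof is correct, and it is essentially the mirror image of the paper's argument. The paper starts from the Neumann problem: it invokes standard theory to assert that~\eqref{EquivNeumann} has a weak solution unique up to an additive constant, reads off $\nabla\cdot\mb{F}=0$ directly from the PDE, and then uses the weak form $\int_\Omega\nabla a\cdot\nabla v\,dx=\int_\Omega\mb{A}\cdot\nabla v\,dx$ to deduce that $a$ solves the minimization problem. You instead start from the minimization problem, establish well-posedness via the Hilbert projection theorem after verifying that $\nabla H^1(\Omega;\RR)$ is closed in $[L^2(\Omega;\RR)]^d$ (a step the paper implicitly subsumes into the assertion that the Neumann problem is well posed), derive the same optimality condition $\int_\Omega(\mb{A}-\nabla a)\cdot\nabla v\,dx=0$, and then integrate by parts to recover~\eqref{EquivNeumann} and $\nabla\cdot\mb{F}=0$. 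Both directions hinge on the same variational identity and the uniqueness up to constants on each side, so the two arguments are logically interchangeable. Your version is somewhat more self-contained — it does not rely on prior Neumann solvability, getting existence and uniqueness ``for free'' from the projection theorem — and it aligns more directly with the way the theorem is worded (the minimizer \emph{is also} a weak solution). The paper's version is terser because it treats Neumann solvability as a black box. One small observation: you mention the compatibility condition $\int_{\partial\Omega}\mb{A}\cdot\mb{n}\,dS=\int_\Omega\nabla\cdot\mb{A}\,dx$ as automatic by the divergence theorem, which is a nice sanity check for the Neumann route that the paper skips entirely.
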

\begin{proof}
The Neumann problem~\eqref{EquivNeumann} is guaranteed to have a weak
solution $a\in H^1(\Omega)$ that is unique up to an additive constant,
and the definition of $\mb{F}$ ensures that
$\nabla\cdot\mb{F}=\nabla\cdot\mb{A}-\Delta a=0$.  The solution $a$ satisfies the variational equation
\begin{align*}
\int_\Omega \nabla a\cdot\nabla v\,dx=
\int_{\partial\Omega}\change{(\mb{A}\cdot\mb{n})v}\,ds-\int_\Omega\nabla\cdot\mb{A}\,v\,dx=
\int_\Omega\mb{A}\cdot\nabla v\,dx \mbox{ for all }
v\in H^1(\Omega)~.
\end{align*}
Since $\int_\Omega(\mb{A}-\nabla a)\cdot\nabla v\,dx=0$ for all $v\in H^1(\Omega;\RR)$, it follows that $a$ satisfies the minimization problem~\eqref{GaugeMinimizer}, which completes the proof.
\end{proof}

We make a few simple (overlapping) observations concerning our candidate $\mb{F}$ for the
canonical gauge equivalent to $\mb{A}$:
\begin{itemize}
  \item Among all vector fields equivalent to $\mb{A}$, $\mb{F}$ has
    the smallest $L^2$-norm on $\Omega$.
  \item The scalar and vector fields $a$ and $\mb{F}$ provide a 
    Helmoltz-Hodge decomposition of $\mb{A}$ (cf.~\cite{Chorin1993}),
    $\mb{A}=\nabla a+\mb{F}$, with $\int_\Omega\mb{F}\cdot\nabla a\,dx=0$.
  \item The vector field $\mb{F}$ satisfies Coulomb's constraint,
    $\nabla\cdot\mb{F}=0$ in $\Omega$.
  \item The vector field $\mb{F}$ has vanishing boundary flux,
    $\mb{F}\cdot\mb{n}=0$ on $\partial\Omega$.
\end{itemize}
We note that Coulomb's constraint does not, by itself, fix $\mb{F}$.
This fact is illustrated in Section~\ref{SimpleExample}.  Our choice
of $\mb{F}$ is uniquely determined by the minimal norm condition,
which we achieve in practice via solving the Neumann
problem~\eqref{EquivNeumann}.  In relation to the fourth observation,
we note that any divergence-free vector field $\mb{A}$ on $\Omega$
will satisfy $\int_{\partial\Omega}\mb{A}\cdot\mb{n}\,ds=0$, but the
constraint that $\mb{F}\cdot\mb{n}=0$ pointwise on $\partial\Omega$ is
much stronger.

\begin{remark}
  We are not the first to consider the type of gauge described in
  Theorem~\ref{CanonicalGauge}.  For example, Fournais and Helffer consider this
  gauge in\cite{Fournais2010} (cf. Section 1.5 and Appendix D) as part
  of their theoretical development, and they revisit it
  in~\cite{Fournais2019} in the derivation of asymptotics of ground
  state eigenvalue in the special case of constant magnetic fields
  $\mb{A}=(B/2)\,(-y,x)$.
  We are, however, the first to consider the use of the canonical
  gauge as a means of computing eigenpairs more efficiently and stably.
\end{remark}

Although the discussion above provides some indication that the
canonical gauge $\mb{F}$ for $\mb{A}$ on $\Omega$ is special in
certain respects, it may not be apparent why accurately(!) computing
eigenpairs might be easier for $H(\mb{F},V)$ than for
$H(\mb{A},V)$.  Some heuristic motivation for this expectation
follows.
\change{Suppose that $(\lambda,\psi)$ is an eigenpair of
$H(\mb{A},V)$, with $\|\psi\|_{L^2(\Omega)}=1$, and that
$(\lambda,\phi)$ is the corresponding eigenpair of $H(\mb{F},V)$, so
$\psi=e^{\ii\, a}\phi$.  We have
\begin{align}\label{BasicIdentity}
  \nabla\psi-\ii\,\mb{A}\psi=e^{\ii\,a}\left(\nabla\phi-\ii\,\mb{F}\phi\right)\quad,\quad
  |\nabla\psi-\ii\,\mb{A}\psi|=|\nabla\phi-\ii\,\mb{F}\phi|~.
\end{align}
Here and afterward, we use $|\mb{B}|$ to denote the Euclidean norm of
a vector field $\mb{B}$.  We also note that
$\lambda=\|\nabla\psi-\ii\,\mb{A}\psi\|_{L^2(\Omega)}^2+\|V^{1/2}\psi\|_{L^2(\Omega)}^2
=\|\nabla\phi-\ii\,\mb{F}\phi\|_{L^2(\Omega)}^2+\|V^{1/2}\phi\|_{L^2(\Omega)}^2$,
and that
$\|V^{1/2}\psi\|_{L^2(\Omega)}=\|V^{1/2}\phi\|_{L^2(\Omega)}$.  This
provides a second way of seeing that
$\|\nabla\psi-\ii\,\mb{A}\psi\|_{L^2(\Omega)}=\|\nabla\phi-\ii\,\mb{F}\phi\|_{L^2(\Omega)}$
which, though not as direct as the first, provides a value for this
common quantity in terms of $\lambda$ and $V$.
Recalling that
$\|\mb{F}\|_{L^2(\Omega)}<\|\mb{A}\|_{L^2(\Omega)}$ unless
$\mb{A}=\mb{F}$, it is reasonable to conjecture that
$|\mb{F}|<|\mb{A}|$ for much of the domain $\Omega$.  We clearly have
$|\mb{F}\phi|=|\mb{F}\psi|$ and $|\mb{A}\phi|=|\mb{A}\psi|$.  This
discussion leads to our first heuristic:
\begin{align}\label{Heuristic1}\tag{H1}
  \|\mb{F}\phi\|_{L^2(\Omega)}<\|\mb{A}\psi\|_{L^2(\Omega)}, \mbox{ and
  } |\mb{F}\phi|<|\mb{A}\psi| \mbox{ for much of }\Omega~.
\end{align}
Combining~\eqref{Heuristic1} with~\eqref{BasicIdentity} leads to our
second heuristic:
\begin{align}\label{Heuristic2}\tag{H2}
  \|\nabla\phi\|_{L^2(\Omega)}<\|\nabla\psi\|_{L^2(\Omega)},\mbox{ and
  } |\nabla\phi|<|\nabla\psi| \mbox{ for much of }\Omega~.
\end{align}
The heuristic~\eqref{Heuristic2} provides some indication as to why
it might be easier to approximate $\phi$ than $\psi$ using finite
elements.  Although having a large(r) gradient does not, by itself,
indicate that a function is more difficult to approximate using
piecewise polynomials, we expect larger gradients to be indicative of
higher frequency oscillation in the context of eigenvectors, and
highly oscillatory eigenvectors are a greater challenge to resolve.
The heuristics~\eqref{Heuristic1} and~\eqref{Heuristic2}, as well
as the statement about larger gradients being associated with
higher-frequency oscillation, are first demonstrated in
Sections~\ref{ConstantAExample} and~\ref{Ex2FirstLook}.  The experiments in
Section~\ref{Experiments} provide more extensive validation. In these
experiments, we also see that it is common for a third heuristic to
hold:
\begin{align}\label{Heuristic3}\tag{H3}
  \|\nabla\phi\|_{L^2(\Omega)}\approx \|\mb{F}\phi\|_{L^2(\Omega)}\mbox{ and
  } \|\nabla\psi\|_{L^2(\Omega)}\approx \|\mb{A}\psi\|_{L^2(\Omega)}~,
\end{align}
at least lower in the spectrum.}

\change{
  \subsection{Constant vector fields}\label{ConstantAExample}
  Although a constant vector field $\mb{A}$ is not interesting from
  the point of applications---it is associated with having a zero
  magnetic field---it provides a simple illustration of why one might expect
  a well-chosen gauge to reduce oscillation in associated eigenvectors.
  For constant $\mb{A}$, one sees directly that $a=\mb{A}\cdot x$ (up
  to additive constant) and $\mb{F}=\mb{0}$ in
  Theorem~\ref{CanonicalGauge}.  If $(\lambda,\phi)$ is an eigenpair
  of $H(\mb{F})=-\Delta$, then $(\lambda,\psi)$ is the associated
  eigenpair of $H(\mb{A})=-\Delta+2\ii\,\mb{A}\cdot\nabla
  +|\mb{A}|^2$, where $\psi=e^{\ii\,a}\phi=e^{\ii\,\mb{A}\cdot
    x}\phi$.  The heuristic~\eqref{Heuristic1} holds trivially in this case.
In the 1D case with $\Omega=(0,1)$, constant $A$, and homogeneous Dirichlet
conditions, the eigenvalues and
eigenvectors are $\lambda_n=(n\pi)^2$, $\phi_n=\sqrt{2}\sin(n\pi x)$ for
$H(\mb{F})$, and $\psi_n=e^{\ii A x}\sqrt{2}\sin(n\pi x)$ for $H(\mb{A})$.
We see that, for any fixed $n$, $\psi_n$ generically oscillates more
rapidly in both its real and imaginary components than does $\phi_n$,
particularly for larger $A$.  In other words,~\eqref{Heuristic2}
holds.  For this 1D problem, applying~\eqref{Heuristic3} to $\phi_n$
and $\psi_n$, it reads $n\pi\approx 0$ and $\sqrt{(n\pi)^2+A^2}\approx
A$.  Depending on how far one is willing to push the interpretation of
``$\approx$'', ~\eqref{Heuristic3} can be said to hold for smaller $n$.
}

\subsection{Constant-curl vector field comparisons}\label{SimpleExample}
  Consider the vector fields $\mb{A}_1=(-y/2,x/2)$, $\mb{A}_2=(-y,0)$ and
$\mb{A}_3=(0,x)$.  These vector fields have the same (scalar) curl,
$\curl\,\mb{A}_j=1$, and they all satisfy the Coulomb constraint,
$\nabla\cdot\mb{A}_j=0$.  It is also clear that
\begin{align*}
  \mb{A}_1-\mb{A}_2=\mb{A}_3- \mb{A}_1=\nabla(xy/2)~.
\end{align*}
Suppose that $\Omega\subset\RR^2$ is the unit disk.  Since
$\nabla\cdot\mb{A}_1=0$ in $\Omega$ and
$\mb{A}_1\cdot\mb{n}=0$ on $\partial\Omega$, we determine from
Theorem~\ref{CanonicalGauge} that $\mb{F}=\mb{A}_1$ is the canonical gauge
in its equivalence class.  Here, we have
$\|\mb{A}_1\|_{L^2(\Omega)}=\sqrt{\pi/8}$, and
$\|\mb{A}_j\|_{L^2(\Omega)}=\sqrt{\pi/4}$ for $j=2,3$.

Now suppose that $\Omega=(-1,1)\times(-1,1)$.  In this
case, $\|\mb{A}_1\|_{L^2(\Omega)}=\sqrt{2/3}\approx 0.816497$ and
$\|\mb{A}_j\|_{L^2(\Omega)}=\sqrt{4/3}\approx 1.1547$ for $j=2,3$, but none of these
minimize the norm.  The norm-minimizer $\mb{F}$ from this equivalence
class, approximated using a finite element discretization, satisfies
$\|\mb{F}\|_2\approx 0.749872$

\subsection{A First Look at Example 2}\label{Ex2FirstLook}
Let $\Omega=(-1,1)\times(-1,1)$ and
\begin{align*}
  \bfA = -100(\cos( f_1)\sin(f_2), \sin(f_1)\cos(f_2)))\;,\; f_1 =
  5\pi \sin(x^2 + y^2) \;,\; f_2 =5\pi \cos(x^2 + y^2)~.
\end{align*}
We give a stream plot of $\mb{A}$ overlaid on
$\|\mb{A}\|$, together a plot of $|\curl\mb{A}|$ in Figure~\ref{VectorFields}.  The latter of these
is shown because it was argued in~\cite{Ovall2024} that low-lying
eigenvalues of $H(\mb{A})$ tend to be spatially concentrated in areas
where $\curl\mb{A}$ is small.

In Figure~\ref{Ex2FirstEigenvector}, we show plots of the groundstate
eigenvectors of $H(\mb{A})$ and $H(\mb{F})$, $\psi_1$ and $\phi_1$,
under Dirichlet boundary conditions. We show the modulus, real and
imaginary parts, and phase of both vectors.  To compute the phase of a
complex number $z$, we use $\arcsin(\frac{\Im z}{|z|})$, as was done
in~\cite{Bonnaillie-Noel2007}.  These were computed using a finite
element discretization as described in Section~\ref{Experiments}; both
eigenvectors were computed in precisely the same finite element space.
The computed groundstate eigenvalue for $H(\mb{A})$ is
$\lambda_1 \approx 104.0694$, and the computed eigenvalue for
$H(\mb{F})$ is $\lambda_1 \approx 104.0568$; \change{after enriching
  the finite element space by uniformly increasing the local
  polynomial degree, the computed eigenvalue for both $H(\mb{A})$ and
  $H(\mb{F})$ agree to seven digits, $104.0568$, which is the value
  achieved on the coarser discretization for $H(\mb{F})$}.  The plots of $|\psi_1|$ and $|\phi_1|$ are also
visually nearly indistinguishable.  These facts are in line with
Lemma~\ref{lemma1}, which indicates that the discretization is fine
enough to accurately reflect the behavior of the true $\psi_1$ and
$\phi_1$.  We highlight the real and imaginary parts of $\psi_1$ and
$\phi_1$, noting that those for $\phi_1$ are significantly less
oscillatory.  This suggests that $\phi_1$ can be well-resolved in a
coarser finite element space than $\psi_1$ can, at much less cost!  This
claim will be validated through extensive experiments in
Section~\ref{Experiments}.

We finally consider the \change{heuristics~\eqref{Heuristic1}-\eqref{Heuristic3}}.
In this example, we calculated
\begin{align*}
  \|\mb{A}\|_{L^2(\O)} &= 130.643590 &
  \|\nabla\psi_1\|_{L^2(\O)} &= 71.000721 &
  \|\mb{A}\psi_1\|_{L^2(\O)} &= 70.758747\\
  \|\mb{F}\|_{L^2(\O)} &= 89.861420 &
  \|\nabla\phi_1\|_{L^2(\O)} &= 31.032205 &                
\|\mb{F}\phi_1\|_{L^2(\O)} &= 30.473089
\end{align*}
We first note that $\|\mb{F}\|_{L^2(\Omega)}< \|\mb{A}\|_{L^2(\O)}$, as
predicted by Theorem~\ref{CanonicalGauge}.  We also observe that
$\|\mb{A}\psi_1\|_{L^2(\O)} \approx
\|\nabla\psi_1\|_{L^2(\O)}>\|\nabla\change{\phi_1}\|_{L^2(\O)}\approx
\|\mb{F}\phi_1\|_{L^2(\O)}$, \change{as suggested
  in~\eqref{Heuristic2} and~\eqref{Heuristic3}}.
Finally, in Figure~\ref{Ex2FirstEigenvectorB}, we see a validation of
the \change{heuristic~\eqref{Heuristic1}} for the ground states.

\begin{figure}
  \centering
  \includegraphics[width=0.24\textwidth]{./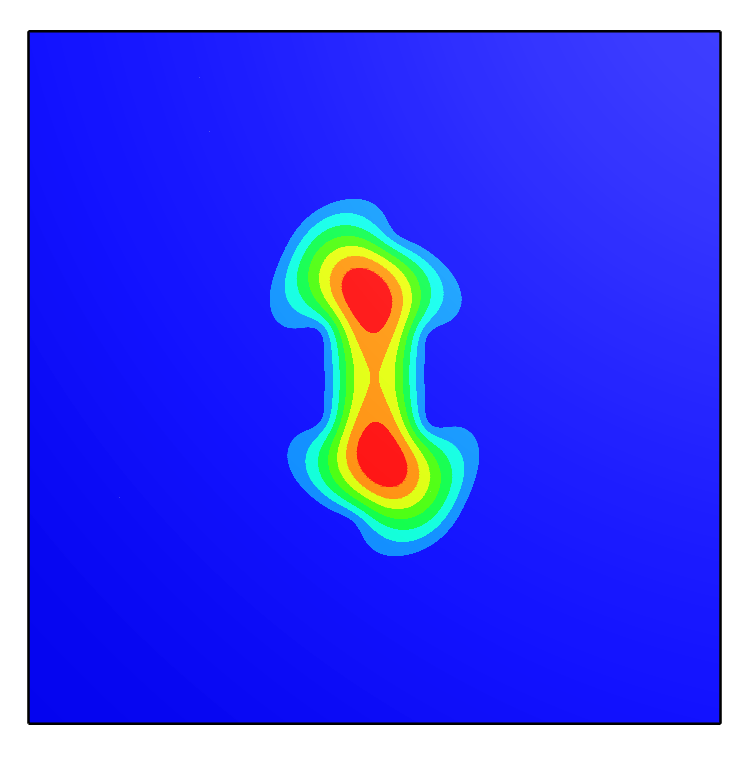}
  \includegraphics[width=0.245\textwidth]{./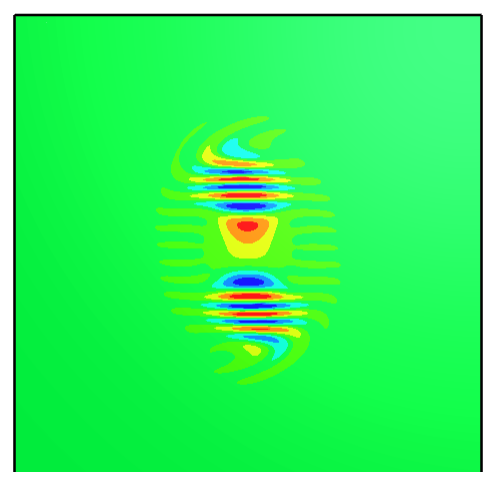}
  \includegraphics[width=0.24\textwidth]{./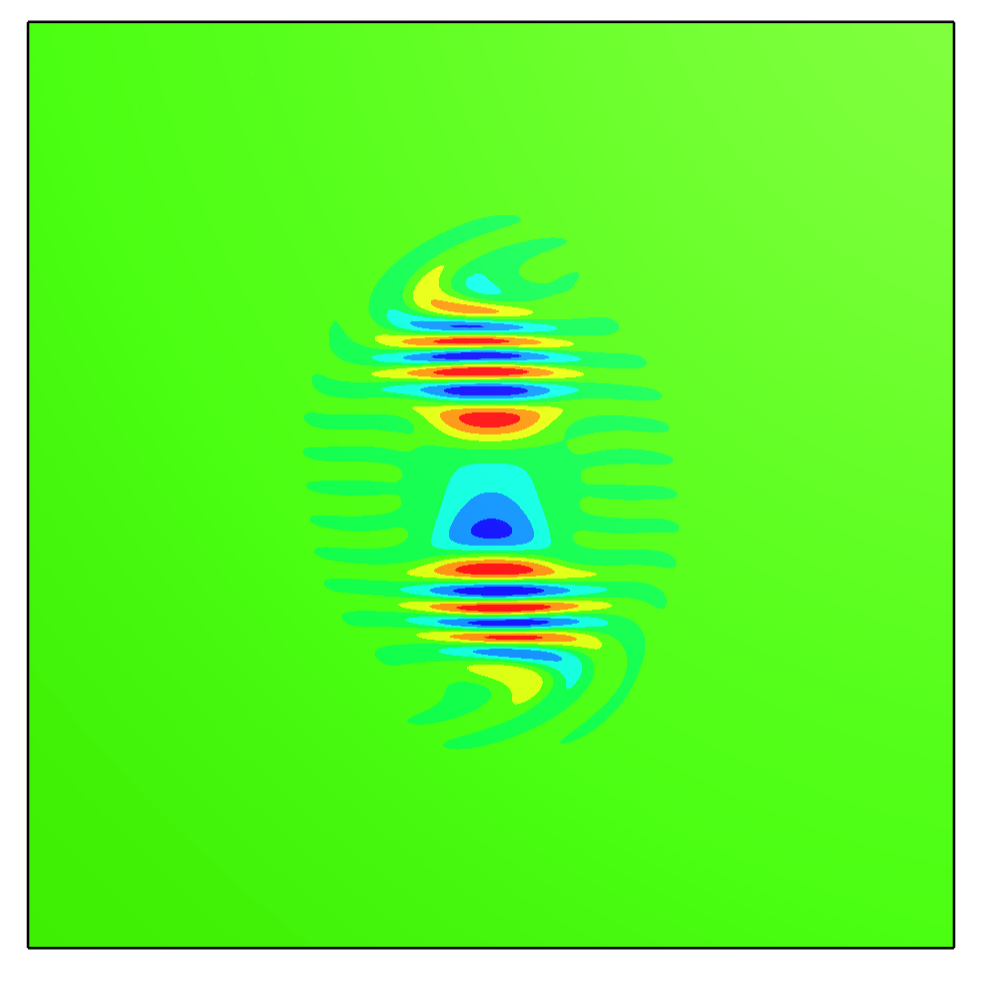}
  \includegraphics[width=0.24\textwidth]{./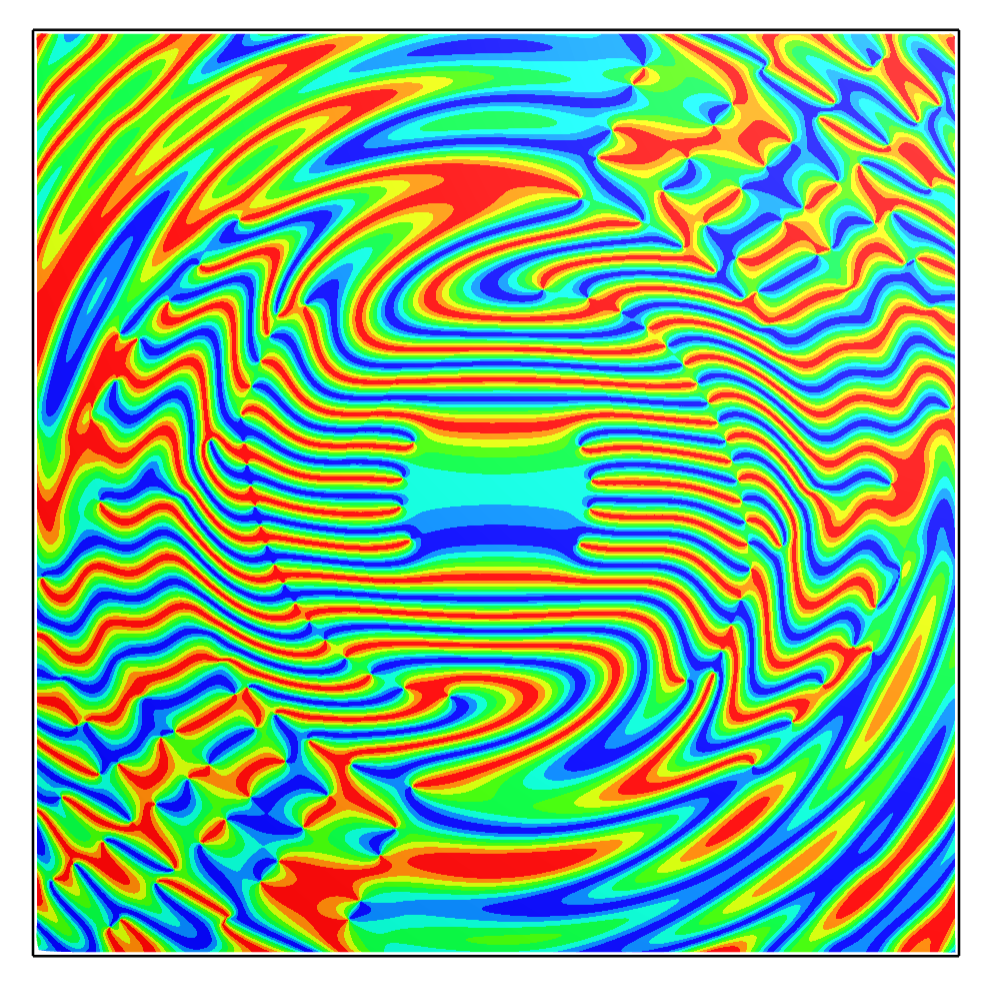}
  \includegraphics[width=0.24\textwidth]{./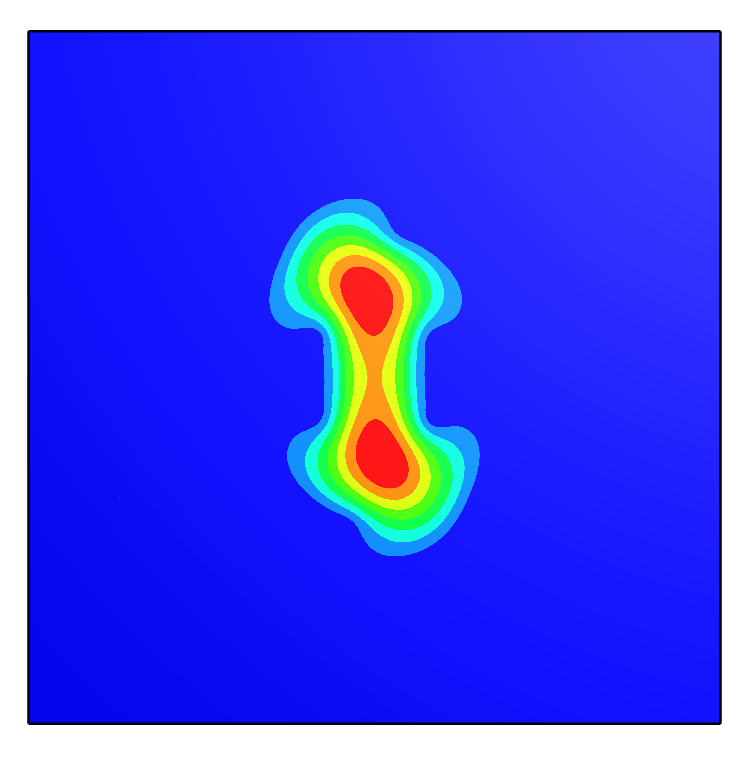}
  \includegraphics[width=0.25\textwidth]{./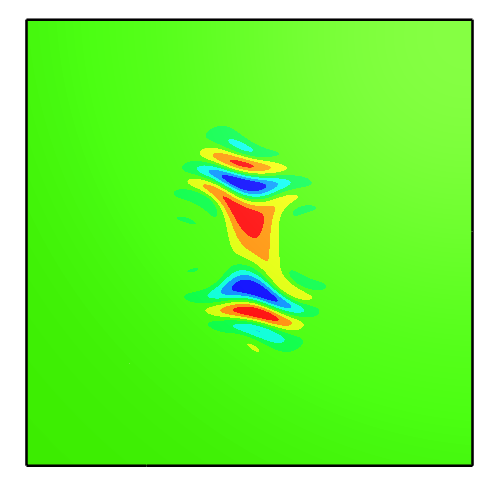}
  \includegraphics[width=0.24\textwidth]{./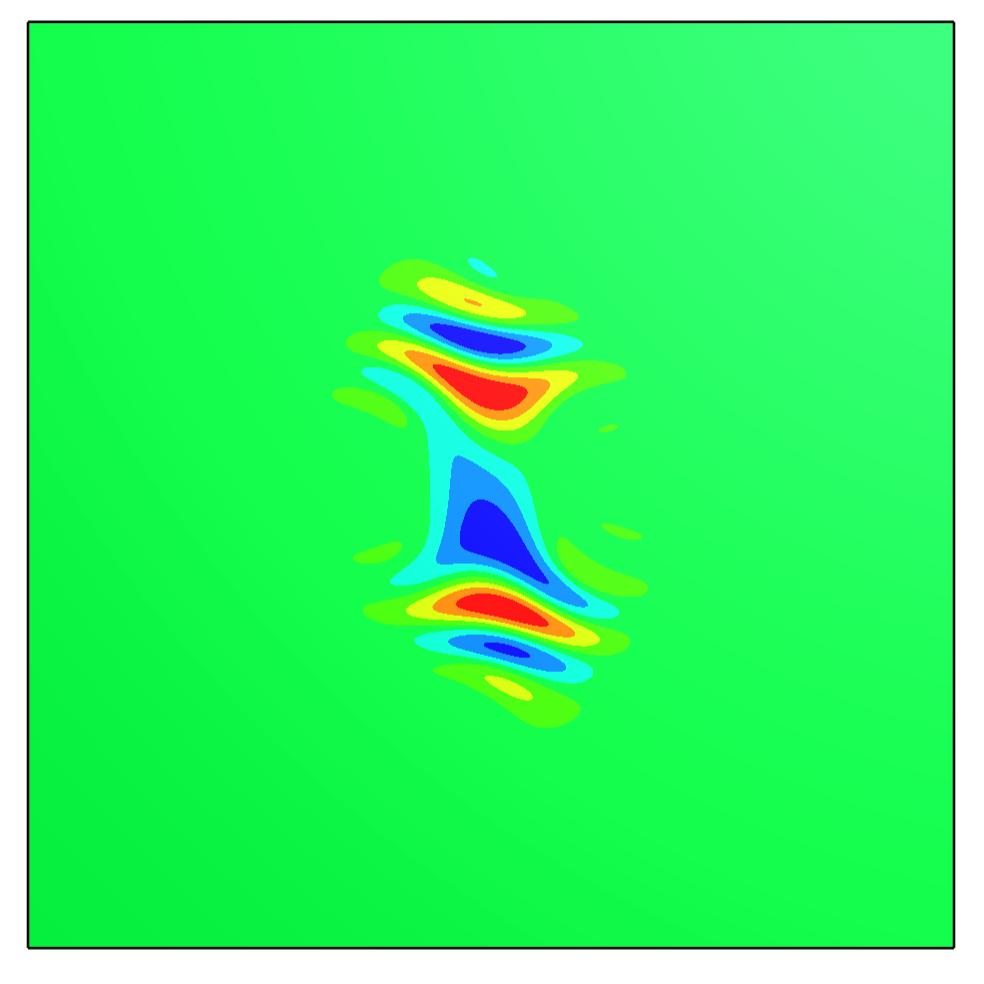}
  \includegraphics[width=0.24\textwidth]{./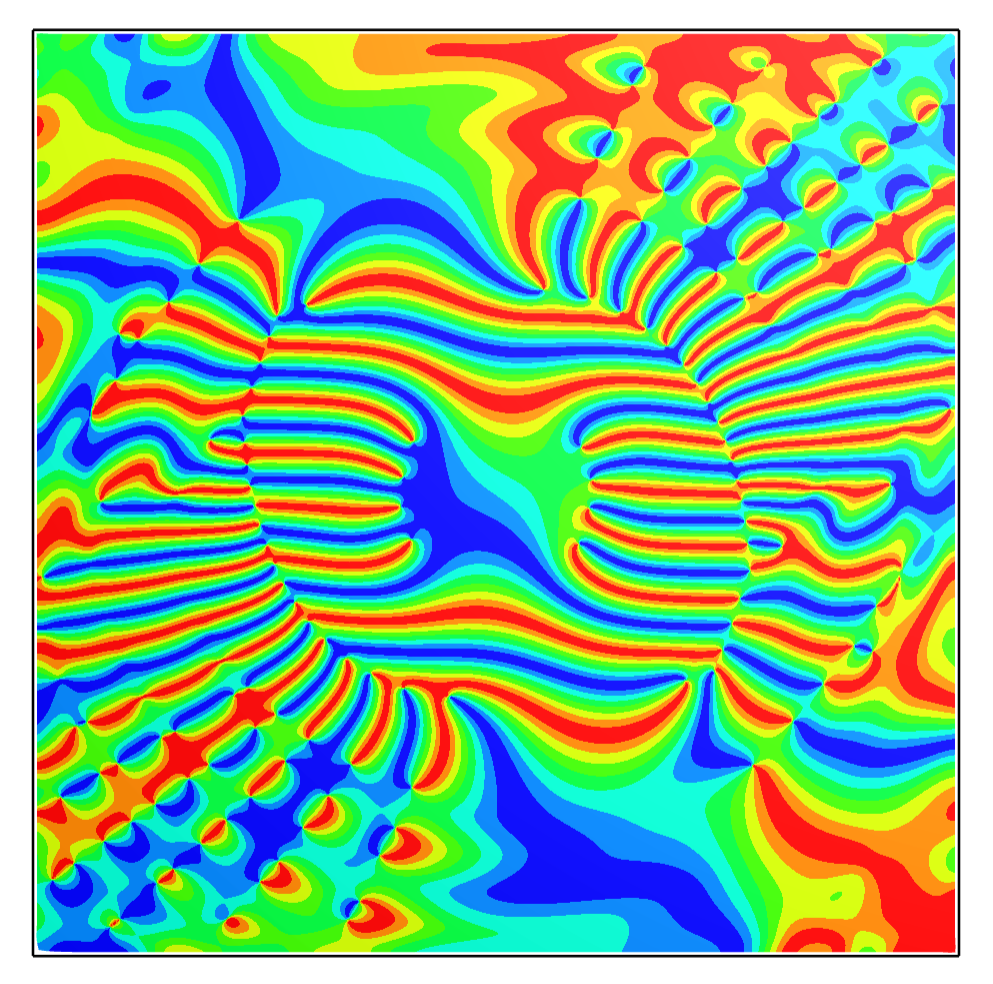}
  \caption{The first eigenvector $\psi_1$ of $H(\mb{A})$ (top row), and
    the  first eigenvector $\phi_1$ of $H(\mb{F})$, for
    Example 2. From left to right in the top row, we see
  $|\psi_1|$, $\Re\psi_1$, $\Im\psi_1$ and the phase of $\psi_1$; the bottom
  row is analogous for $\phi_1$.}\label{Ex2FirstEigenvector}
\end{figure}

 \begin{figure}
   \centering
   \begin{tabular}{cccc}
     \includegraphics[width=0.23\textwidth]{./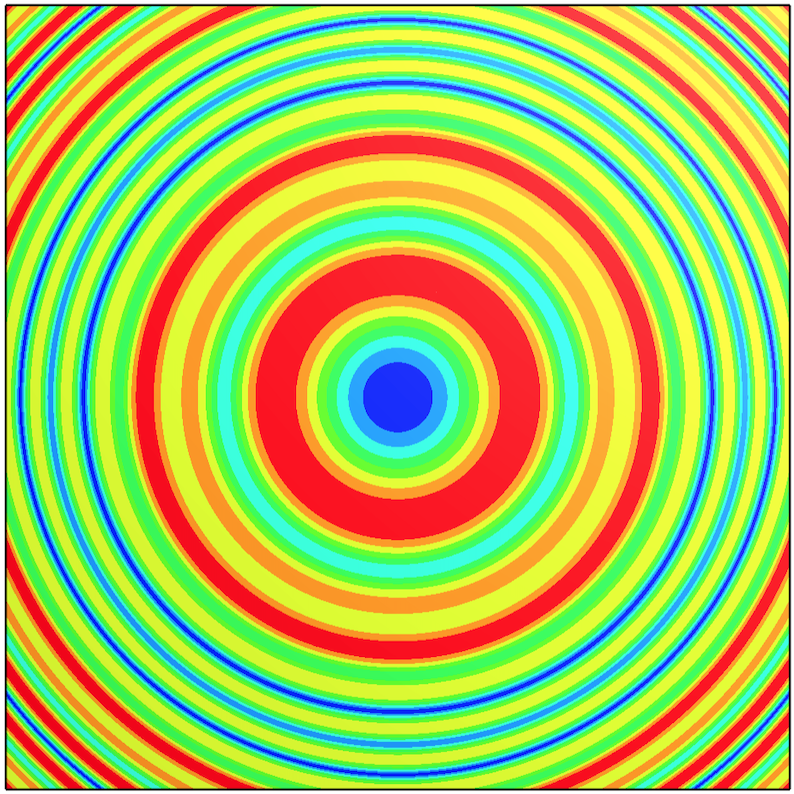}&
     \includegraphics[width=0.23\textwidth]{./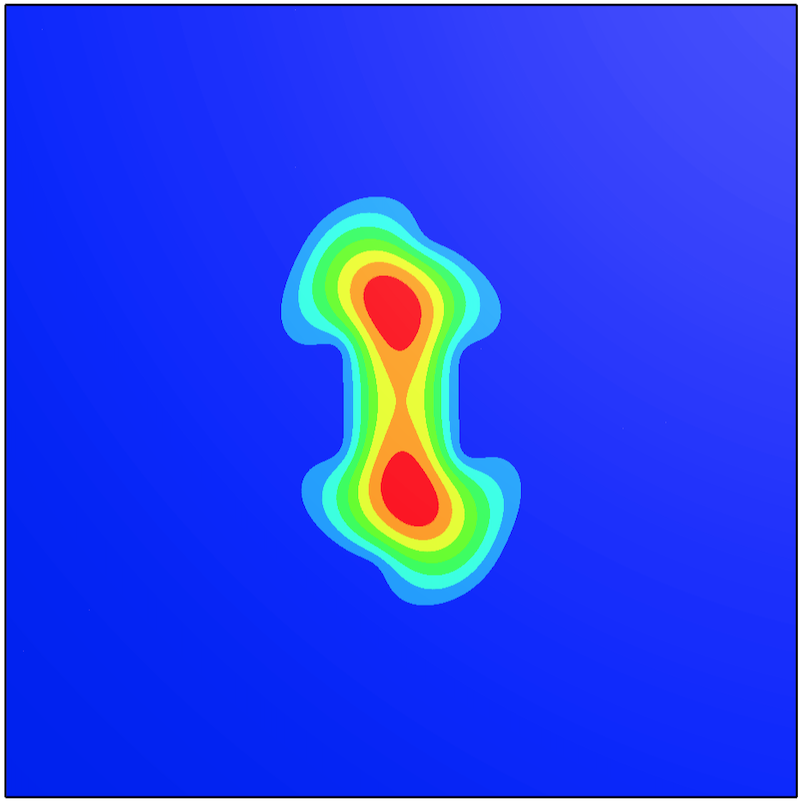}&
     \includegraphics[width=0.23\textwidth]{./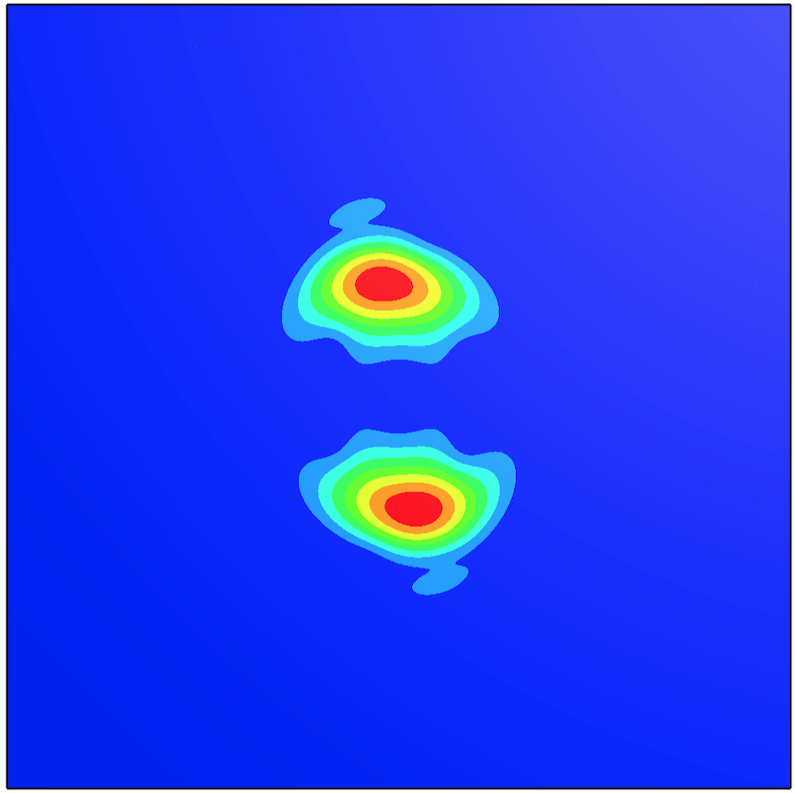}&
    \includegraphics[width=0.23\textwidth]{./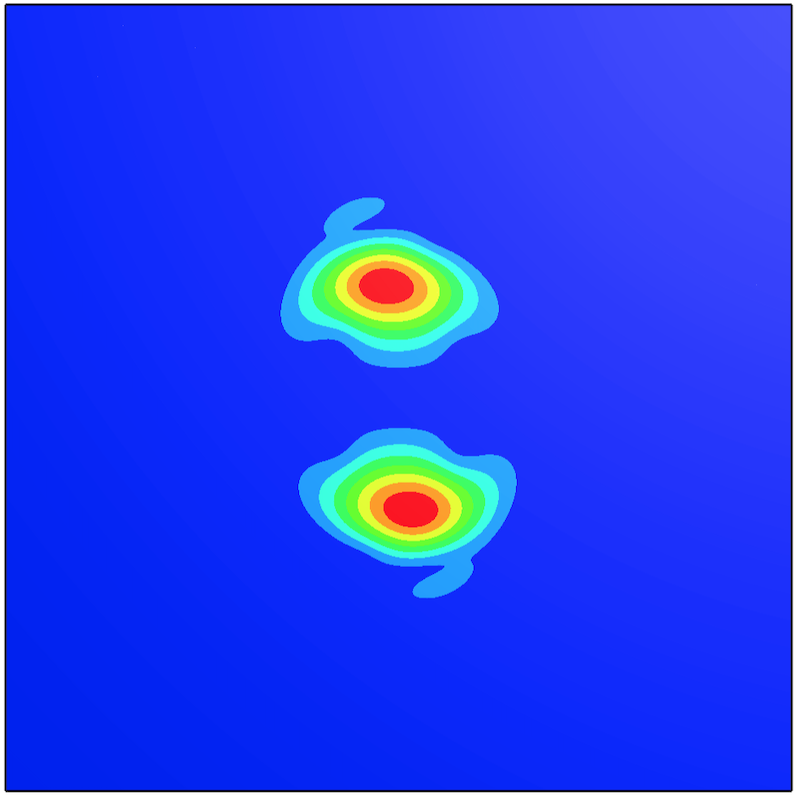}\\
     \includegraphics[width=0.23\textwidth]{./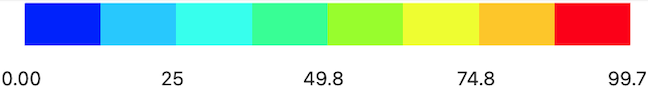}&
      \includegraphics[width=0.23\textwidth]{./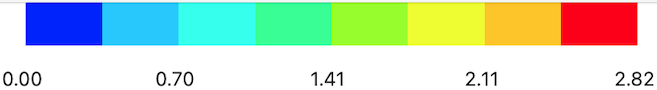}&
      \includegraphics[width=0.23\textwidth]{./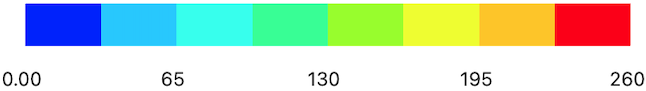}&
     \includegraphics[width=0.23\textwidth]{./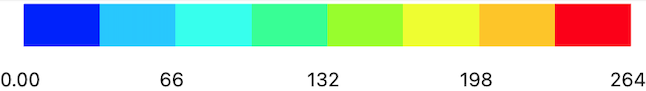}\\
     \includegraphics[width=0.23\textwidth]{./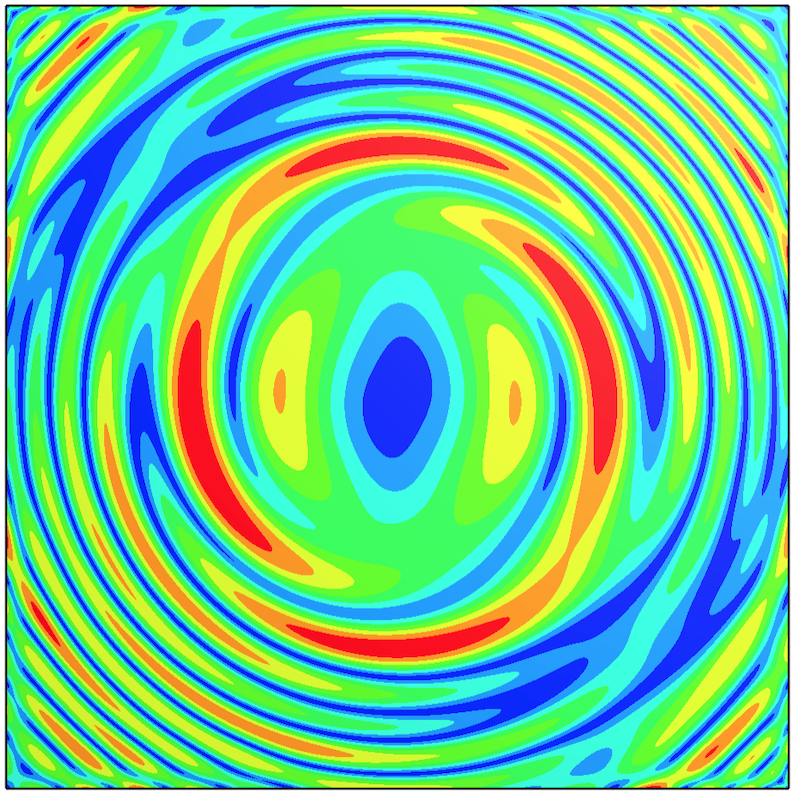}&
     \includegraphics[width=0.23\textwidth]{./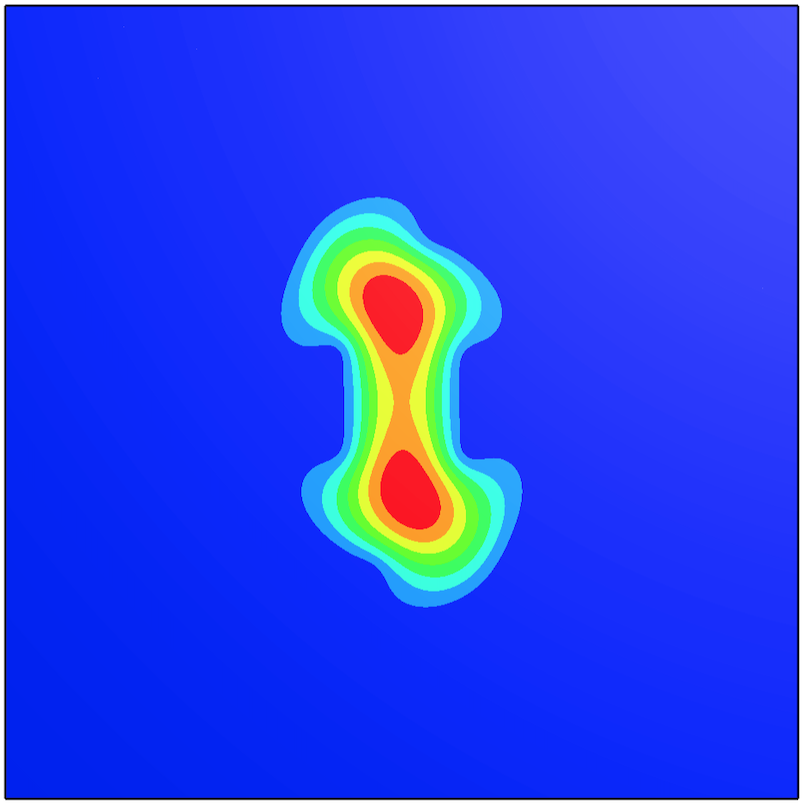}&
     \includegraphics[width=0.23\textwidth]{./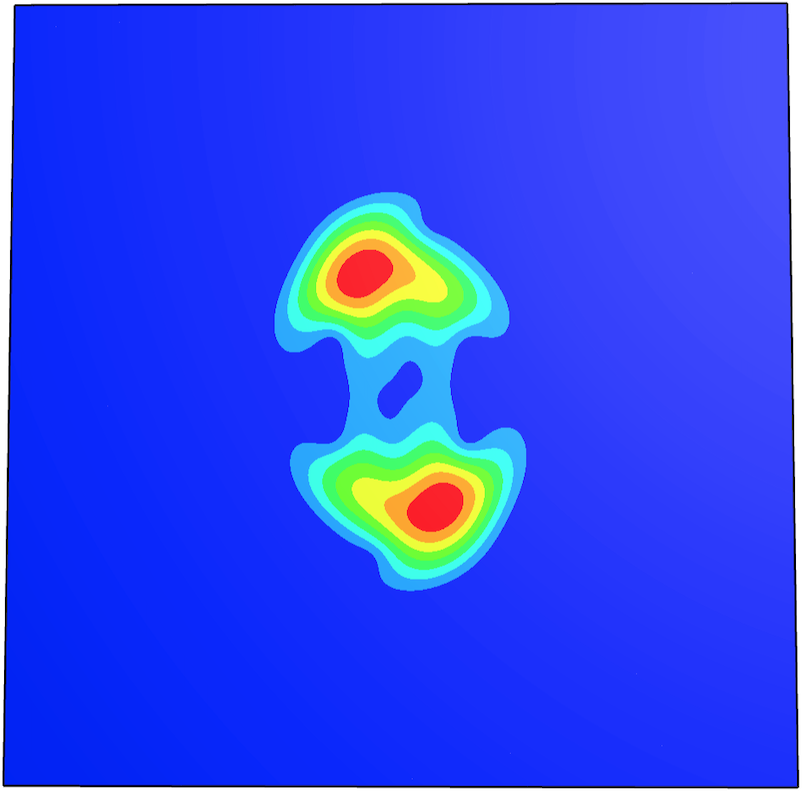}&
     \includegraphics[width=0.23\textwidth]{./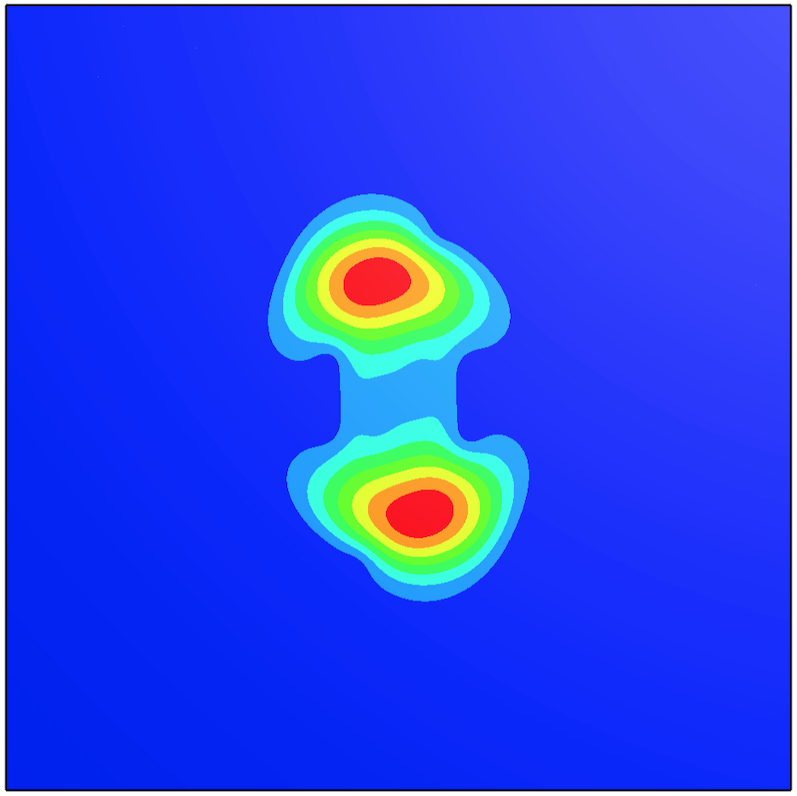}\\
     \includegraphics[width=0.23\textwidth]{./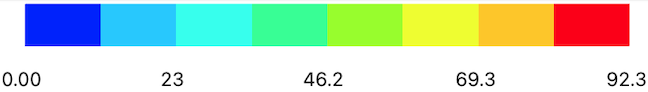}&
     \includegraphics[width=0.23\textwidth]{./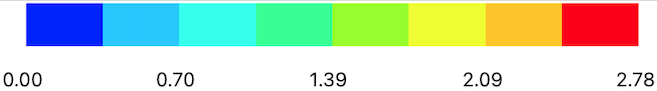}&
     \includegraphics[width=0.23\textwidth]{./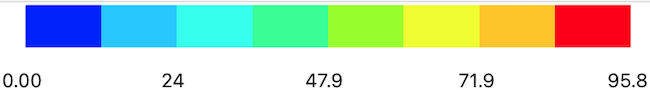}&
     \includegraphics[width=0.23\textwidth]{./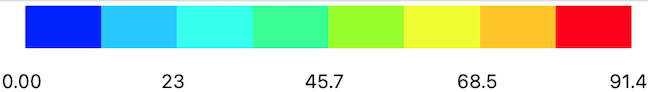}  
   \end{tabular}
  \caption{From left to right in the top row, we see $|\mb{A}|$,
    $|\psi_1|$,$|\mb{A}\psi_1|$, and $|\nabla \psi_1|$; the bottom row
    is analogous for $\mb{F}$ and $\phi_1$.}\label{Ex2FirstEigenvectorB}
 \end{figure}

\section{Numerical Results}\label{Experiments}
For our numerical experiments, we use the finite element software
package NGSolve~\cite{Schoeberl2014,Schoeberl2021} for discretization,
solution of source problems, visualization and several other
computational aspects.  For our eigenvalue solver, we use the add-on package
pyeigfeast~\cite{Gopalakrishnan2017}, which is a Python implementation
of the FEAST algorithm~\cite{Polizzi2009,Tang2014}.  The finite
element spaces consist of globally continuous piecewise polynomials on
quasi-uniform triangulations/meshes of $\Omega$.  We fixed the local
polynomial degree at $p=3$, and varied the meshsize parameter $h$,
which here represents the longest allowed edge length of any triangle
in the mesh.

In each of the experiments, we consider the first six eigenpairs of
$H(\mb{A})$ and $H(\mb{F})$, or of $H(\mb{A},V)$ and $H(\mb{F},V)$,
and we approximate them when $h=0.01$, $h=0.03$ and $h=0.05$.  For
Examples 1, 2a and 3 on $\Omega=(-1,1)\times (-1,1)$, the number of
degrees of freedom for the associated matrices are 417451, 46996, and
17017 for the three different mesh parameters. For Example 2b, which
requires that the meshes also conform to a cartesian grid partition of
$\Omega$, we have 417340, 36043, and 21025.  For the variants in
Example 3, the degrees of freedom in $\Omega_1$ are 393474,
44259, and 16221; in $\Omega_2$, they are 386399, 43286, and 15893;
and in $\Omega_3$, they are 372829, 41698, and 15268.
For the L-shaped domain in Example 4,
they are 517177, 57427, and 20845.
In each case, the computation of $a$ via~\eqref{EquivNeumann},
which is needed to determine $\mb{F}$, requires one additional degree
of freedom to fix the additive constant.

On the finest mesh, we illustrate~\change{\eqref{Heuristic2} and~\eqref{Heuristic3}}.  For each mesh,
we provide the computed eigenvalues and timing information for
essential components of the computation, i.e. those that do not
involve plotting solutions or computing of numbers such as
$\|\nabla\psi\|_{L^2(\Omega)}$ and $\|\mb{A}\psi\|_{L^2(\Omega)}$ that
are used in validation of~\change{\eqref{Heuristic2} and~\eqref{Heuristic3}}.  As FEAST is the most
computationally intensive part of the process, and one might employ a
different (cheaper) eigensolver, we provide both the total time and
the time spent on FEAST.  We note that computations involving $\mb{F}$
first require the solution of a Poisson problem, as indicated in
Theorem~\ref{CanonicalGauge}.  The cost of this solve is negligible in
comparison with that of the eigenvalue solver, and we will see that it
is typically well worth the investment.

\begin{figure}
  \centering
  \begin{tabular}{llll}
    \multicolumn{1}{c}{Example 1}&\multicolumn{1}{c}{Example 2}&\multicolumn{1}{c}{Example 3}&\multicolumn{1}{c}{Example 4}\\
    \includegraphics[width=0.23\textwidth]{./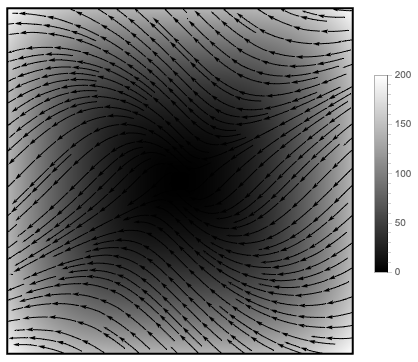}&
    \includegraphics[width=0.23\textwidth]{./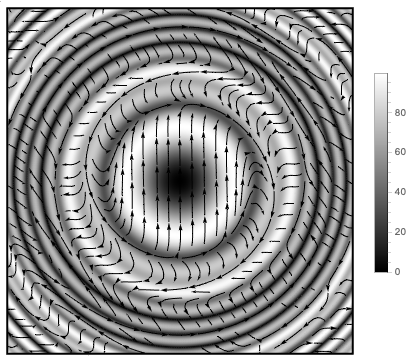}&
   \includegraphics[width=0.2025\textwidth]{./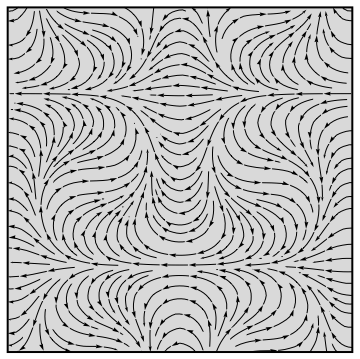}&
    \includegraphics[width=0.23\textwidth]{./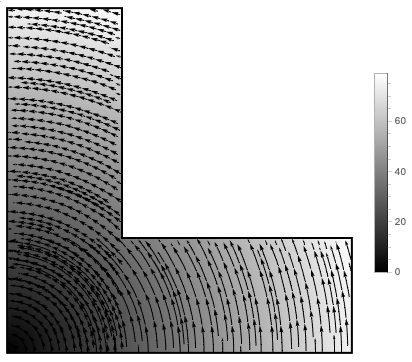}\\                          
    \includegraphics[width=0.23\textwidth]{./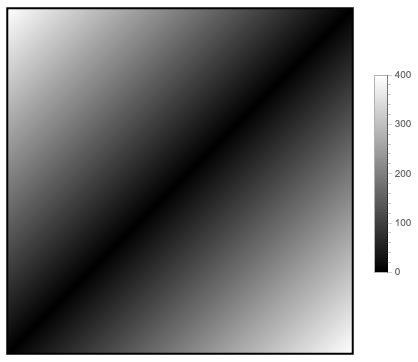}&
   \includegraphics[width=0.23\textwidth]{./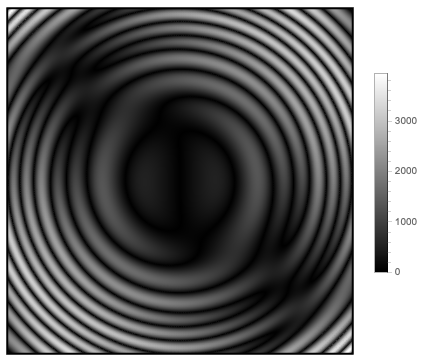}&
   \includegraphics[width=0.23\textwidth]{./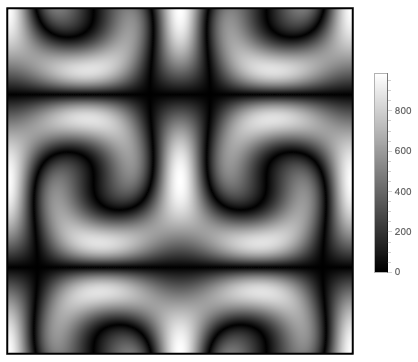}&
    \includegraphics[width=0.2025\textwidth]{./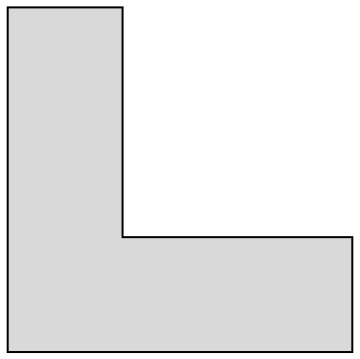}
  \end{tabular}
  \caption{Streamplot of $\mb{A}$ overlaid on plot of $\|\mb{A}\|$ (top) and
    plot of $|\curl\mb{A}|$ (bottom) for the
    vector fields used in the experiments.\label{VectorFields}}
\end{figure}

\subsection{Example 1}\label{Example1}
We take the vector field
\begin{align}\label{Ex1A}
  \bfA = -100(x^2+y^2, x^2-y^2)~,
\end{align}
on the domain $\Omega=(-1,1)\times(-1,1)$, see
Figure~\ref{VectorFields}.  We impose homogeneous Dirichlet boundary
conditions for the eigenvalue problems.
We have $\curl\mb{A}=200(y-x)$, which
suggests that low-lying eigenvectors will be concentrated near the
line $y=x$, and this is what we observe.   For this problem, we have
\begin{align*}
  \|\mb{A}\|_{L^2(\Omega)}=178.8854\quad,\quad \|\mb{F}\|_{L^2(\Omega)}=70.3584
\end{align*}
Table~\ref{Ex1Table}
provides the computed eigenvalues and timing information on each of
three meshes, as well as the validation of~\change{\eqref{Heuristic2} and~\eqref{Heuristic3}}. 

The computations on the finest mesh, $h=0.01$, provide a baseline for
comparison.  We see very clear validation of~\change{\eqref{Heuristic2} and~\eqref{Heuristic3}} in Table~\ref{Ex1Table}, with
one exception---$\|\nabla \phi_1\|_{L^2(\Omega)}$ is slightly larger
than $\|\nabla \psi_1\|_{L^2(\Omega)}$.  Looking at the real parts of
$\psi_1$ and $\phi_1$ in Figure~\ref{Ex1Fig1} indicates why this might
be the case.  Although $\psi_1$ is much more oscillatory than $\phi_1$
away from the origin along the central channel, it varies slowly near
the origin, whereas $\psi_1$ is more uniformly oscillatory along the
channel.  Similar statements can be made for the other eigenvectors,
but $\|\nabla \phi_j\|_{L^2(\Omega)}<\|\nabla \psi_j\|_{L^2(\Omega)}$
for those, as typically expected.

The strong oscillation of $\psi_j$ nearer the corners of the channel,
in contrast to the mild oscillation of $\phi_j$, suggests that the
latter can be more readily resolved in coarser spaces, and this is
what we observe both in Table~\ref{Ex1Table} and Figure~\ref{Ex1Fig2}.
More specifically, we see greater variation in the computed
eigenvalues of $H(\mb{A})$ than of $H(\mb{F})$ in
Table~\ref{Ex1Table}.  Perhaps even more striking in this case is the
significant drop in computational cost for $H(\mb{F})$ in comparison
to $H(\mb{A})$.  Not only do we see a clear deterioration of the
computed eigenvalues of $H(\mb{A})$ as $h$ increases, we also observe
non-trivial differences in the computed eigenvectors---in some cases,
eigenvectors that belong higher in the spectrum are being (not very
well) approximated instead of those that should appear among the first
six!  We highlight that the quality of the computed eigenpairs of
$H(\mb{F})$ when $h=0.03$ is at least as good as that of the computed
eigenpairs of $H(\mb{A})$ when $h=0.01$, but the corresponding cost is
significantly smaller---13.5 seconds versus 422.43 seconds!  Although
we do not show the corresponding plots for $H(\mb{F})$, we state that
they remain essentially the same over each level of discretization,
and look like those seen for $H(\mb{A})$ on the finest mesh.
\change{We also highlight how little the computed eigenvalues change for
  $H(\mb{F})$ over
  this range of mesh sizes, in comparison with those of
  $H(\mb{A})$.  These relatively small changes in the computed
  eigenvalues and eigenvectors over a range of discretizations are what me mean
when we assert that the computations are ``more stable'' when using $\mb{F}$ than when using $\mb{A}$.  The differences
are generally even more stark for subsequent examples.}
\begin{table}
\caption{Computed eigenvalues and timing information, and validation
  of ~\eqref{Heuristic2} when $h=0.01$, for Example 1.\label{Ex1Table}}
\begin{center}
\begin{tabular}{|c|cll|cccccc|}\hline
&$h$ & Total & FEAST& $\lambda_1$ &  $\lambda_2$  & $\lambda_3$ &  $\lambda_4$ & $\lambda_5$ &  $\lambda_6$\\\hline
\multirow{3}{*}{$H(\bfA)$} &0.01  &422.43s& 375.25s    &25.8482&   29.6950 &35.9656&   44.4081 &54.5735&   65.7054\\
&0.03   &67.81s&  64.64s   &26.3270&   31.5307 &39.8076&   50.5411 &62.7682&   72.4820\\
&0.05  &12.78s&11.85s&27.0502&   34.3334 &45.7499&   60.0641 &73.0750&   75.2048\\
\hline
\multirow{3}{*}{$H(\bfF)$} & 0.01  &491.92s& 413.76s    &25.8453&   29.6843 &35.9438&   44.3743&54.5290&   65.6540\\
& 0.03   &13.6s&9.99s&25.8557&   29.6875 &35.9519&   44.3811 &54.5375&   65.6622\\
& 0.05  &4.62s&3.69s&26.0736&   29.7487 &36.1300&   44.5101 &54.7189&   65.8232\\
\hline
\end{tabular}

\vspace*{2mm}
 \begin{tabular}{ |c|cccccc|}
   \hline
   &$j=1$&$j=2$&$j=3$&$j=4$&$j=5$&$j=6$\\\hline
   $\|\nabla \psi_j\|_{L^2(\Omega)}$  & 33.9056& 55.1878 & 64.0270  & 67.2759 & 68.5995 & 68.7150 \\  
   $\|\mb{A} \psi_j\|_{L^2(\Omega)}$  & 33.7639 & 55.0632 & 63.8676  & 67.0595 & 68.3161 & 68.3653 \\  
   \hline
  $\|\nabla \phi_j\|_{L^2(\Omega)}$  & 34.1399 & 26.6008 & 27.4577  & 27.5653 & 27.6055 & 27.5628 \\  
   $\|\mb{F} \phi_j\|_{L^2(\Omega)}$  &  33.9989 & 26.3400 & 27.0811  & 27.0285 & 26.8875 & 26.6720 \\\hline
 \end{tabular}
\end{center}
\end{table}

\begin{figure}
\includegraphics[width=.16\textwidth]{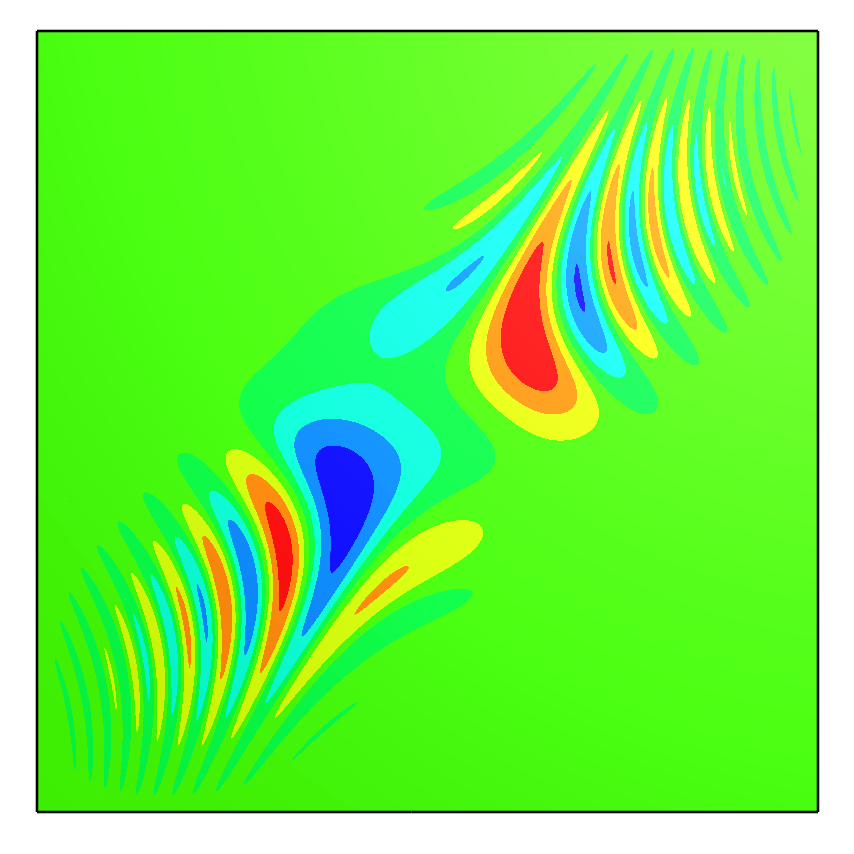}
\includegraphics[width=.16\textwidth]{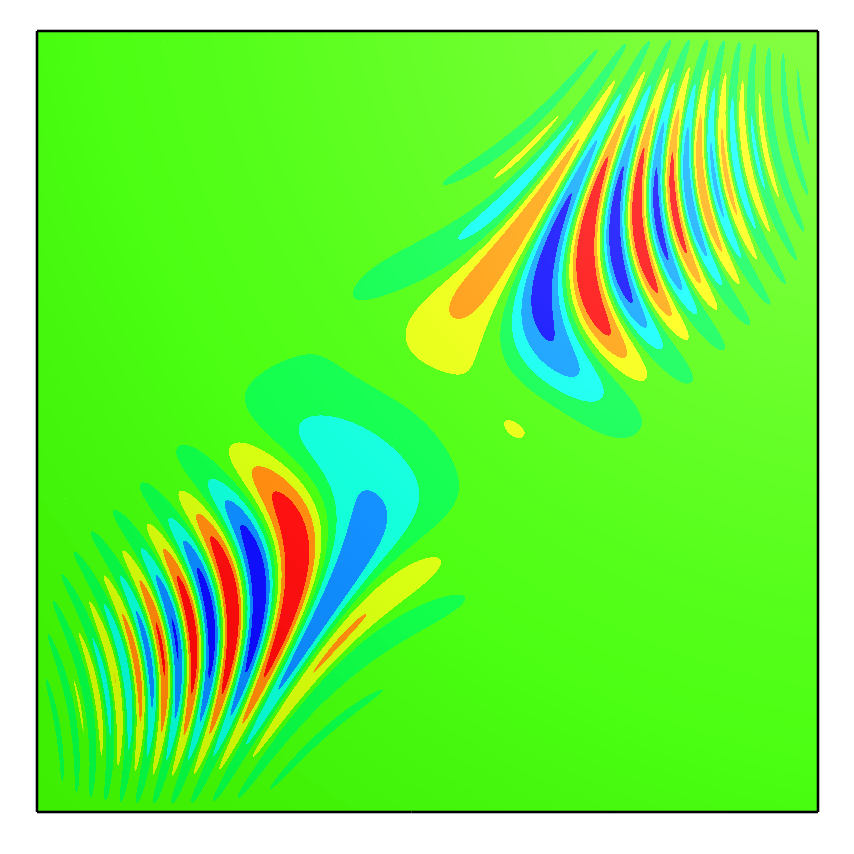}
\includegraphics[width=.16\textwidth]{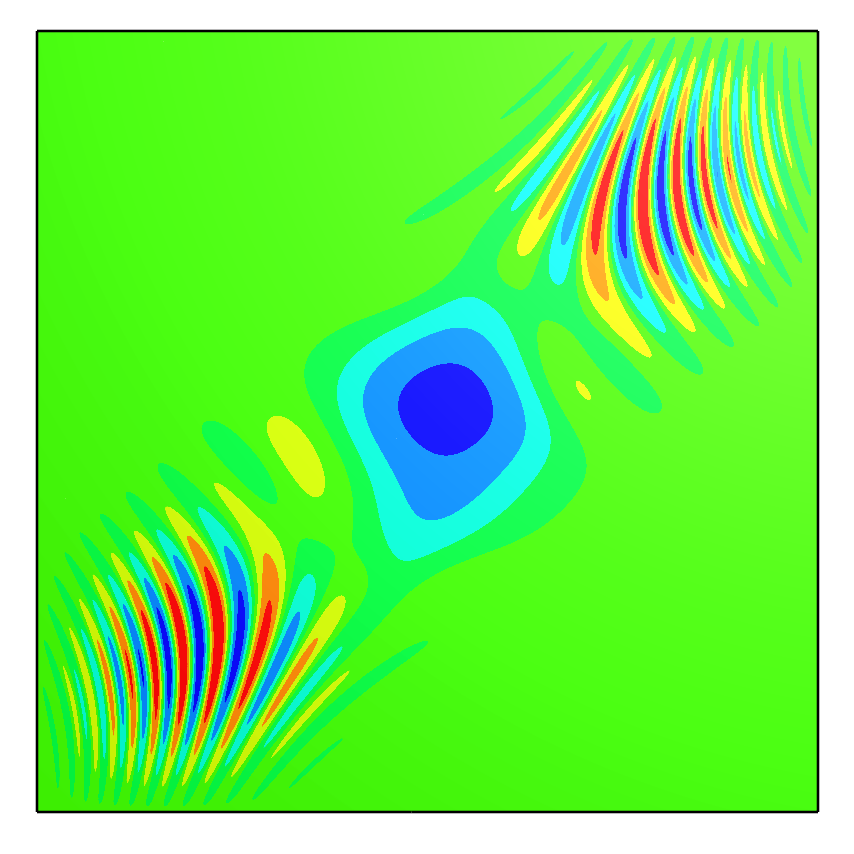}
\includegraphics[width=.16\textwidth]{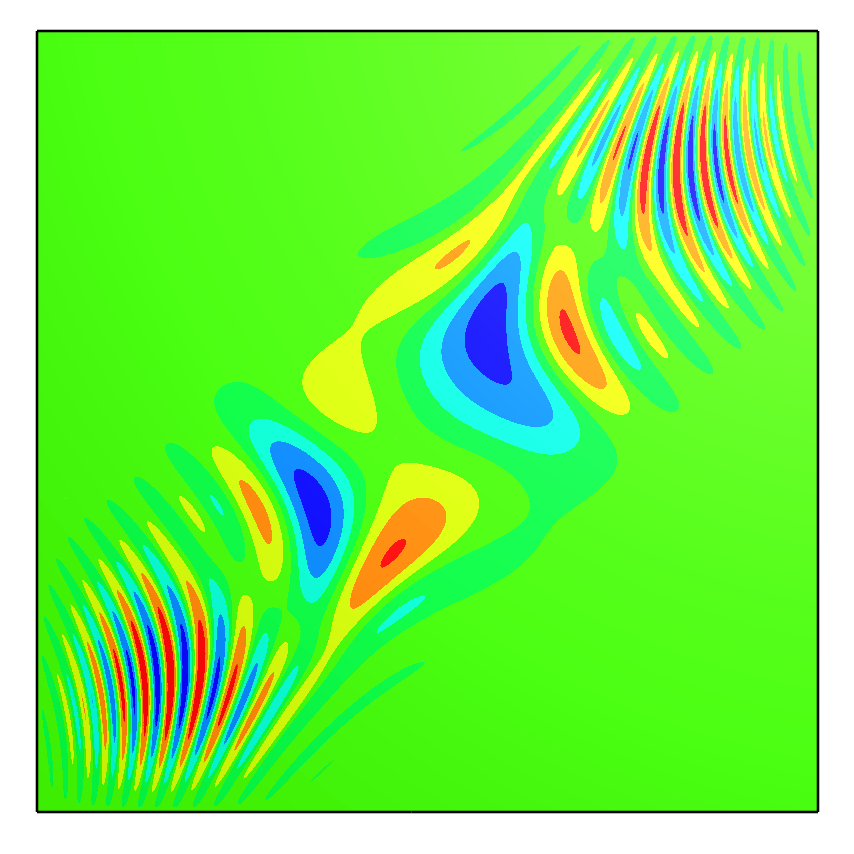}
\includegraphics[width=.16\textwidth]{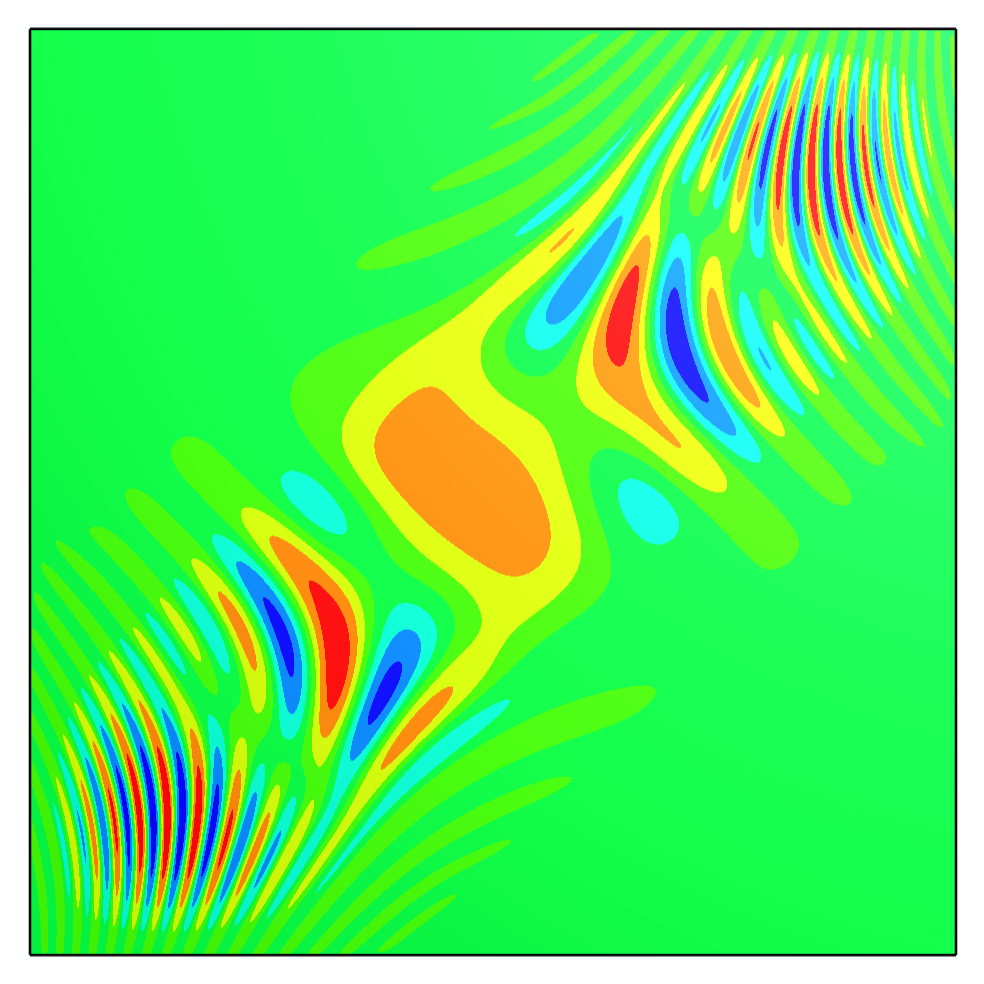}
\includegraphics[width=.16\textwidth]{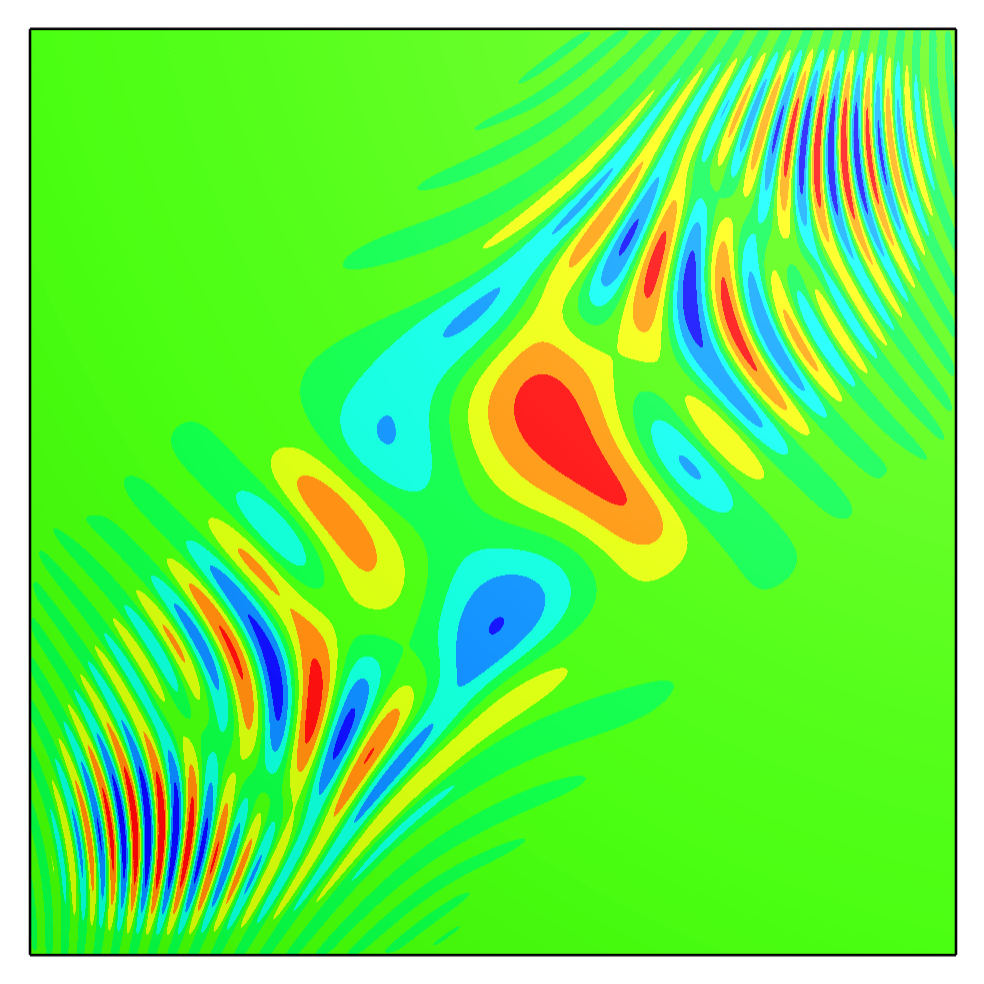}
\includegraphics[width=.16\textwidth]{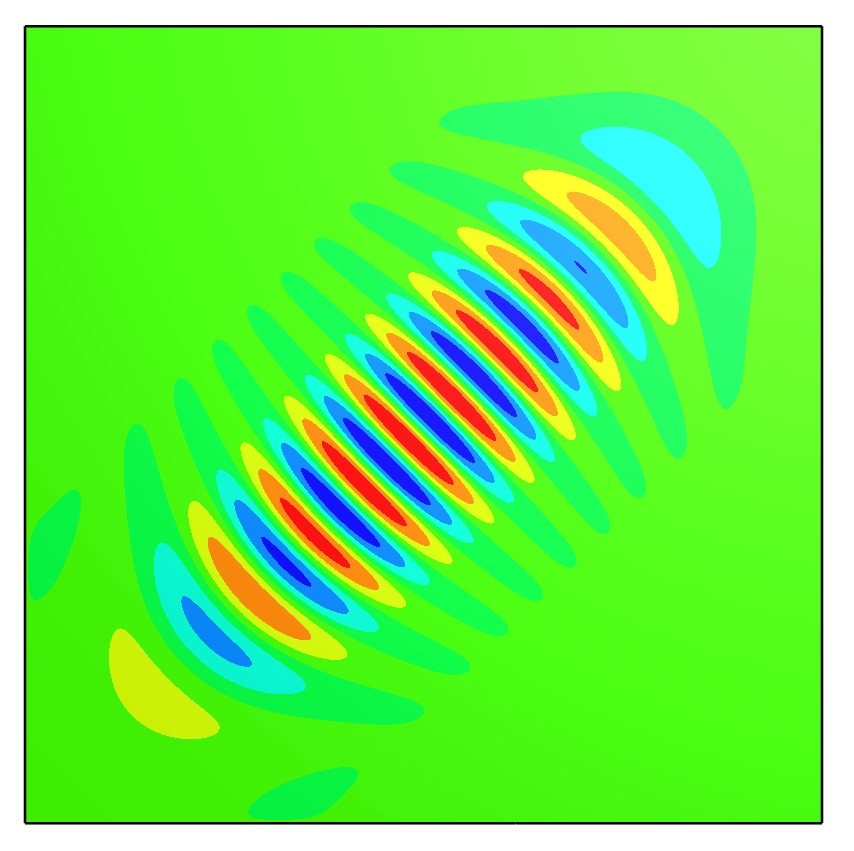}
\includegraphics[width=.16\textwidth]{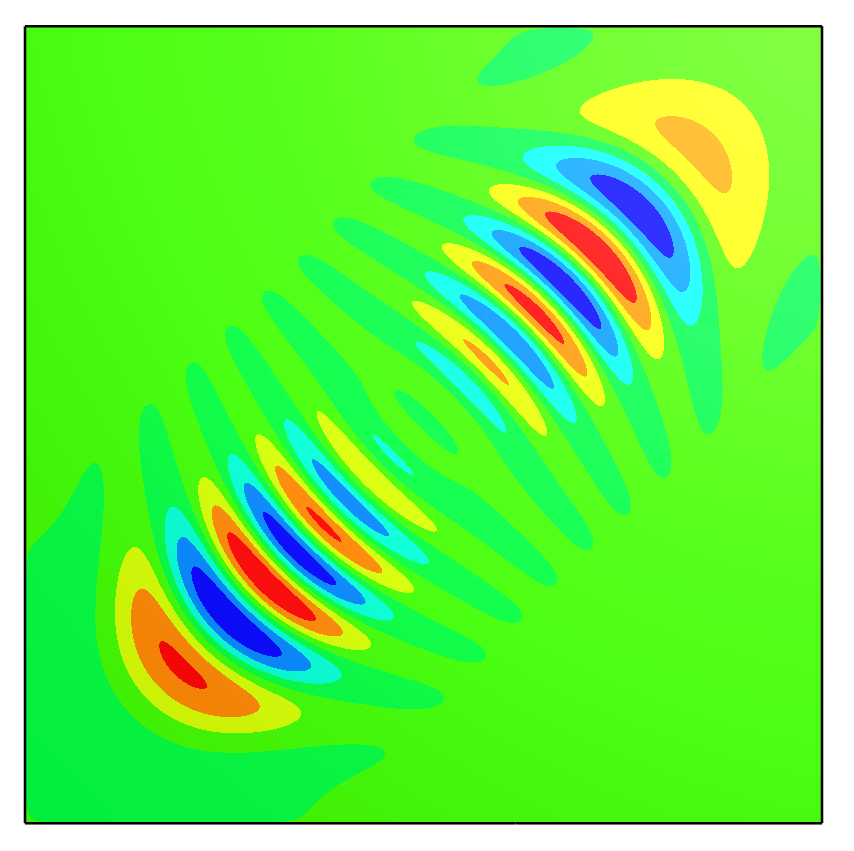}
\includegraphics[width=.16\textwidth]{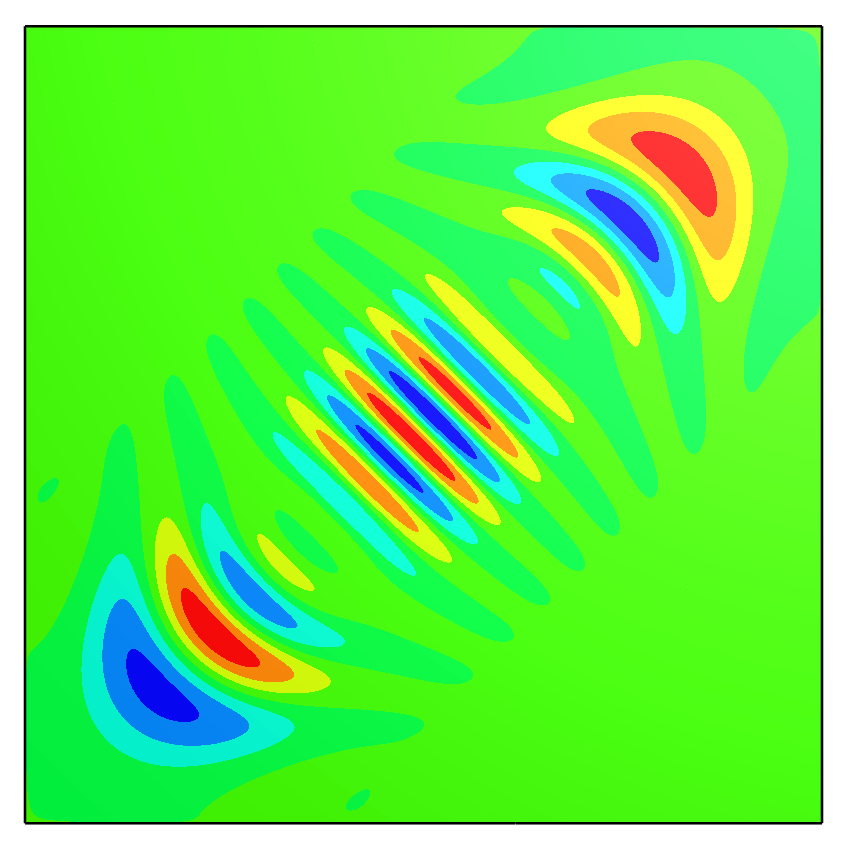}
\includegraphics[width=.16\textwidth]{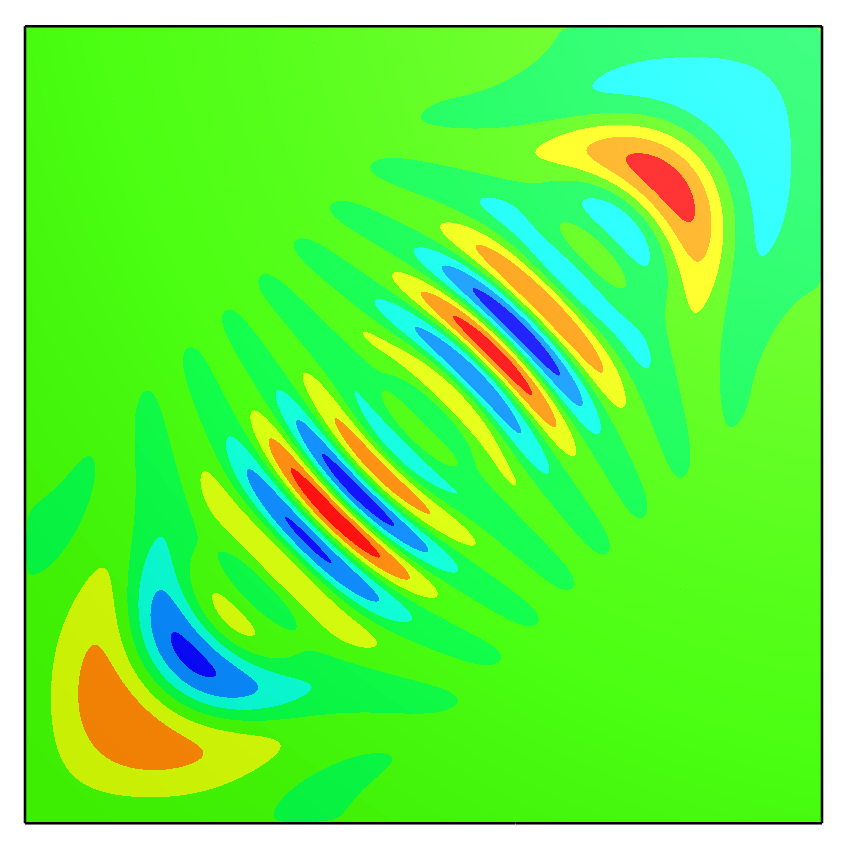}
\includegraphics[width=.16\textwidth]{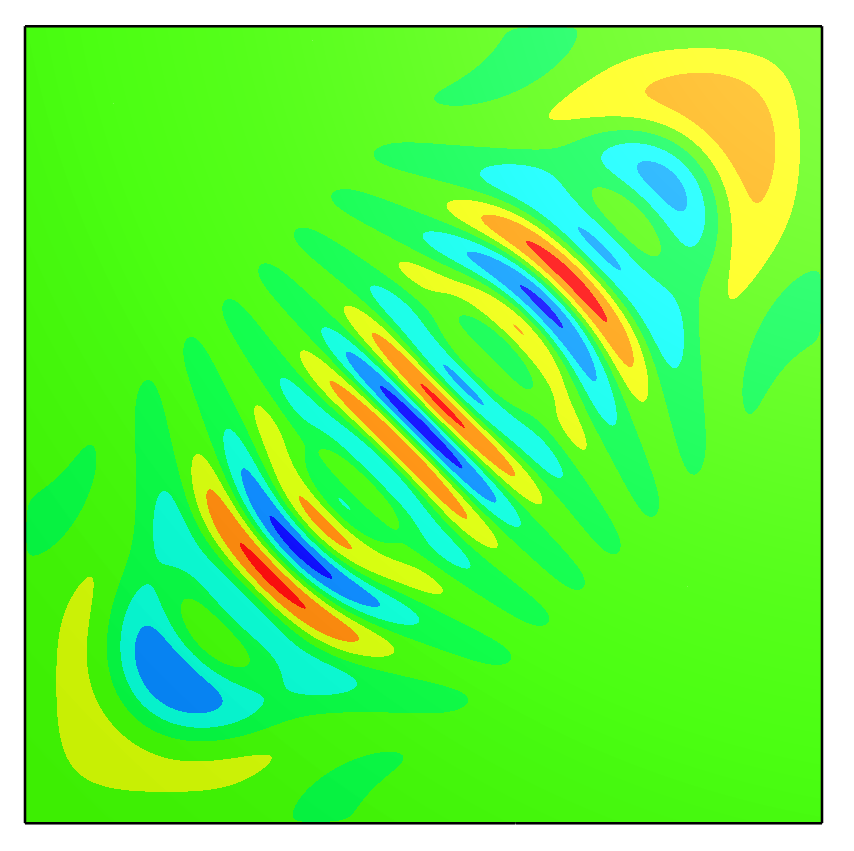}
\includegraphics[width=.16\textwidth]{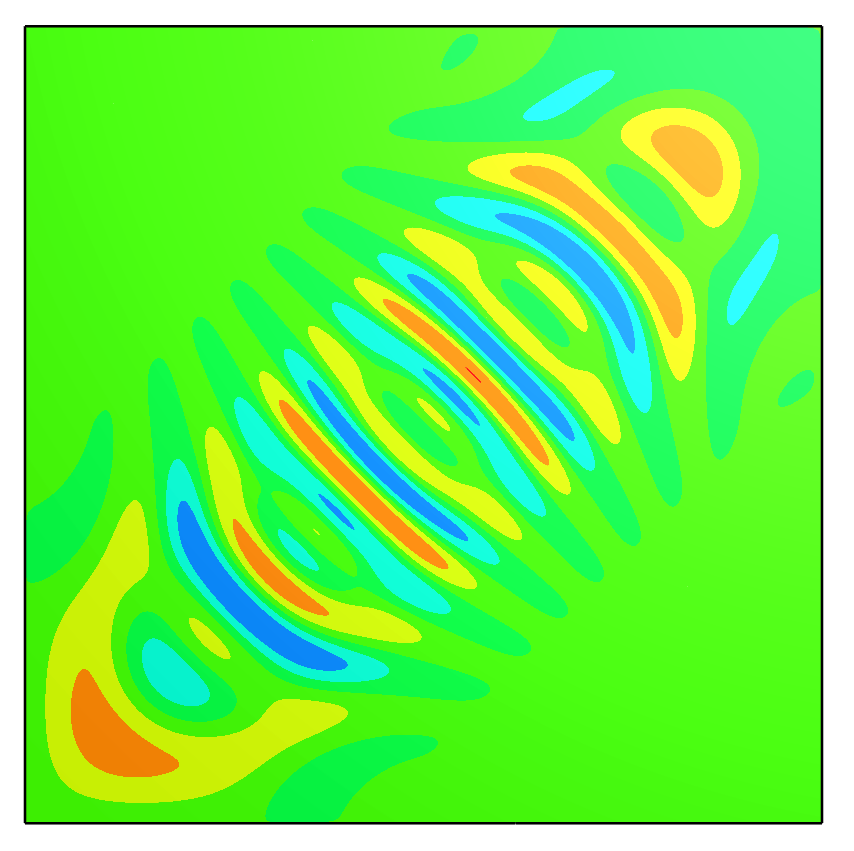}
\caption{$\Re(\psi_j)$ for $H(\bfA)$ (top row) and $\Re(\phi_j)$ for
  $H(\bfF)$ (bottom row) when $h=0.01$, $1\leq j\leq 6$, for
  Example 1.}
\label{Ex1Fig1}
\end{figure}

\begin{figure}
\includegraphics[width=.16\textwidth]{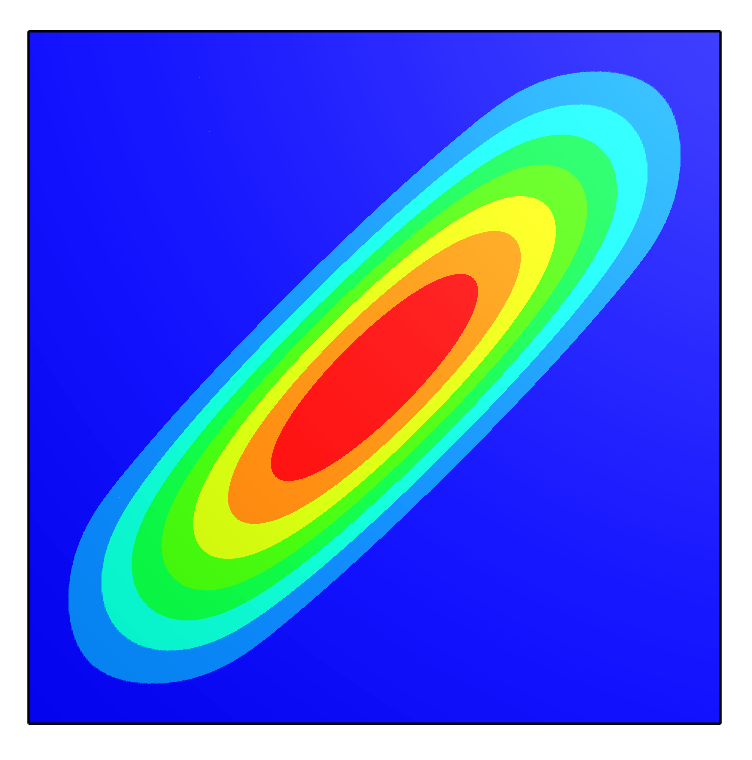}
\includegraphics[width=.16\textwidth]{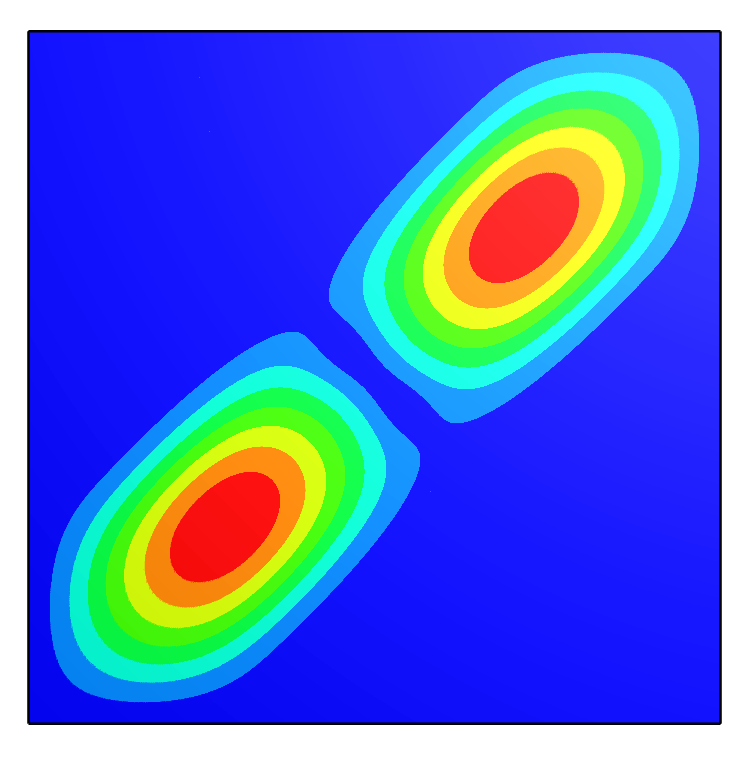}
\includegraphics[width=.16\textwidth]{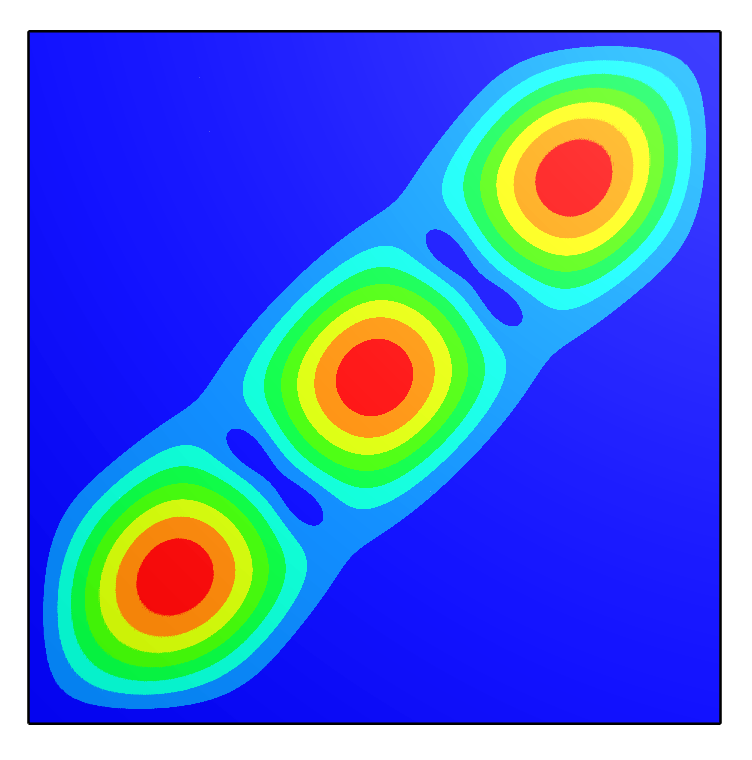}
\includegraphics[width=.16\textwidth]{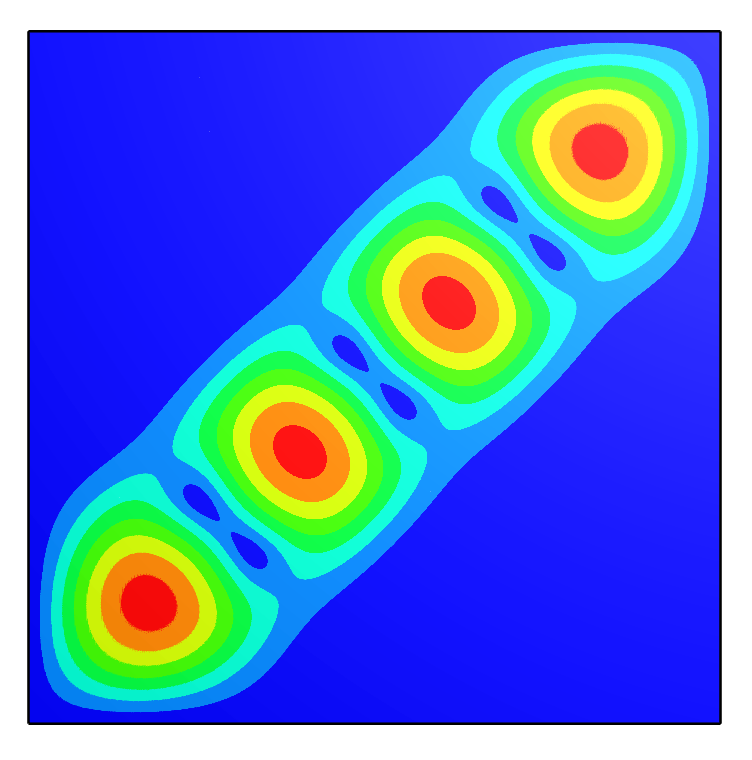}
\includegraphics[width=.16\textwidth]{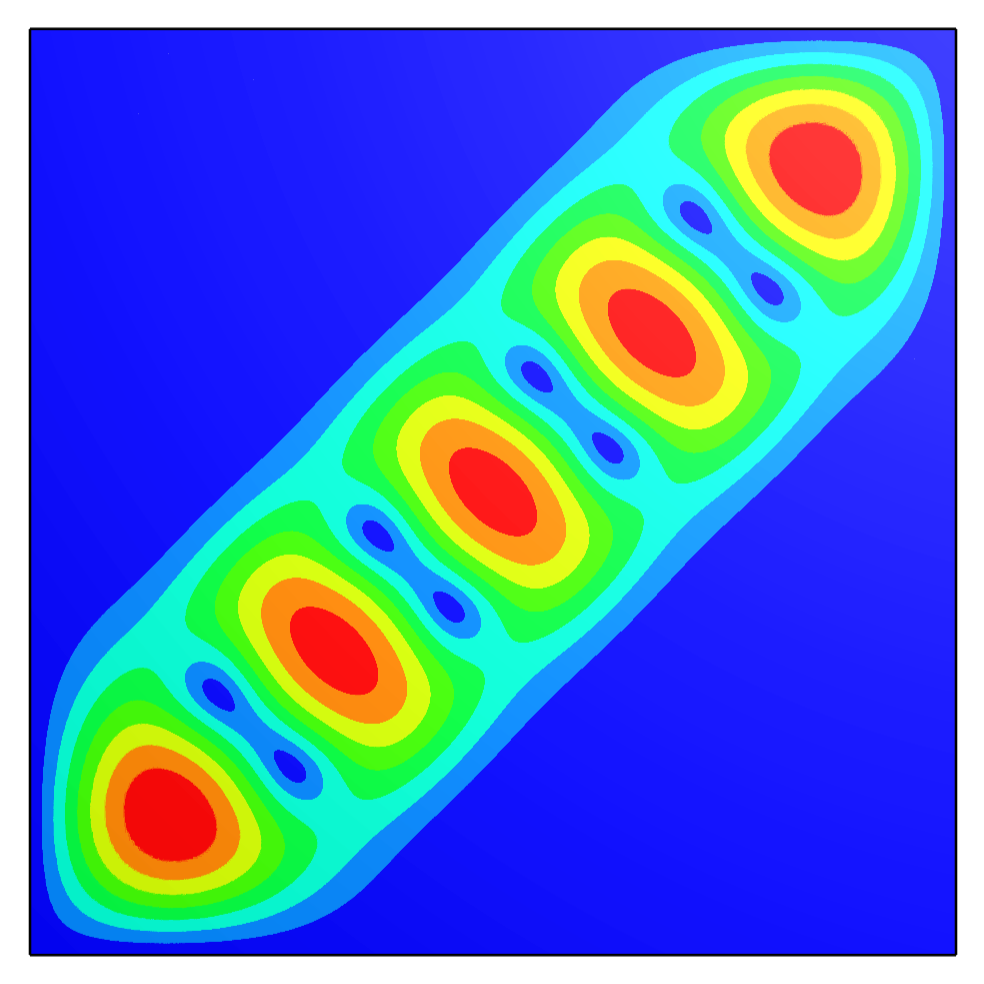}
\includegraphics[width=.16\textwidth]{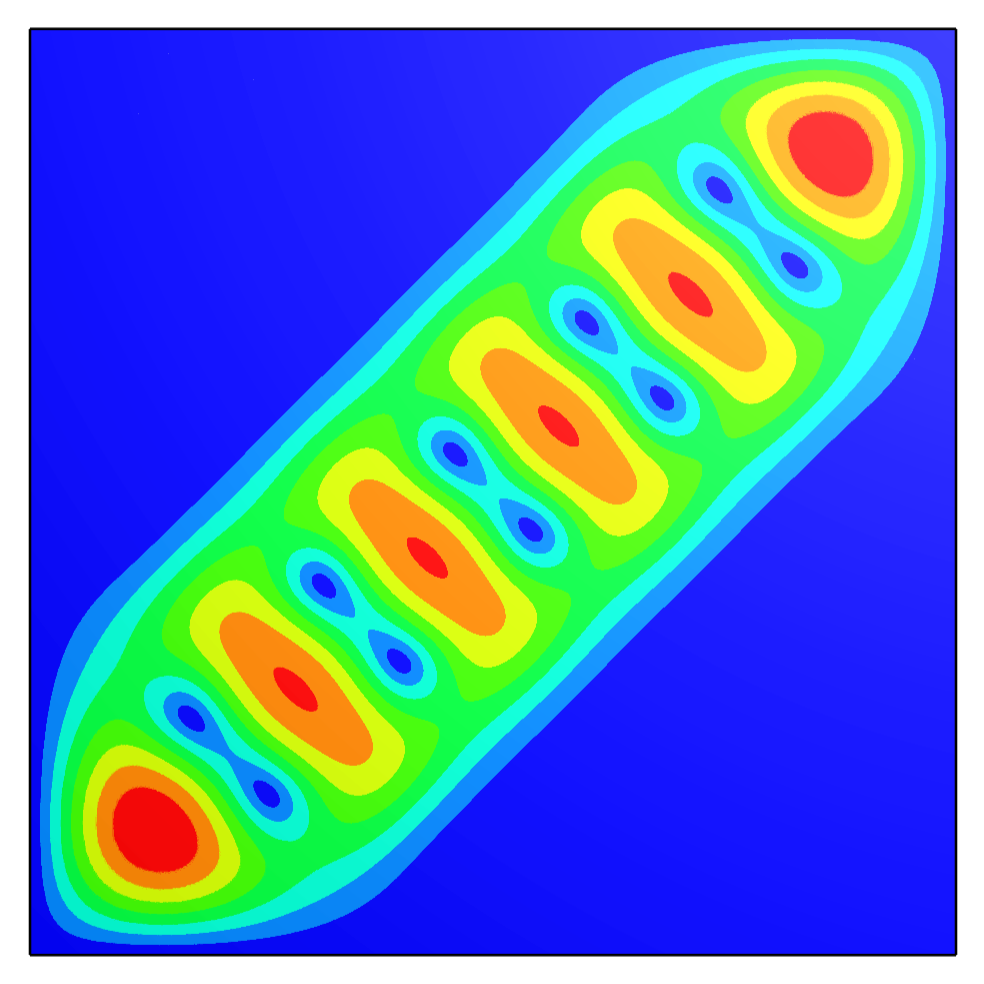}
\includegraphics[width=.16\textwidth]{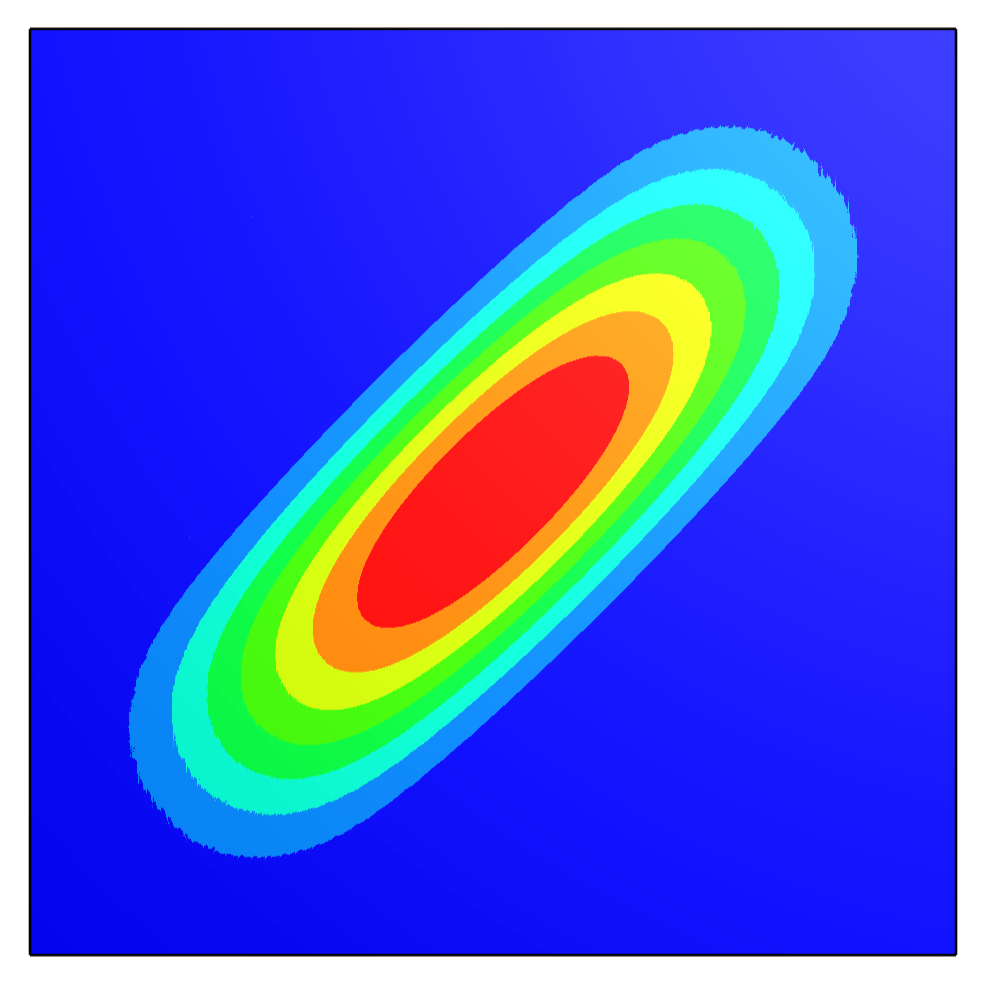}
\includegraphics[width=.16\textwidth]{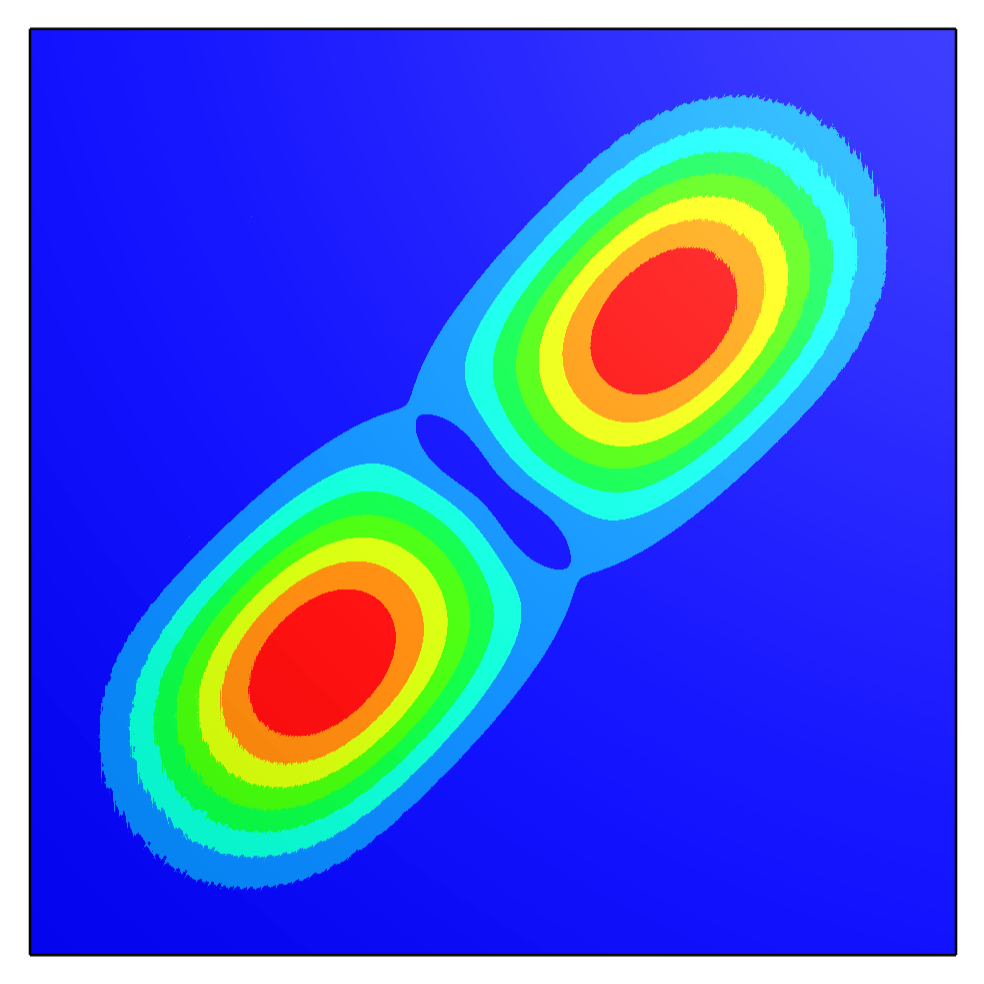}
\includegraphics[width=.16\textwidth]{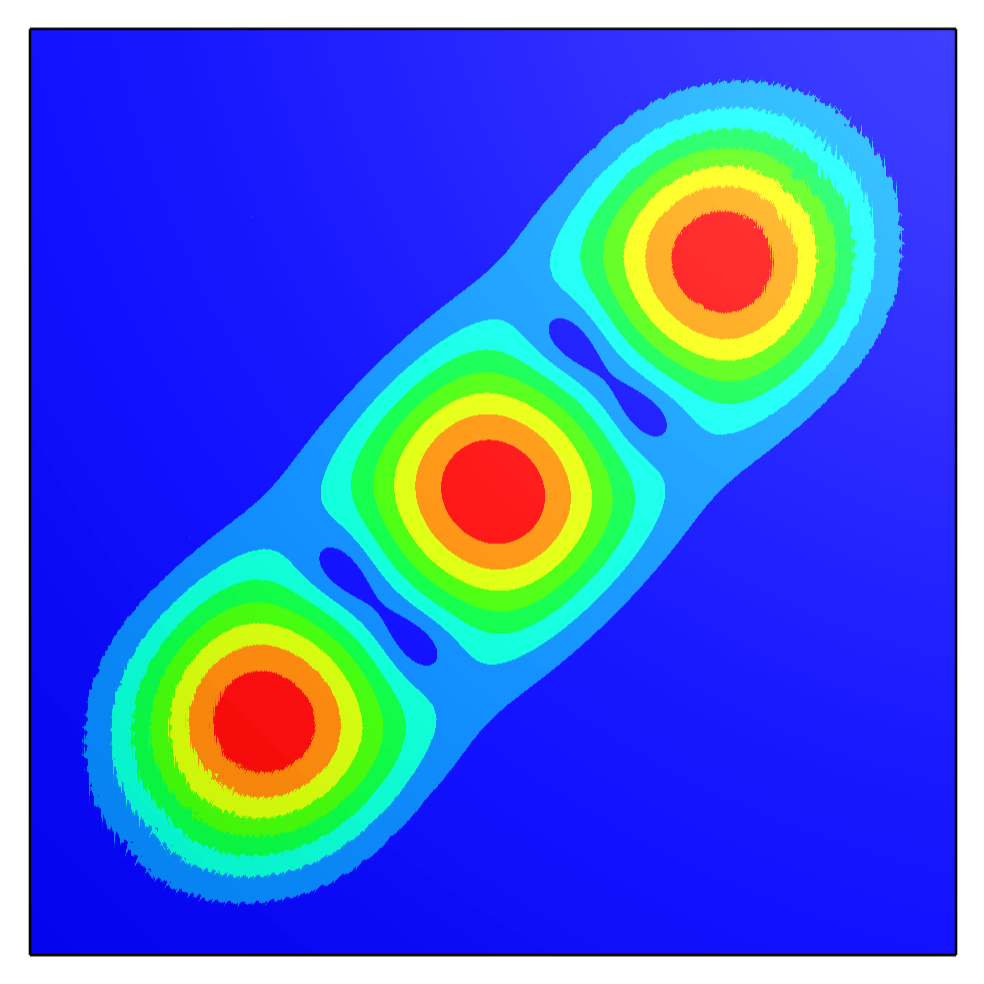}
\includegraphics[width=.16\textwidth]{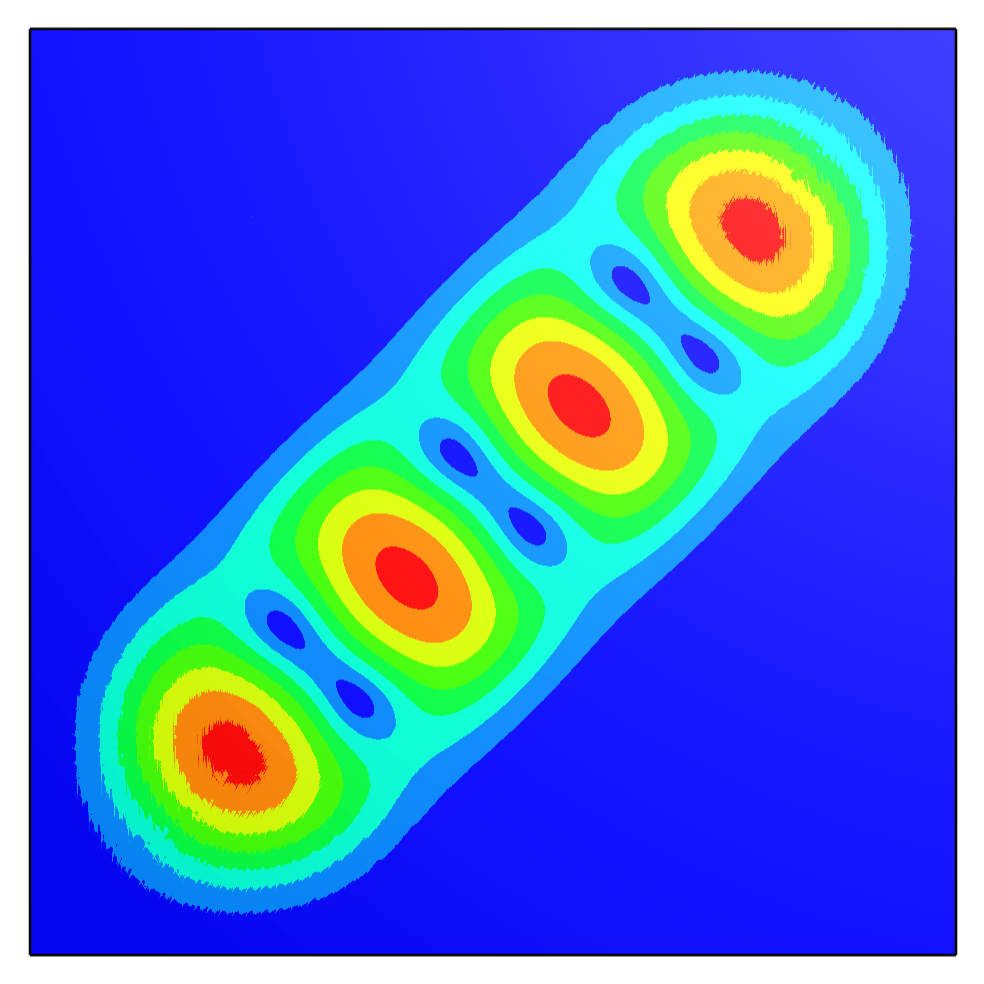}
\includegraphics[width=.16\textwidth]{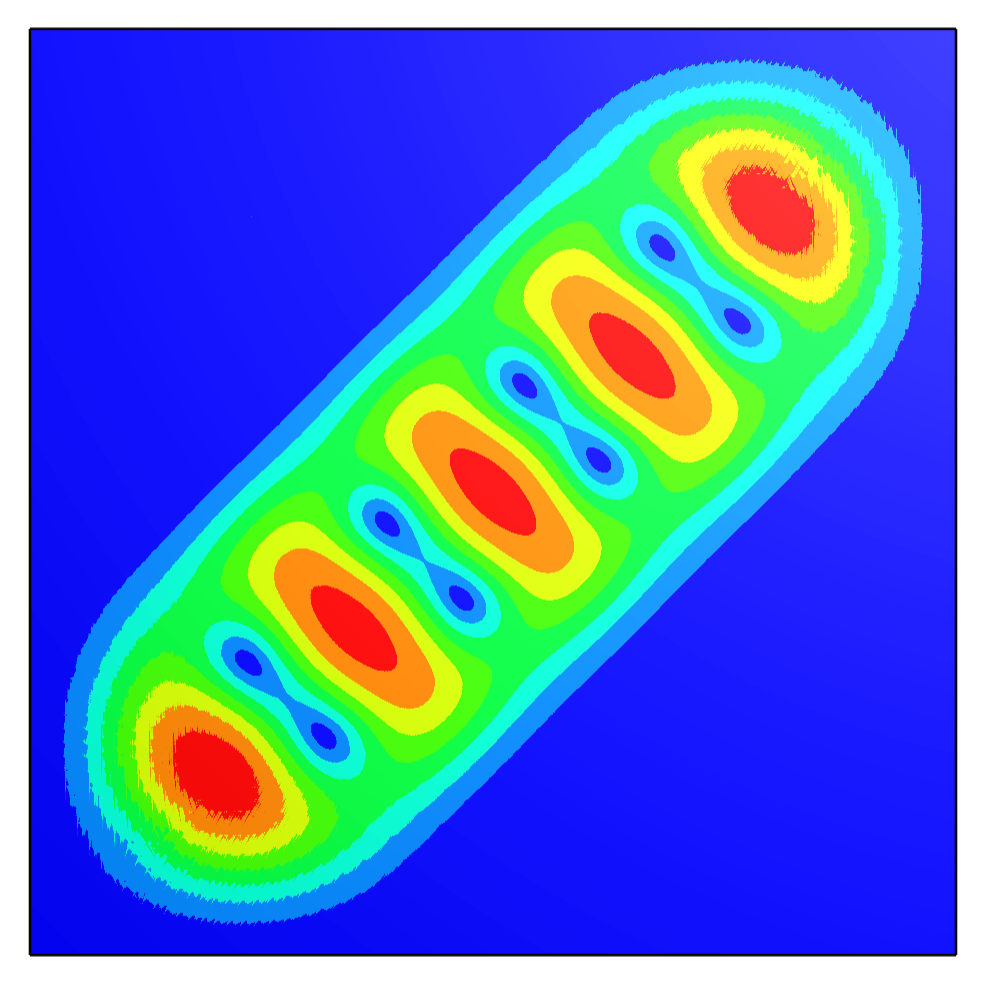}
\includegraphics[width=.16\textwidth]{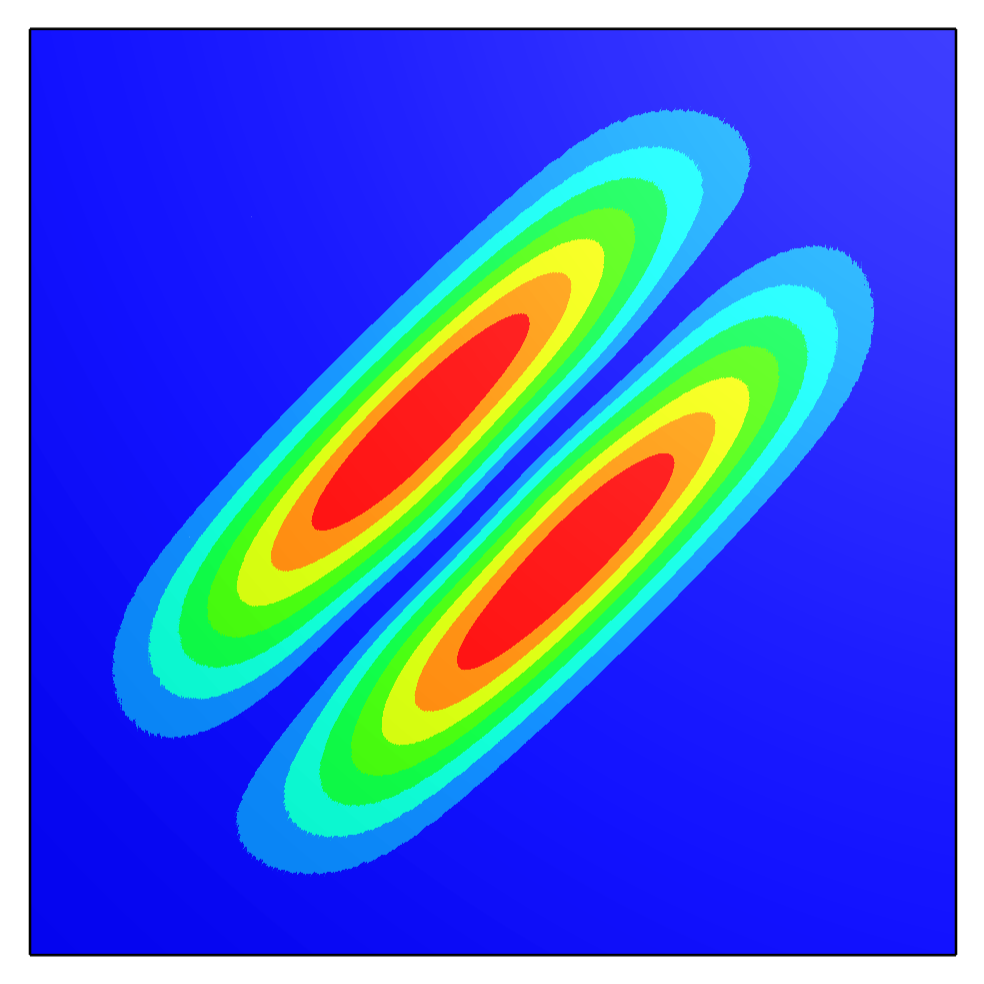}
\includegraphics[width=.16\textwidth]{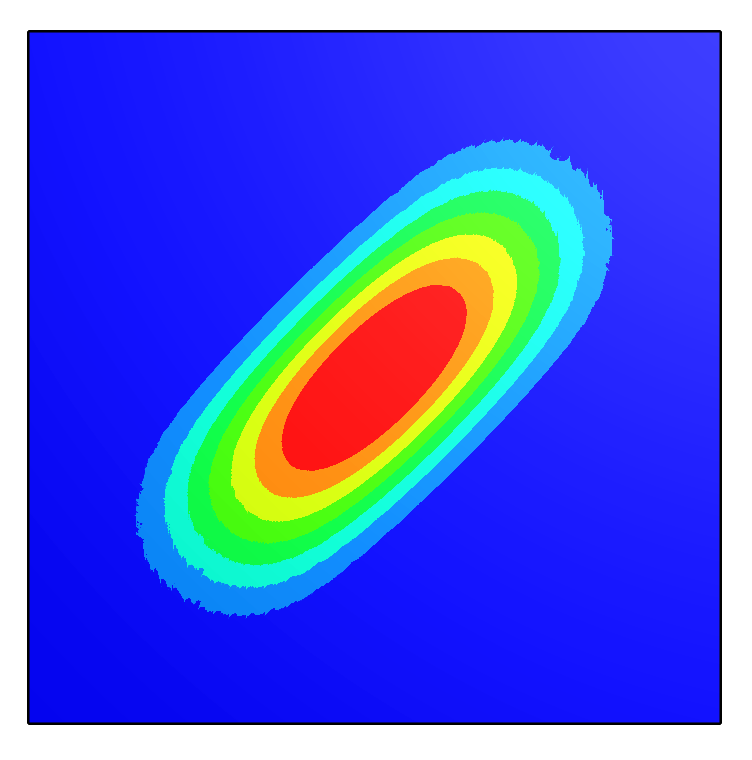}
\includegraphics[width=.16\textwidth]{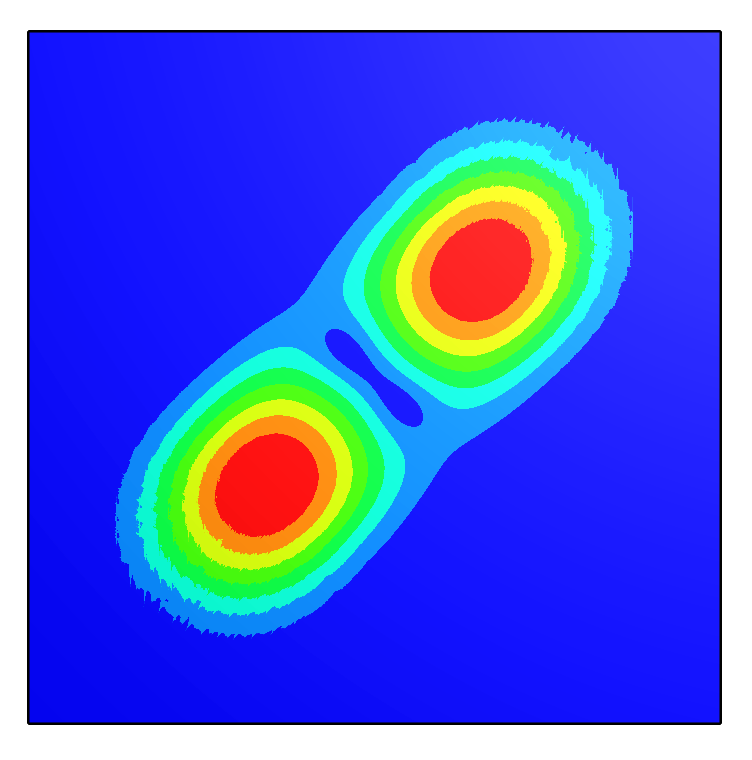}
\includegraphics[width=.16\textwidth]{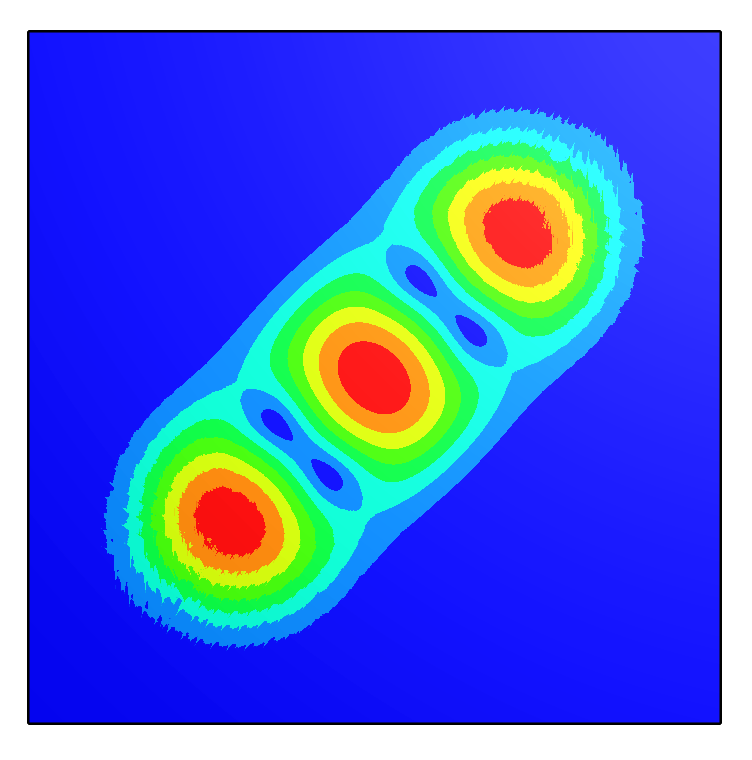}
\includegraphics[width=.16\textwidth]{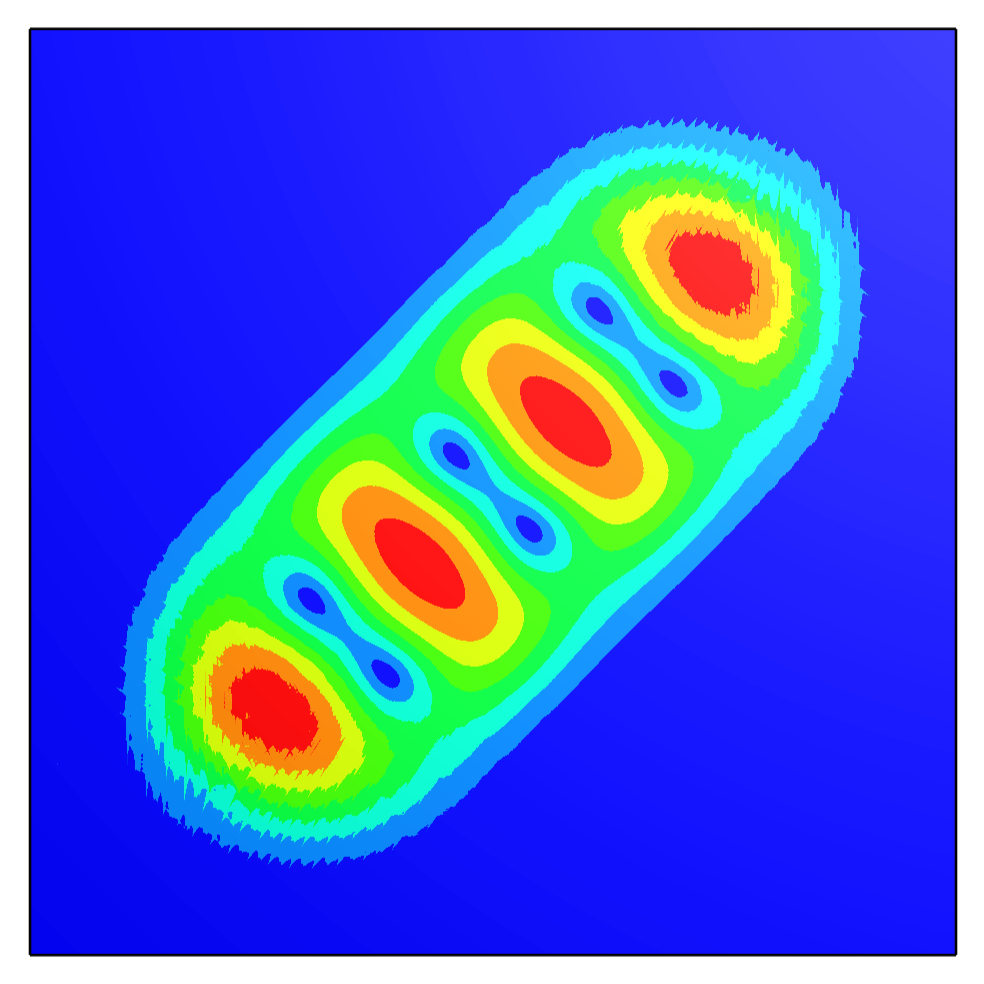}
\includegraphics[width=.16\textwidth]{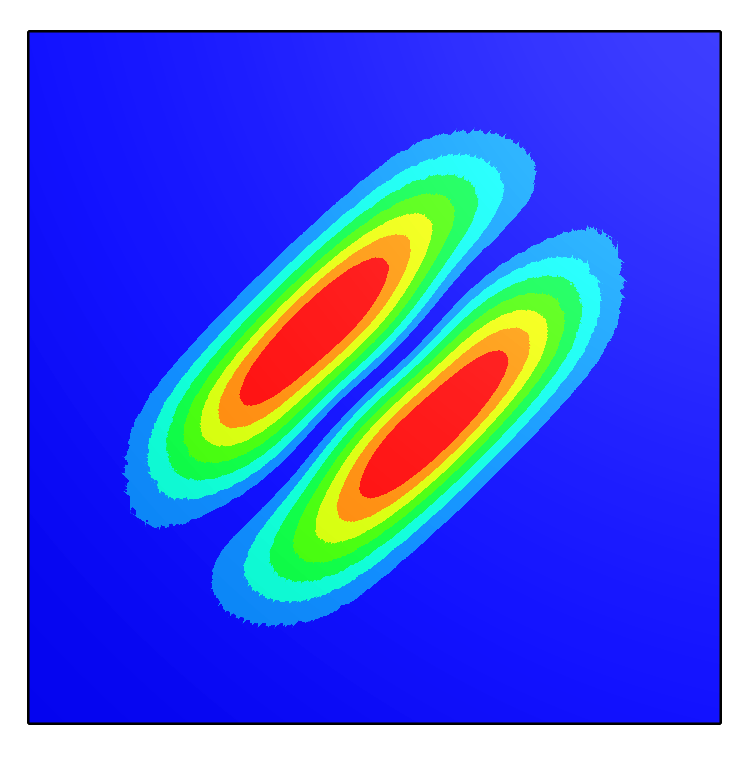}
\includegraphics[width=.16\textwidth]{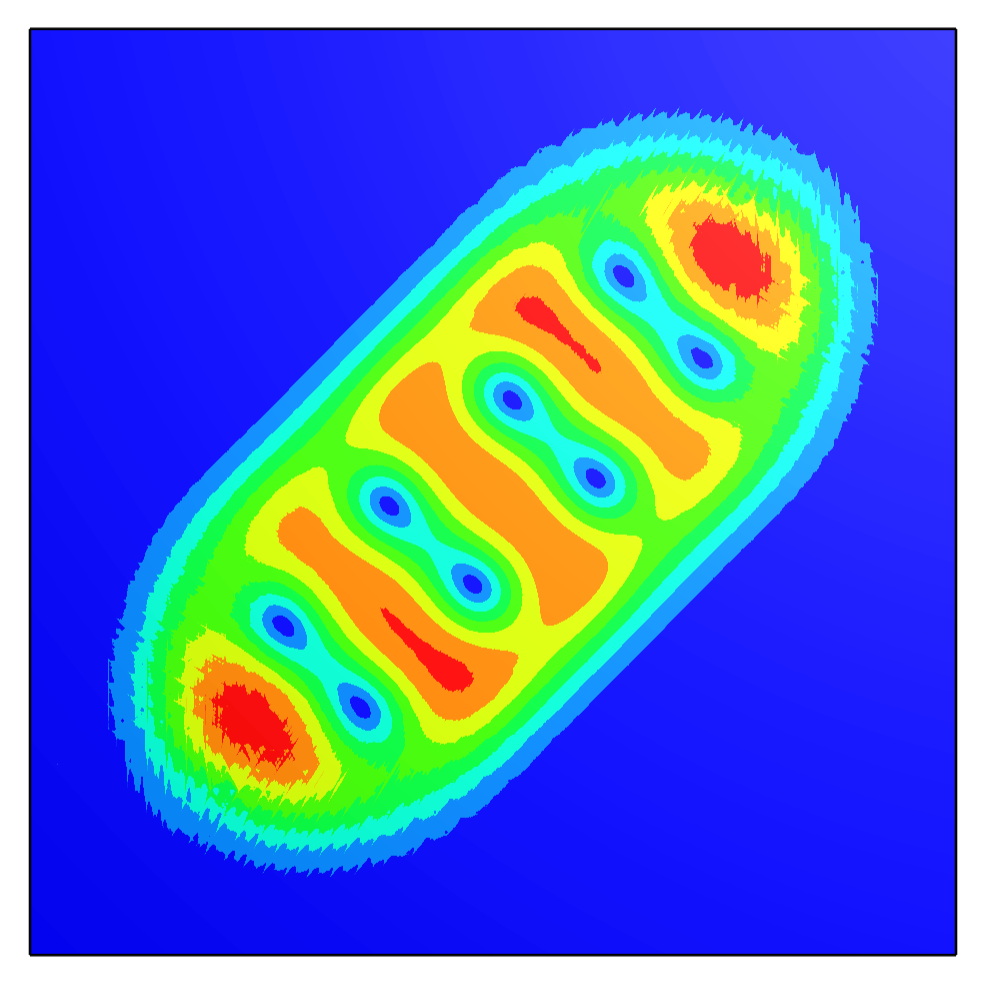}
\caption{The computed eigenvectors $|\psi_j|$ of $H(\mb{A})$, $1\leq j\leq 6$, when
  $h=0.01$ (top row), $h=0.03$ (middle row), and $h=0.05$, for Example 1.\label{Ex1Fig2}}
\end{figure}

\subsection{Example 2}\label{Example2}
We take the vector field
\begin{align}\label{Ex2A}
\bfA = -100(\cos( f_1)\sin(f_2), \sin(f_1)\cos(f_2)))\;,\; f_1 = 5\pi \sin(x^2 + y^2) \;,\; f_2 =5\pi \cos(x^2 + y^2)~,
\end{align}
with homogeneous Dirichlet conditions on the domain $\Omega=(-1,1)\times(-1,1)$, see
Figure~\ref{VectorFields}.
In this case, we compute
\begin{align*}
  \|A\|_{L^2(\Omega)}=130.6436\quad,\quad \|F\|_{L^2(\Omega)}=89.8614~,
\end{align*}
when $h=0.01$.
We will consider variants of the problem without a scalar potential
($V=0$) and with a scalar potential.

\subsubsection{Example 2a: Eigenpairs for $H(\mb{A})$, $H(\mb{F})$.}\label{Example2a}
 Table~\ref{Ex2aTable} provides the computed
eigenvalues and timing information on each of three meshes, as well as
the validation of ~\change{\eqref{Heuristic2} and~\eqref{Heuristic3}}.  We first observe
that the computed $\lambda_5$ and $\lambda_6$ almost certainly
represent an eigenvalue of multiplicity two.  As with Example 1, we
see that the computed eigenvalues for $H(\mb{F})$ vary little has $h$
increases, while there is a marked change in what is computed for
$H(\mb{A})$, particularly on the coarsest mesh.  As before, the
quality of the computed eigenpairs for $H(\mb{F})$ when $h=0.03$ is as
good as that for $H(\mb{A})$ when $h=0.01$, so we again see that
computing with $H(\mb{F})$ is more efficient.  We look at the computed
eigenvectors of $H(\mb{A})$ in Figure~\ref{Ex2aFig2}, showing those
for $h=0.01$ and $h=0.05$.  We omit those for $h=0.03$ because there
is a negligible difference between it and $h=0.01$, apart from a
change in order between the final two eigenvectors.  This is
immaterial because they are approximating the basis of a
two-dimensional eigenspace.  When $h=0.05$, we see a marked
deterioration in the quality of the approximations for $H(\mb{A})$, as
well as a misordering among the final three eigenvectors.  In
contrast, the computed eigenvectors for $H(\mb{F})$ (not shown) remain
essentially the same across the three meshes, up to ordering of the
final two vectors.

\begin{table}
\caption{Computed eigenvalues and timing information, and validation
  of ~\change{\eqref{Heuristic2} and~\eqref{Heuristic3}} when $h=0.01$, for Example 2a.\label{Ex2aTable}}
\begin{center}
\begin{tabular}{|c|cll|cccccc|}\hline
&$h$ & Total & FEAST& $\lambda_1$ &  $\lambda_2$  & $\lambda_3$ &  $\lambda_4$ & $\lambda_5$ &  $\lambda_6$\\\hline
\multirow{3}{*}{$H(\bfA)$}
&0.01  &310.55s& 270.39s    &104.069&   111.630 &154.607&   177.494 &196.589&   196.590\\
&0.03   &77.08s&  73.96s   &109.963&   119.903 &159.500&   184.112 &200.066&   200.089\\
&0.05  &25.93s&24.94s&134.774&   176.245 &219.781&   237.908 &238.637&   238.910\\
\hline
\multirow{3}{*}{$H(\bfF)$}
&0.01  &354.08s& 276.56s    &104.057&   111.613 &154.598&   177.481 &196.583&   196.583\\
&0.03   &25.07s&22.54s&104.075&   111.642&154.638&   177.554 &196.607&   196.608\\
&0.05  &7.46s&6.58s&104.444&   112.211 &155.502&   179.126 &197.103&   197.141\\
\hline
\end{tabular}

\vspace*{2mm}
 \begin{tabular}{ |c|cccccc|}
   \hline
   &$j=1$&$j=2$&$j=3$&$j=4$&$j=5$&$j=6$\\\hline
   $\|\nabla \psi_j\|_{L^2(\Omega)}$  & 71.0007 & 79.5820 & 60.4883  & 75.4745 & 51.9968 & 51.9967 \\
   $\|\mb{A} \psi_j\|_{L^2(\Omega)}$ & 70.7587 & 79.2955 & 59.8389  & 74.8731 & 51.4847 & 51.4847 \\  
   \hline
   $\|\nabla \phi_j\|_{L^2(\Omega)}$  & 31.0322 & 35.5500 & 33.6866  & 40.9010 & 22.0141 & 22.0141 \\ 
   $\|\mb{F} \phi_j\|_{L^2(\Omega)}$  &  30.4731 & 34.9024 & 32.5051  & 39.7792 & 20.7749 & 20.7749 \\ \hline
 \end{tabular}
\end{center}
\end{table}

\begin{figure}
\includegraphics[width=.16\textwidth]{Ex2/H_A/01Normeig_1.png}
\includegraphics[width=.16\textwidth]{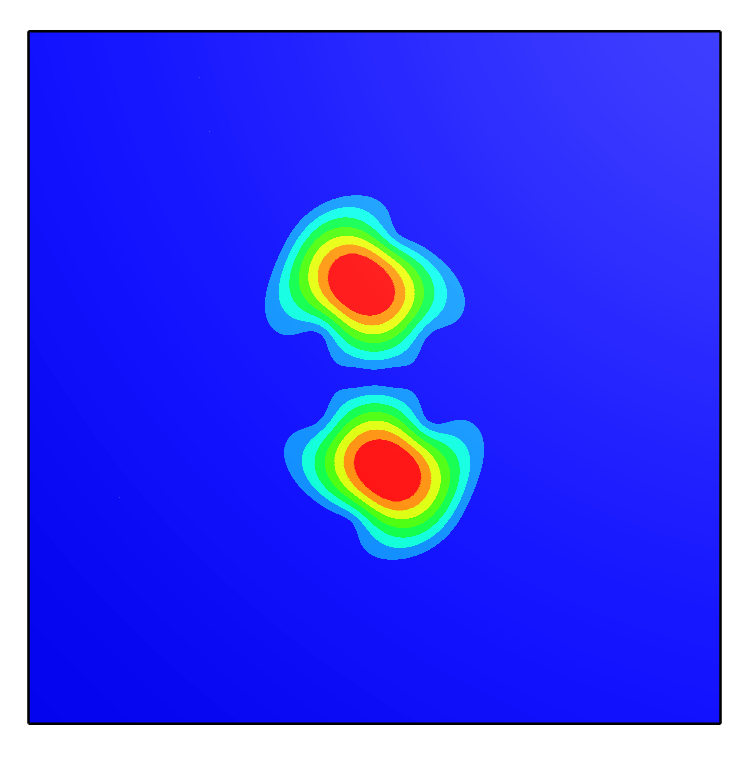}
\includegraphics[width=.16\textwidth]{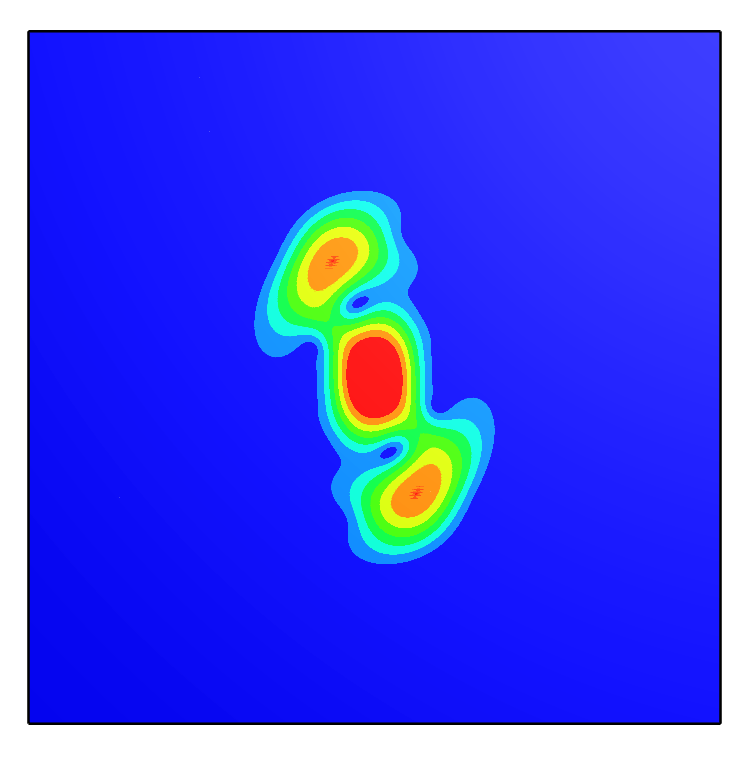}
\includegraphics[width=.16\textwidth]{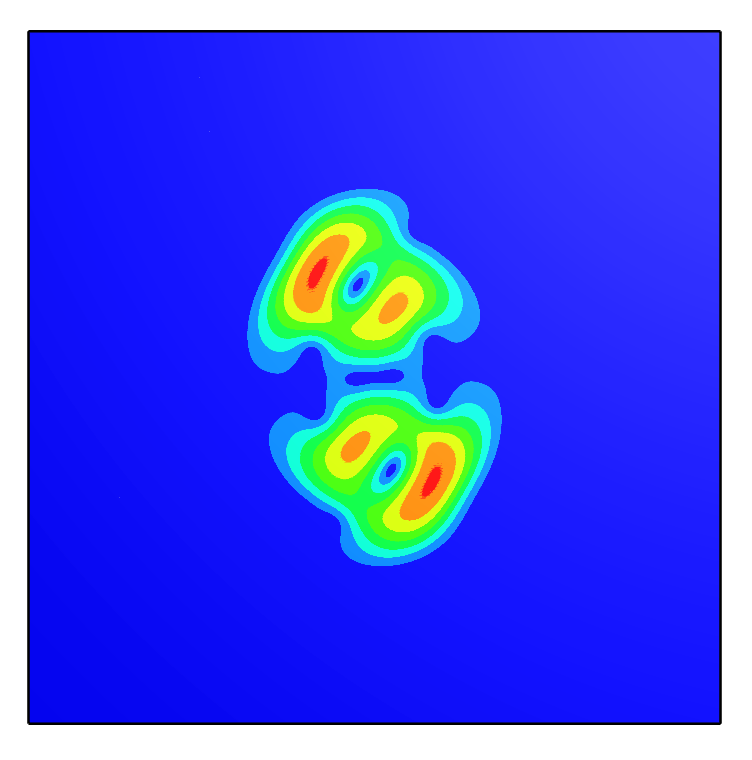}
\includegraphics[width=.16\textwidth]{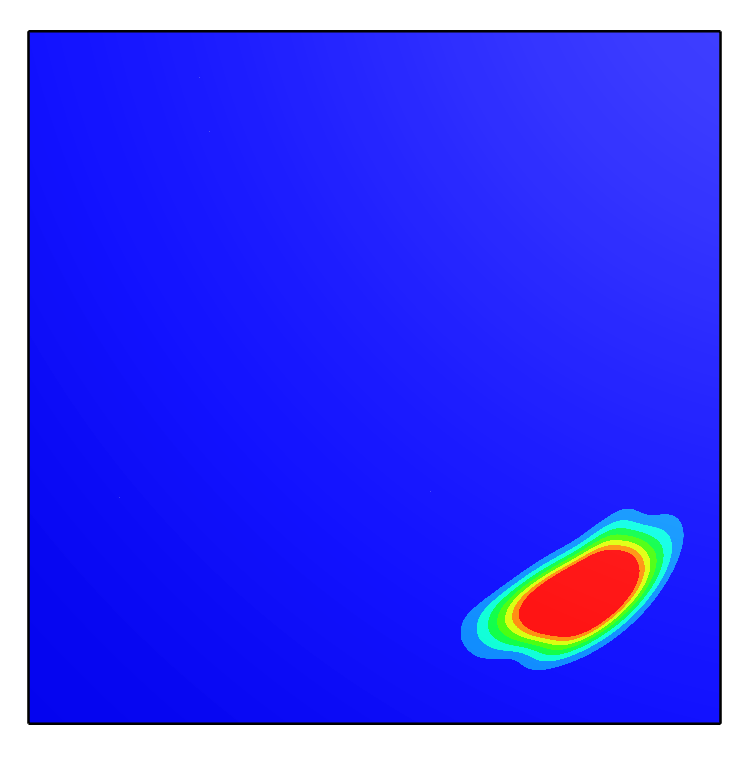}
\includegraphics[width=.16\textwidth]{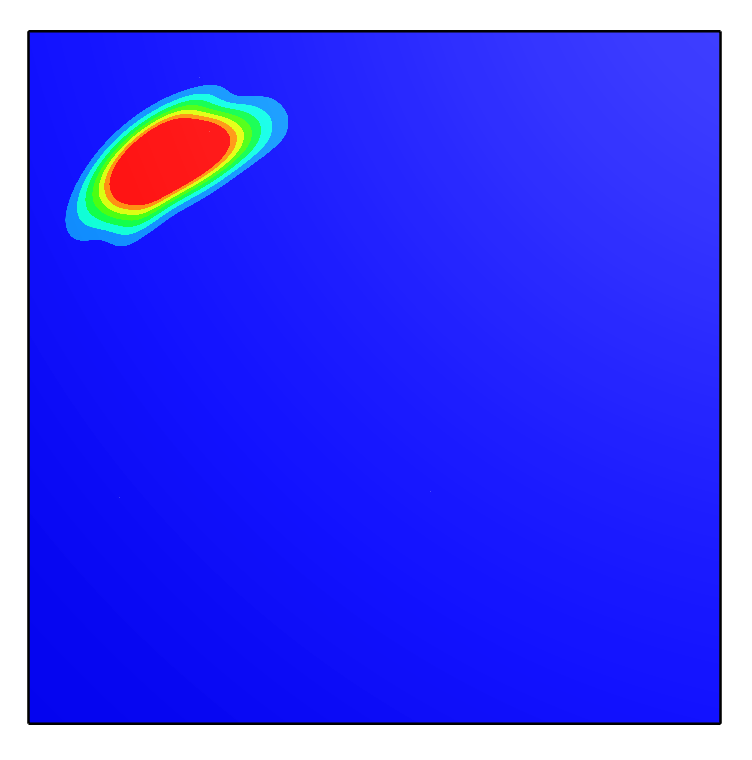}
\includegraphics[width=.16\textwidth]{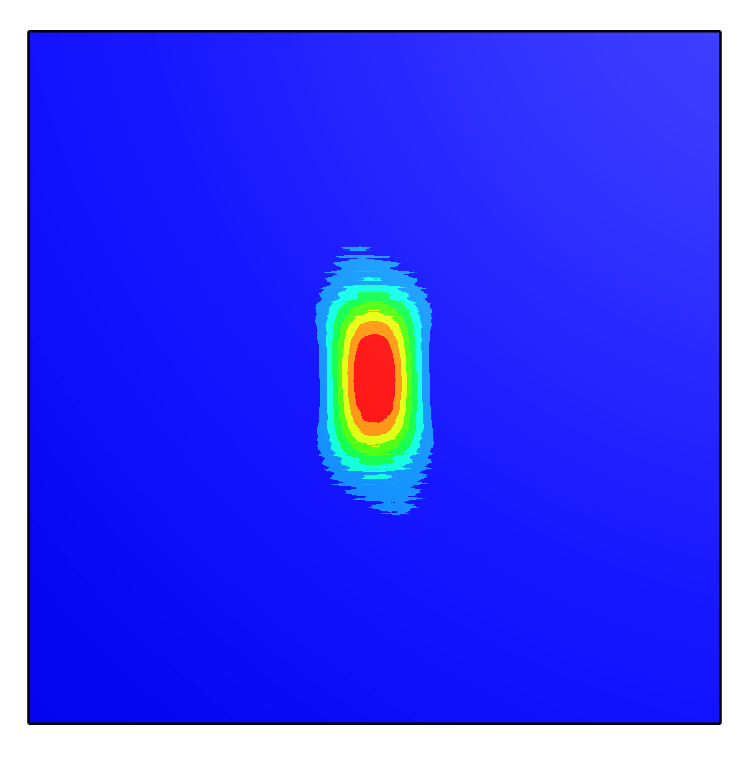}
\includegraphics[width=.16\textwidth]{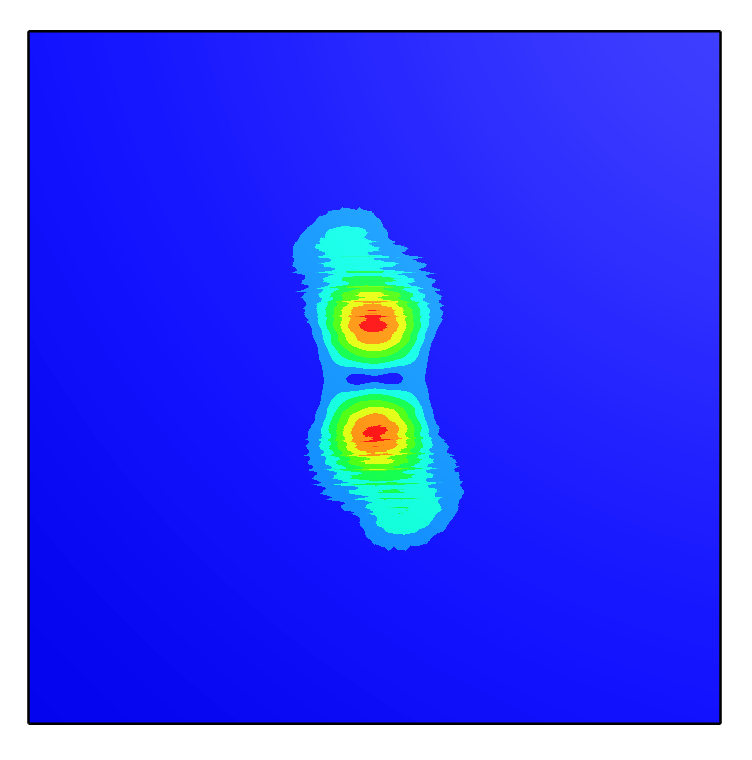}
\includegraphics[width=.16\textwidth]{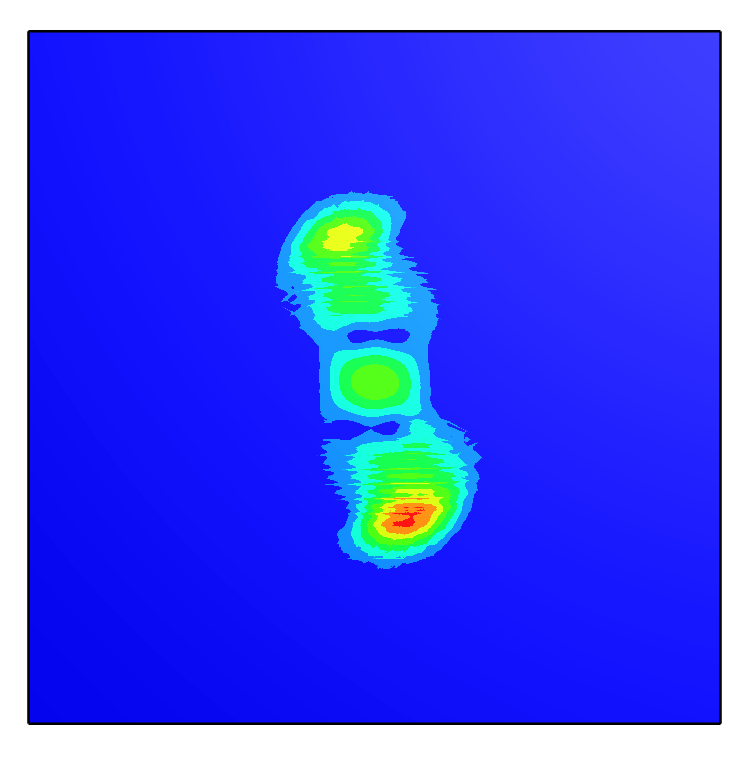}
\includegraphics[width=.16\textwidth]{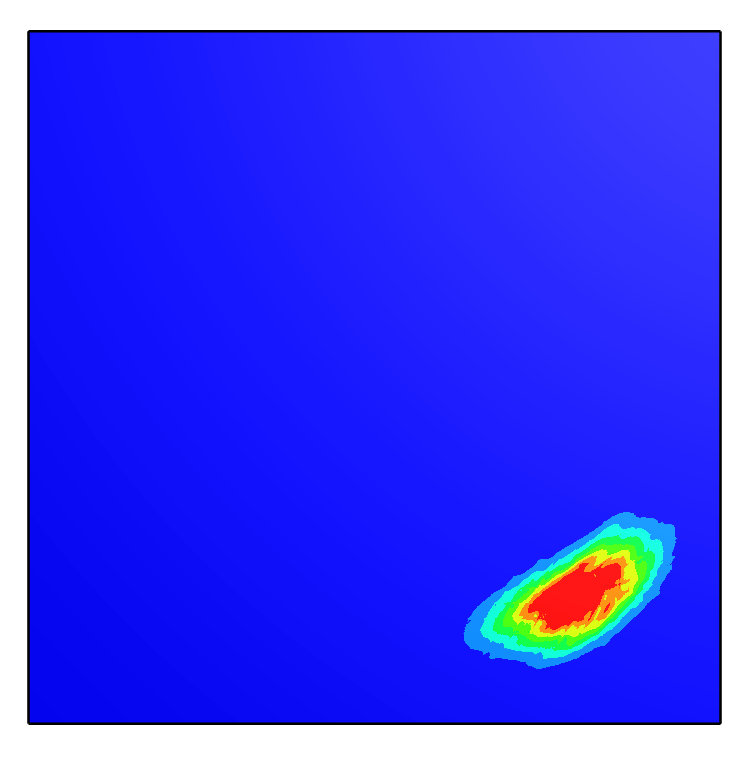}
\includegraphics[width=.16\textwidth]{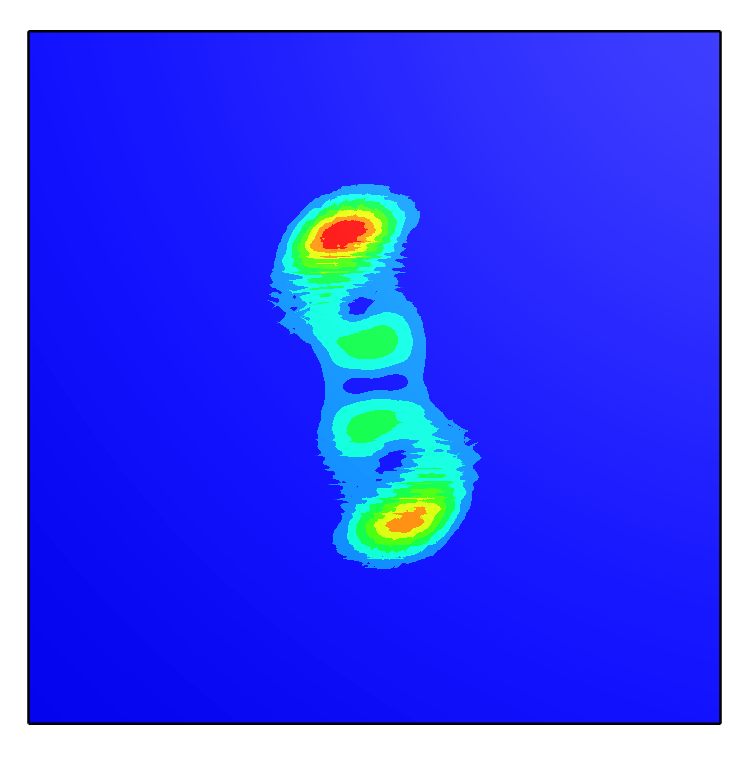}
\includegraphics[width=.16\textwidth]{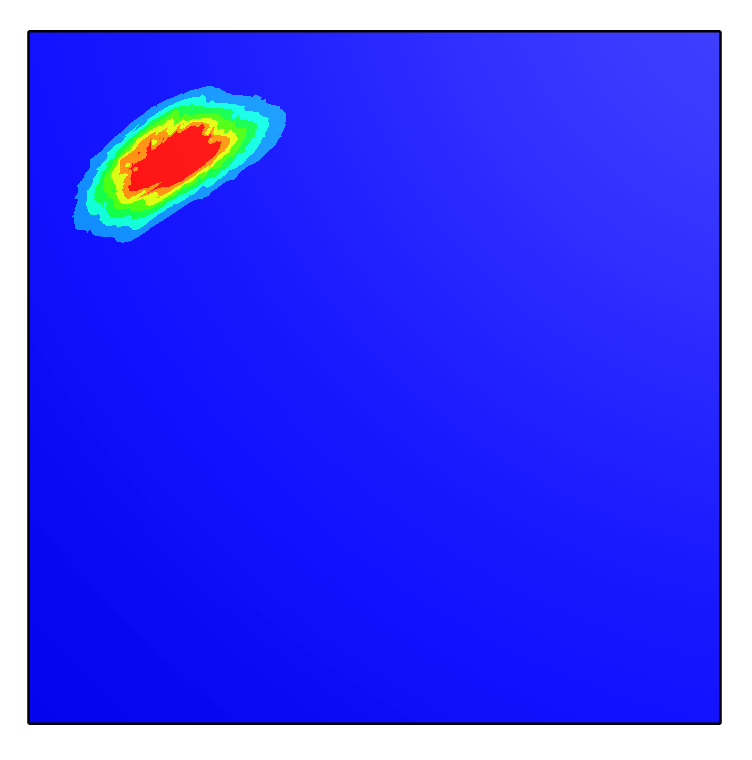}
\caption{The computed eigenvectors $|\psi_j|$ of $H(\mb{A})$, $1\leq j\leq 6$, when
  $h=0.01$ (top row), 
  and $h=0.05$, for Example 2a.\label{Ex2aFig2}}
\end{figure}
 
 \subsubsection{Example 2b: Eigenpairs for $H(\mb{A},V)$ and $H(\mb{F},V)$.}
We take $V$ to be piecewise constant and rapidly varying on $\Omega$,
see Figure~\ref{Ex2bV}.  This scalar potential was constructed by
partitioning $\Omega$ into $16\times 16$ congruent squares, and
randomly assigning a number between $0$ and \change{$V^*$} to each square.
Keeping the same seed for the random number generator, we chose
$\change{V^*}=100,500,1000$, as shown in the figure, so the three
different instances of $V$ are just scalings of each other.  We note that the
actual values of $V$ are strictly between $0$ and $V^*$.

It is known, empirically at least, that the presence of a scalar
potential such as this can dominate the influence of $\mb{A}$ on
how/where eigenvectors localize (cf.\cite{Hoskins2024}), and we
observe this in Figure~\ref{Ex2bFig2}. On obvious change, even when
$V^*=100$ is that we no longer have a repeated eigenvalue among the
first six.  The grid used to define $V$ is overlaid on the plots in
Figure~\ref{Ex2bFig2} for greater ease in comparison with the
potential shown in Figure~\ref{Ex2bFig2}.
\begin{figure}
  \centering
  \begin{minipage}[c]{0.43\textwidth}
    \includegraphics[width=\textwidth]{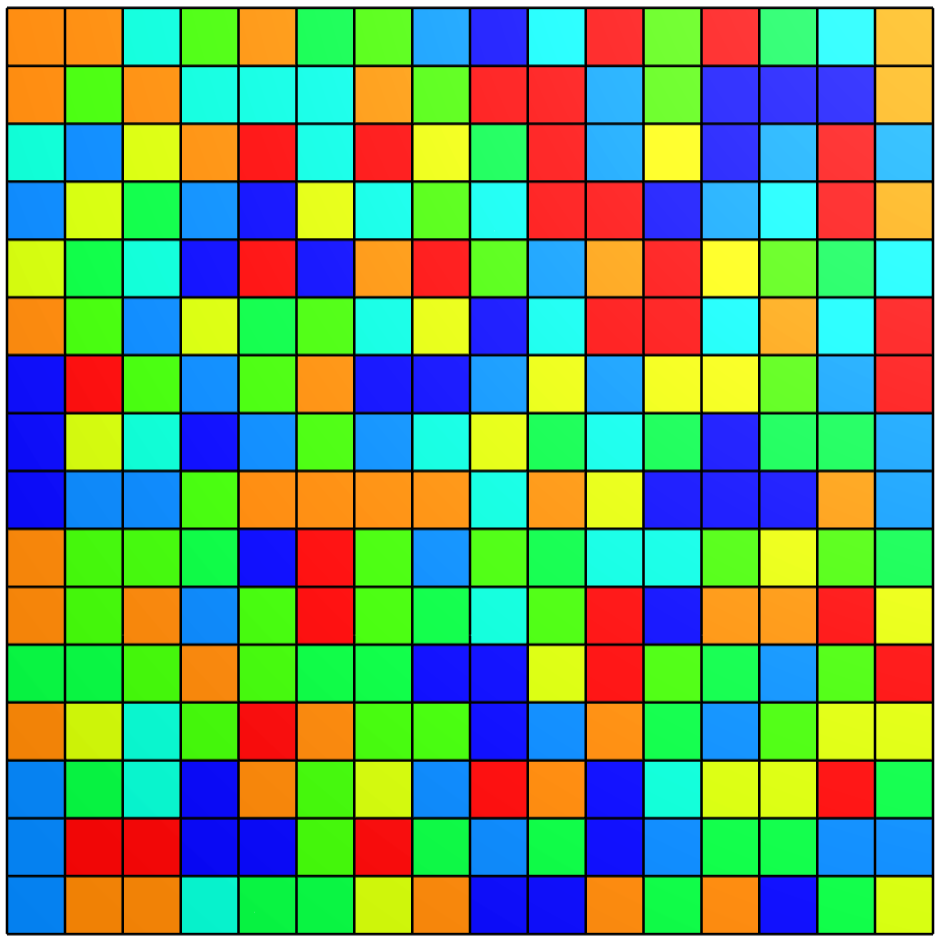}
  \end{minipage}
  \begin{minipage}[c]{0.55\textwidth}
    \centering
    $V^*=100$\\[3pt]
     \includegraphics[width=\textwidth]{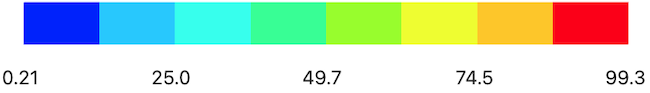}

     \vspace*{4mm}
     $V^*=500$\\[3pt]
    \includegraphics[width=\textwidth]{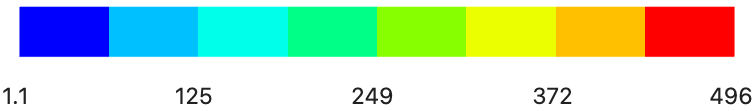}

     \vspace*{4mm}
     $V^*=1000$\\[3pt]  
     \includegraphics[width=\textwidth]{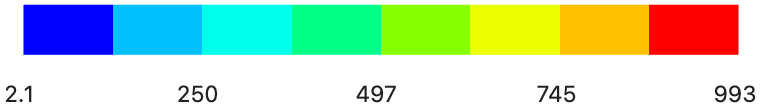}
  \end{minipage} 
 \caption{The piecewise constant scalar field V, Example 2b. \label{Ex2bV}}
\end{figure}

\begin{figure}
\includegraphics[width=.16\textwidth]{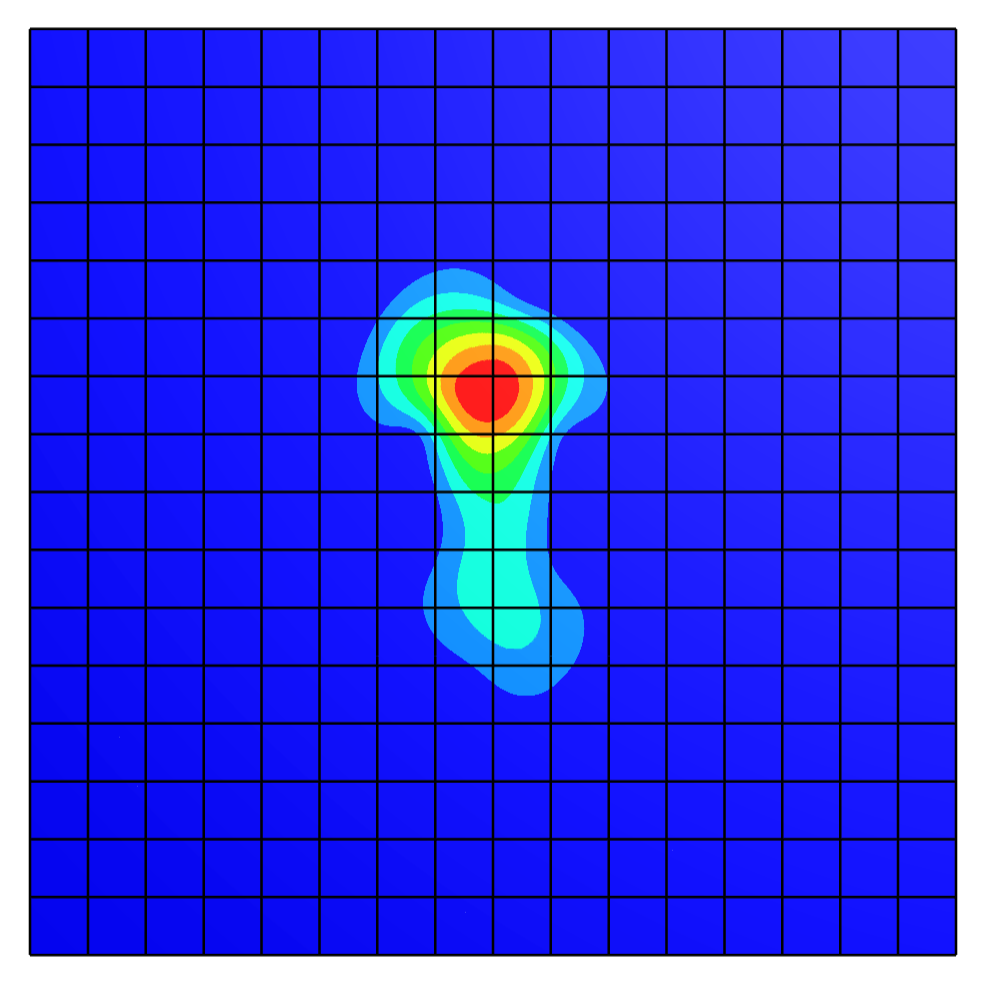}
\includegraphics[width=.16\textwidth]{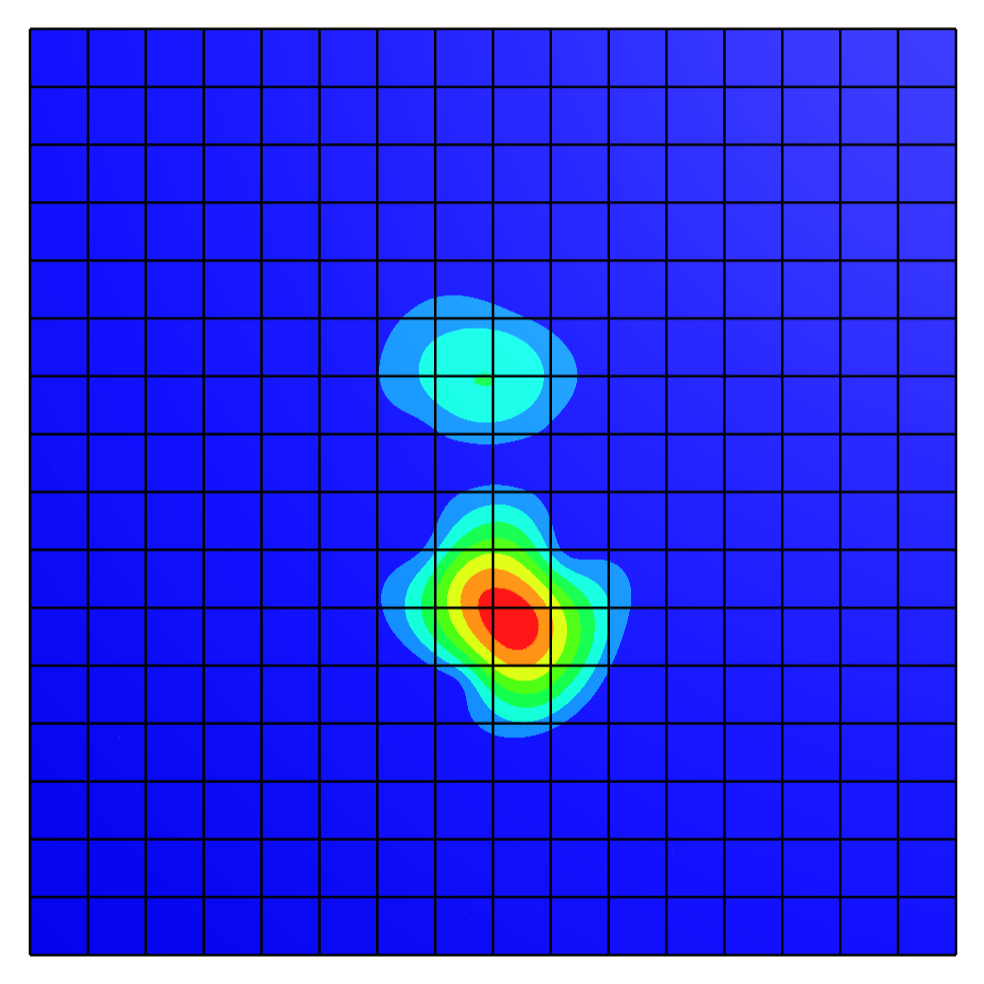}
\includegraphics[width=.16\textwidth]{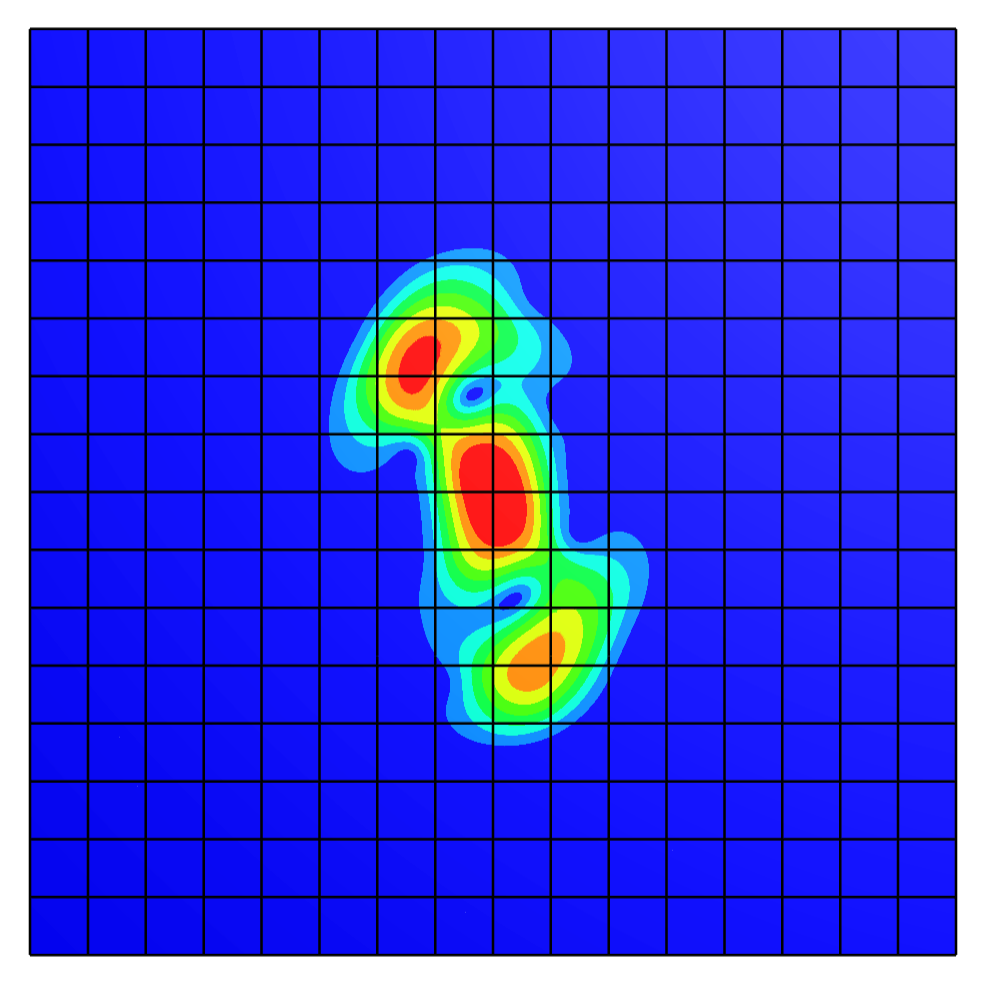}
\includegraphics[width=.16\textwidth]{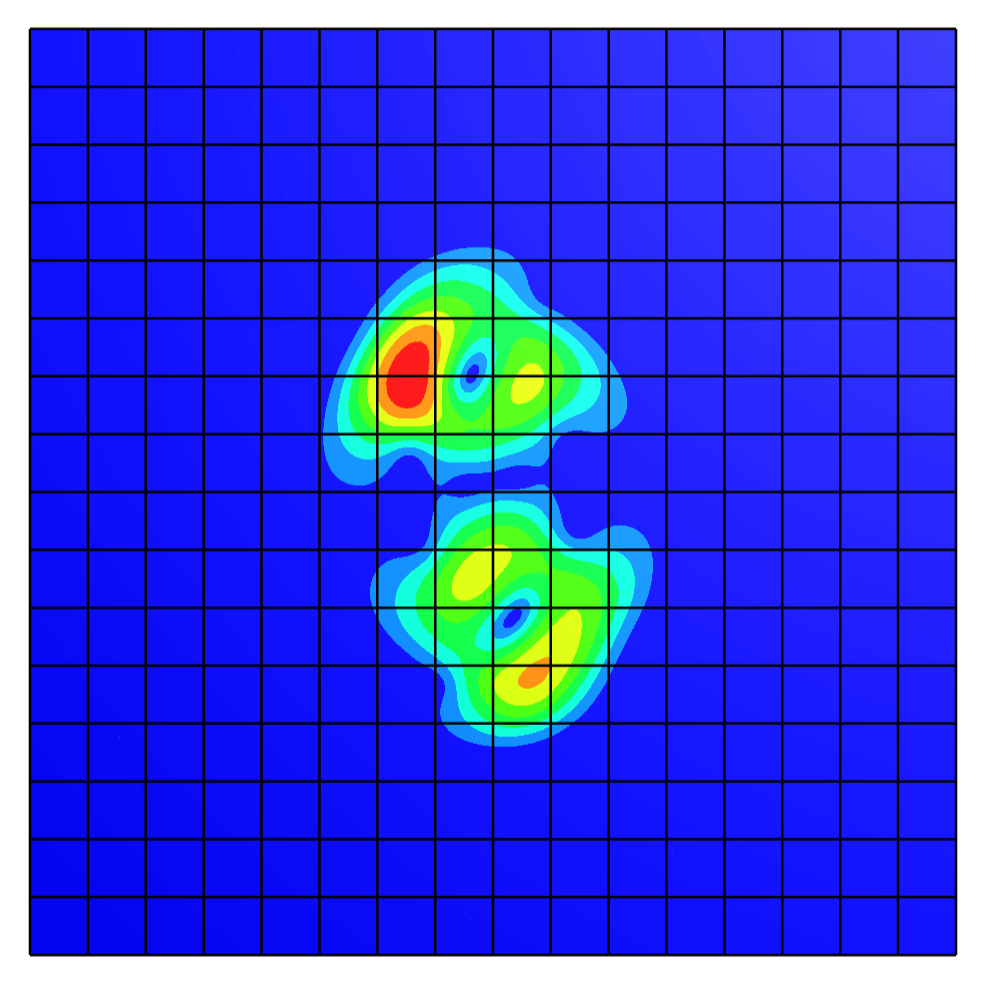}
\includegraphics[width=.16\textwidth]{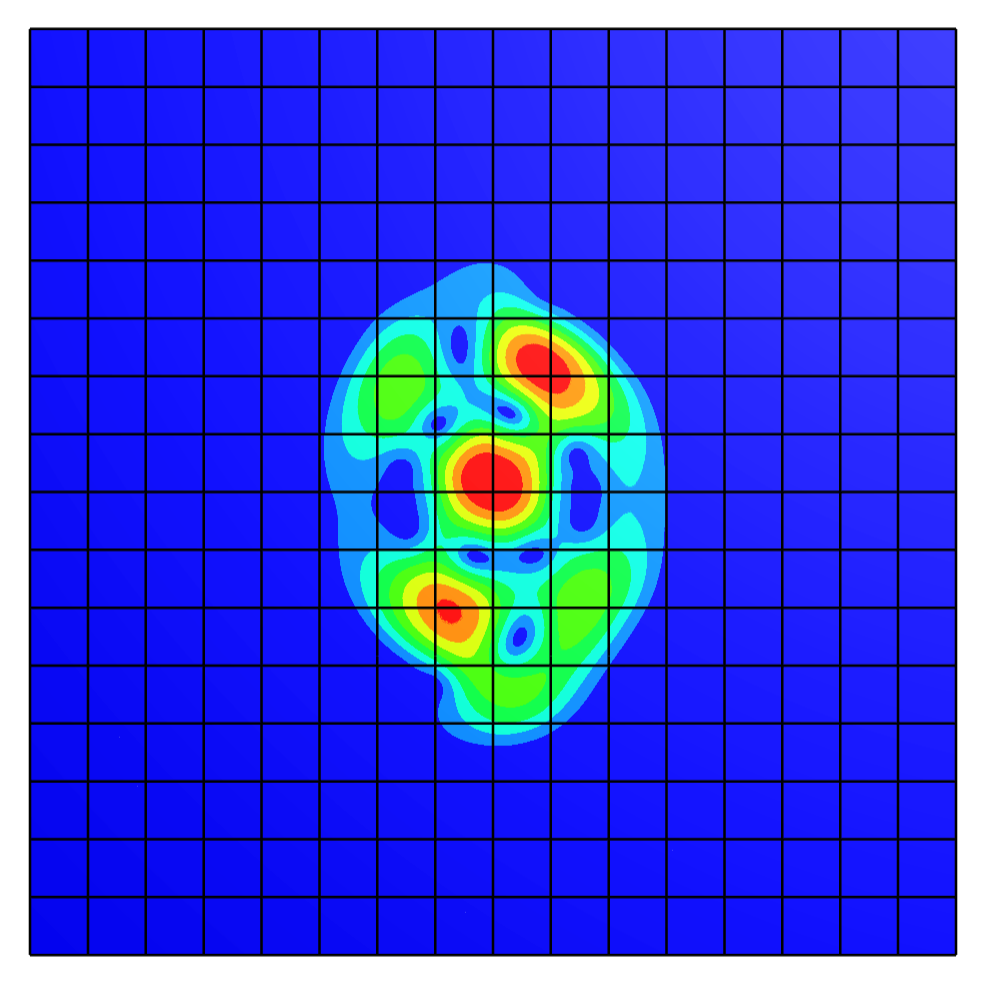}
\includegraphics[width=.16\textwidth]{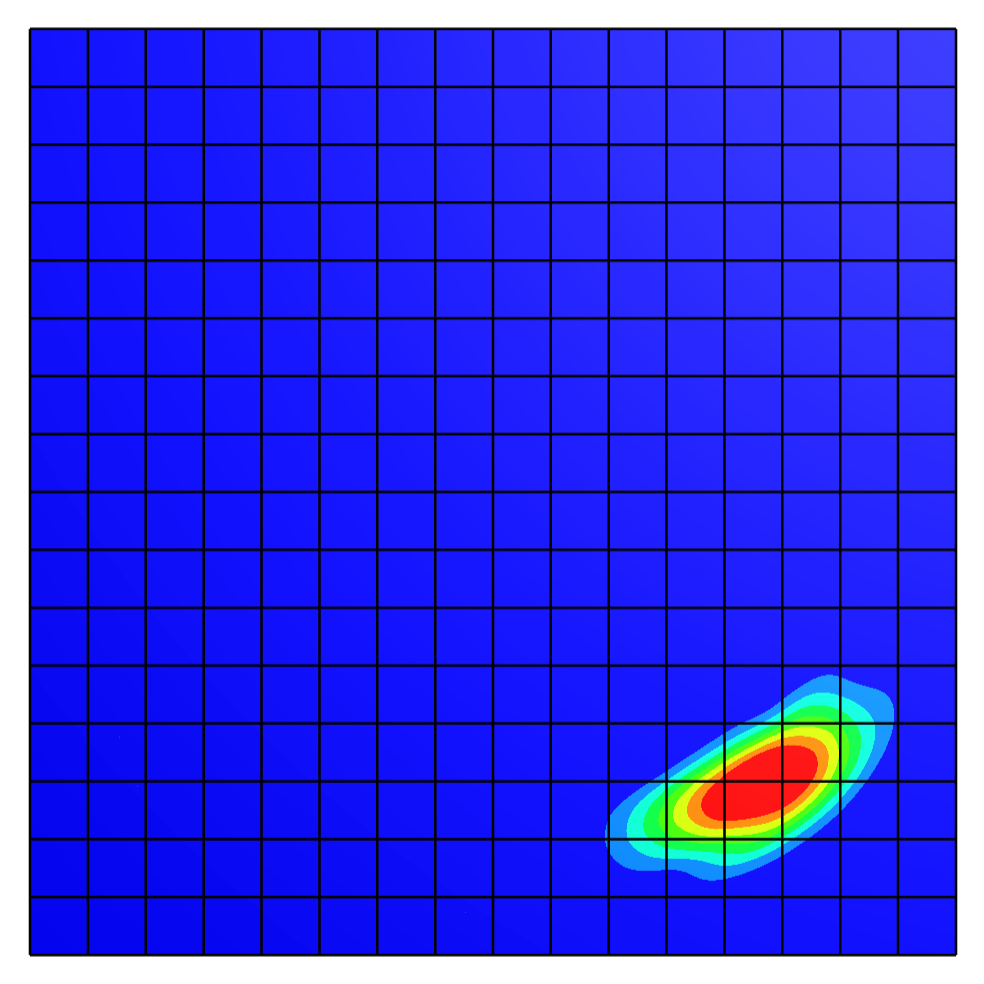}
\includegraphics[width=.16\textwidth]{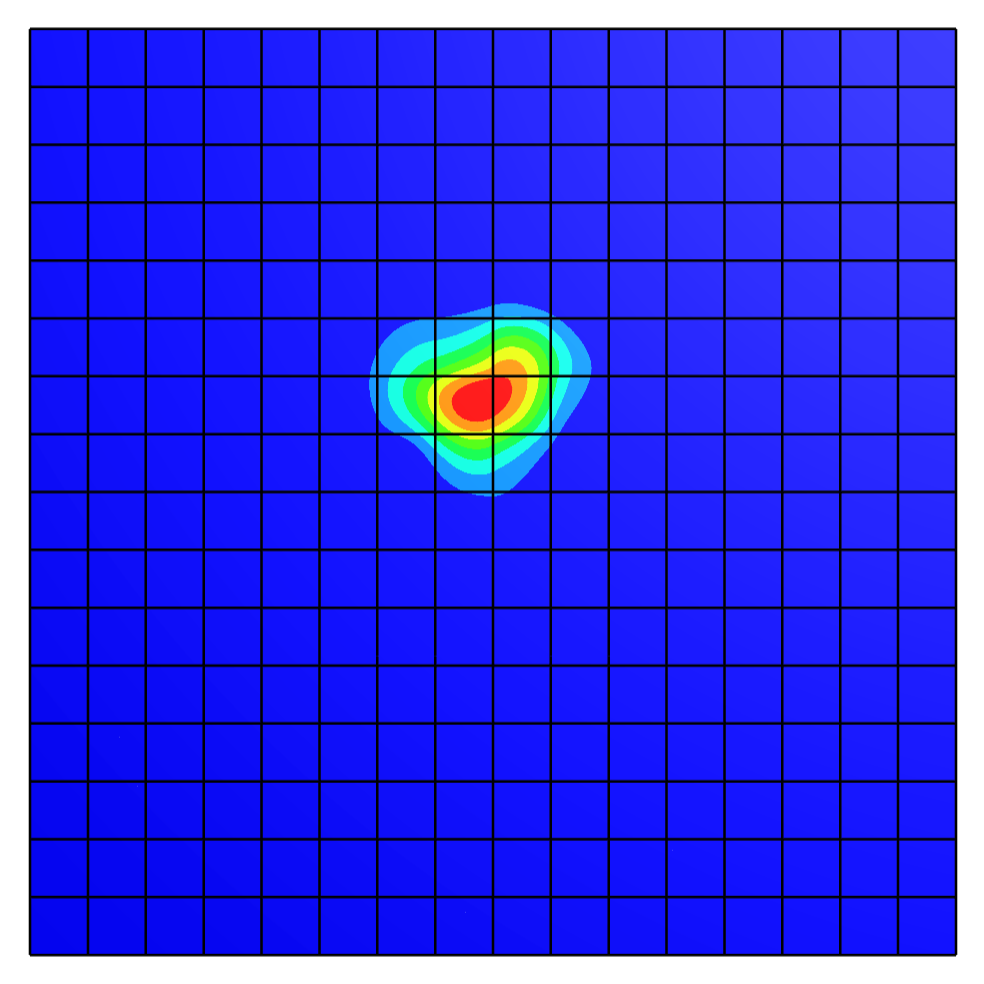}
\includegraphics[width=.16\textwidth]{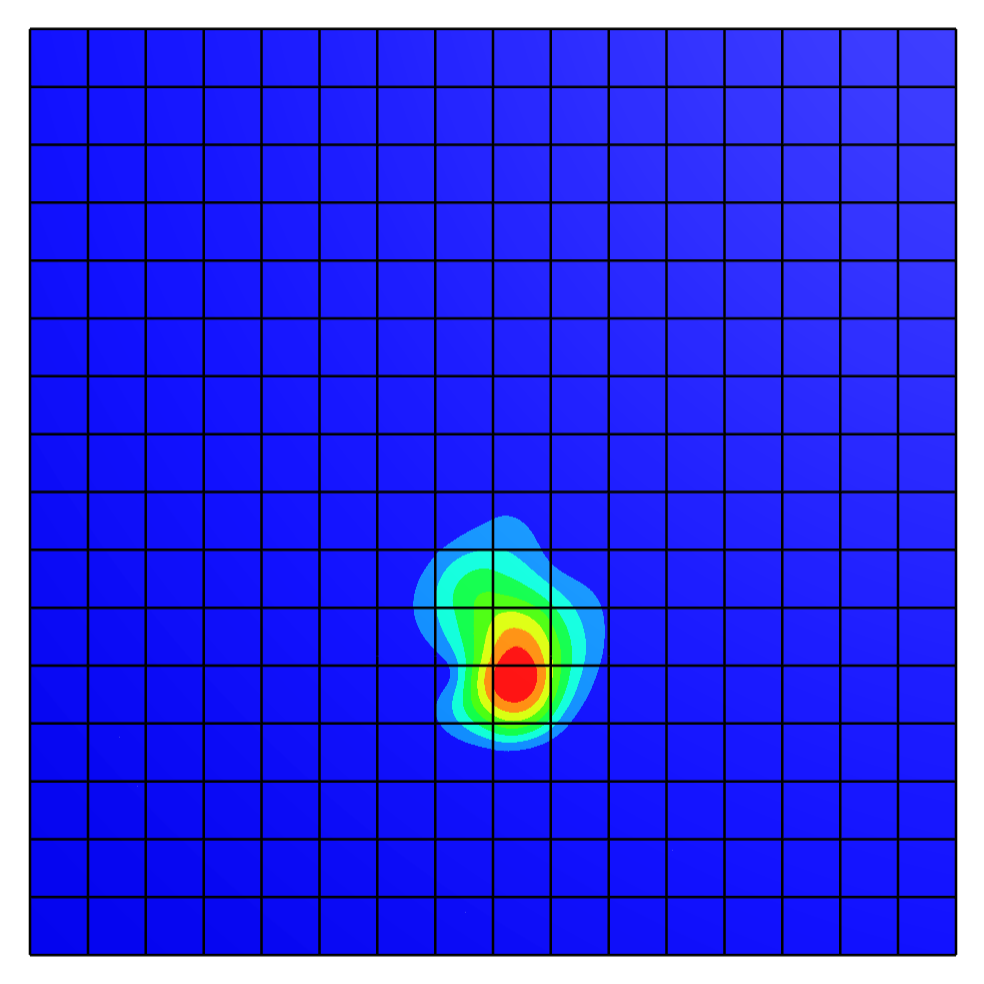}
\includegraphics[width=.16\textwidth]{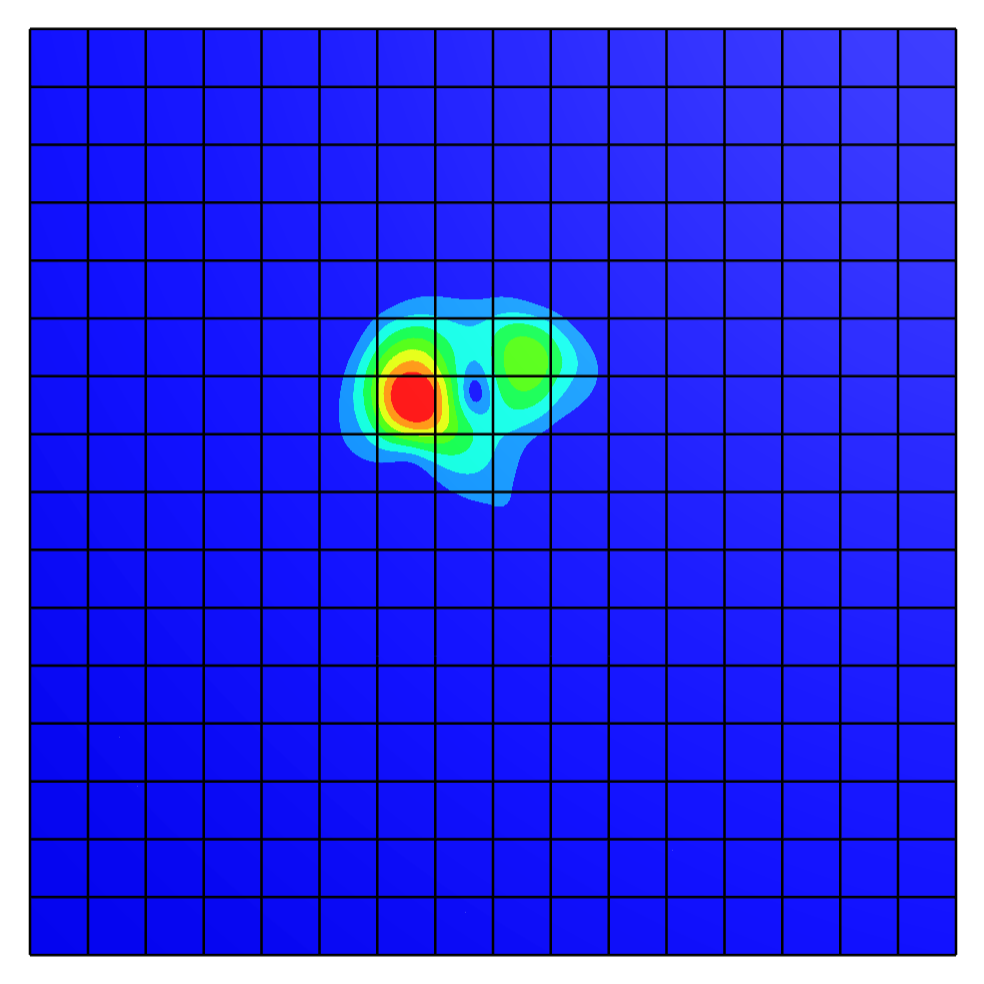}
\includegraphics[width=.16\textwidth]{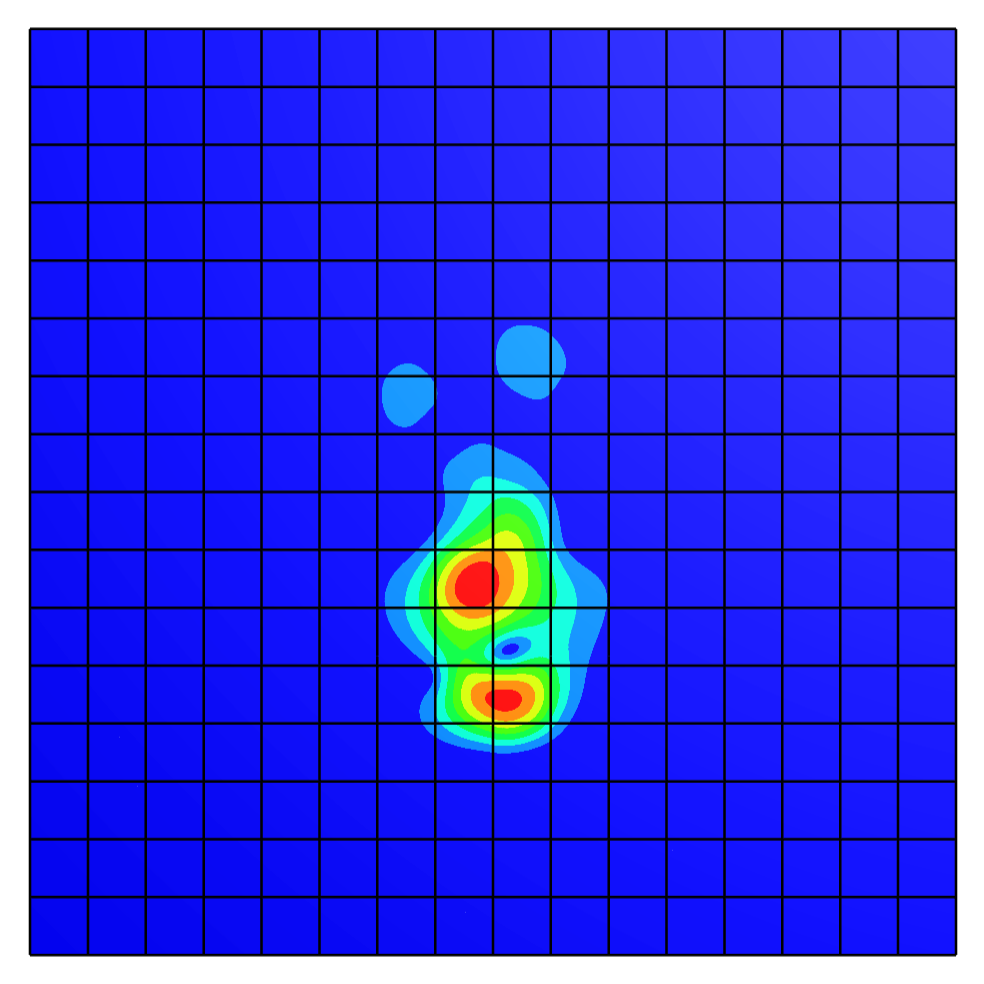}
\includegraphics[width=.16\textwidth]{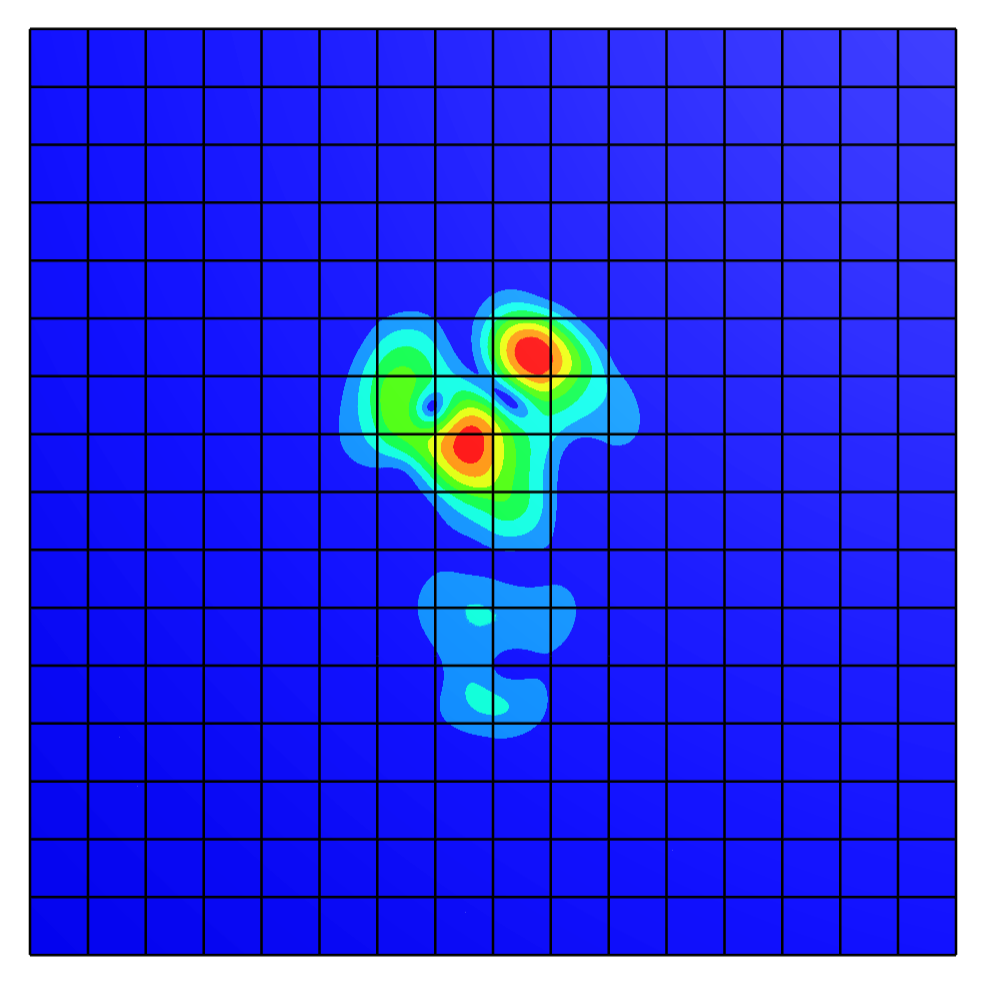}
\includegraphics[width=.16\textwidth]{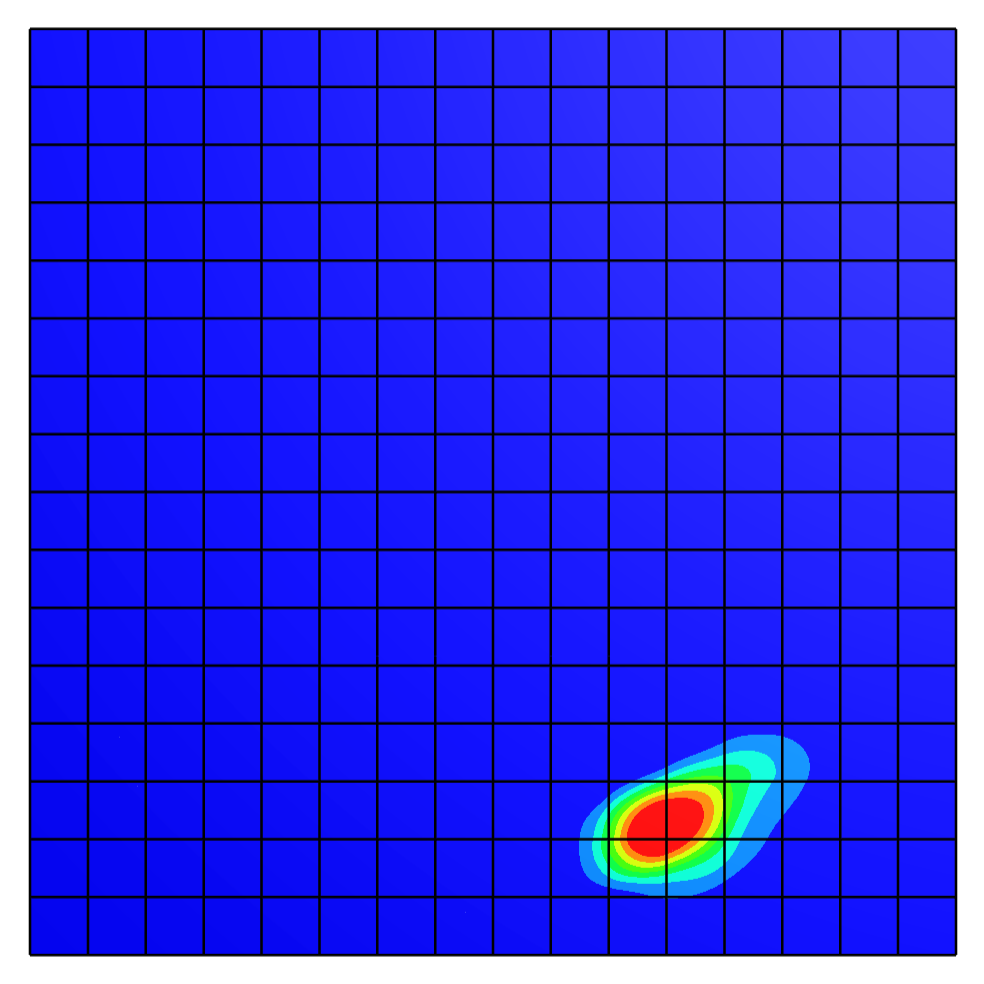}
\includegraphics[width=.16\textwidth]{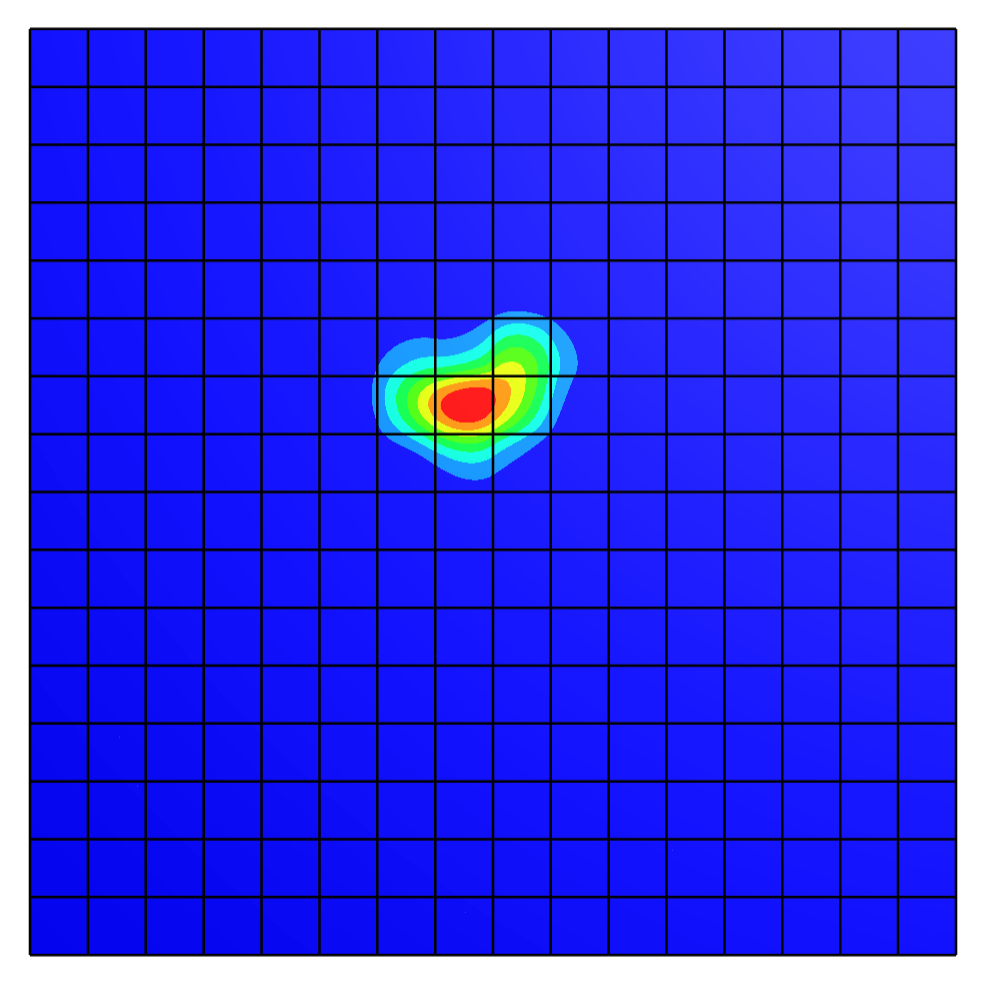}
\includegraphics[width=.16\textwidth]{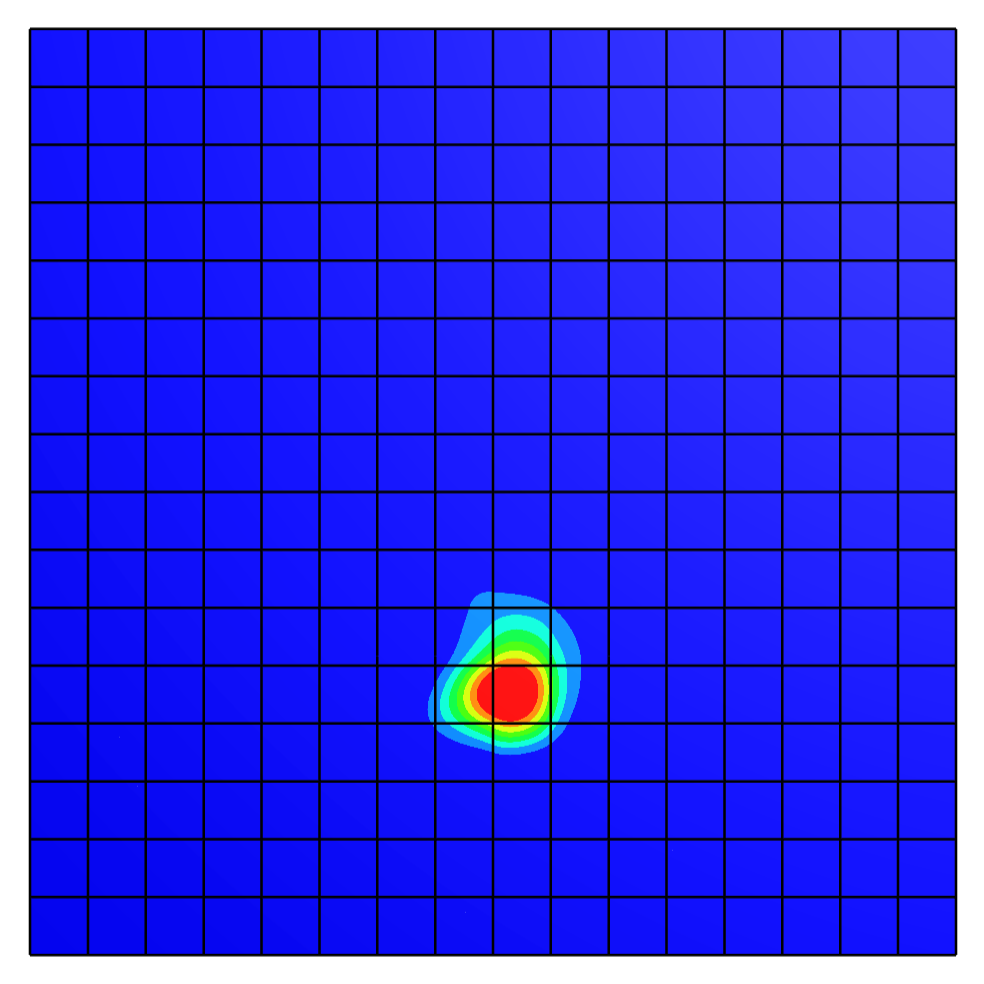}
\includegraphics[width=.16\textwidth]{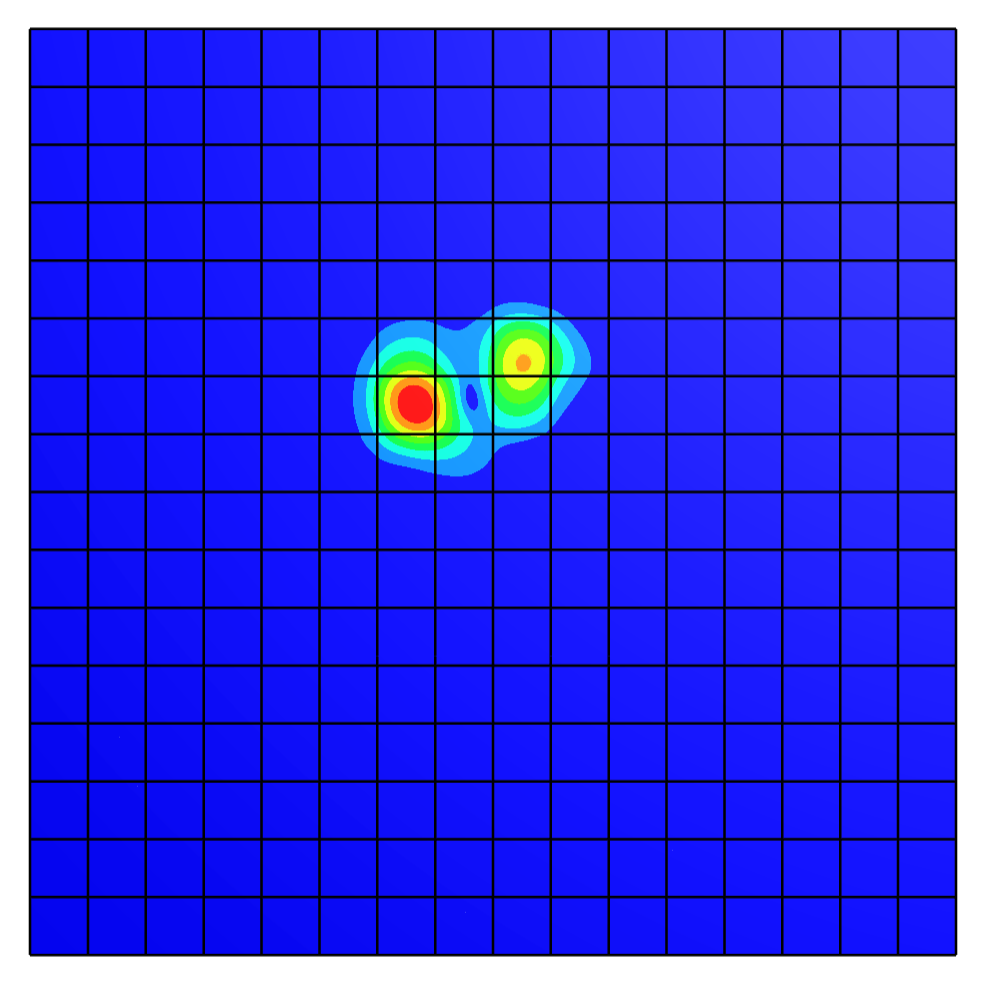}
\includegraphics[width=.16\textwidth]{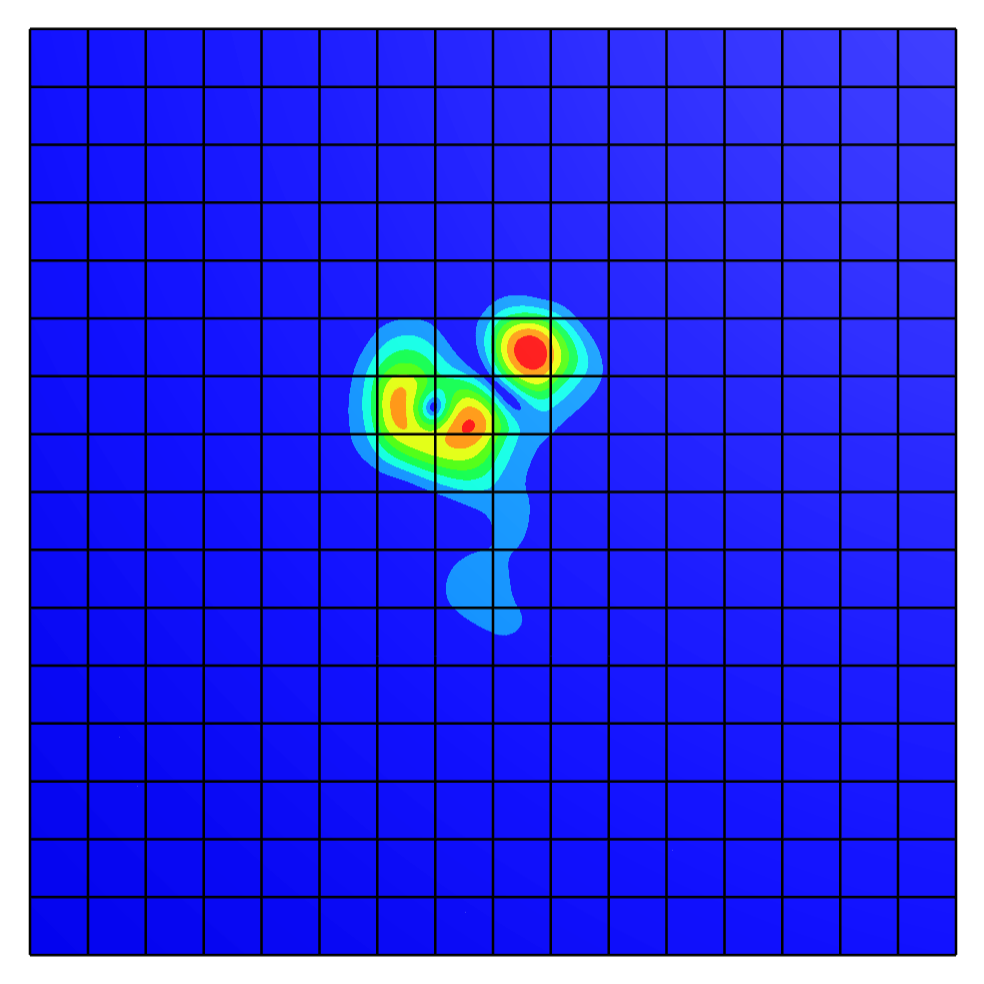}
\includegraphics[width=.16\textwidth]{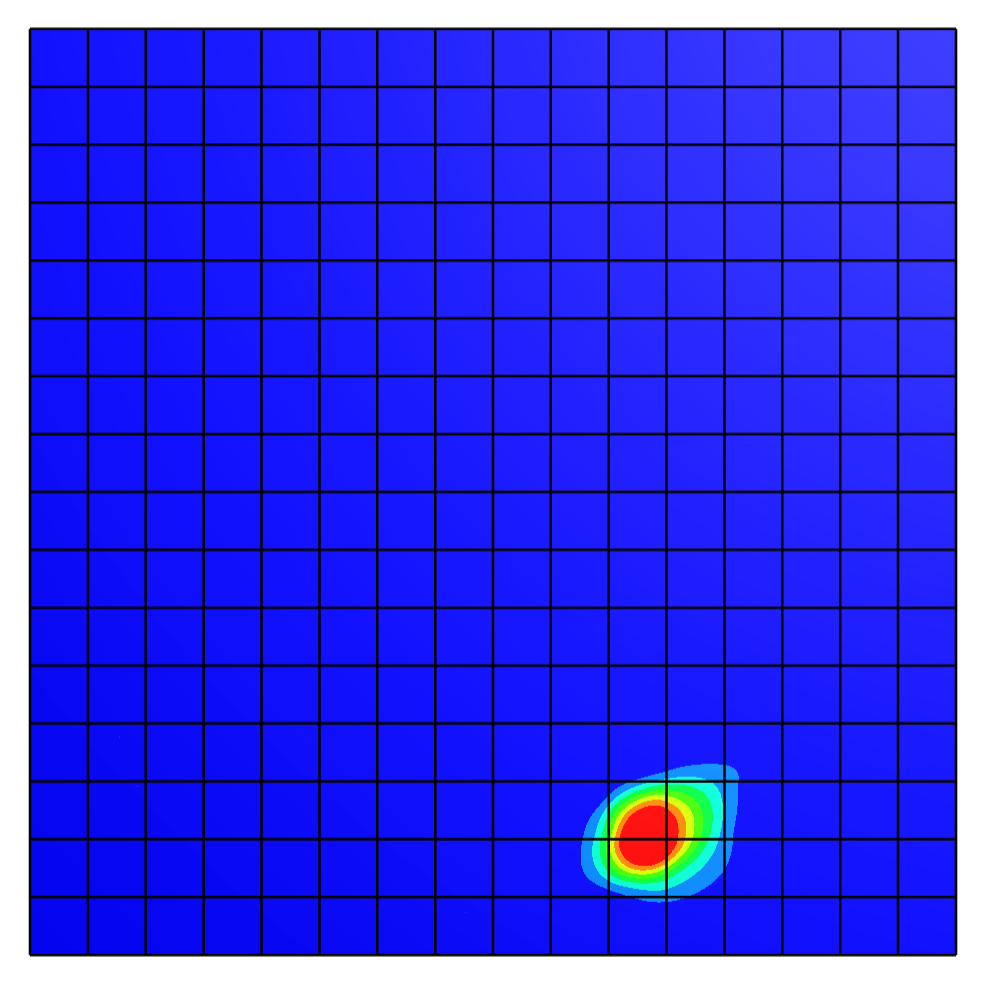}
\includegraphics[width=.16\textwidth]{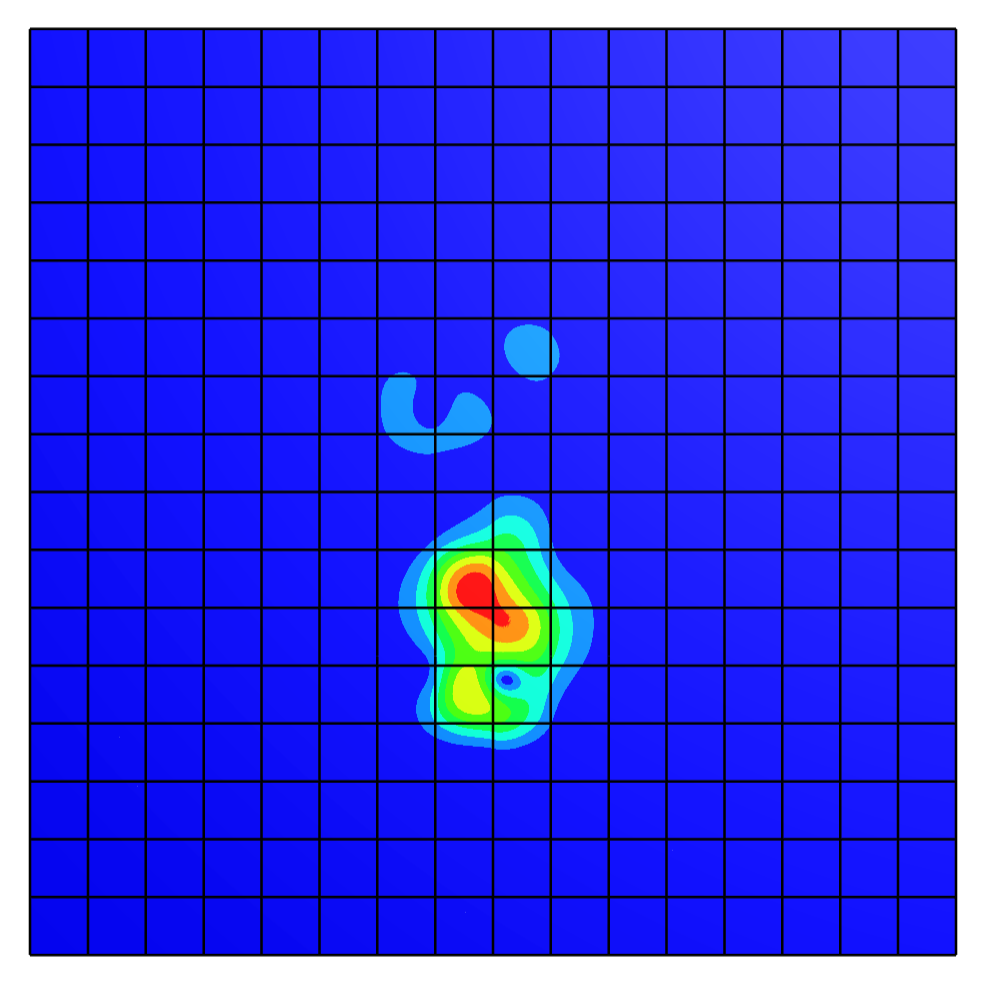}
\caption{The computed eigenvectors $|\phi_j|$ of $H(\mb{A},V)$, $1\leq j\leq 6$, when
     $h=0.01$, for Example 2b: $V^*=100$ (top row),
     $V^*=500$ (middle row), and $V^*=1000$. Compare with
     the top row of Figure~\ref{Ex2aFig2}.\label{Ex2bFig2}}
\end{figure}

The data given in Table~\ref{Ex2bTable1} shows that, even with the
presence of the scalar potential $V$, computing the eigenpairs of
$H(\mb{F},V)$ remains the better option in the sense that it provide
more accurate and more stable eigenvalue approximations on coarser
meshes.  \change{We observe the $L^2(\Omega)$-norm behavior indicated
  in~\eqref{Heuristic1}-\eqref{Heuristic3}  in Table~\ref{Ex2bTable2},
  for each of the three choices of $V^*$.}

\begingroup  
\renewcommand{\arraystretch}{1.5}
\begin{table}
\caption{Computed eigenvalues and timing information for Example
  2b.\label{Ex2bTable1}}
\begin{small}
\begin{center}
  \begin{tabular}{|cc|cll|cccccc|}\hline   
&&$h$ & Total & FEAST& $\lambda_1$ &  $\lambda_2$  & $\lambda_3$ &  $\lambda_4$ & $\lambda_5$ &  $\lambda_6$\\\hline
\parbox[t]{2mm}{\multirow{3}{*}{\rotatebox[origin=c]{90}{$H(\bfA,V)$}}}&\parbox[t]{2mm}{\multirow{3}{*}{\rotatebox[origin=c]{90}{$V^*=100$}}}
&0.01  &  835.32s & 791.97s    &137.200&   147.545 &197.729&   214.967 &240.362&   245.838\\
&&0.03   & 11.44s &9.45s &155.169&   173.436 &215.799&   239.026 &252.172&   257.267\\
&&0.05  & 5.81s & 4.59s & 168.803 & 198.294 &241.373&   265.962 &272.172&   278.125\\
\hline
\parbox[t]{2mm}{\multirow{3}{*}{\rotatebox[origin=c]{90}{$H(\bfF,V)$}}}&\parbox[t]{2mm}{\multirow{3}{*}{\rotatebox[origin=c]{90}{$V^*=100$}}}
&0.01  & 1130.55s& 1056.65s    &137.181&   147.521 &197.714&   214.945 &240.338&   245.832\\
&&0.03   & 7.60s& 6.12s&137.285&   147.677 &198.020&   215.442 &240.790&   245.942\\
&&0.05  & 3.67s & 3.14s&137.502&   148.008 &198.682&   216.5285 &241.700&   246.013\\
\hline\hline
\parbox[t]{2mm}{\multirow{3}{*}{\rotatebox[origin=c]{90}{$H(\bfA,V)$}}}&\parbox[t]{2mm}{\multirow{3}{*}{\rotatebox[origin=c]{90}{$V^*=500$}}}
&0.01  & 401.57s& 358.92s    &224.1576&   273.235 &313.723&   334.855 &374.314&   387.587\\
&&0.03   & 11.66s& 9.39s&242.230&   294.788 &344.036&   351.915 &397.155&   400.898\\
&&0.05  & 6.27s & 5.12s&260.100&   320.014 &361.198&   383.091 &408.173&   432.155\\
\hline
\parbox[t]{2mm}{\multirow{3}{*}{\rotatebox[origin=c]{90}{$H(\bfF,V)$}}}&\parbox[t]{2mm}{\multirow{3}{*}{\rotatebox[origin=c]{90}{$V^*=500$}}}
&0.01  & 431.87s& 357.98s    &224.140&   273.215 &313.6941&   334.843 &374.290&   387.581\\
&&0.03   & 11.97s& 9.33s&224.237&   273.563 &314.400&   335.186 &374.712&   387.637\\
&&0.05  & 5.75s & 4.37s&224.433&   274.276 &315.945&   335.992 &375.607&   387.792\\
\hline\hline
\parbox[t]{2mm}{\multirow{3}{*}{\rotatebox[origin=c]{90}{$H(\bfA,V)$}}}&\parbox[t]{2mm}{\multirow{3}{*}{\rotatebox[origin=c]{90}{$V^*=1000$}}}
&0.01  & 292.54s& 249.45s    &296.412&   354.952 &394.305&   478.797 &483.241&   488.778\\
&&0.03   & 9.56s & 7.50s&313.336&   367.788 &428.568&   494.140 &501.865&   509.388\\
&&0.05  & 5.70s &4.49s&329.695&   385.353 &464.333&   502.269 &531.462&   534.313\\
\hline
\parbox[t]{2mm}{\multirow{3}{*}{\rotatebox[origin=c]{90}{$H(\bfF,V)$}}}&\parbox[t]{2mm}{\multirow{3}{*}{\rotatebox[origin=c]{90}{$V^*=1000$}}}
&0.01  & 328.96s& 256.17s&296.398&   354.941 &394.275&   478.767 &483.235&   488.757\\
&&0.03   & 10.13s& 7.49s&296.537&   355.667 &394.991&   479.497 &483.283&   489.185\\
&&0.05  & 5.74s & 4.31s&296.828&   357.223 &396.527&   481.027 &483.488&   490.216\\
\hline
\end{tabular}
\end{center}
\end{small}
\end{table}
\endgroup

\begin{table}
\caption{Validating ~\change{\eqref{Heuristic2} and~\eqref{Heuristic3}} for Example 2b when $V^{*}\neq 0$.\label{Ex2bTable2}}
\begin{center} 
 \vspace*{2mm}
  \begin{tabular}{|c|c|cccccc|}
    \hline
    $V^*$&&$j=1$&$j=2$&$j=3$&$j=4$&$j=5$&$j=6$\\\hline
    \multirow{2}{*}{100}
    &$\|\nabla \psi_j\|_{L^2(\Omega)}$  &   72.7210 & 78.4483 & 63.3522  & 76.2744 & 74.3651& 51.9391 \\  
    &$\|\mb{A} \psi_j\|_{L^2(\Omega)}$ &   72.4420 & 78.1542 & 62.7169 & 75.6253 & 73.5758 & 51.4715 \\  
    \hline
    \multirow{2}{*}{100}
    &$\|\nabla \phi_j\|_{L^2(\Omega)}$  &   30.8666 & 34.8089 & 35.1378  & 41.5322 & 38.5640 & 21.4864 \\ 
    &$\|\mb{F} \phi_j\|_{L^2(\Omega)}$  &   30.2012 & 34.1386 & 33.9774  & 40.3259 & 37.0173 & 20.3293 \\ \hline
    \hline
    \multirow{2}{*}{500}
    &$\|\nabla \psi_j\|_{L^2(\Omega)}$  &   72.8499 & 73.1349 & 83.2860  & 64.7322& 71.1297 & 52.4223  \\  
    &$\|\mb{A} \psi_j\|_{L^2(\Omega)}$ &    72.3454& 72.6661& 82.5599  & 63.8395 & 69.9942 & 51.8223 \\  
    \hline
    \multirow{2}{*}{500}
    &$\|\nabla \phi_j\|_{L^2(\Omega)}$  &   30.3262 & 39.2781 & 43.9434  & 34.4332 & 35.8518 & 18.3745 \\ 
    &$\|\mb{F} \phi_j\|_{L^2(\Omega)}$  &   29.0915 & 38.3966 & 42.5488  & 32.7229 & 33.5400 & 16.5846\\ \hline
    \hline
    \multirow{2}{*}{1000}
    &$\|\nabla \psi_j\|_{L^2(\Omega)}$  &     71.3297& 61.4968 & 84.4442  & 77.5003 & 52.7697 & 75.3932  \\  
    &$\|\mb{A} \psi_j\|_{L^2(\Omega)}$ &    70.5915 & 60.4635 & 83.4330  & 75.9612& 51.9162 & 74.4759 \\  
    \hline
    \multirow{2}{*}{1000}
    &$\|\nabla \phi_j\|_{L^2(\Omega)}$  &  31.3115 & 43.0073 & 42.9838 & 39.9066 & 19.0242 & 35.7392\\ 
    &$\|\mb{F} \phi_j\|_{L^2(\Omega)}$  &     29.5897 & 41.5157 & 40.9590 & 36.8256 & 16.5088 & 33.7592 \\ \hline
  \end{tabular}
  \end{center}
\end{table}

\subsection{Example 3}
We take the constant-norm vector field 
\begin{align}\label{Ex3A}
  \bfA =-100(\cos( f), \sin(f))\quad,\quad f = \pi\sin(\pi x)\cos(\pi y)~,
\end{align}
on the domain $\Omega=(-1,1)\times(-1,1)$, see
Figure~\ref{VectorFields}.   In fact, we consider a few variations of
this problem, in which we allow holes in the domain, and impose
different boundary conditions on these holes.  A summary of the
different configurations is given below, using $B(\mb{c},r)$ to denote
the disk of radius $r$ centered at $\mb{c}$:
\begin{itemize}
\item $\Omega=(-1,1)\times(-1,1)$ with Dirichlet boundary conditions.
\item $\Omega_1^D = \Omega \setminus B((0,1/2),0.1)$ with Dirichlet boundary conditions.
\item  $\Omega_1^M = \Omega \setminus B((0,1/2),0.1)$ with Dirichlet boundary conditions on $\partial\Omega$ and
  Neumann conditions on $\partial B((0,1/2),0.1)$.
\item $\Omega_2=\Omega \setminus \left(B((-0.8,-0.5),0.1)\cup
    B((0.8,-0.5),0.1)\right)$ with
  Dirichlet conditions.
\item $\Omega_3=\Omega\setminus \left(B((-0.8,-0.5),0.1)\cup
    B((0.8,-0.5),0.1)\cup [-0.05, 0.15] \times [0.4, 0.6]\right)$ with
  Dirichlet conditions.
\end{itemize}
The norms of $\mb{A}$ and $\mb{F}$ are given in Table~\ref{Ex3Table1},
where we see the these variations have a negligible affect on them. 
\begin{table}
  \caption{The norms of $\mb{A}$ and $\mb{F}$ on the various domains
    in Example 3, computed with $h=0.01$.\label{Ex3Table1}}
  \centering
\begin{tabular}{|c|cccc|}\hline
  &$\Omega$&$\Omega_1$&$\Omega_2$&$\Omega_3$\\\hline
  $\|\mb{A}\|_{L^2(\cdot)}$&200&199.2214&198.4256&197.4151\\
  $\|\mb{F}\|_{L^2(\cdot)}$&118.9613&118.6121&117.7474&117.5523\\\hline
\end{tabular}
\end{table}

The decision to include holes in the domain reflects the fact that not
all curl-free vector fields are gradients in multiply-connected
domains, so vector fields having the same curl may differ by more than
a gradient.  Lemma~\ref{lemma1} established an equivalence class of
gauges of $\mb{A}$ that differ from each other only by gradients, and
Theorem~\ref{CanonicalGauge} asserts that our choice of gauge has
minimal norm in this class.  If we were to expand the equivalence
class to all include vector fields having the same curl as $\mb{A}$,
the one having minimal norm for $\Omega_j$ would likely be different.
Whether or not broadening the definition of the equivalence class
would allow for further improvements in computational efficiency is a
topic we do not take up here.  The locations of the holes are chosen
to interfere with where eigenvectors localize in comparison to the
``base case'' $\Omega$.  We show the first six eigenvectors, in
modulus, for each of the configurations, in Figure~\ref{Ex3Fig1}.  In
$\Omega_3$, the square is a little off center, in order to break
symmetry in eigenvectors that would have localized there in the base
case, and we see that it does so for its $\phi_3$.

\begin{figure}
  \centering
 \includegraphics[width=.16\textwidth]{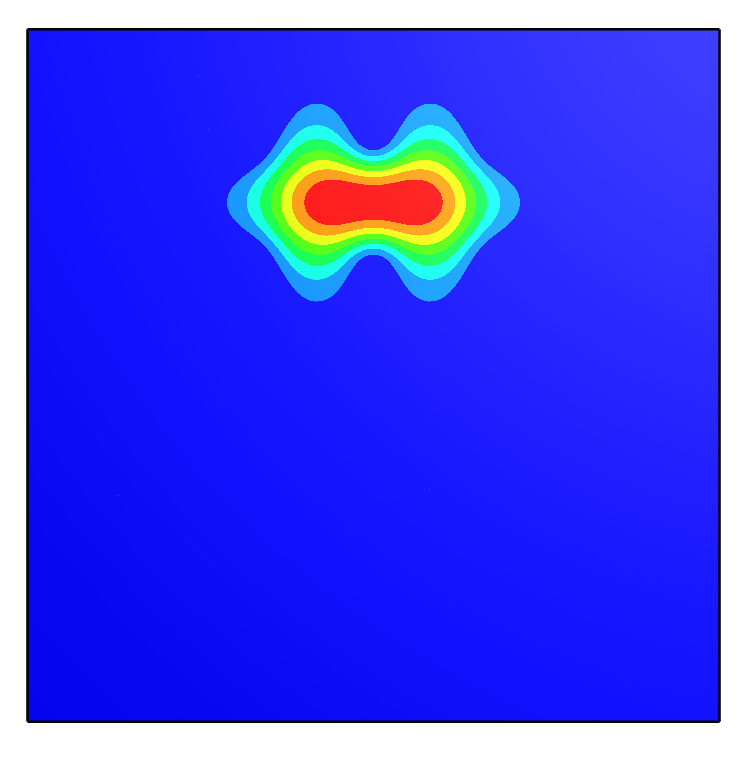}
  \includegraphics[width=.16\textwidth]{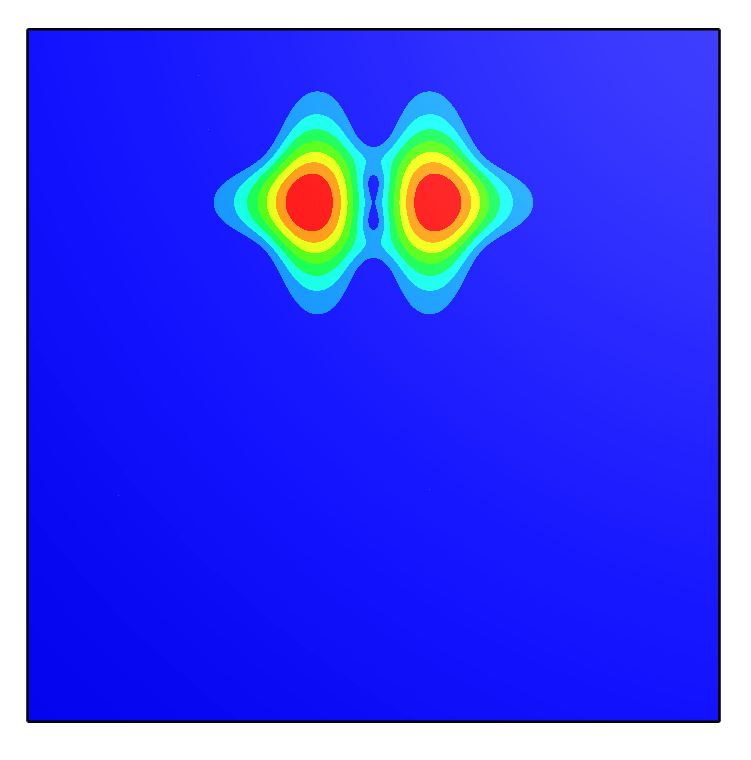}
  \includegraphics[width=.16\textwidth]{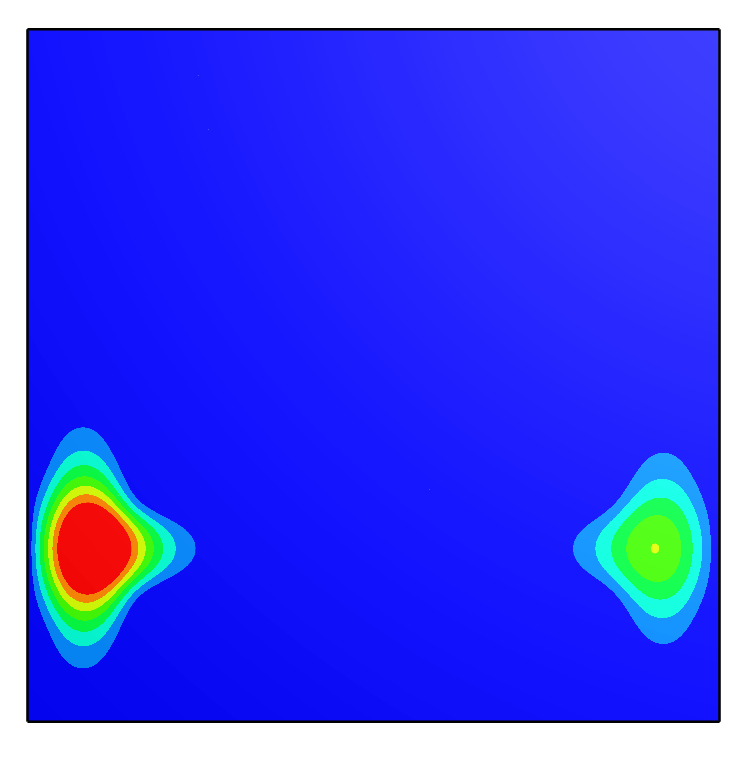}
  \includegraphics[width=.16\textwidth]{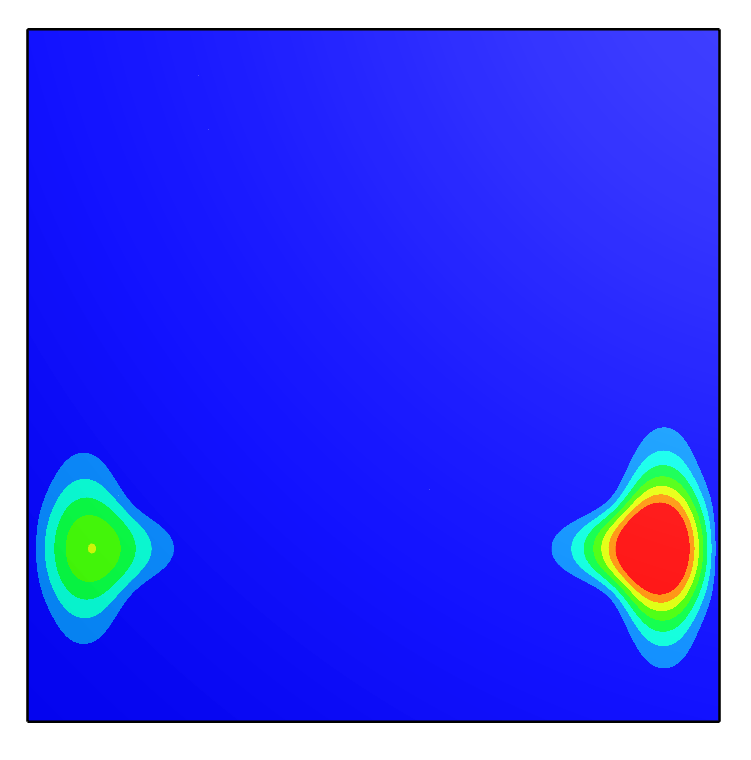}
  \includegraphics[width=.16\textwidth]{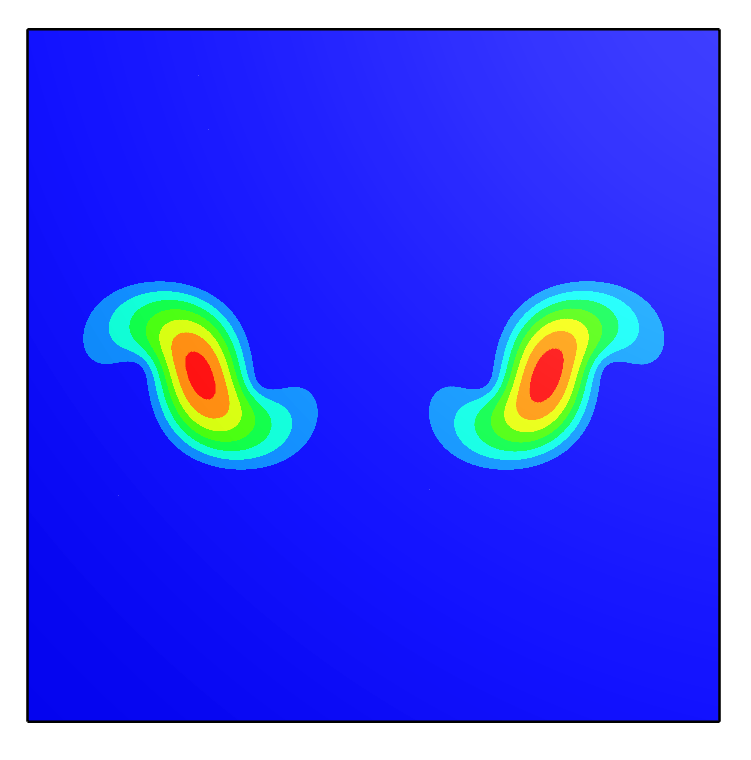}
  \includegraphics[width=.16\textwidth]{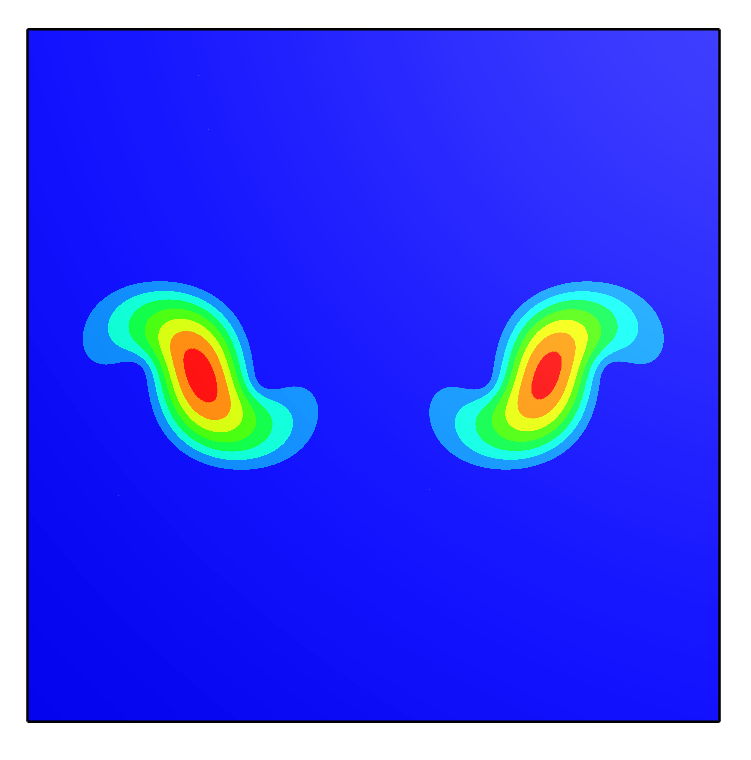}
\includegraphics[width=.16\textwidth]{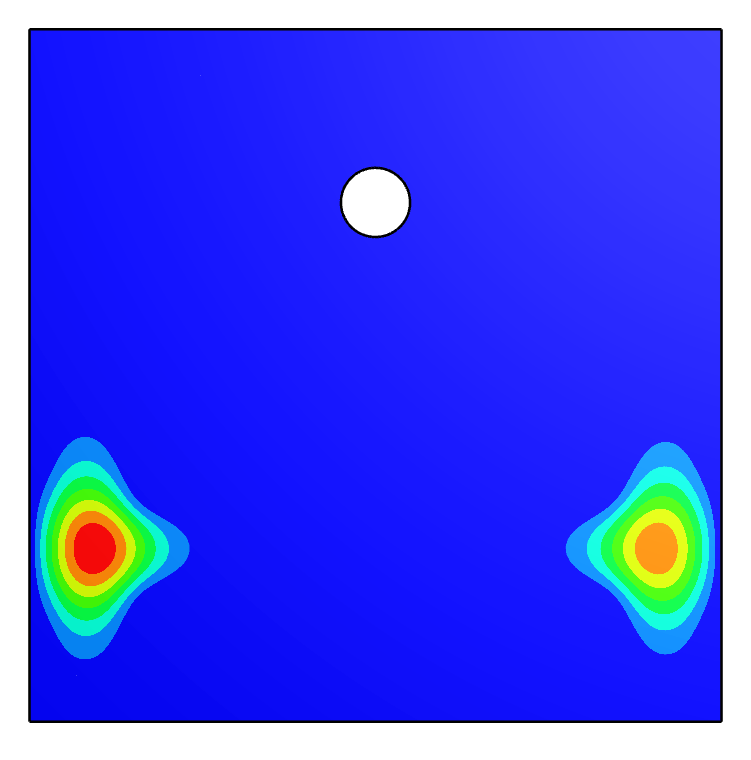}
\includegraphics[width=.16\textwidth]{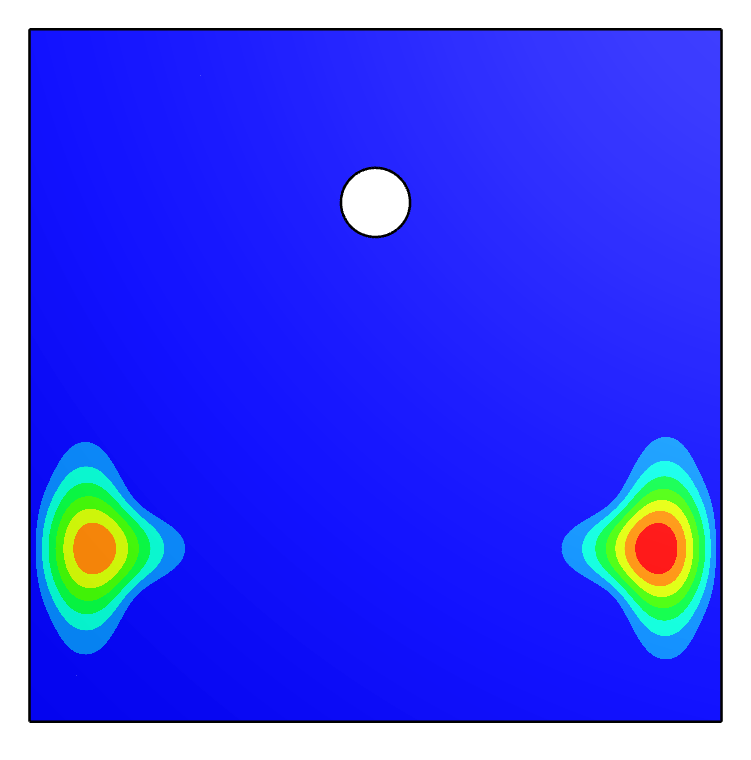}
\includegraphics[width=.16\textwidth]{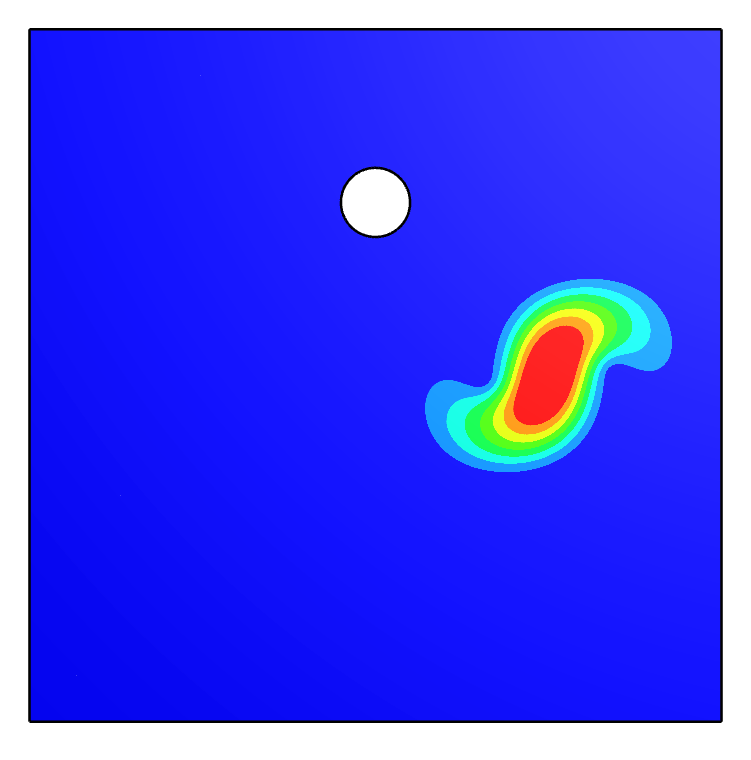}
\includegraphics[width=.16\textwidth]{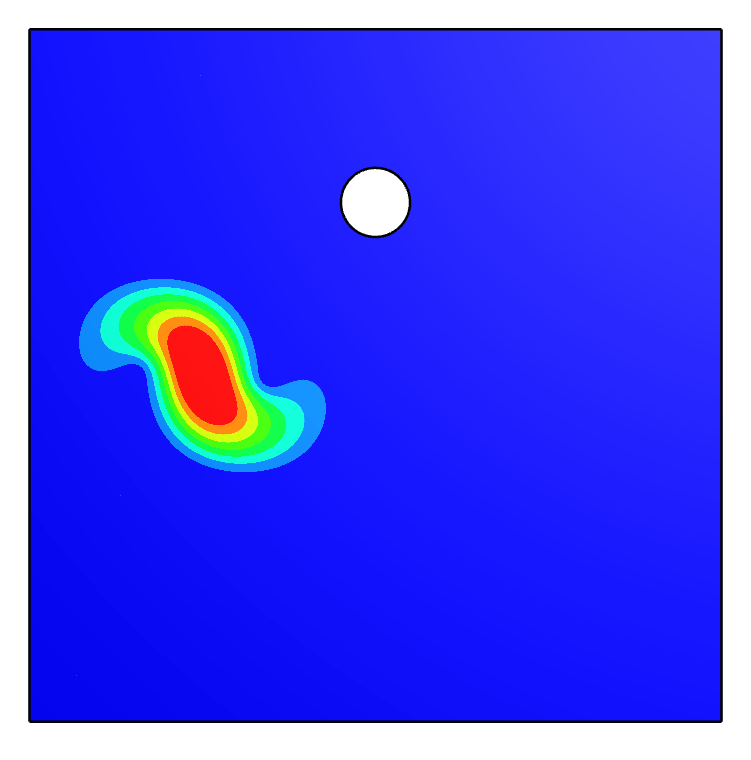}
\includegraphics[width=.16\textwidth]{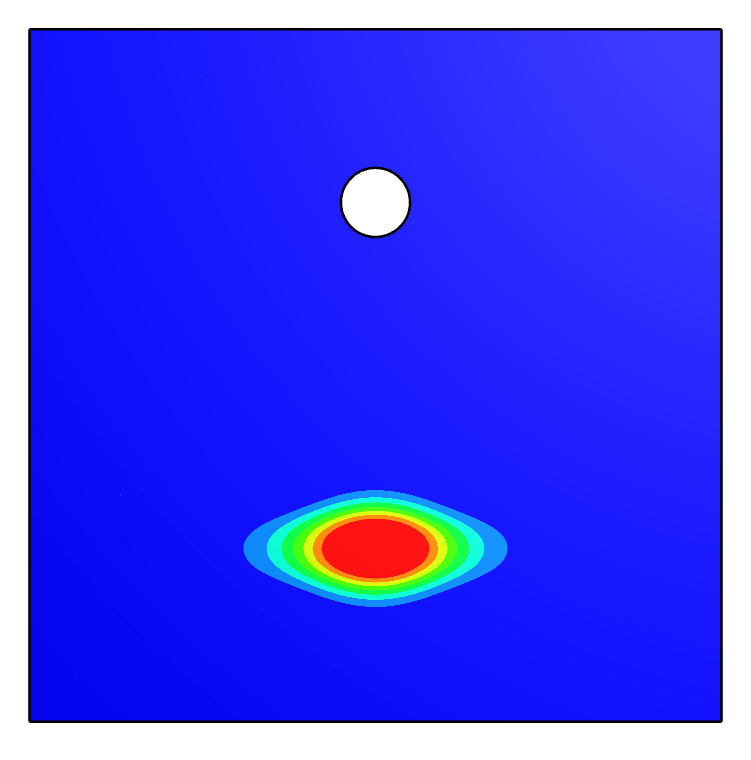}
\includegraphics[width=.16\textwidth]{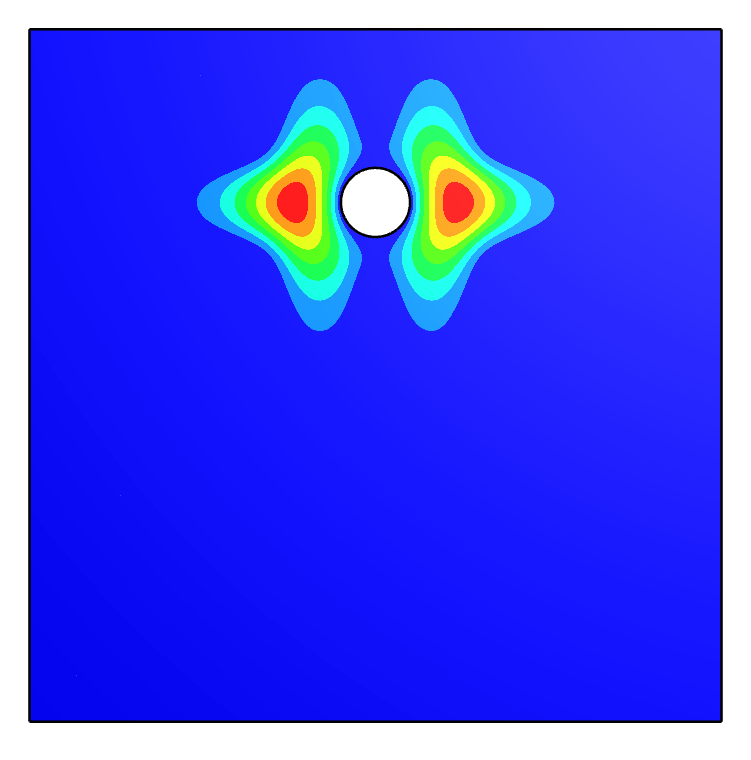}
\includegraphics[width=.16\textwidth]{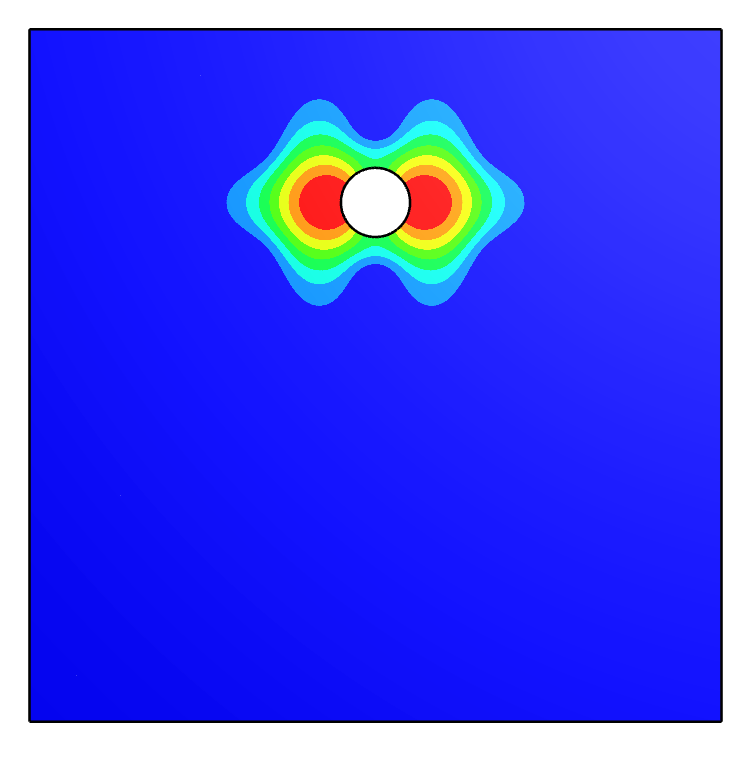}
\includegraphics[width=.16\textwidth]{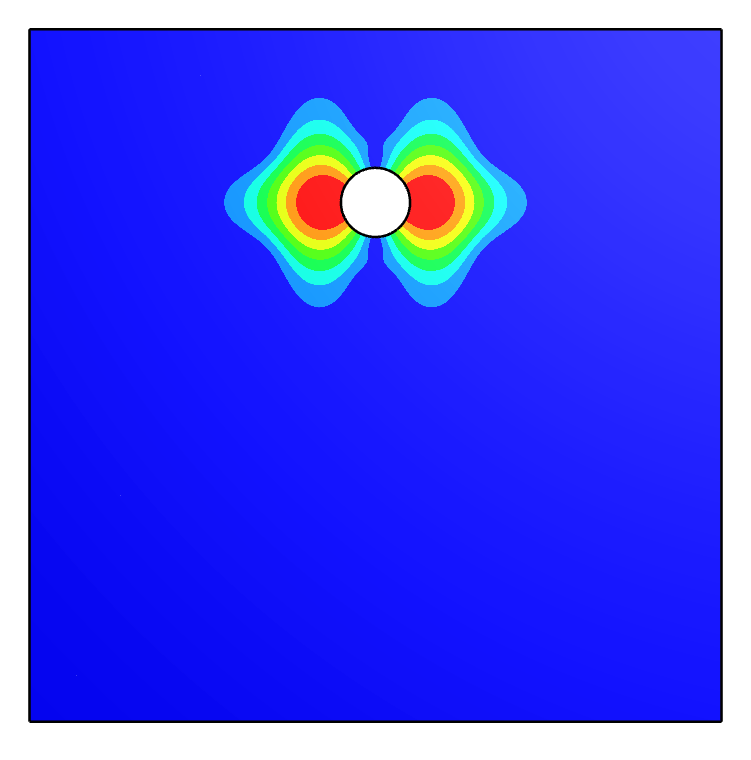}
\includegraphics[width=.16\textwidth]{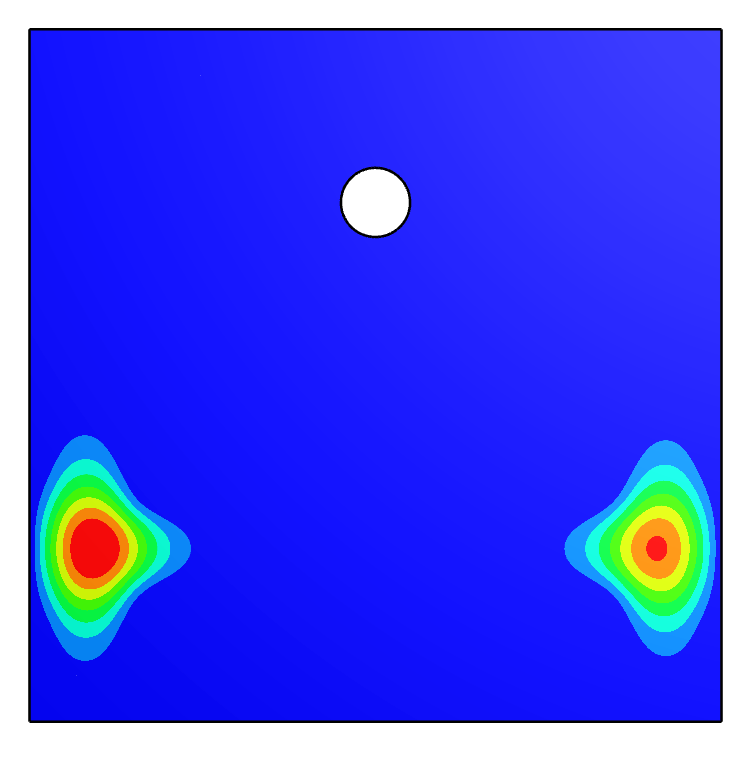}
\includegraphics[width=.16\textwidth]{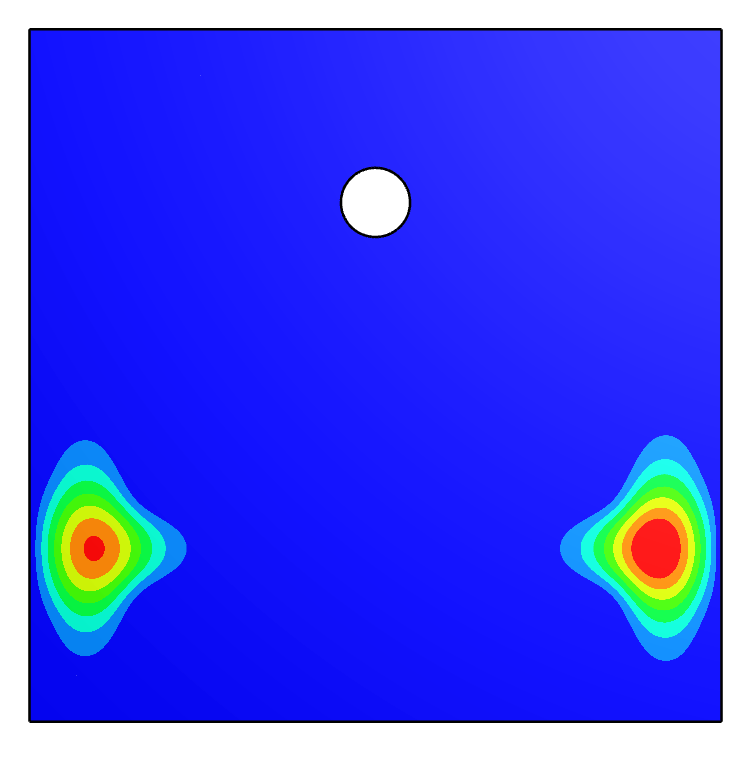}
\includegraphics[width=.16\textwidth]{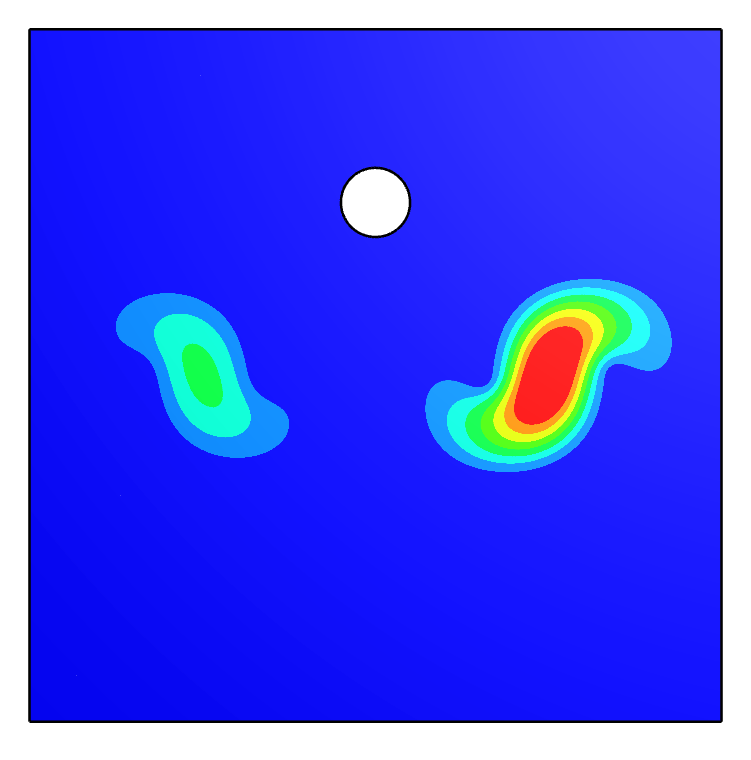}
\includegraphics[width=.16\textwidth]{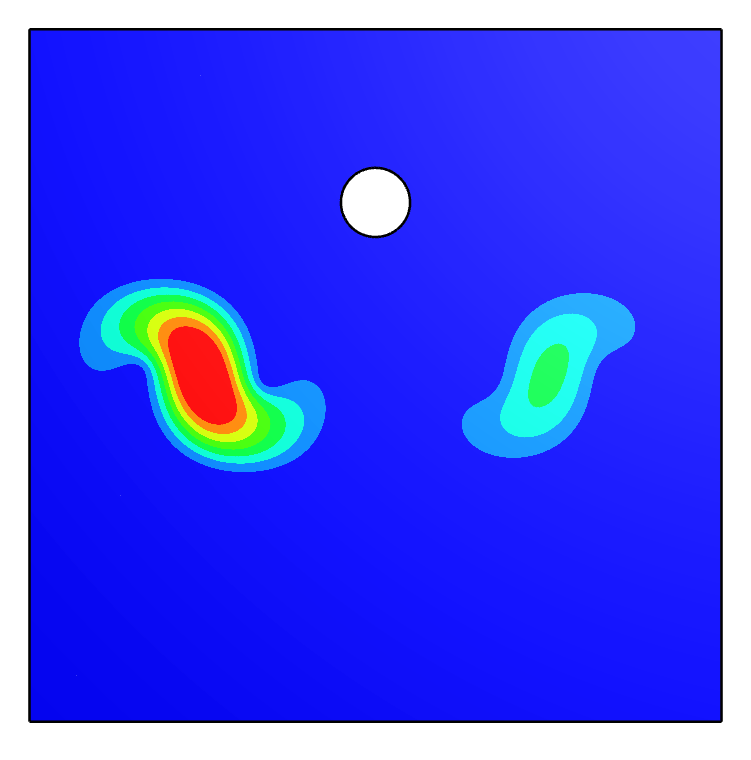}
\includegraphics[width=.16\textwidth]{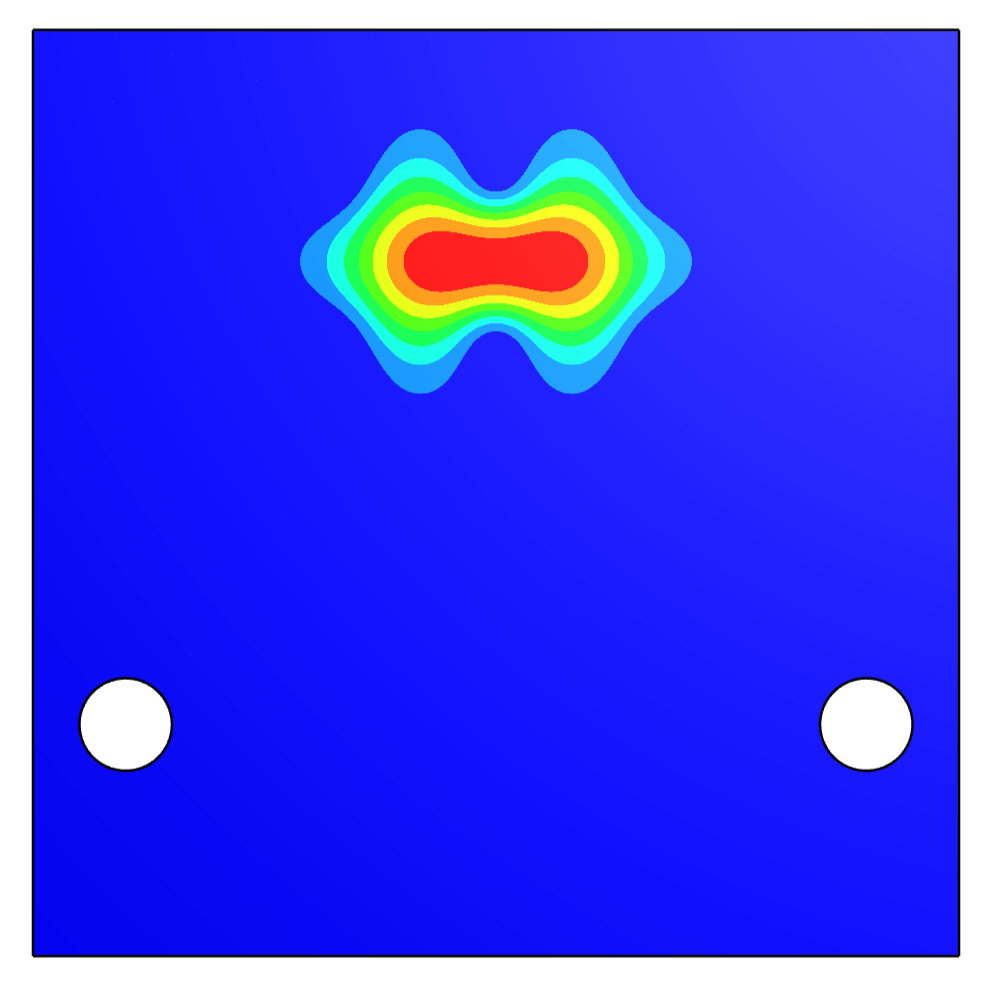}
\includegraphics[width=.16\textwidth]{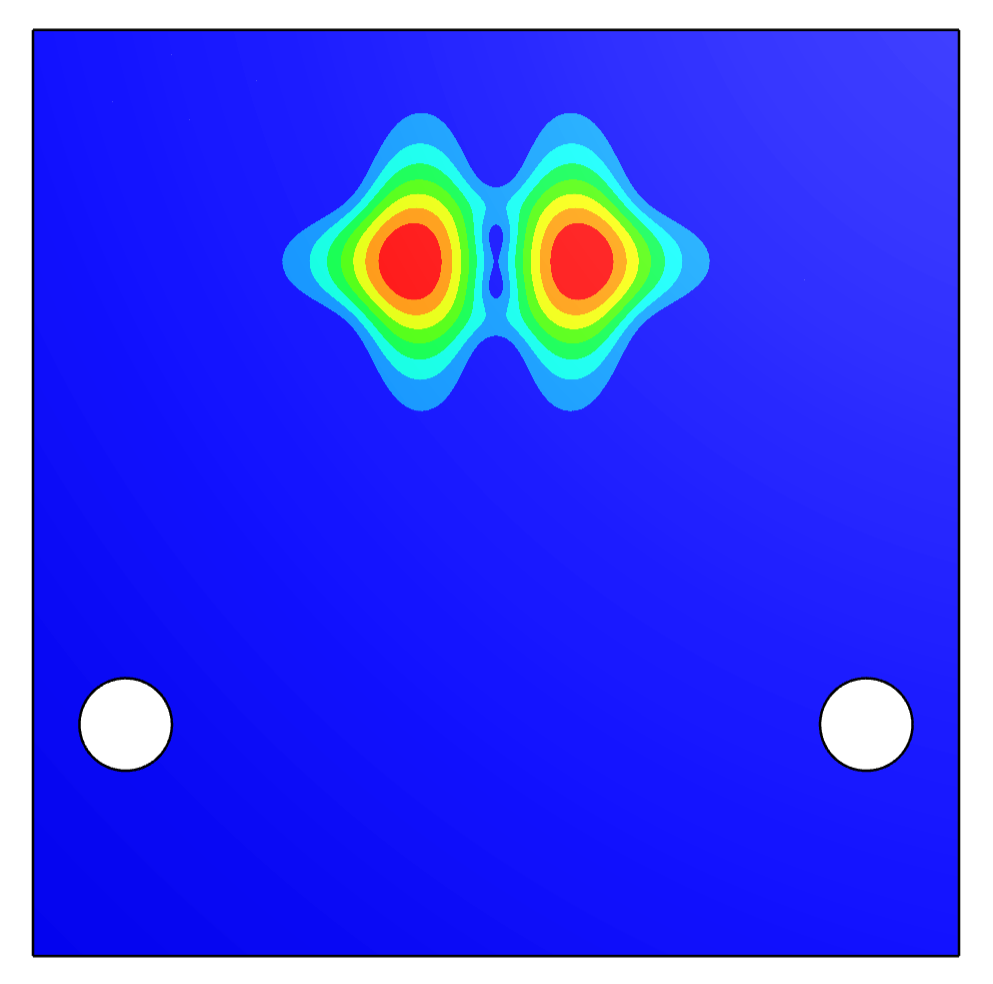}
\includegraphics[width=.16\textwidth]{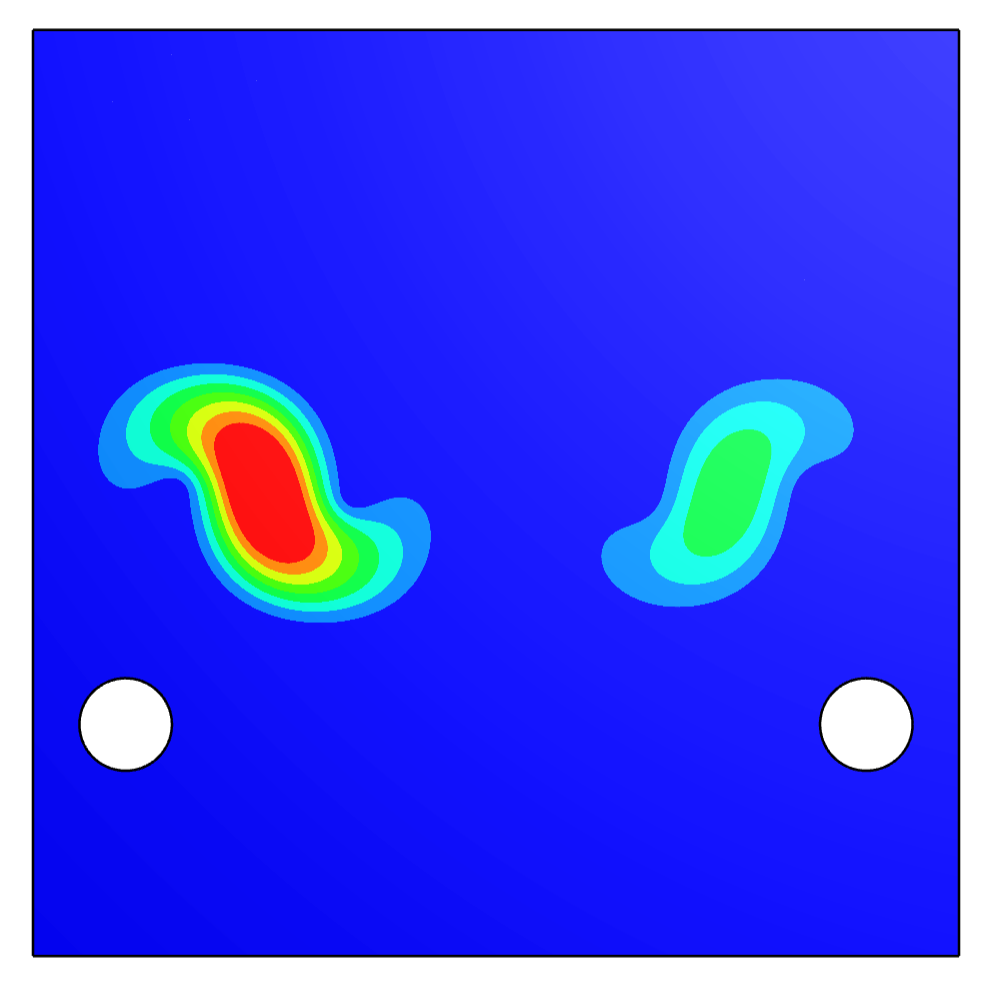}
\includegraphics[width=.16\textwidth]{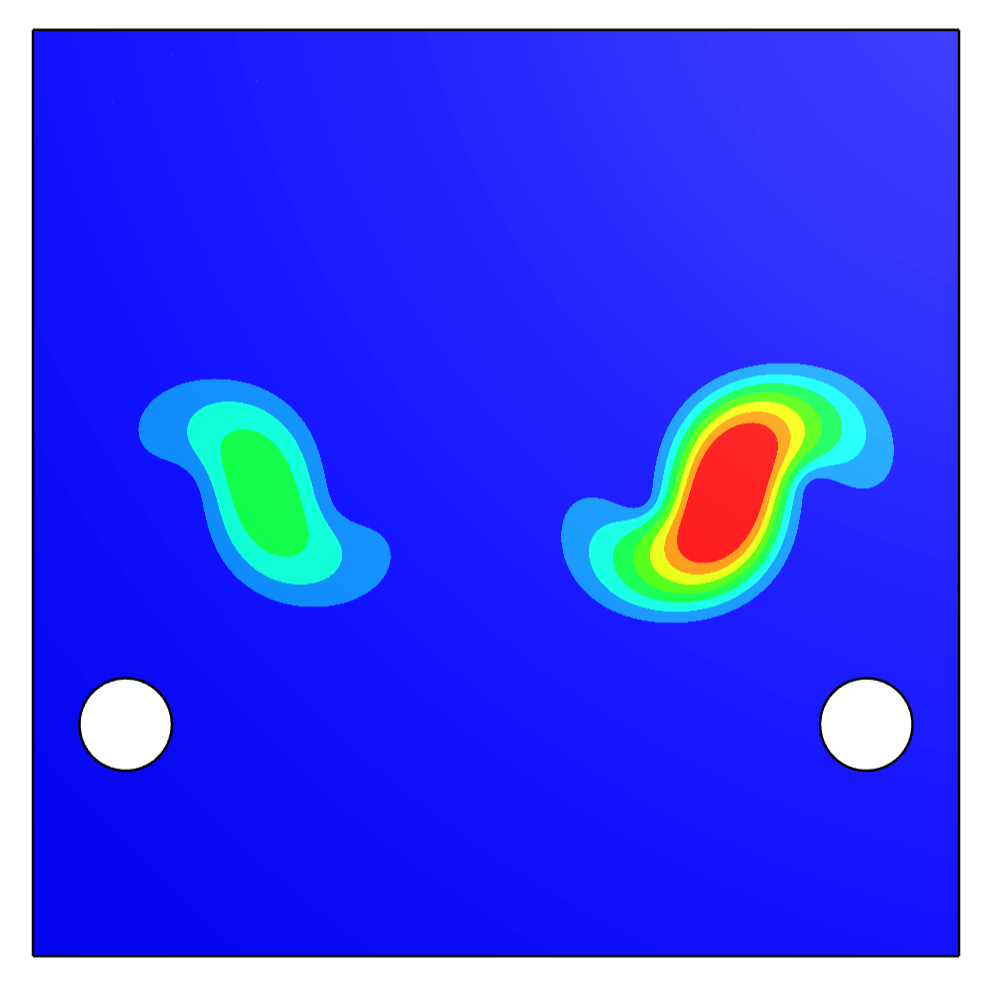}
\includegraphics[width=.16\textwidth]{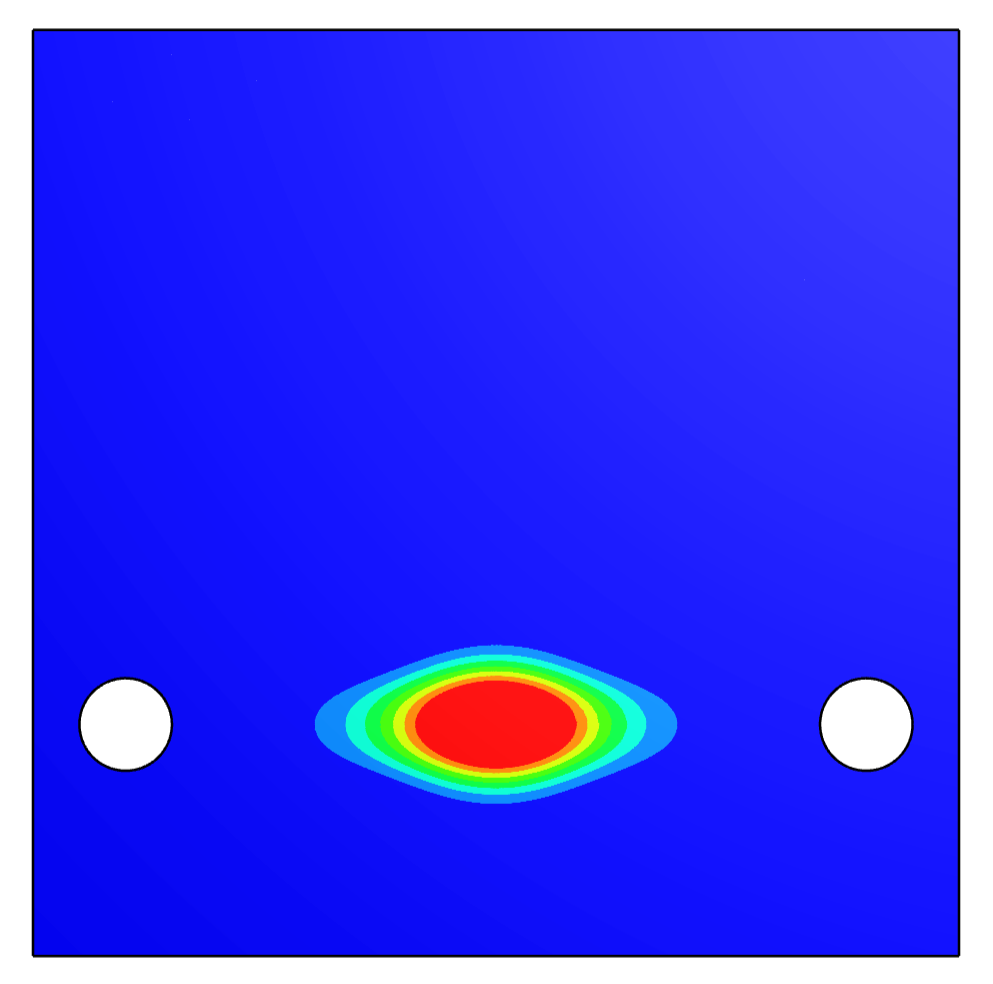}
\includegraphics[width=.16\textwidth]{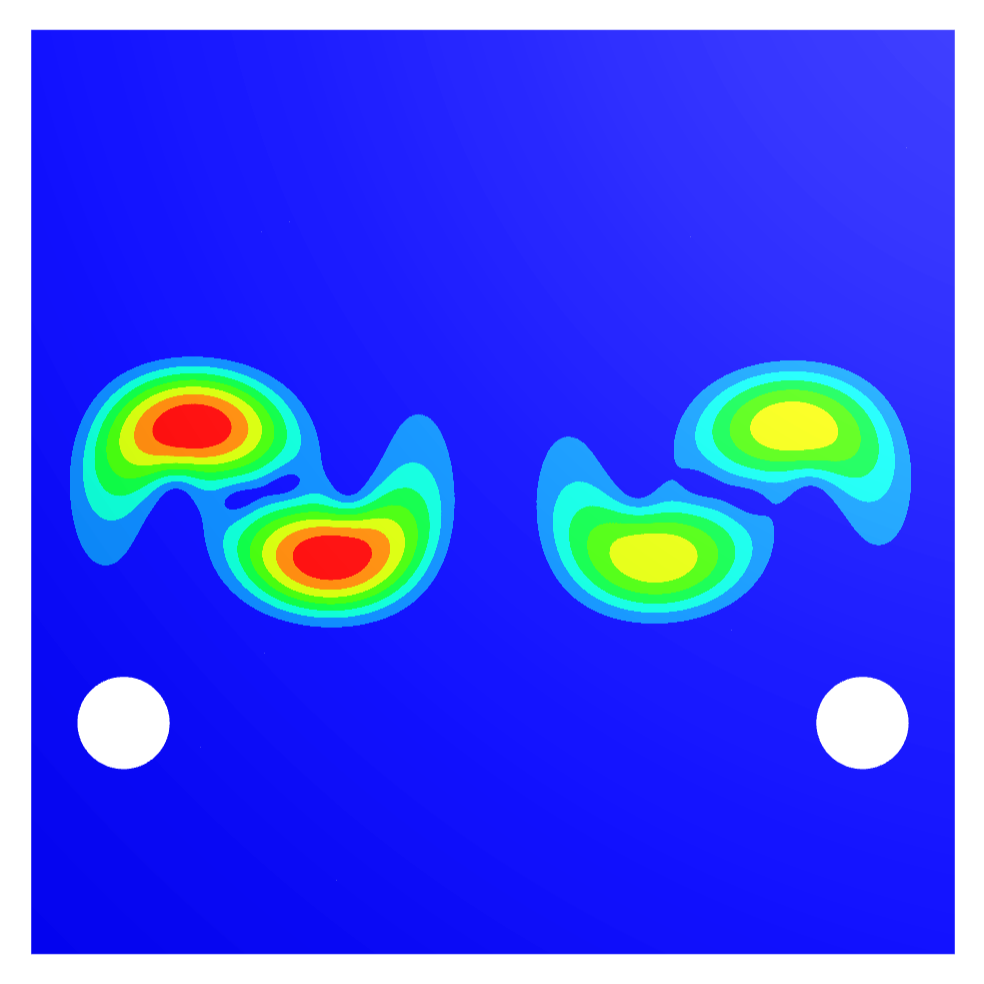}
\includegraphics[width=.16\textwidth]{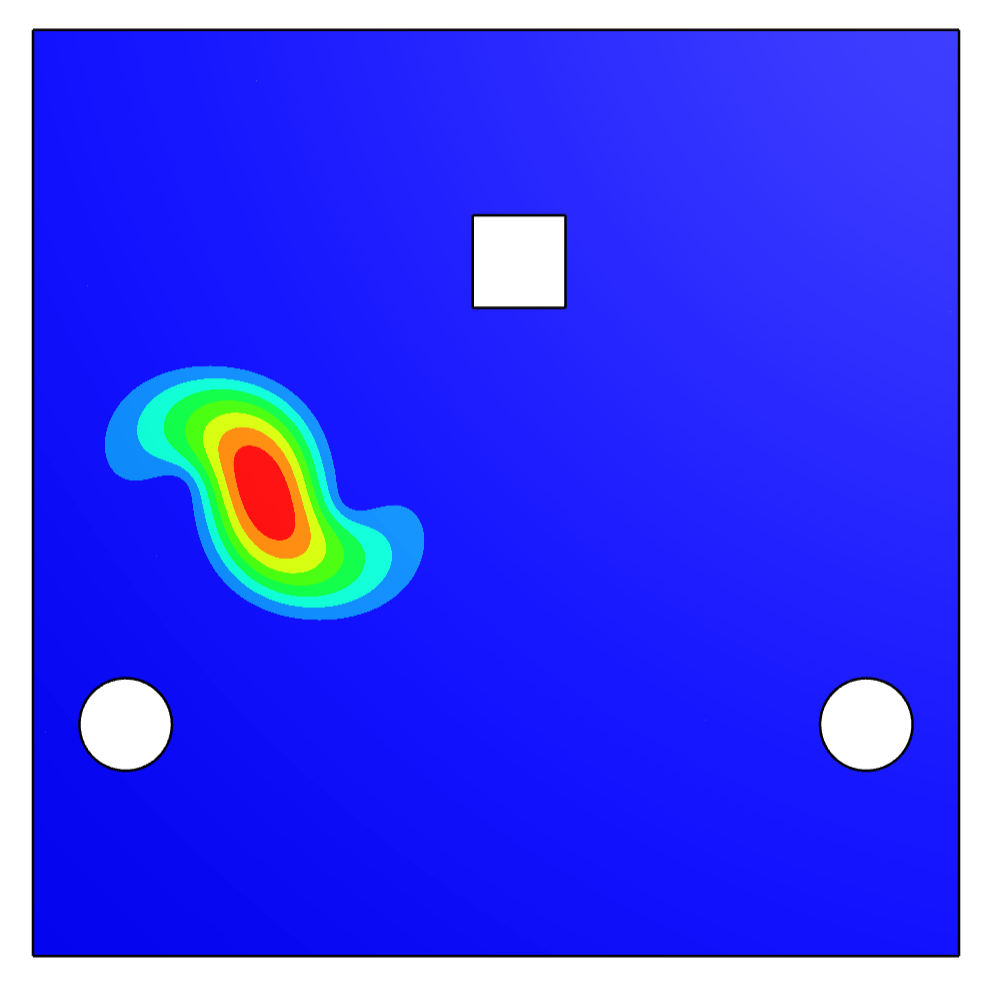}
\includegraphics[width=.16\textwidth]{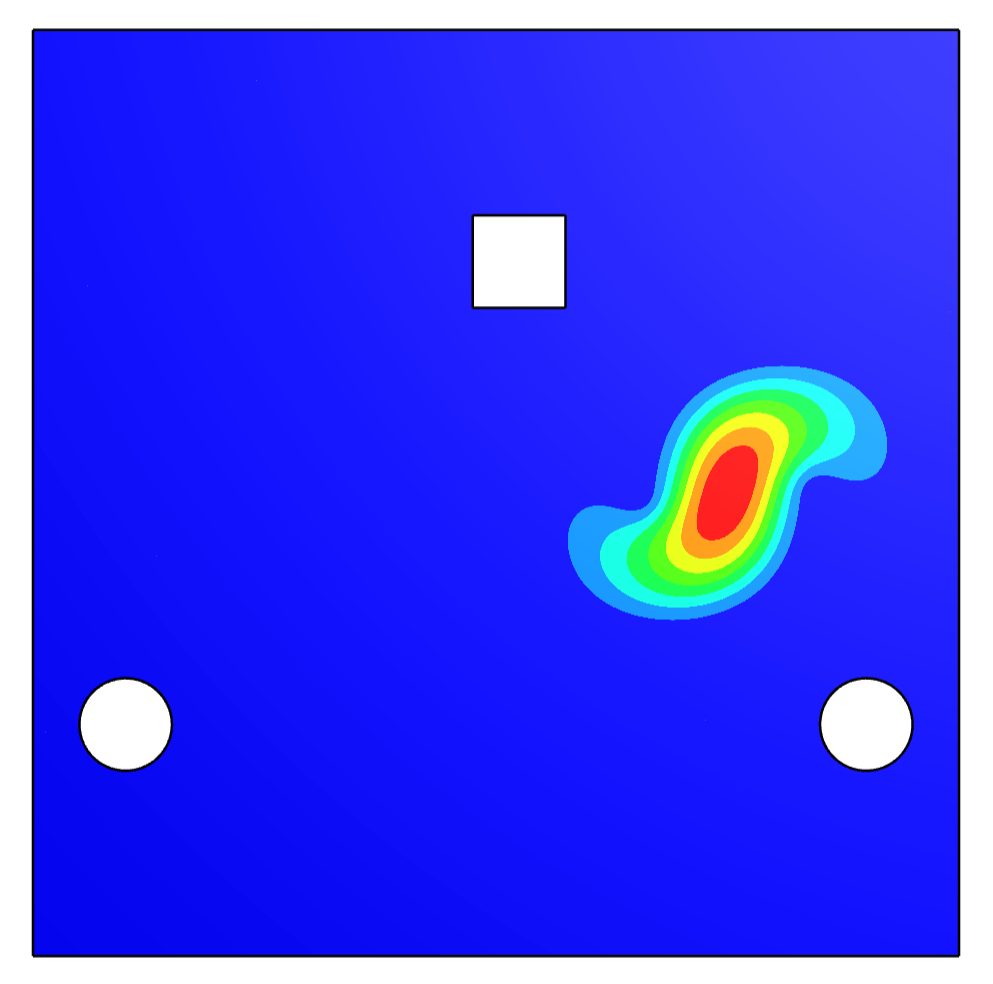}
\includegraphics[width=.16\textwidth]{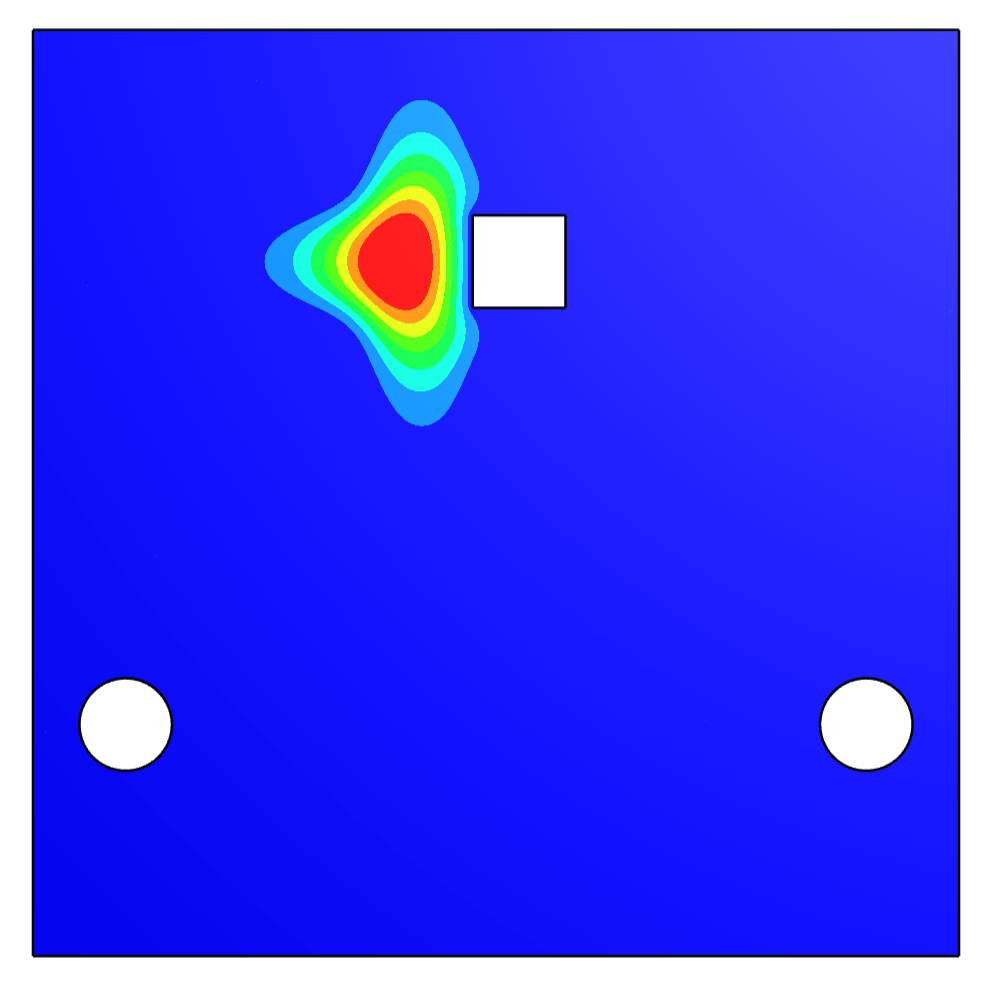}
\includegraphics[width=.16\textwidth]{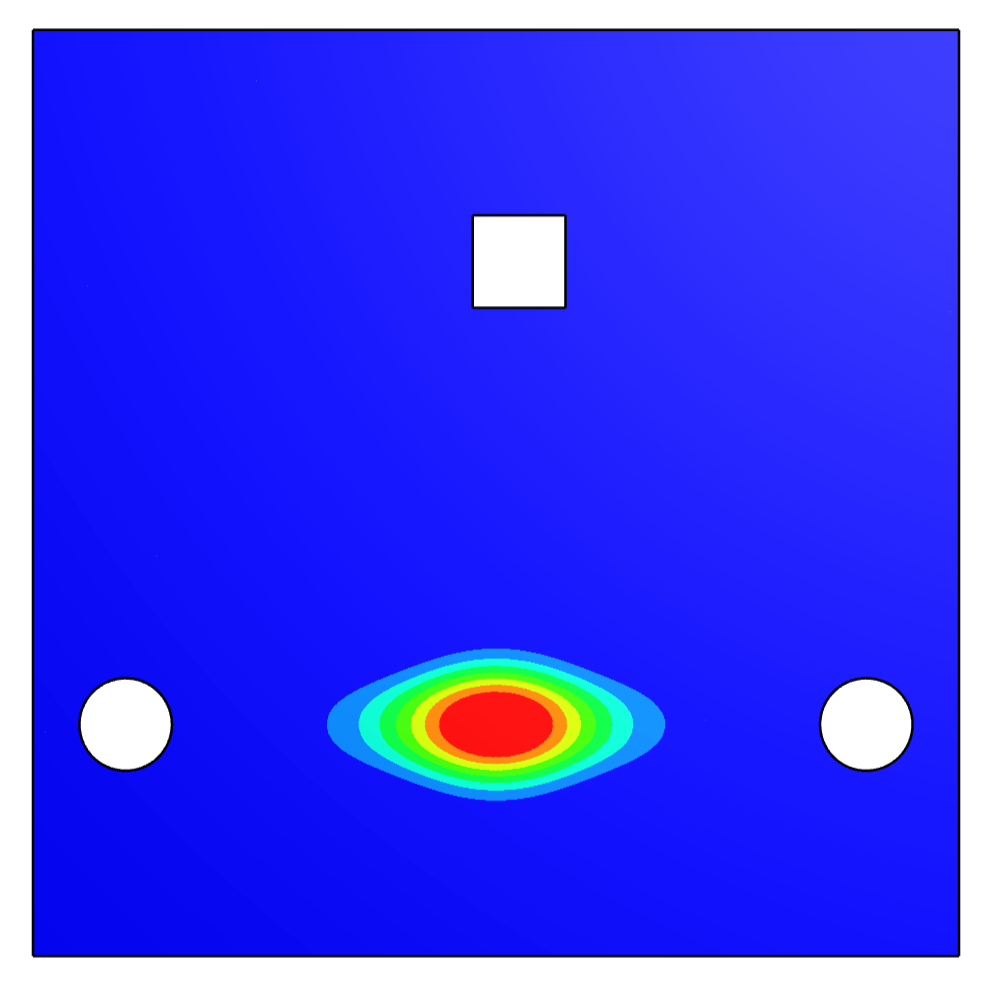}
\includegraphics[width=.16\textwidth]{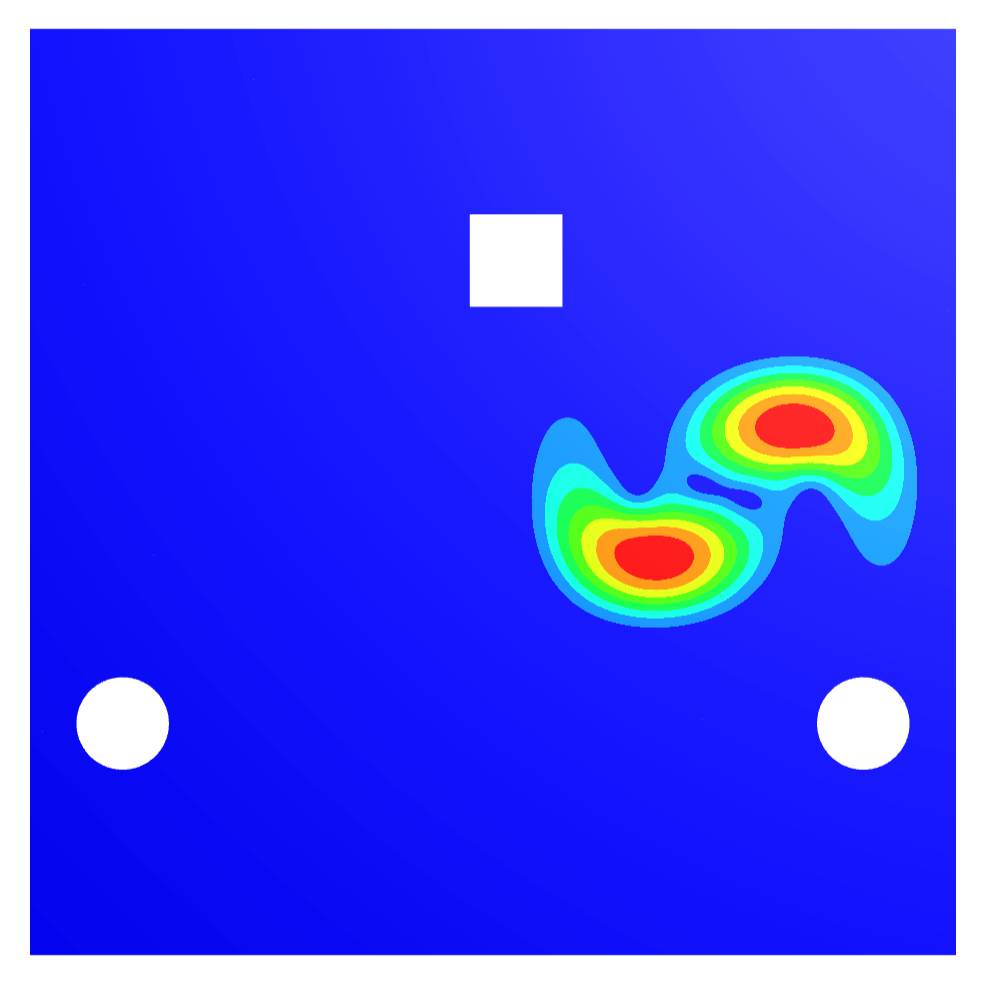}
\includegraphics[width=.16\textwidth]{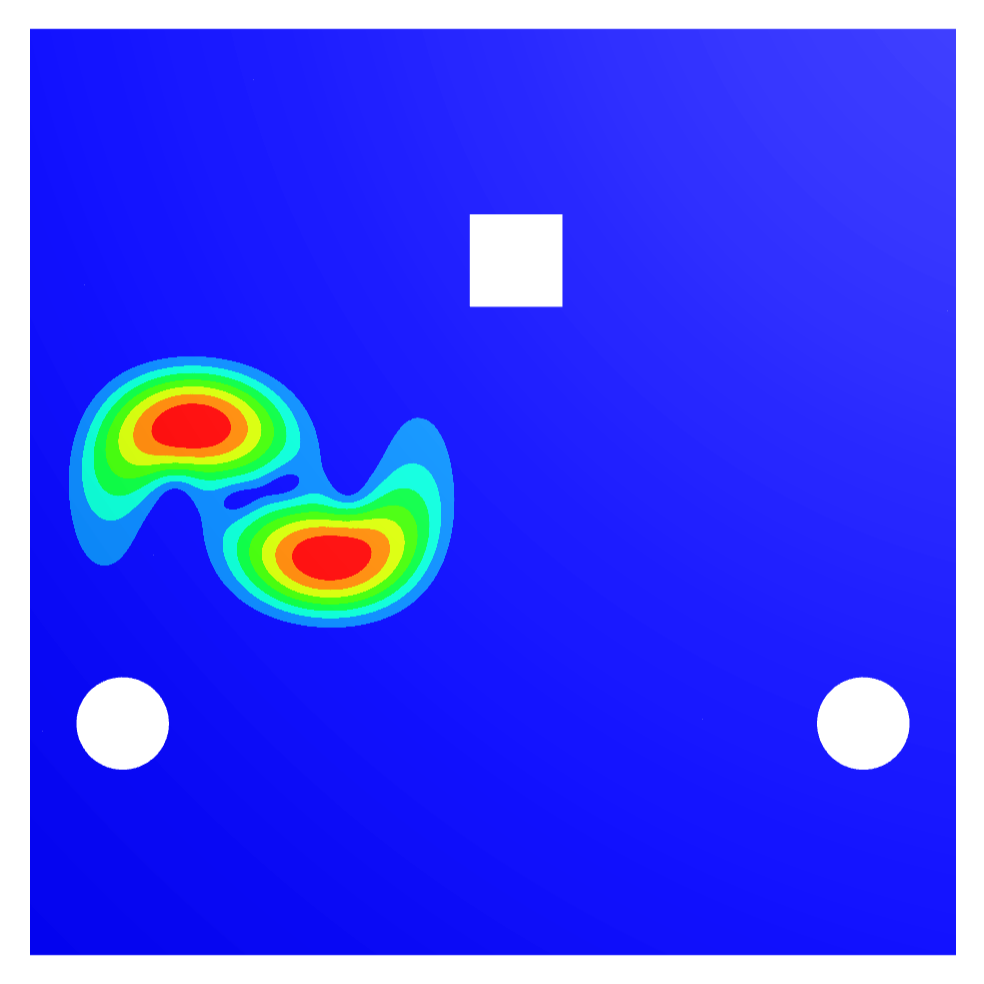}
  \caption{\label{Ex3Fig1}The computed eigenvectors $|\phi_j|$ of
    $H(\mb{F})$, $1\leq j\leq 6$, when $h=0.01$.  The rows from top to
    bottom correspond to $\Omega$, $\Omega_1^D$, $\Omega_1^M$,
    $\Omega_2$ and $\Omega_3$, respectively.}
\end{figure}

As we have seen in previous examples, the computation of eigenvalues
of $H(\mb{F})$ remains (much!) more stable under mesh coarsening than
is the case for $H(\mb{F})$, across all configurations---see
Table~\ref{Ex3Table2}.  We note that this problem does seem more
challenging in the sense that there is more deviation between what is
computed for $H(\mb{F})$ across the three mesh sizes, and both
$H(\mb{A})$ and $H(\mb{F})$ have some approximate eigenpairs change
their order in the line-up even when $h=0.03$, e.g. an apparent
repeated eigenvalue shifting positions.  In prior examples, we
have seen variations on the order of $1$ for eigenvalues on the order
of $10^2$.  Here we see variations as large as $19$ for eigenvalues on
the order of $10^2$ for $H(\mb{F})$---compare the computed values
$\lambda_6$ for $h=0.01$ and $h=0.03$ on $\Omega_3$.  We note,
however, that the computed counterparts for $H(\mb{A})$ differ by over
245!  Again, $H(\mb{F})$ is the clear winner.
\begin{table}
\caption{Computed eigenvalues and timing information for Example 3.
 \label{Ex3Table2}}
\begin{center}
\begin{tabular}{|c|cll|cccccc|}\hline
&$h$ & Total & FEAST& $\lambda_1$ &  $\lambda_2$  & $\lambda_3$ &  $\lambda_4$ & $\lambda_5$ &  $\lambda_6$\\ \hline
$H(\bfA)$&0.01  &330.74s& 282.86s    &94.278&   117.901 &120.607&   120.608 &135.035&   135.035\\
$\Omega$&0.03   &152.81s&  150.05s   &114.358&   139.424 &142.114&   142.475 &157.316&   157.408\\
&0.05  &13.01s&12.27s&306.600&   322.561 &326.090&   344.096 &352.115&   357.653\\
\hline
$H(\bfF)$&0.01  &364.94s& 282.73s    &94.240&   117.860&120.568&   120.568 &134.993&   134.993\\
$\Omega$&0.03   &15.27s&12.19s&94.415&   118.188 &121.615&   121.701 &135.636&   135.639\\
&0.05  &4.94s&4.03s&96.688&   122.132 &129.168&   129.214 &144.567&   144.724\\\hline
\hline
$H(\bfA)$&0.01  &370.12s& 324.13s &120.607& 120.608 &135.053& 135.054 &153.411& 157.023\\
$\Omega_1^D$&0.03   &17.20s&  14.20s &142.393& 142.544 &162.323& 163.678 &173.655& 198.691\\
&0.05  &27.10s&16.30s&330.648& 342.127 &381.4465& 388.992 &399.021& 410.414\\
\hline
$H(\bfF)$&0.01  &362.76s& 285.73s    &120.568& 120.568 &134.994&   134.994 &153.456&   156.896\\
$\Omega_1^D$&0.03   &16.87s&13.27s&121.572& 121.639 &135.939& 136.236 &158.068&   158.328\\
&0.05  &3.56s&2.60s&129.368&   130.461 &147.520&   149.396 &166.672&   166.931\\
\hline
\hline
$H(\bfA)$&0.01  &313.72s& 273.84s &96.042&   100.647 &120.607&   120.608 &135.053&135.054\\
$\Omega_1^M$&0.03   &14.91s&  12.36s   &142.393& 142.544&143.149& 148.959 &162.323& 163.678\\
&0.05  &5.43s&4.60s&330.648&   342.127 &381.446&   388.992 &399.021&   410.414\\
\hline
$H(\bfF)$&0.01  &379.67s& 312.14s    &95.911&   100.515 &120.568&   120.568 &134.994&   134.994\\
$\Omega_1^M$&0.03   &16.43s&13.29s&96.486&   101.212 &121.572&   121.639 &135.939&   136.237\\
&0.05  &4.77s&3.84s&100.836&   105.872 &129.368&   130.461 &147.520&   149.396\\
  \hline
\hline
$H(\bfA)$&0.01  &301.39s& 263.20s  &94.278& 117.901 &135.061& 135.071 &153.418& 170.632\\
$\Omega_2$&0.03   &20.36s&  17.85s &114.158& 139.372 &171.988& 174.003 &174.442& 206.290\\
&0.05  &12.37s&11.63s&305.712&   339.046 &356.581& 397.561 &401.280& 417.617\\
\hline
$H(\bfF)$&0.01  &325.71s& 296.15s    &94.240&   117.860 &134.993&   134.994 &153.471&   170.575\\
$\Omega_2$&0.03   &14.17s&12.52s&94.451&   118.267&136.060&   136.171 &172.465&   172.715\\
&0.05  &4.78s&3.87s&96.561&   121.892 &146.710&   148.115 &184.990&   186.549\\
  \hline\hline
$H(\bfA)$&0.01  &308.72s& 271.89s &135.073&   135.079 &137.885&   153.421 &170.661&   170.664\\
$\Omega_3$&0.03   &15.64s&  13.30s &166.767& 169.243 &172.275& 177.050 &203.154&204.759\\
&0.05  &3.90s&3.18s&350.344& 395.788 &407.278& 415.606 &415.788& 416.278\\
\hline
$H(\bfF)$&0.01  &345.02s& 284.18s&134.992&   134.994 &137.810&   153.468 &170.586&   170.587\\
$\Omega_3$&0.03   &15.63s&12.82s&135.928&   136.115 &138.628&   172.526 &172.569&   183.853\\
&0.05  &4.31s&3.49s&146.329&151.990 &152.063&   186.594 &186.660&   189.789\\
\hline  
\end{tabular}
\end{center}
\end{table}

In Table~\ref{Ex3Table3} we see the validation of
\change{~\eqref{Heuristic2} and~\eqref{Heuristic3}} in most cases.  There are three cases for
which the vector fields corresponding to $\mb{F}$ are about 10\%
larger than their counterparts for $\mb{A}$, but they are about
half as large (or less) in most cases. 
\begin{table}
  \caption{Validating ~\change{\eqref{Heuristic2} and~\eqref{Heuristic3}} for Example 3 in
    most cases.\label{Ex3Table3}}
  \centering
\begin{tabular}{ |cc|cccccc|}
    \hline
    &&$j=1$&$j=2$&$j=3$&$j=4$&$j=5$&$j=6$\\\hline
    \multirow{2}{*}{$\Omega$}&$\|\nabla \psi_j\|_{L^2(\Omega)}$  &   100.1883 & 100.3179 & 100.3407  & 100.3407 & 100.2370 & 100.2370 \\ 
    &$\|\mb{A} \psi_j\|_{L^2(\Omega)}$ &   100 & 100 & 100  & 100 & 100 & 100 \\  
    \hline
    \multirow{2}{*}{$\Omega$}&$\|\nabla \phi_j\|_{L^2(\Omega)}$  &
                                                                   38.0083&
                                                                            41.6704 & 50.0658 & 50.0658& 59.5955 & 59.5947 \\
  &$\|\mb{F} \phi_j\|_{L^2(\Omega)}$  &   37.5056 & 40.8958& 49.3768  & 49.3768 & 59.1937& 59.1929\\  \hline
  \hline
   \multirow{2}{*}{$\Omega_1^D$}&$\|\nabla \psi_j\|_{L^2(\Omega)}$  &   100.3407& 100.3407& 100.2364  & 100.2363& 100.2894 & 100.4052 \\  
   &$\|\mb{A} \psi_j\|_{L^2(\Omega)}$ &   100 & 100 & 100  & 100 & 100 & 100 \\  
   \hline
  \multirow{2}{*}{$\Omega_1^D$} &$\|\nabla \phi_j\|_{L^2(\Omega)}$  &     49.8756 & 49.8748 & 59.8136  & 59.8136 & 112.2854 & 48.4734 \\ 
   &$\|\mb{F} \phi_j\|_{L^2(\Omega)}$  &    49.1839 & 49.1831 & 59.4133  & 59.4133 & 112.0284 & 47.6194 \\  \hline
  \hline
 \multirow{2}{*}{$\Omega_1^M$}  &$\|\nabla \psi_j\|_{L^2(\Omega)}$  &   100.2230 & 100.2740 & 100.3407  & 100.3407 & 100.2364 & 100.2363 \\  
   &$\|\mb{A} \psi_j\|_{L^2(\Omega)}$ &   100 & 100 & 100  & 100 & 100 & 100 \\  
   \hline
 \multirow{2}{*}{$\Omega_1^M$}  &$\|\nabla \phi_j\|_{L^2(\Omega)}$  &    36.9995 & 36.8660 & 49.8756  & 49.8748 & 59.8130 & 59.8129 \\ 
   &$\|\mb{F} \phi_j\|_{L^2(\Omega)}$  &    36.3791 & 36.1019 &
                                                                49.1839  & 49.1831 & 59.4127& 59.4126 \\ \hline
   \hline
 \multirow{2}{*}{$\Omega_2$}   &$\|\nabla \psi_j\|_{L^2(\Omega)}$  &     100.1883 & 100.3179 & 100.2361  & 100.2358 & 100.2892 & 100.3336\\
   &$\|\mb{A} \psi_j\|_{L^2(\Omega)}$ &   100 & 100 & 100  & 100 & 100 & 100 \\  
   \hline
\multirow{2}{*}{$\Omega_2$}    &$\|\nabla \phi_j\|_{L^2(\Omega)}$  &       37.8805 & 41.7133 & 58.4620  & 58.4614 & 112.0237 & 61.3137 \\ 
   &$\|\mb{F} \phi_j\|_{L^2(\Omega)}$  &   37.3761 & 40.9396 & 58.0524
                             & 58.0518 & 111.7666 & 60.7634 \\
  \hline
  \hline
\multirow{2}{*}{$\Omega_3$}   & $\|\nabla \psi_j\|_{L^2(\Omega)}$  &      100.2357 & 100.2355 & 100.3873  & 100.2891 & 100.3332 & 100.3329 \\  
   &$\|\mb{A} \psi_j\|_{L^2(\Omega)}$ &   100 & 100 & 100  & 100 & 100 & 100 \\  
   \hline
 \multirow{2}{*}{$\Omega_3$}  &$\|\nabla \phi_j\|_{L^2(\Omega)}$  &       58.7244 & 58.6738 & 43.7127 & 111.3903 & 61.2991 & 61.3292 \\ 
   &$\|\mb{F} \phi_j\|_{L^2(\Omega)}$  &    58.3166& 58.2657 & 42.8100  & 111.1316 & 60.7483 & 60.7785 \\  \hline  
\end{tabular}
\end{table}

\subsection{Example 4}
As a final example, we take the vector field 
\begin{align}\label{Ex4A}
  \mb{A} = 25(-y, x)~,
\end{align}
with Neumann boundary conditions on the L shape domain $\Omega=
(0,3) \times (0,3) \setminus [2, 3) \times [2,3)$, see
Figure~\ref{VectorFields}.  This may seem the least interesting of the
fields, as it has a constant curl, but it is the most studied of the
vector fields in the literature---though not with computational
efficiency considerations in mind.  For a qualitative comparison, we
refer to \cite[Section 5.3]{Bonnaillie-Noel2007}.  Although the domain
used there is $(0,2) \times (0,2) \setminus [1, 2) \times [1,2)$, the
qualitative behavior of the first five eigenpairs in their case and
ours is very similar: $\lambda_1\neq\lambda_2$ are very close to
each other, and $\lambda_4\neq \lambda_5$ are very close to each other.
Their Figure 13 shows eigenvector behavior quite similar to what we
observe in the top row of Figure~\ref{Ex4Fig2}. 

We highlight that both $\mb{A}$ and $\mb{F}$ are divergence-free in
this case.  However, there is a significant difference in their norms.
We compute 
 \begin{align*}
  \|\change{\mb{A}}\|_{L^2(\Omega)}=109.9242\quad,\quad \|\change{\mb{F}}\|_{L^2(\Omega)}=30.9111~,
\end{align*}
when $h=0.01$.  The computed eigenvalues and timing information for
$H(\mb{A})$ and $H(\mb{F})$ are provided in Table~\ref{Ex4Table},
together with the validation of ~\eqref{Heuristic2}.  This is
the only example for which the computational cost for $H(\mb{A})$ is
consistently less than that for $H(\mb{F})$, if only slightly.
Looking at the computed eigenvalues begins to illustrate why
$H(\mb{F})$ remains the superior choice.  The change in the computed
eigenvalues for $H(\mb{F})$ as $h$ increases is extremely small in
this case---smaller than in previous examples.  The change in the computed
eigenvalues for $H(\mb{A})$ as $h$ increases is also fairly small, but
the computed eigenvectors show some significant changes---see
Figure~\ref{Ex4Fig1}.  Comparing the case $h=0.03$ to $h=0.01$, the
first third eigenvector moves into the first position, shifting the
first two eigenvectors into positions two and three.  The behavior
when $h=0.05$ is much worse.  Of the first six eigenvectors computed
when $h=0.05$, only two correspond to eigenvectors computed when
$h=0.01$: the first and second on the coarse mesh correspond to the
third and sixth on the fine mesh.  The remaining four computed
eigenvectors on the coarse mesh have no counterparts on the fine mesh!
The behavior for $H(\mb{F})$, seen in Figure~\ref{Ex4Fig2} is far superior.

\begin{table}
\caption{Computed eigenvalues and timing information, and validation
  of ~\change{\eqref{Heuristic2} and~\eqref{Heuristic3}} when $h=0.01$, for Example 4.
 \label{Ex4Table}}
\begin{center}
\begin{tabular}{|c|cll|cccccc|}\hline
&$h$ & Total & FEAST& $\lambda_1$ &  $\lambda_2$  & $\lambda_3$ &  $\lambda_4$ & $\lambda_5$ &  $\lambda_6$\\\hline
\multirow{3}{*}{$H(\bfA)$}
&0.01  &937.97s& 884.07s    &24.6272&   24.6274 &25.4954 & 26.8344 &26.8352&30.3674\\
&0.03   &22.18s &18.83s   &25.4954&   26.5064 &26.6700 &28.5015 &28.8521 &30.4596\\
&0.05  &4.90s&3.87s&25.4963&   30.7517 &31.2396  &32.1223 &33.4258   &33.8494\\
\hline
\multirow{3}{*}{$H(\bfF)$}
&0.01  &1167.48s& 1067.06s    &24.6244&   24.6245&25.4954&   26.8318 &26.8326&   30.3673\\
&0.03   &24.27s&20.28s&24.6244&   24.6245 &25.4954&   26.8318 &26.8326&   30.3675\\
&0.05  &7.24s&6.00s&24.6246&   24.6246 &25.4959&   26.8320 &26.8328&   30.3693\\
\hline
\end{tabular}

\vspace*{2mm}
 \begin{tabular}{ |c|cccccc|}
   \hline
   &$j=1$&$j=2$&$j=3$&$j=4$&$j=5$&$j=6$\\\hline
   $\|\nabla \psi_j\|_{L^2(\Omega)}$  & 72.4348 & 72.4355 & 9.259  & 70.8129& 70.8300 & 38.1424 \\
   $\|\mb{A} \psi_j\|_{L^2(\Omega)}$ &72.4424 & 72.4431 & 9.2589  & 70.7989 & 70.8156 & 38.1343 \\  
   \hline
   $\|\nabla \phi_j\|_{L^2(\Omega)}$  &  11.8373 & 11.8362 & 14.4127  & 12.3038 & 12.2969 & 20.0244 \\ 
   $\|\mb{F} \phi_j\|_{L^2(\Omega)}$  &  11.8825 & 11.8815 & 14.4127  & 12.2223 & 12.2133 & 20.0090  \\ \hline
 \end{tabular}
\end{center}
\end{table}

\begin{figure}
\includegraphics[width=.16\textwidth]{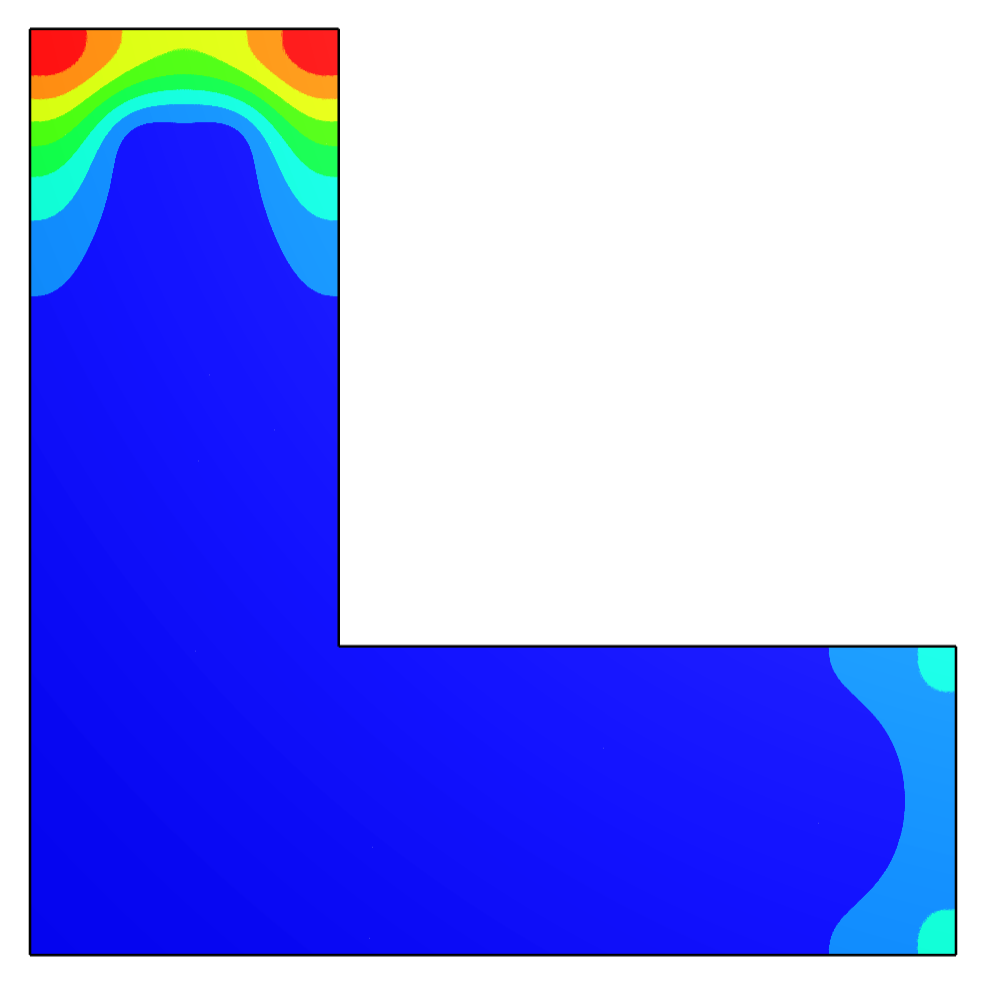}
\includegraphics[width=.16\textwidth]{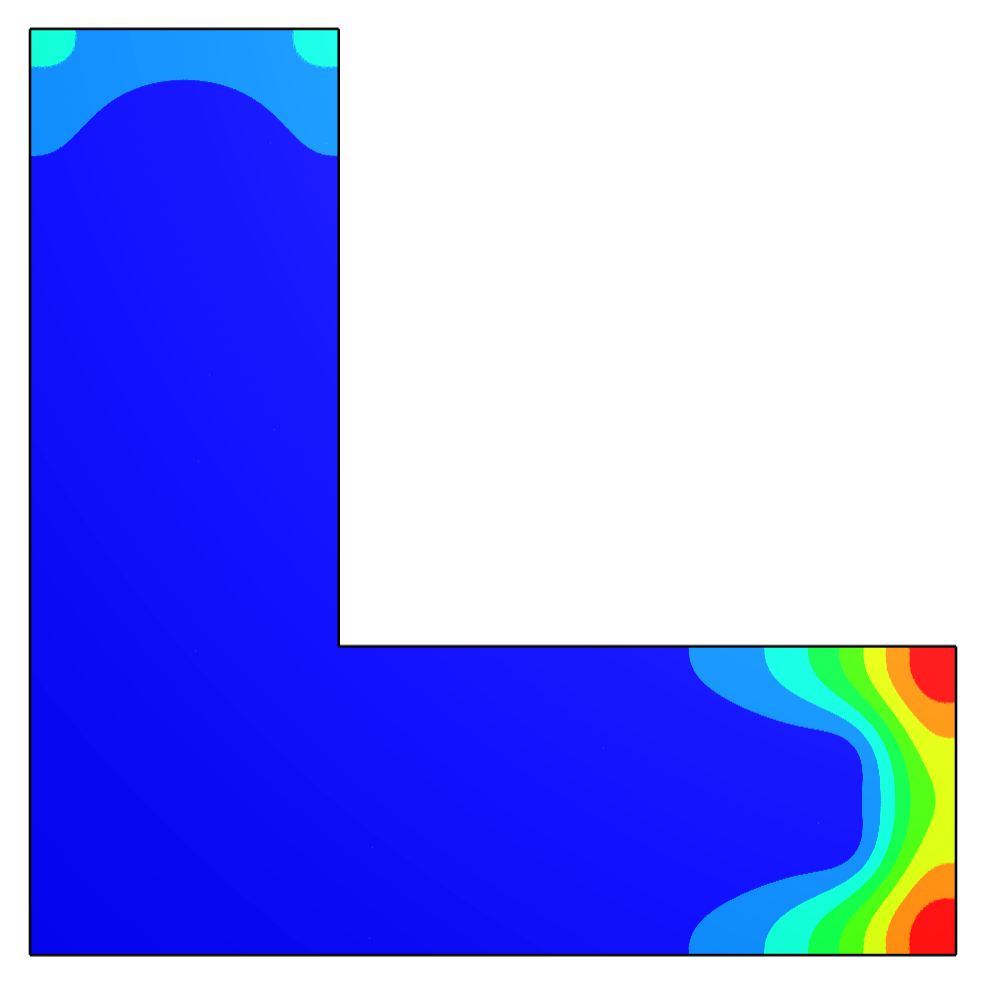}
\includegraphics[width=.16\textwidth]{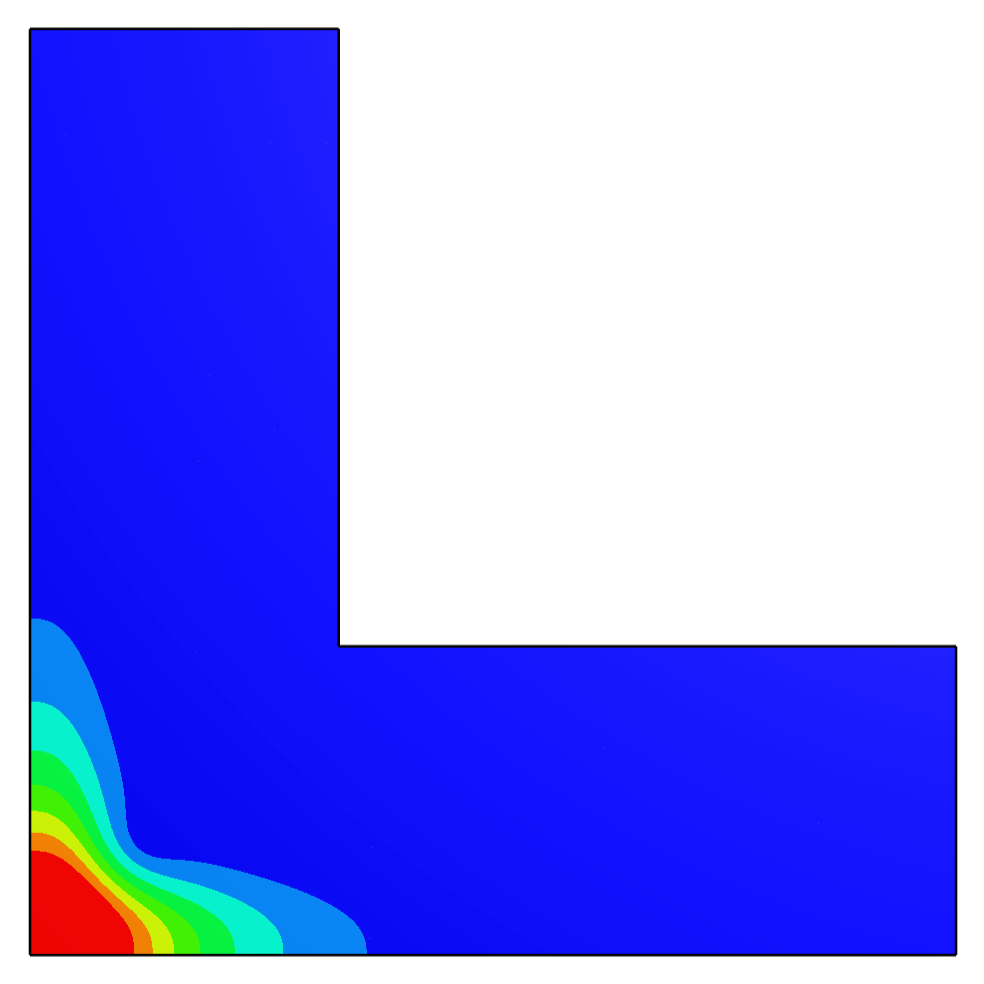}
\includegraphics[width=.16\textwidth]{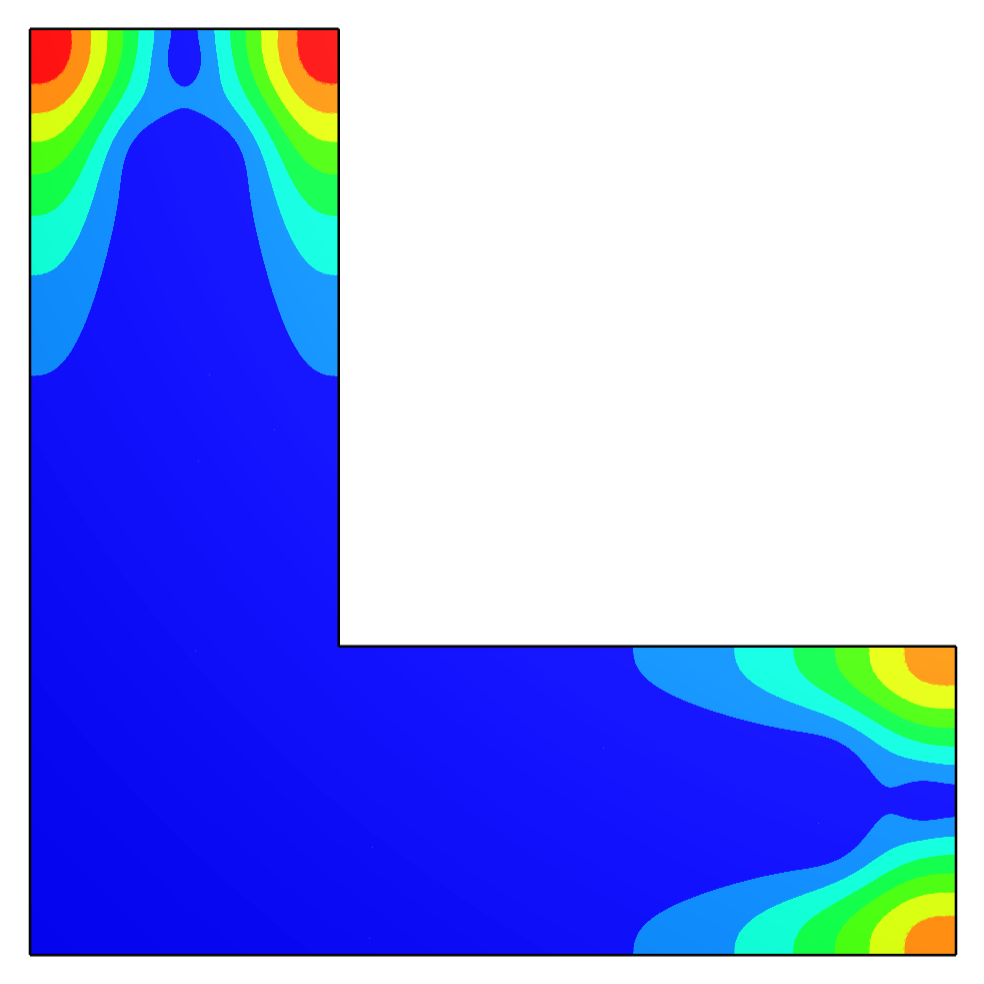}
\includegraphics[width=.16\textwidth]{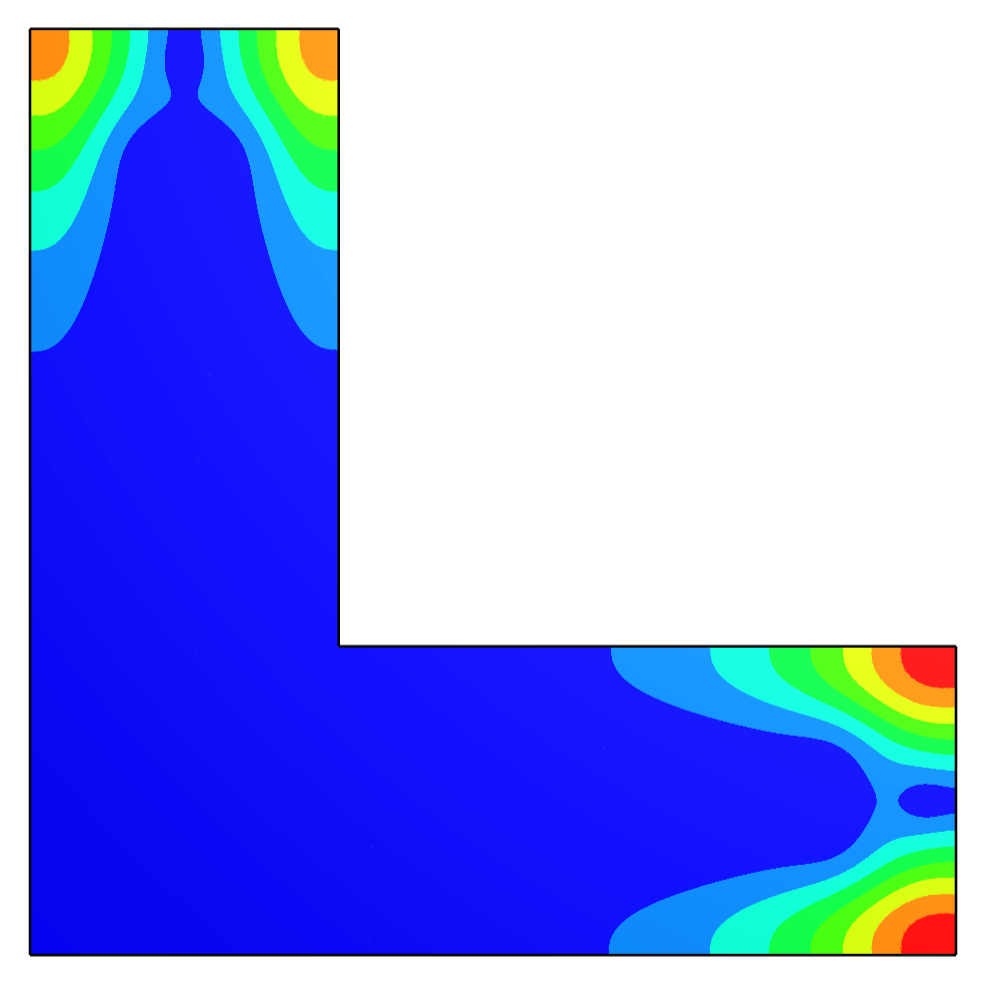}
\includegraphics[width=.16\textwidth]{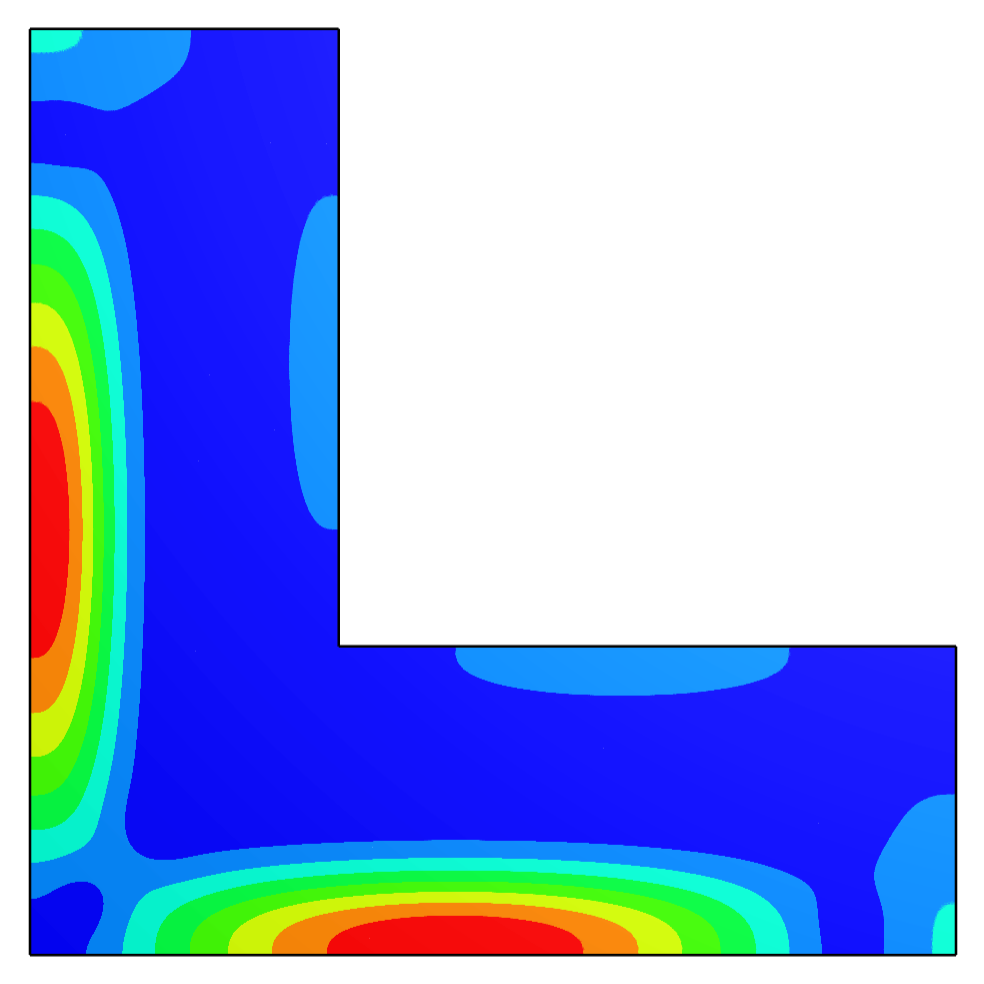}
\includegraphics[width=.16\textwidth]{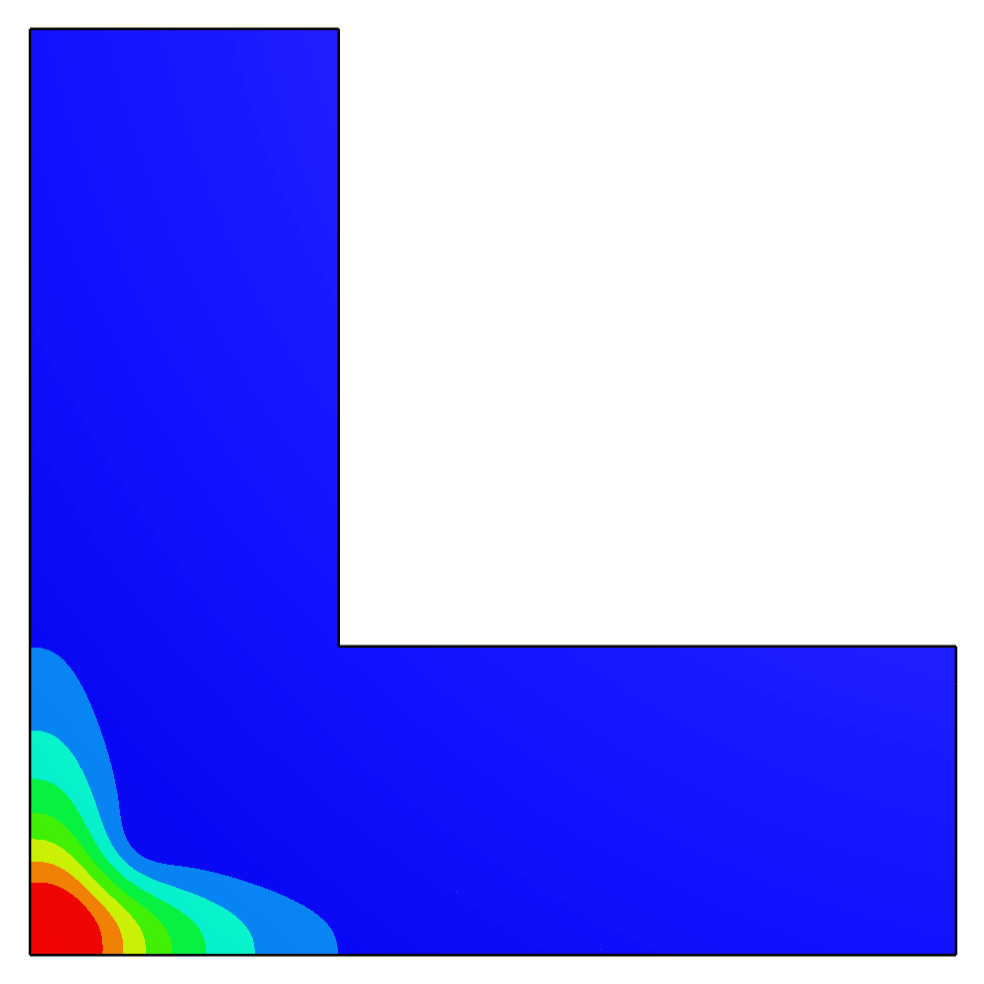}
\includegraphics[width=.16\textwidth]{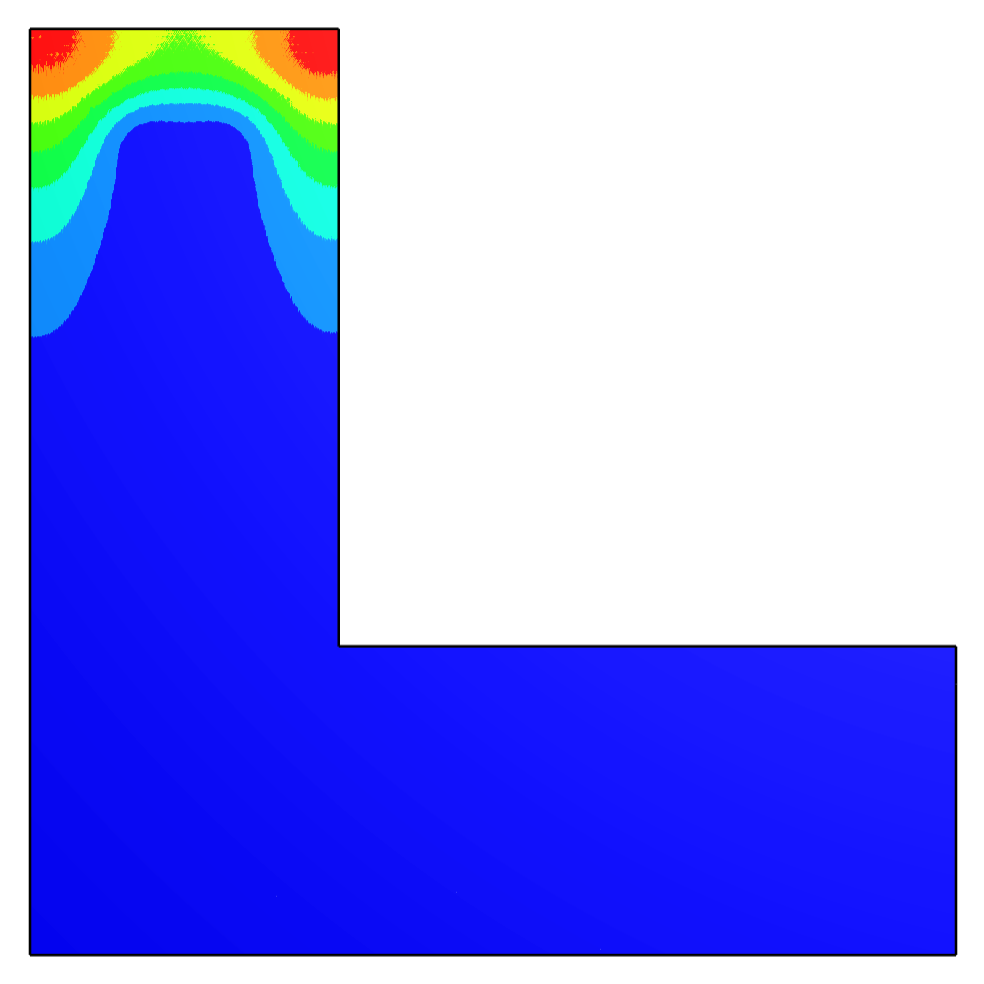}
\includegraphics[width=.16\textwidth]{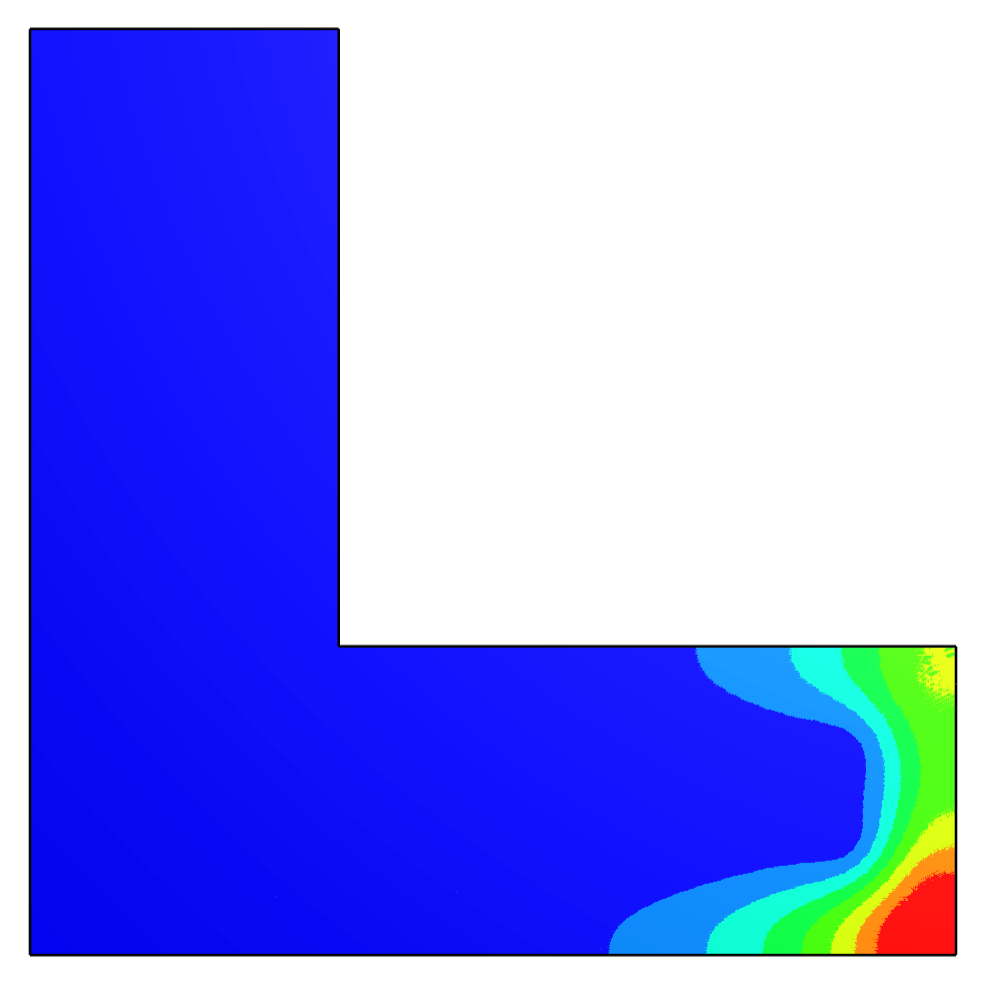}
\includegraphics[width=.16\textwidth]{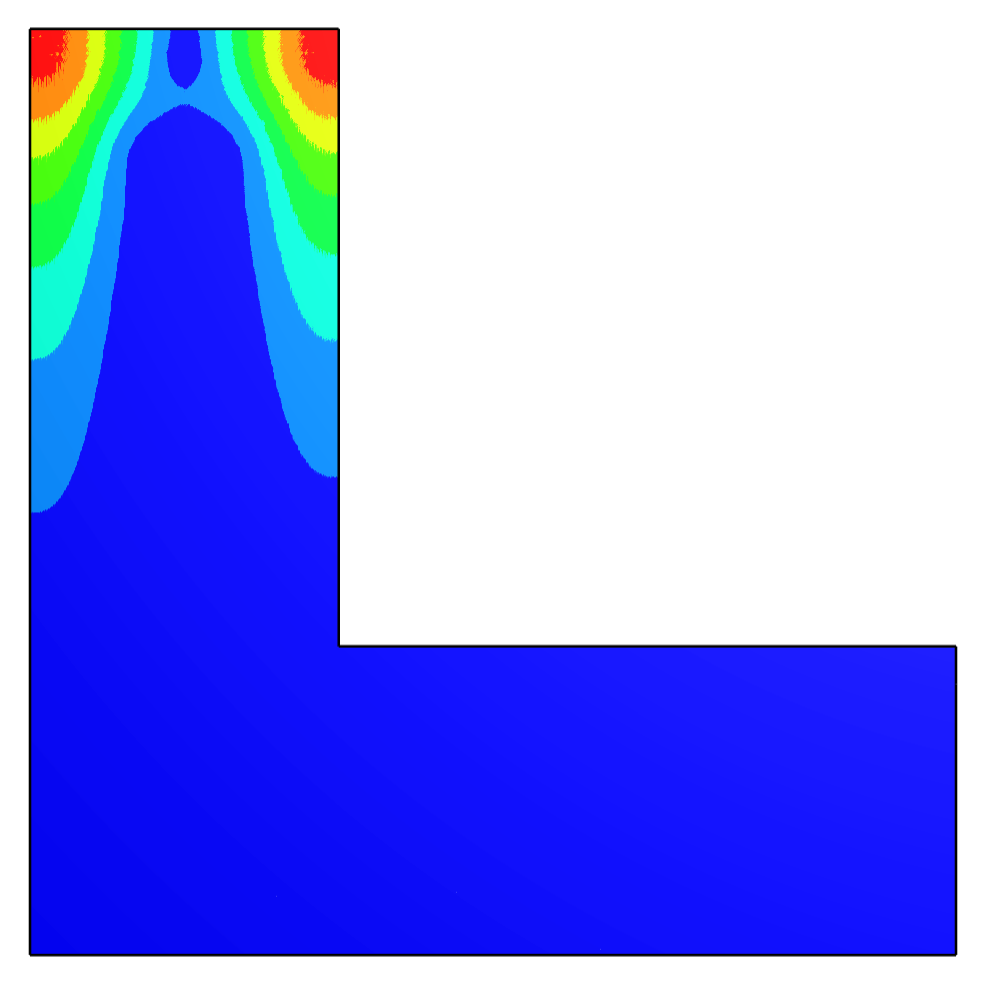}
\includegraphics[width=.16\textwidth]{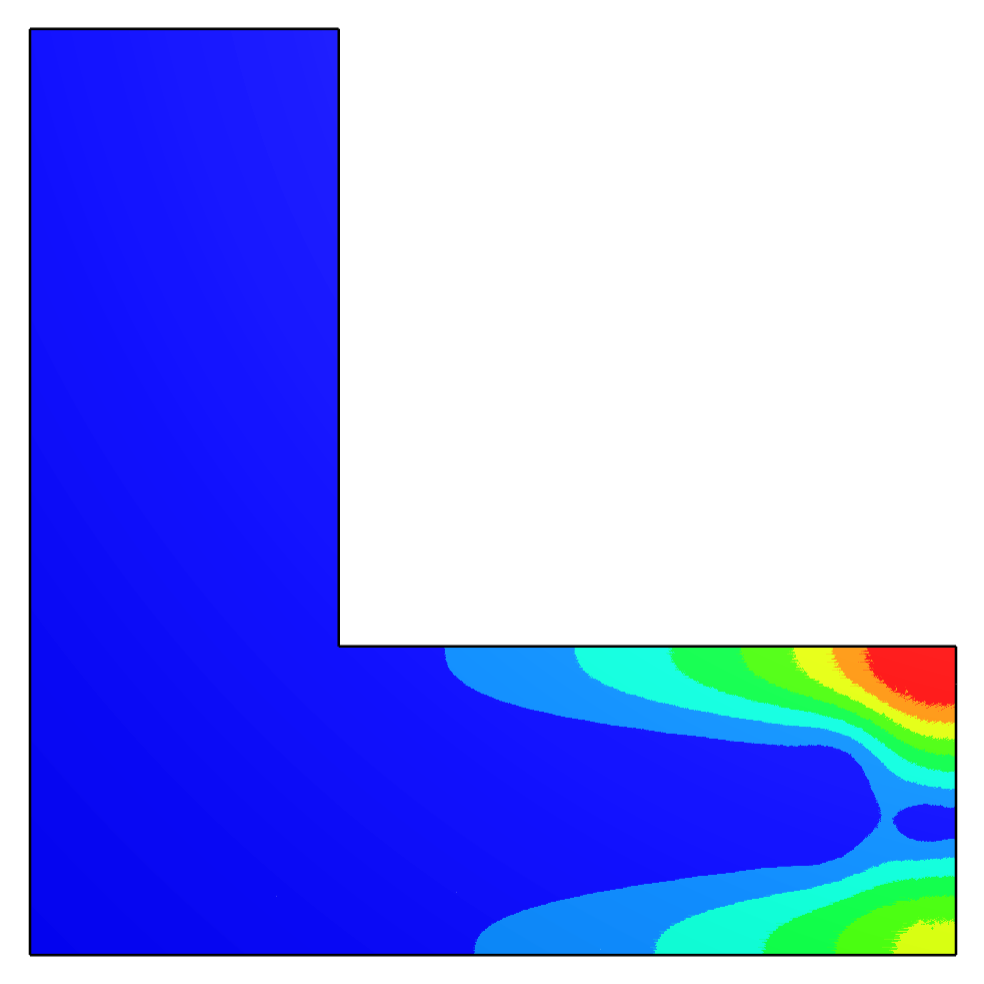}
\includegraphics[width=.16\textwidth]{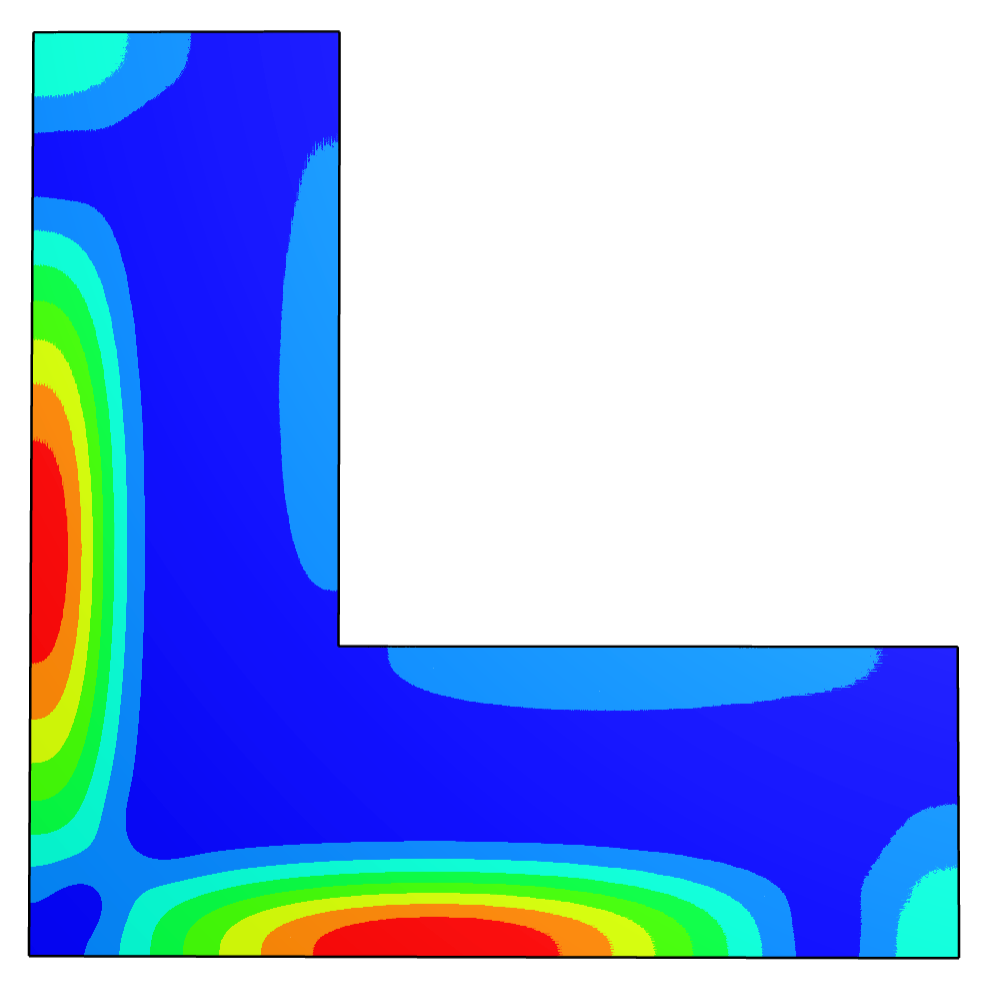}
\includegraphics[width=.16\textwidth]{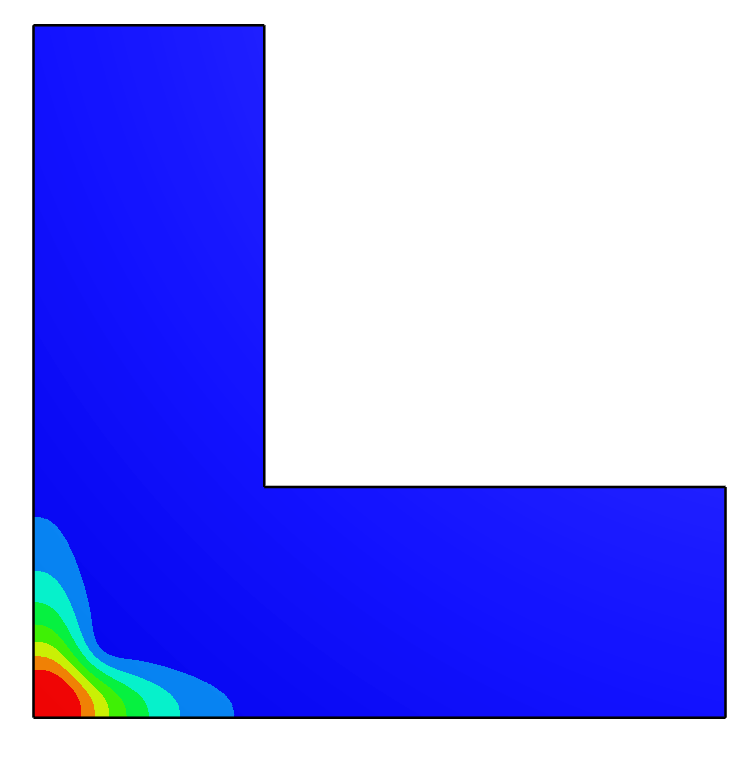}
\includegraphics[width=.16\textwidth]{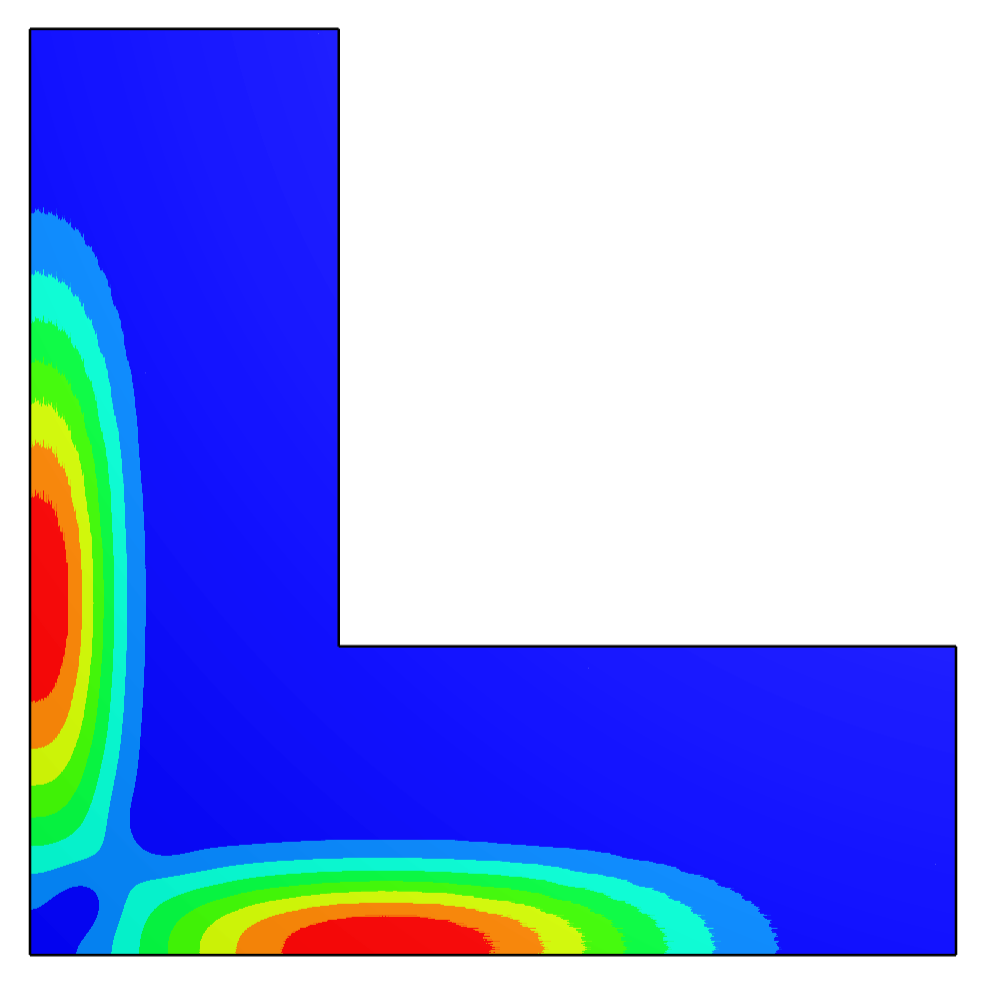}
\includegraphics[width=.16\textwidth]{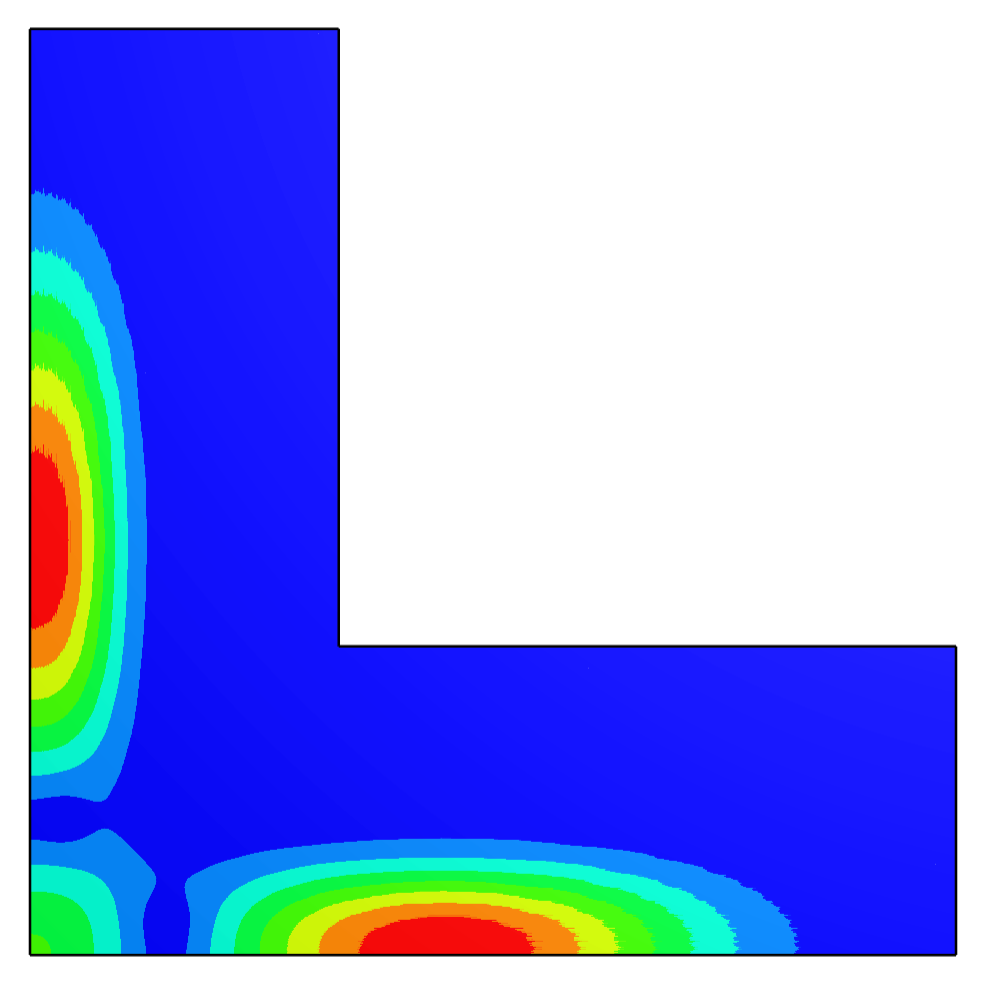}
\includegraphics[width=.16\textwidth]{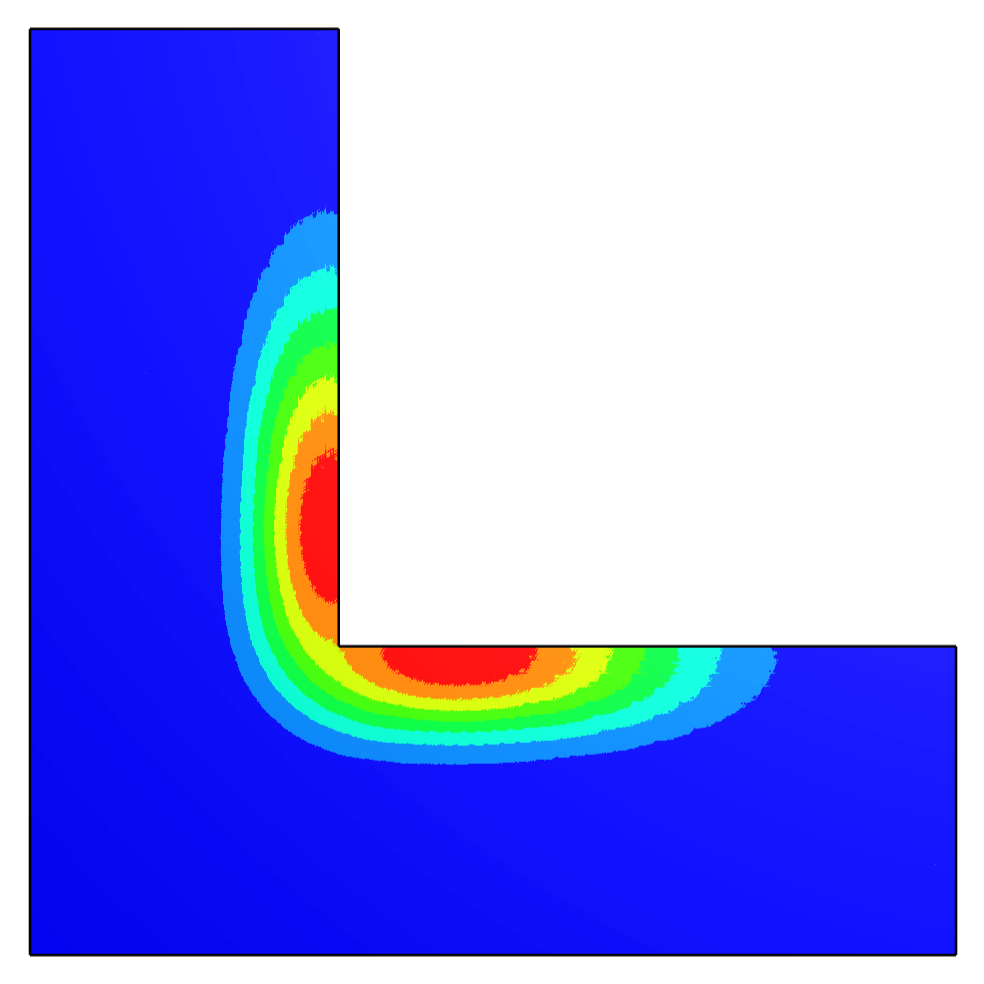}
\includegraphics[width=.16\textwidth]{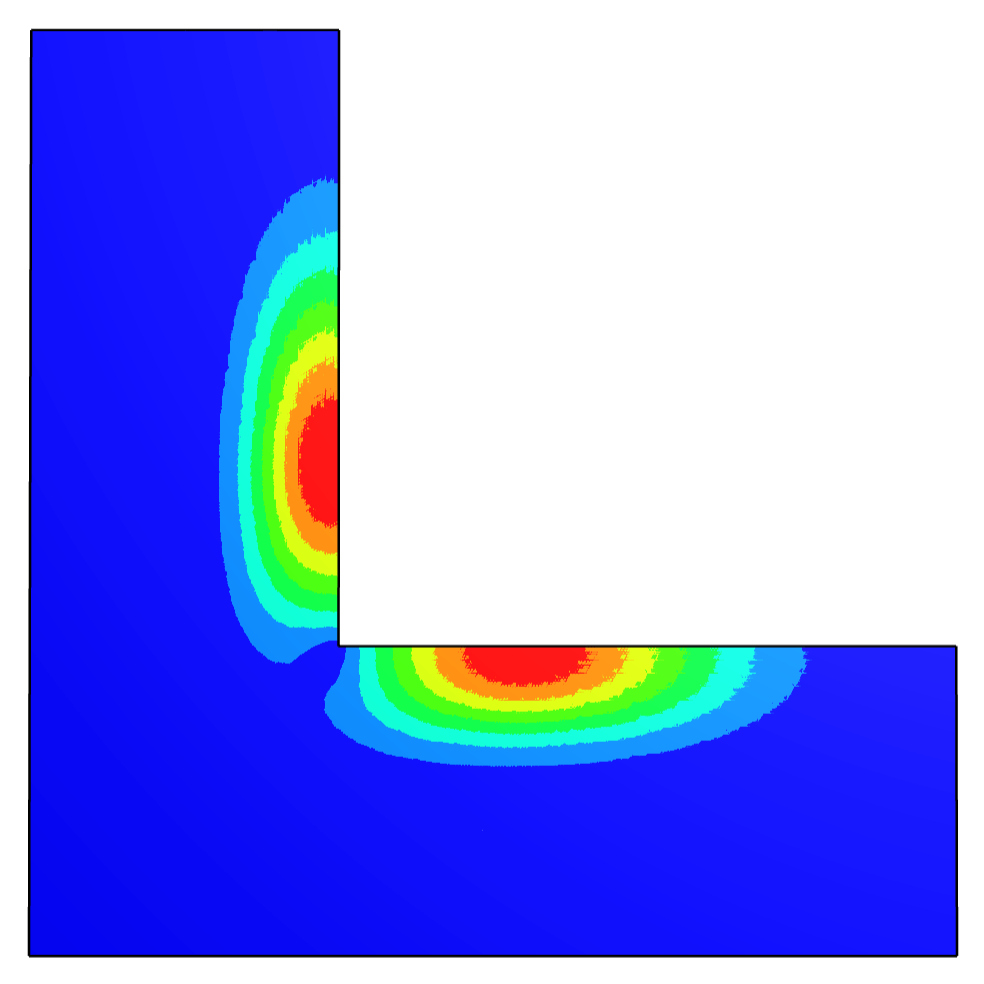}
\includegraphics[width=.16\textwidth]{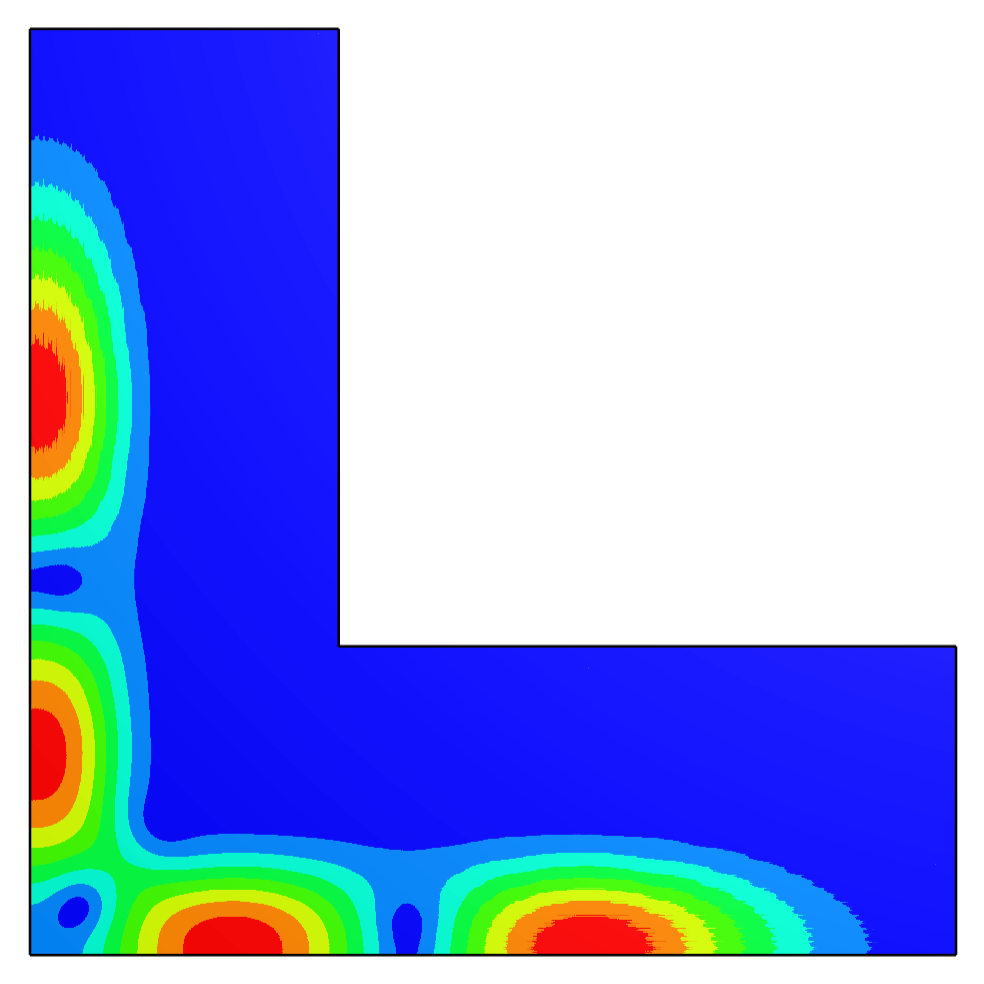}
\caption{The computed eigenvectors $|\psi_j|$ of $H(\mb{A})$, $1\leq j\leq 6$, when
  $h=0.01$ (top row), $h=0.03$ (middle row), and $h=0.05$, for Example 4. 
 \label{Ex4Fig1}}
\end{figure}

 \begin{figure}
\includegraphics[width=.16\textwidth]{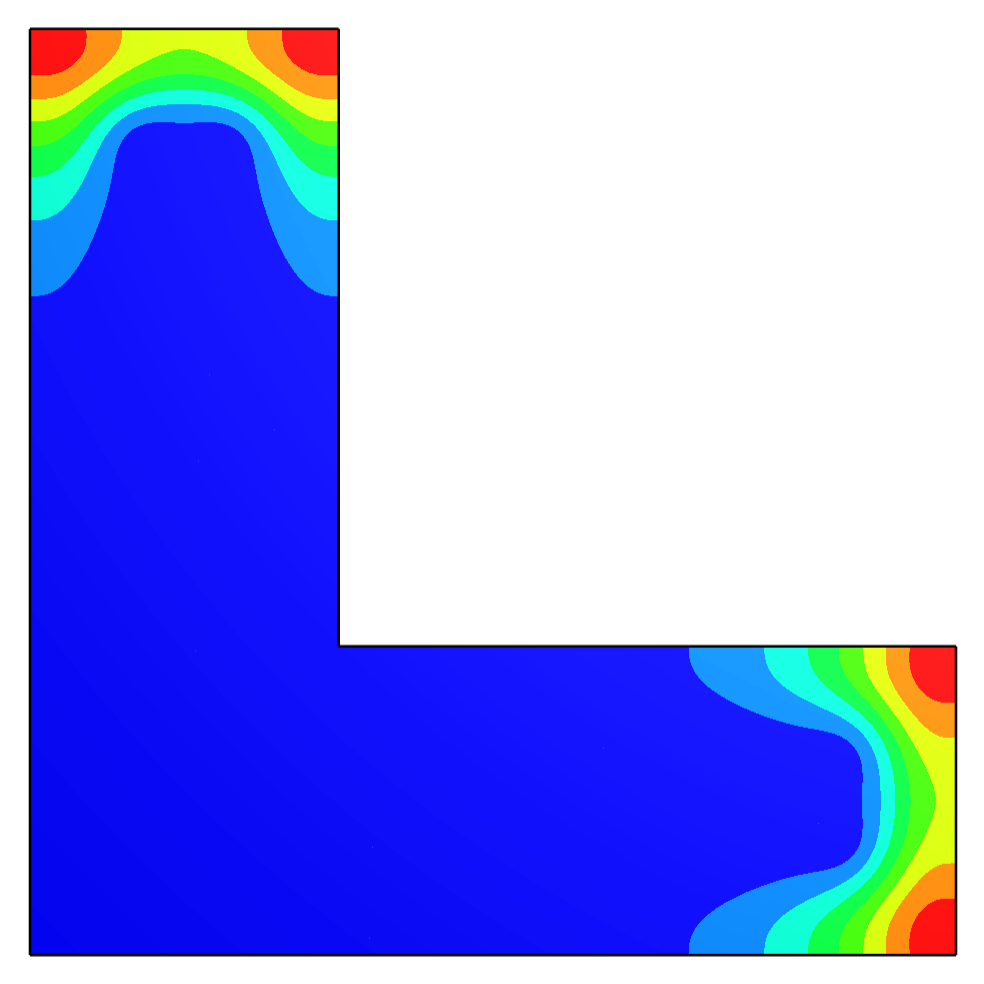}
\includegraphics[width=.16\textwidth]{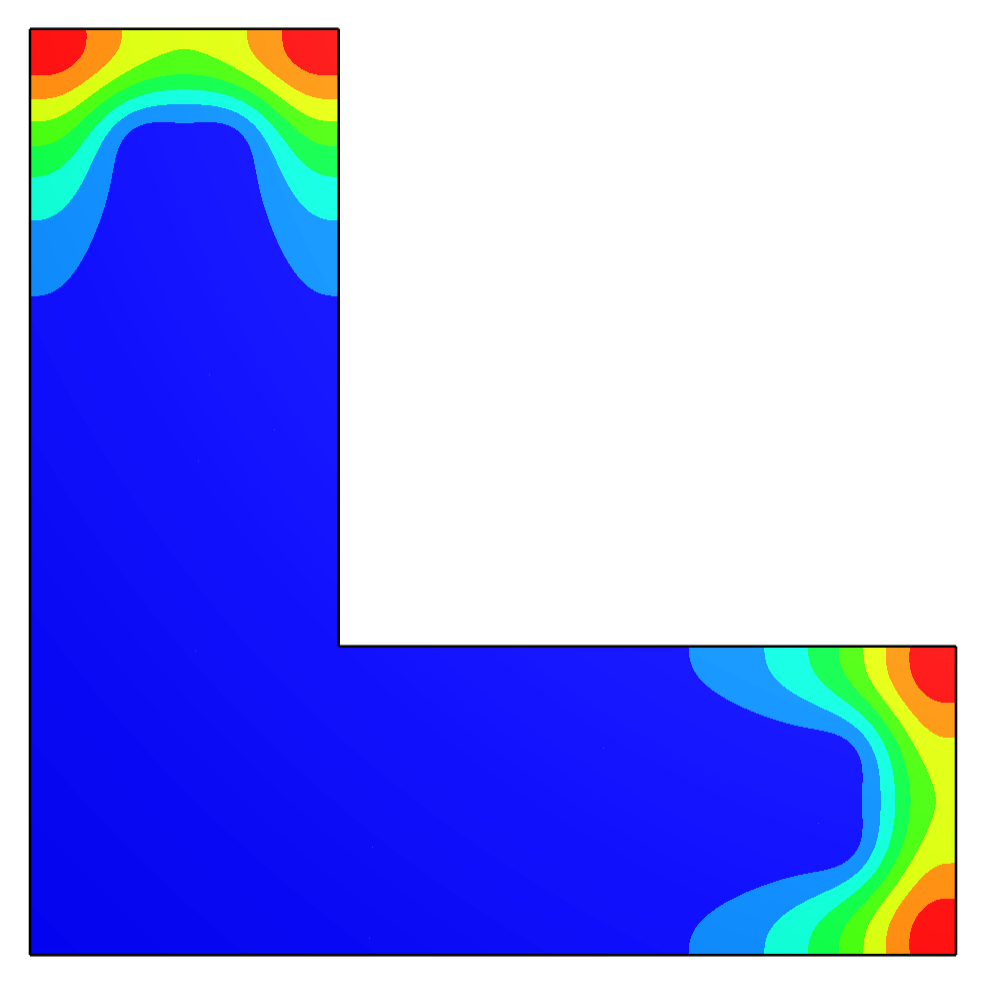}
\includegraphics[width=.16\textwidth]{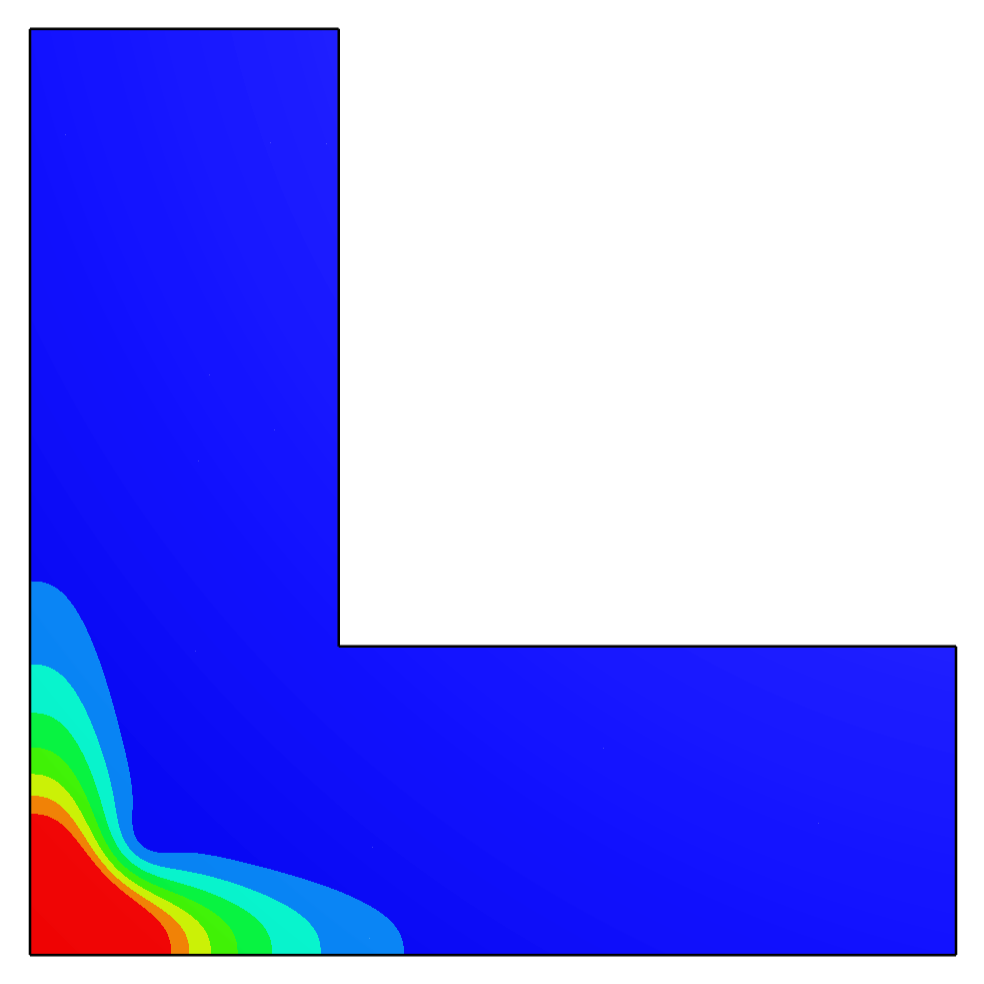}
\includegraphics[width=.16\textwidth]{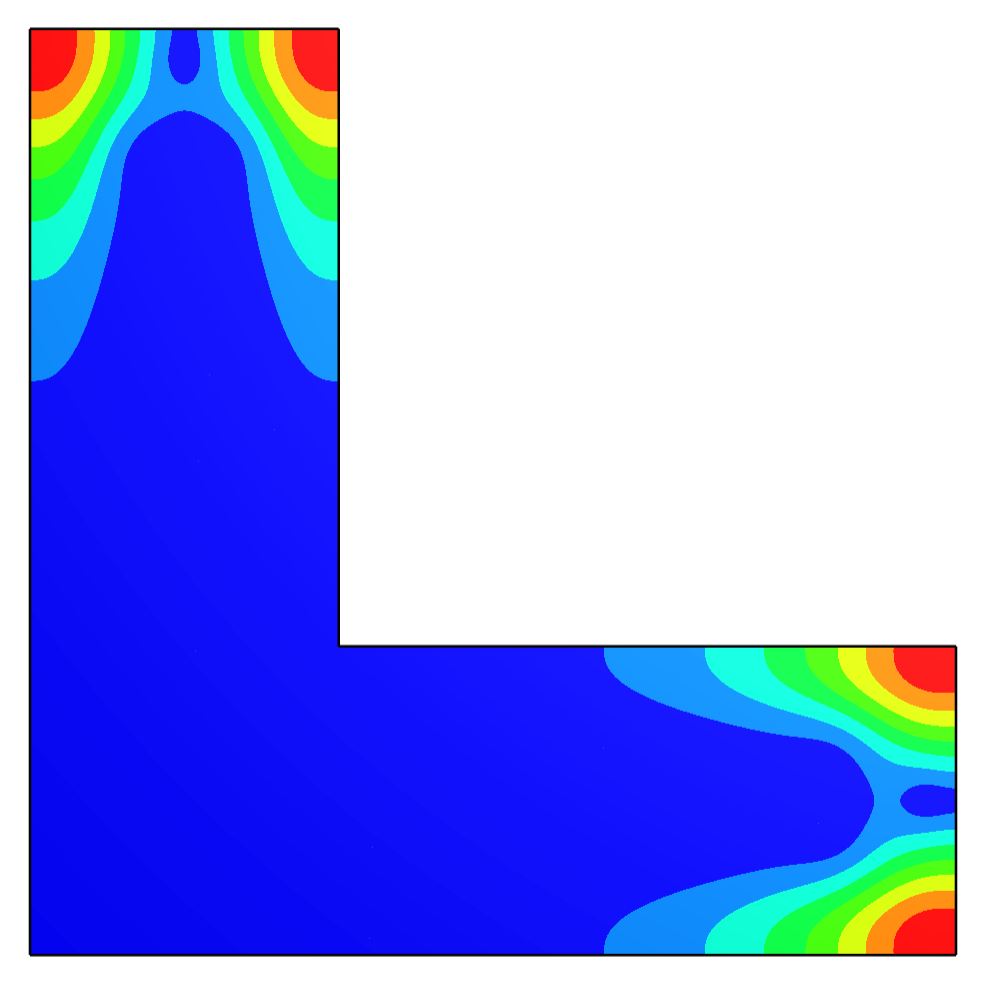}
\includegraphics[width=.16\textwidth]{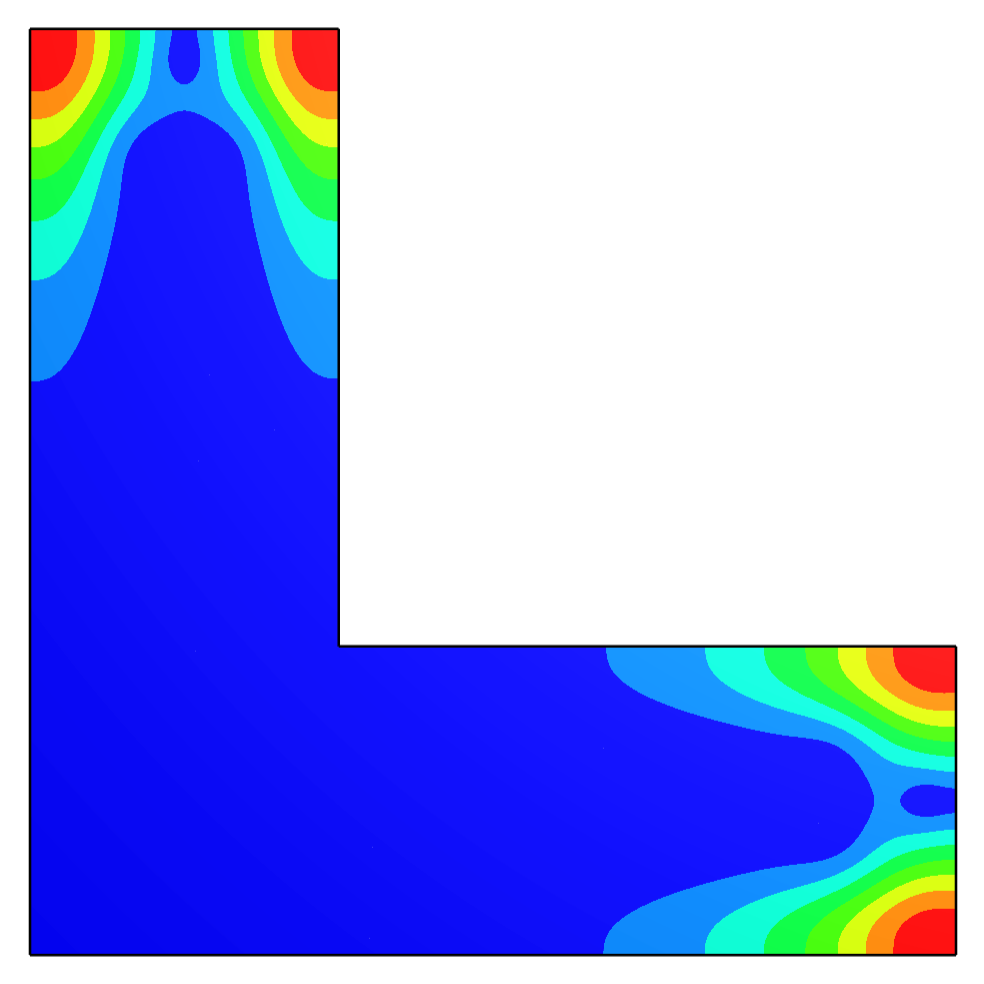}
\includegraphics[width=.16\textwidth]{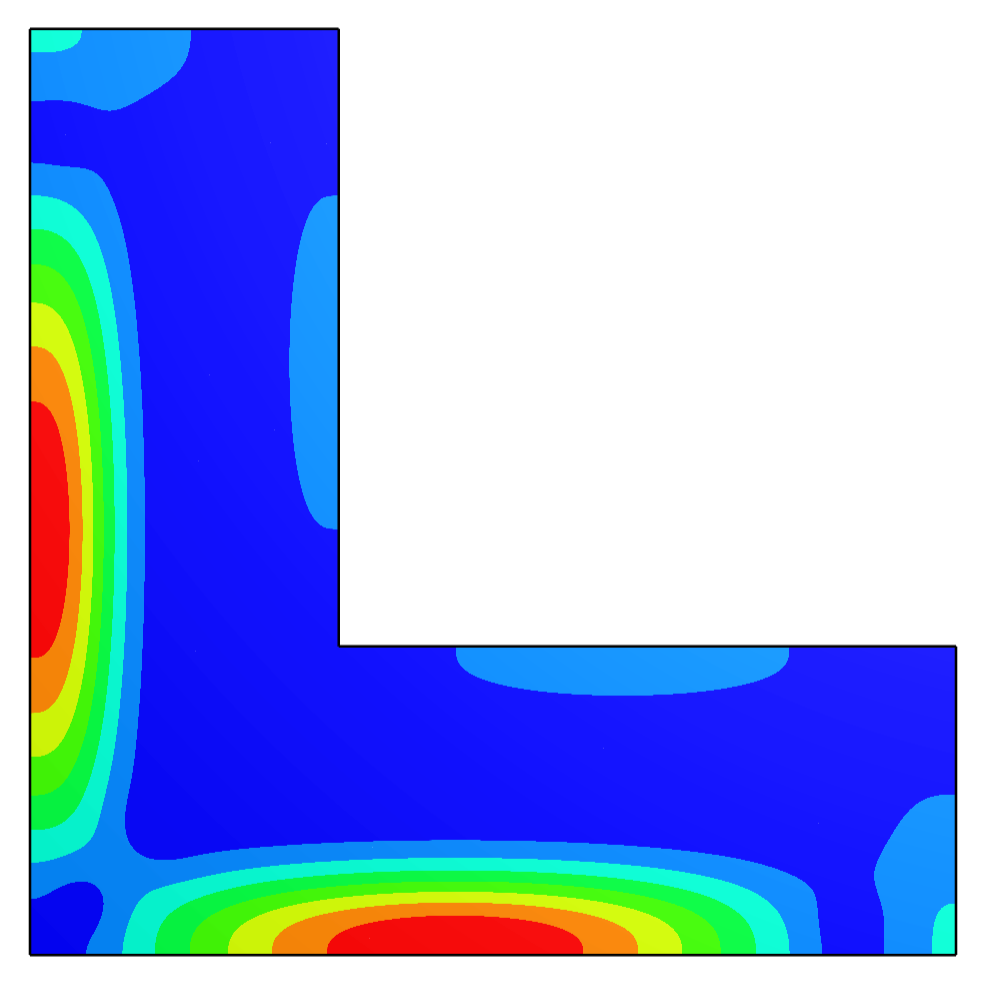}
\includegraphics[width=.16\textwidth]{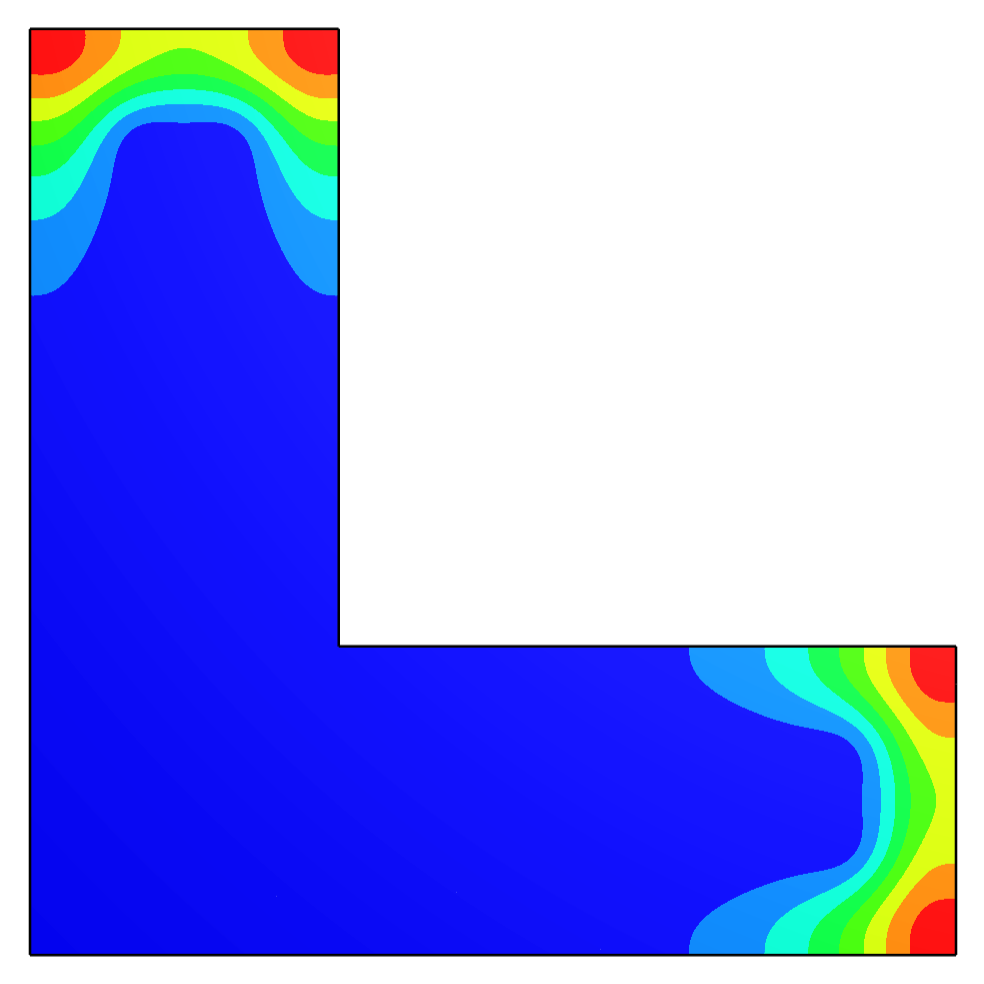}
\includegraphics[width=.16\textwidth]{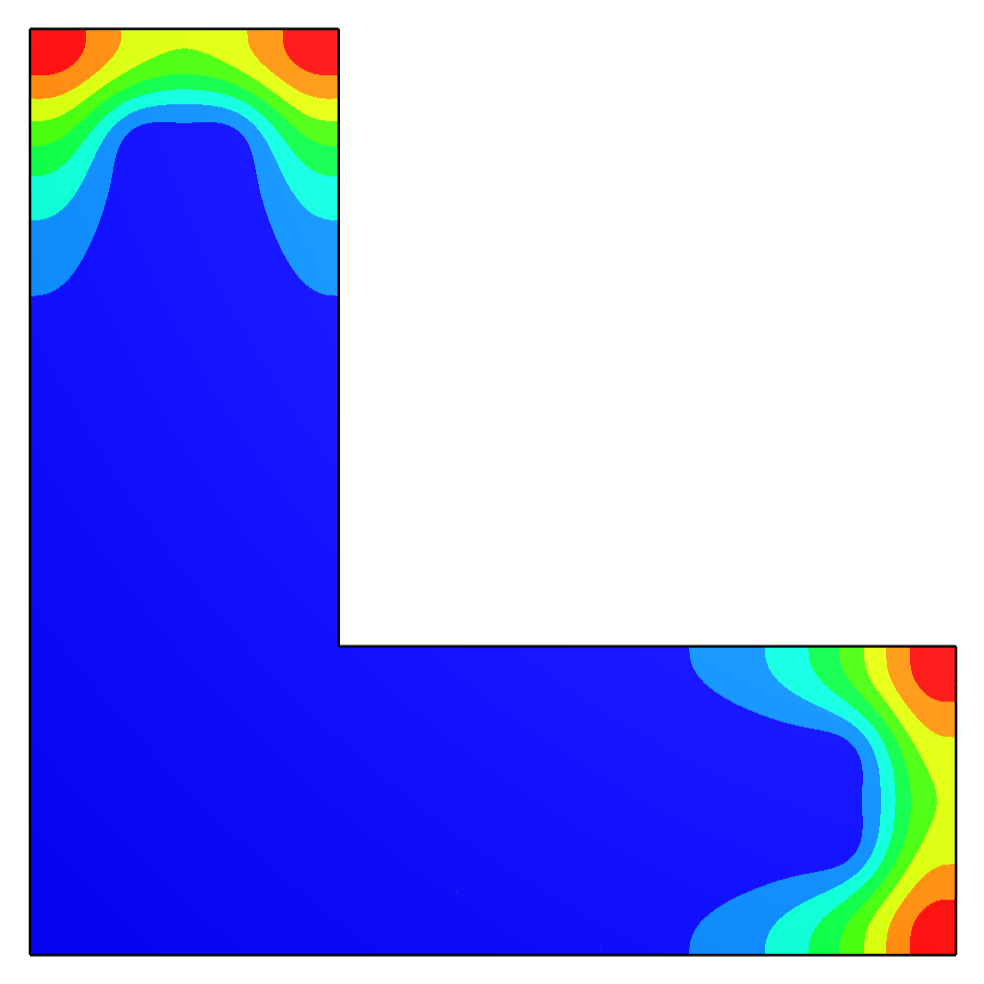}
\includegraphics[width=.16\textwidth]{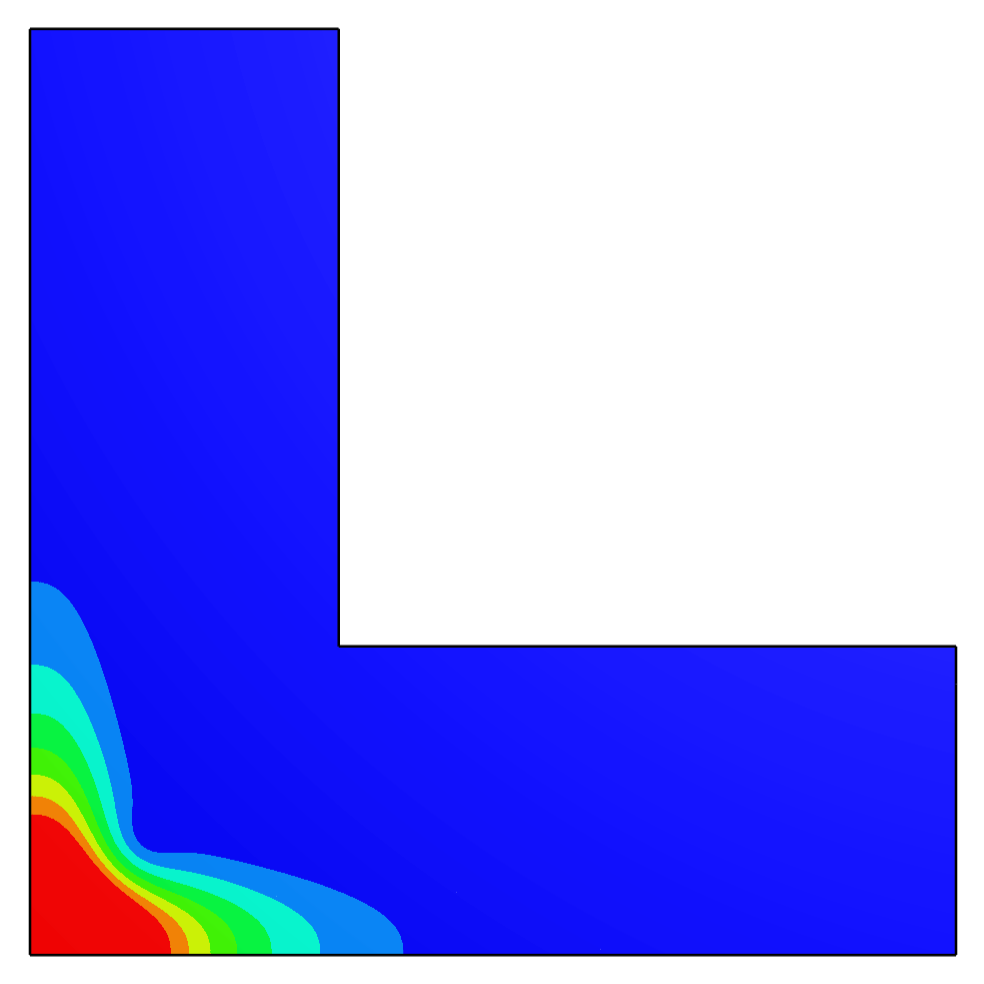}
\includegraphics[width=.16\textwidth]{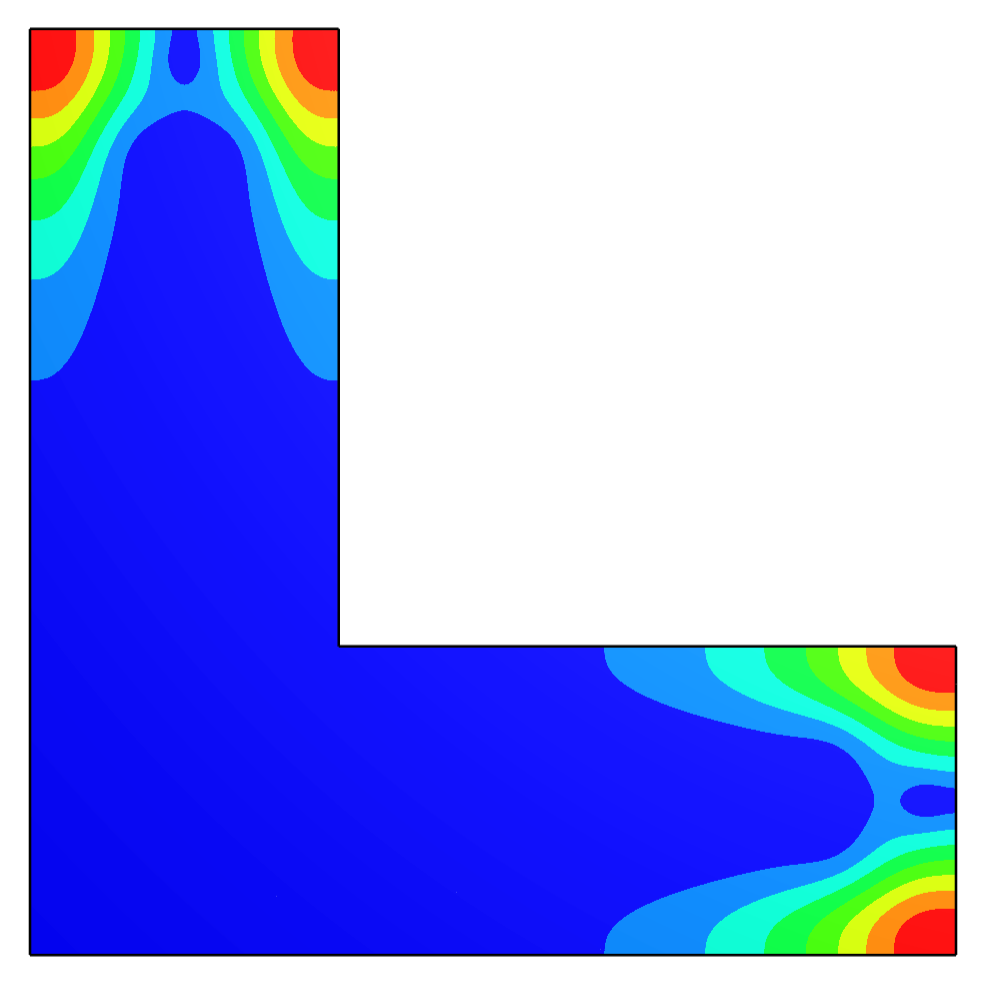}
\includegraphics[width=.16\textwidth]{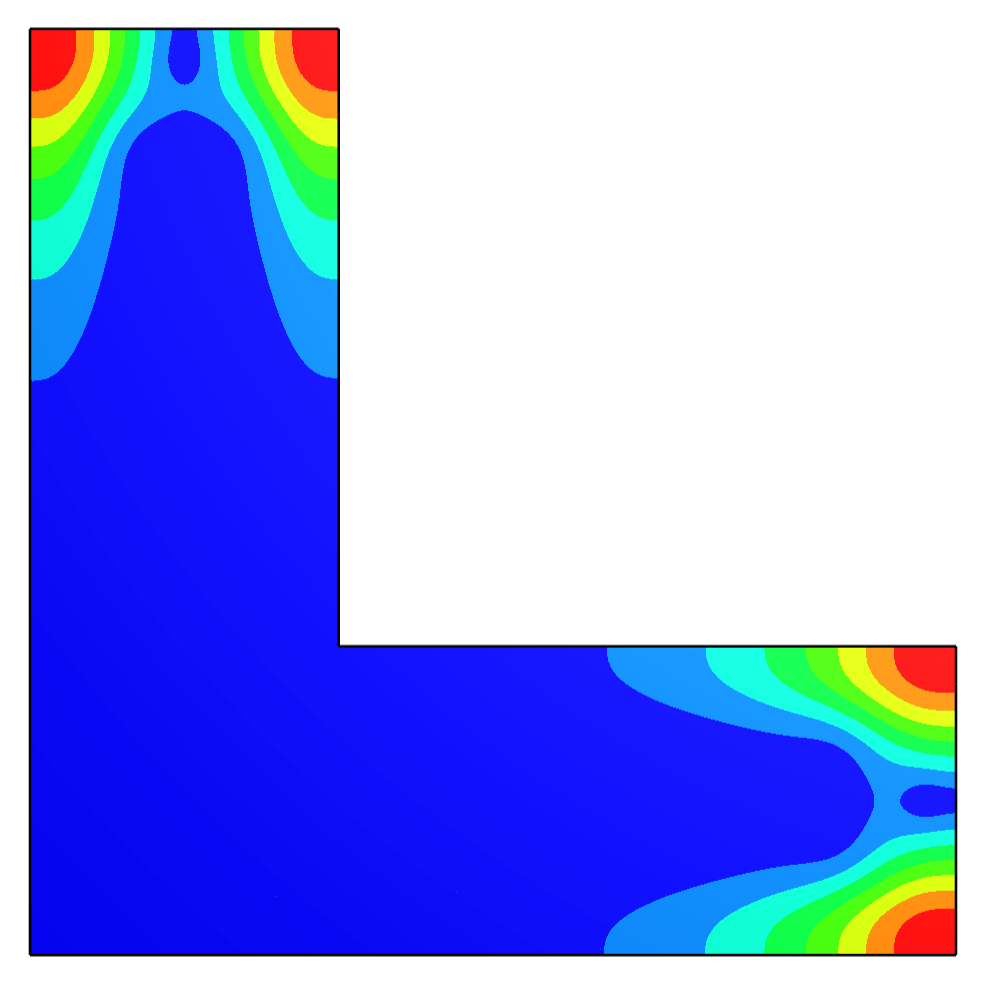}
\includegraphics[width=.16\textwidth]{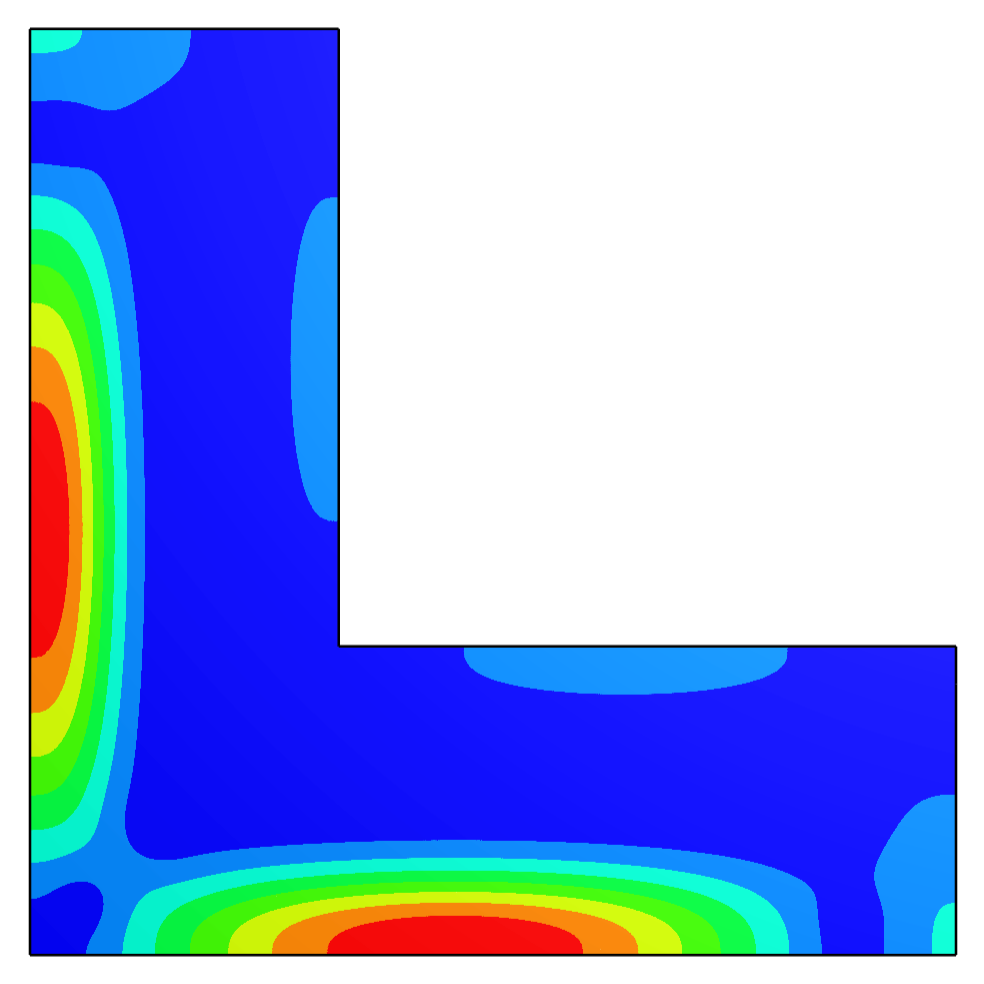}
\includegraphics[width=.16\textwidth]{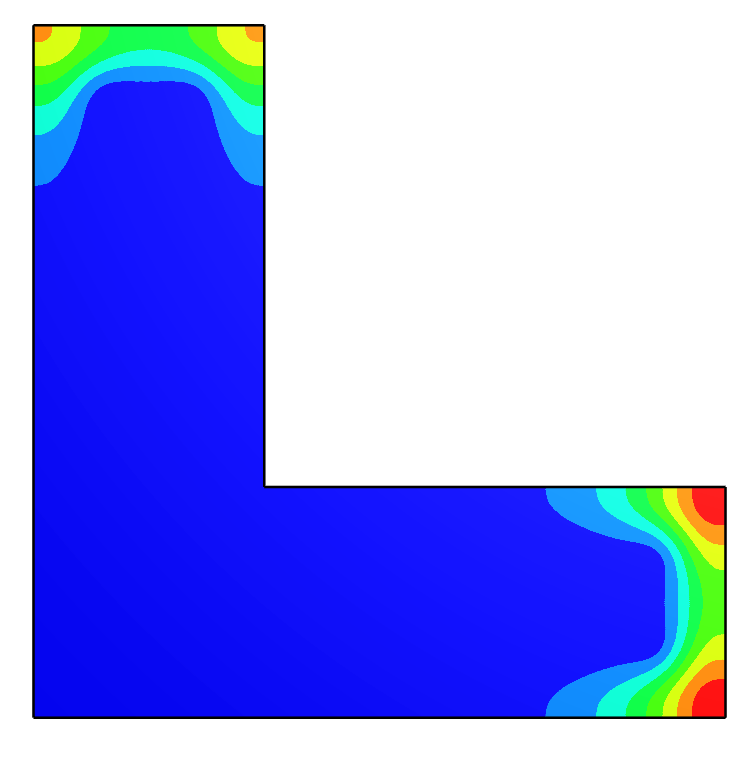}
\includegraphics[width=.16\textwidth]{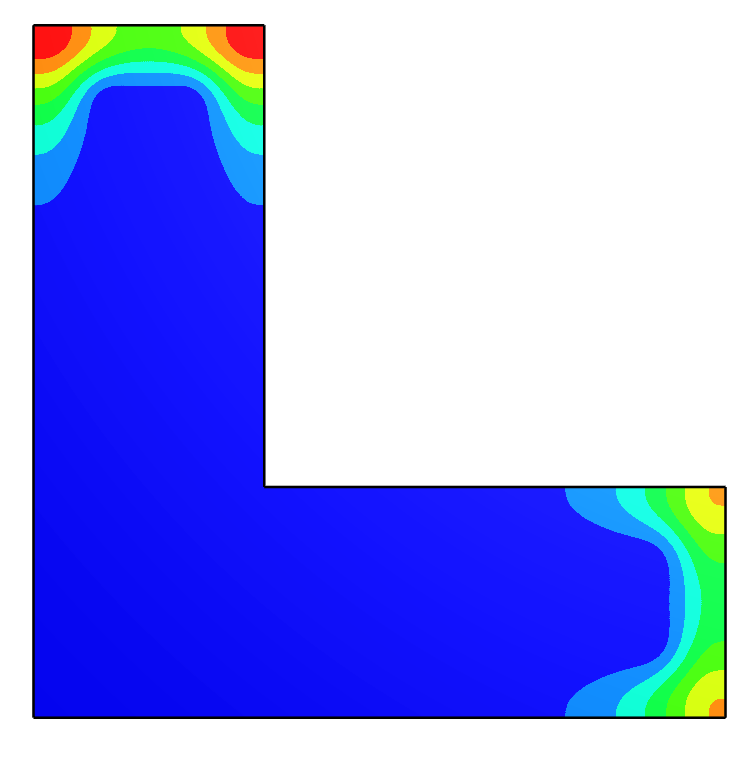}
\includegraphics[width=.16\textwidth]{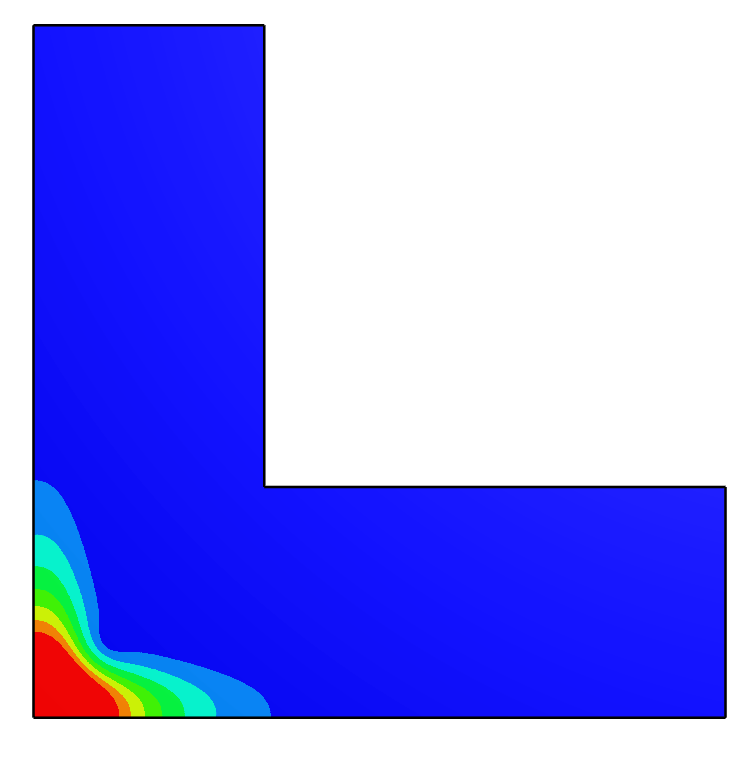}
\includegraphics[width=.16\textwidth]{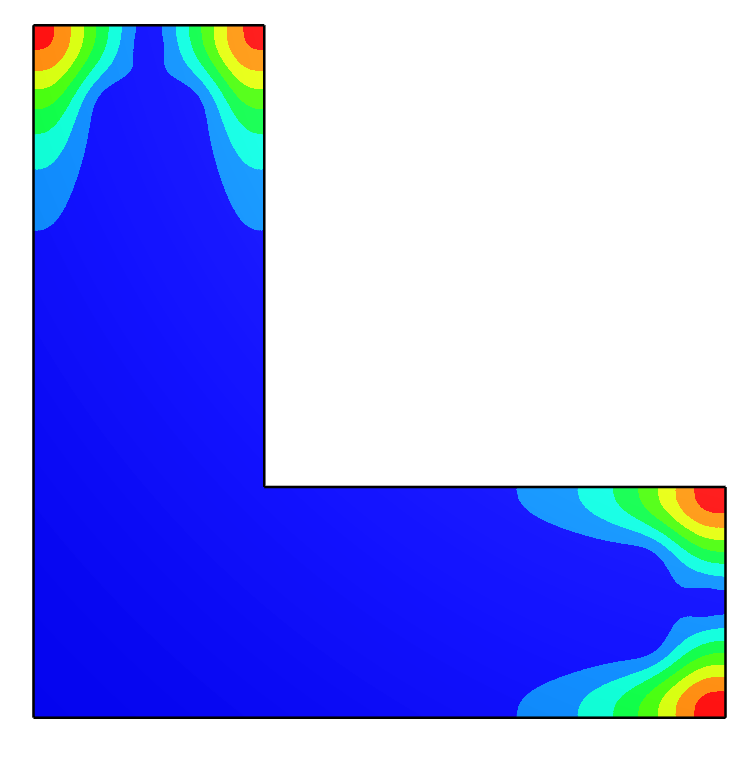}
\includegraphics[width=.16\textwidth]{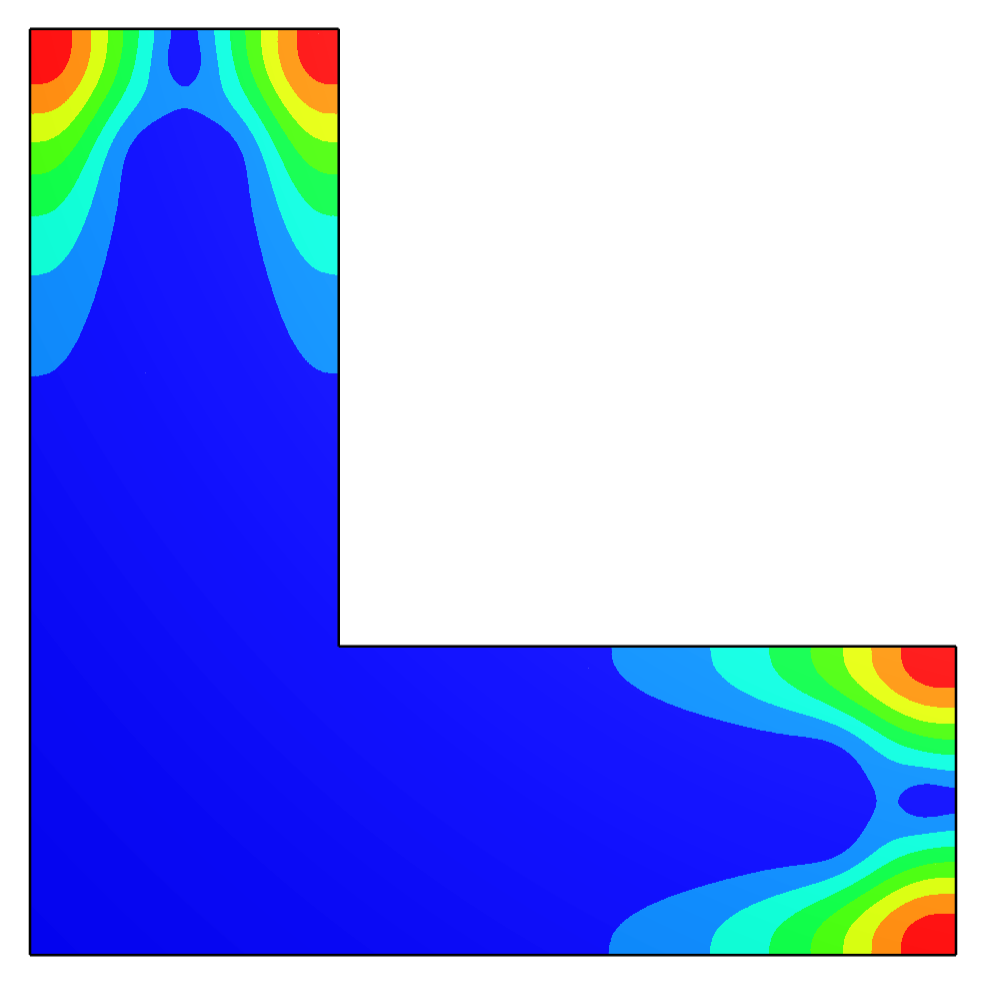}
\includegraphics[width=.16\textwidth]{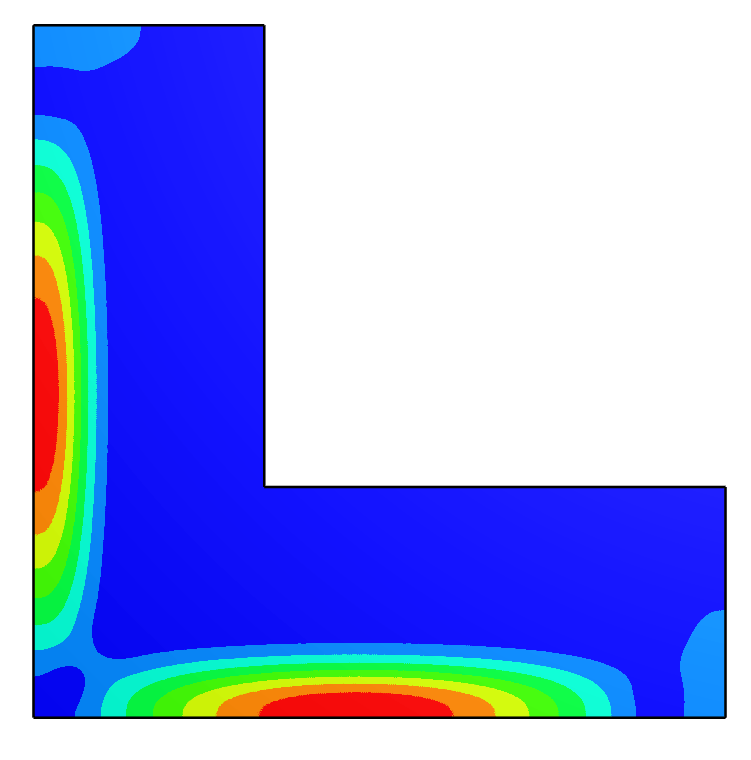}
 \caption{The computed eigenvectors $|\phi_j|$ of $H(\mb{F})$, $1\leq j\leq 6$, when
   $h=0.01$ (top row), $h=0.03$ (middle row), and $h=0.05$, for Example 4.\label{Ex4Fig2}}
 \end{figure}

\section{Conclusion}\label{Conclusions}
We have provided strong heuristic and empirical evidence that the
accurate computation of eigenpairs of the magnetic Schr\"odinger
operator can be done more efficiently and/or stably if a canonical
magnetic gauge is chosen.  This gauge $\mb{F}$ is part of a natural
Helmholtz decomposition of the given magnetic gauge $\mb{A}$, having
minimal norm among all gauges that differ from $\mb{A}$ by a gradient,
and is computed by solving a Poisson problem.  In all examples, we
observe greater stability in the computed eigenpairs with respect to
the mesh parameter $h$ for the canonical gauge---sometimes by a
significant margin.  We also observe a significant reduction in
computational cost to achieve similar accuracy---eigenvalues computed
with the canonical gauge when $h=0.03$ are typically just as accurate
as those computed with the original gauge when $h=0.01$, at a very
small fraction of the cost.

\medskip

\end{document}